\DeclareSymbolFont{bbold}{U}{bbold}{m}{n}
\DeclareSymbolFontAlphabet{\mathbbold}{bbold}
\newcommand{\inv}{{\scriptscriptstyle{\text{--}1}}}
\newcommand{\sfSigma}{\sf{\Sigma}}
\newcommand{\tinybrac}[1]{{\scriptscriptstyle(#1)}}
\renewcommand\@dotsep{200}
\tikzset{cross/.style={cross out, draw=black, ultra thick, minimum size=2*(#1-\pgflinewidth), inner sep=1pt, outer sep=1pt}}
\tikzset{>=stealth}
\tikzset{->-/.style={decoration={
	markings,
	mark=at position #1 with {\arrow{>}}},
	postaction={decorate}}}
\tikzset{-<-/.style={decoration={
	markings,
	mark=at position #1 with {\arrow{<}}},
	postaction={decorate}}}
\begin{document}

\newgeometry{top=1.5cm, bottom=0.5cm, left=2.5cm, right=2.5cm}


\title{Resurgence of Tritronquées Solutions \\ of the Deformed Painlevé I Equation}

\author{Mohamad Alameddine, Olivier Marchal, Nikita Nikolaev, and Nicolas Orantin}

\date{3 April 2025}

\maketitle
\thispagestyle{frontpage}

\begin{abstract}
\noindent
We prove that the formal $\hbar$-power series solution of the deformed Painlevé I equation is resurgent, which means it is generically Borel summable and its Borel transform admits endless analytic continuation.
In particular, we find that the Borel transform defines a global multivalued holomorphic function on a singular algebraic surface isomorphic to the Fermat quintic surface $x^5 + y^5 + z^5 = 0$ modulo an involution.
This surface is an algebraic fibration over the complex plane of the differential equation with generic fibre a smooth quintic curve.
Each fibre is equipped with a fivefold covering map over another complex plane (the Borel plane) with ten ramification points (the Borel singularities) spread equally over two branch points giving two opposite Stokes rays.
\end{abstract}

{\footnotesize
\textbf{Keywords:}
Painlevé equations, Borel resummation, Stokes phenomenon, analytic continuation, complex flows, Lie groupoids, resurgence, exact asymptotics, exact perturbation theory

\textbf{2020 MSC:} 
\MSCSubjectCode{34A26} (primary),
\MSCSubjectCode{34M40},
\MSCSubjectCode{34M55},
\MSCSubjectCode{34M04},
\MSCSubjectCode{34M25}
}


{\begin{spacing}{0.9}
\small
\setcounter{tocdepth}{2}
\tableofcontents
\end{spacing}
}

\newpage
\restoregeometry

\newpage
\setcounter{section}{-1}
\counterwithout{equation}{section}
\section{Introduction}

In this paper, we study the \dfn{deformed Painlevé I equation}; i.e., the second-order nonlinear ordinary differential equation
\begin{eqntag}
\label{250204124541}
	\hbar^2 \ddot{q} = 6q^2 + t
\end{eqntag}
in the complex $t$-plane depending on a complex parameter $\hbar \in \CC$.
When $\hbar = 1$, this is nothing but the usual Painlevé I equation.
In fact, for all nonzero $\hbar$, the deformed Painlevé I equation can be transformed into the usual Painlevé I equation by a simple rescaling change of variables which also trades the $\hbar \to 0$ limit for the $t \to \infty$ limit.
However, the deformed Painlevé I equation has an even richer structure because, unlike the usual Painlevé I equation, it is a \textit{family} of ordinary differential equations indexed by $\hbar$ with the notable feature that the nature of the problem is substantially different, and simpler, for $\hbar = 0$ when it becomes the quadratic equation
\begin{eqntag}
\label{250312055354}
	6q_0^2 + t = 0
\fullstop
\end{eqntag}
Our interest is to understand the behaviour of solutions in $\hbar$ and, in particular, to reconstruct \textit{exact} solutions of \eqref{250204124541} (meaning holomorphic in both $t$ and $\hbar$ in some domain) starting from a solution of the much simpler problem \eqref{250312055354}.
In other words, we search for holomorphic solutions $q = q (t, \hbar)$ with prescribed asymptotic behaviour as $\hbar \to 0$, casting this problem within the framework of so-called \textit{exact perturbation theory}.

Upon selecting a root of \eqref{250312055354}, a straightforward calculation shows that the deformed Painlevé I equation can be solved uniquely in $\hbar$-power series with locally holomorphic coefficients,
\begin{eqntag}
\label{250205071510}
	\hat{q} (t,\hbar) = \sum_{n = 0}^\infty q_n (t) \hbar^n
\fullstop
\end{eqntag}
It turns out that only the even-order coefficients $q_{2n}$ are nonzero and, remarkably, they can be determined completely explicitly: $q_{2n} = a_{2n} t^{(1-5n)/2}$ for $a_{2n} \in \CC$; see \autoref{250204162846}.
However, $\hat{q}$ is strictly a \textit{formal solution} because the constants $a_{2n}$ exhibit factorial growth in $n$, so the power series \eqref{250205071510} is divergent and hence does not define a holomorphic function of $\hbar$.
Standard asymptotic existence methods allow us to lift the formal solution $\hat{q}$ to true holomorphic solutions $q$ which are asymptotic to $\hat{q}$ as $\hbar \to 0$ in sectors of the $\hbar$-plane, but such lifts are highly non-unique and typically non-constructive.
The aim of exact perturbation theory is to construct such asymptotic lifts in a canonical and explicit way using the \textit{Borel resummation}.
This lifts the divergent series $\hat{q}$ in a uniquely prescribed way, depending only on a phase $\alpha \in \RR / 2\pi\ZZ$, to a holomorphic function, defined in a sector of the $\hbar$-plane of opening angle $\pi$ bisected by $\alpha$, given by a Laplace integral:
\begin{eqntag}
\label{250312063806}
	q_\alpha (t,\hbar)
		\coleq s_\alpha [\, \hat{q} \, ] (t,\hbar)
		= q_0 (t) + \int_{e^{i\alpha} \RR_+} e^{-\xi/\hbar} \hat{\omega} (t,\xi) \d{\xi}
\fullstop
\end{eqntag}
The integrand $\hat{\omega}$, called the \textit{Borel transform} of $\hat{q}$, is the convergent power series in $\xi$ obtained explicitly from $\hat{q}$ by the formula
\begin{eqntag}
\label{250312070005}
	\hat{\omega} (t,\xi)
		= \sum_{n = 0}^\infty \frac{q_{n+1} (t)}{n!} \xi^{n}
\fullstop
\end{eqntag}
Whether or not the Laplace integral in \eqref{250312063806} is well-defined is a question about the geometry of the analytic continuation of $\hat{\omega}$ in the $\xi$-plane, sometimes called the \textit{Borel plane}.
Obstructions to this analytic continuation are the $t$-dependent singularities of $\hat{\omega}$ in the $\xi$-plane, called \textit{Borel singularities}.
The directions $\alpha$ in which the integration contour in \eqref{250312063806} encounters a Borel singularity for a given $t$ are called \textit{Stokes rays}.
As $\alpha$ varies without crossing a Stokes ray, the different Borel resummations $q_\alpha$ assemble into a single holomorphic function defined in a larger sector of the $\hbar$-plane.
But as $\alpha$ crosses a Stokes ray, the Borel resummation of $\hat{q}$ exhibits a discontinuous jump, offering a beautiful example of the Stokes phenomenon.

\textit{Resurgence} is the study of the global geometric structure of singularities of the Borel transform.
Its aim is to identify an exceptionally well-behaved class of divergent series called \dfn{resurgent series}.
Roughly speaking, these are characterised by being generically \textit{Borel summable} (i.e., such that the Laplace integral in \eqref{250312063806} is well-defined for almost every $t$ and almost every $\alpha$) and such that the associated Stokes phenomenon across Stokes rays can be described in terms of the Borel resummation of new divergent series extracted from the singularities of the Borel transform.
In particular, the Borel transform must admit an \textit{endless analytic continuation}; i.e., it can be analytically continued along all paths in the Borel $\xi$-plane that avoids the Borel singularities.

\paragraph{Main results.}
In this paper, we give a complete description of the geometry of the analytic continuation of the Borel transform $\hat{\omega}$.
Our main theorem can be formulated as follows.

\enlargethispage{10pt}

{\bfseries\autoref{250304131355}.}
{\itshape The formal solution $\hat{q} (t,\hbar)$ of the deformed Painlevé I equation is resurgent.}

Resurgent is a loaded concept which for the deformed Painlevé I we describe in detail in \autoref{250313153618}.
More specifically, we prove that the Borel transform $\hat{\omega} (t,\xi)$ of $\hat{q} (t,\hbar)$ defined in \eqref{250312070005} is convergent (\autoref{250312151012}), and describe the full geometry of the resurgent structure of $\hat{q}$ which can be summarised as follows.

{\bfseries\autoref{250210171316} and \autoref{250313152819} (endless analytic continuation; summary).}
{\itshape The Borel transform $\hat{\omega} (t,\xi)$ admits endless analytic continuation of exponential type in all directions for every nonzero $t$.
More specifically:
\begin{enumerate}
\item There is a complex algebraic surface $M$ (the \dfn{Borel space}) and an algebraic curve $S \subset M$ (the locus of \dfn{Borel singularities}) such that the Borel transform $\hat{\omega}$ naturally extends to a global multivalued holomorphic function $\omega$ on the complement $M^\ast \coleq M \smallsetminus S$ contained in the smooth locus of $M$.
Namely, $\omega$ is a global holomorphic function with exponential bounds at infinity on a two-dimensional holomorphic manifold $\tilde{M}$ equipped with a holomorphic surjective submersion $\nu : \tilde{M} \to M^\ast$.
\item The surface $M$ is isomorphic to the quotient of the Fermat quintic surface $x^5 + y^5 + z^5 = 0$ in $\CC^3$ by the involution automorphism $\sigma : (x,y,z) \mapsto - (y,x,z)$.
It has only one singular point located at the origin, and the locus of Borel singularities $S$ corresponds to the intersections of the Fermat quintic with the planes $x = 0$ and $y = 0$; see \autoref{250312110535}.
\item There is an algebraic surjective submersion $\rm{s} : M \to \CC$ (the \dfn{source map}) to the Riemann surface of the square-root of $t$ defined by the quadratic equation \eqref{250312055354}, such that every nonzero fibre $M_{\tau \neq 0} \coleq \rm{s}^\inv (\tau)$ (the \dfn{Borel surface}) is a smooth algebraic curve, isomorphic to the quotient of a plane quintic curve by an involution, and the intersection $S_\tau \coleq S \cap M_\tau$ consists of exactly ten points.
Moreover, the restriction of the submersion $\nu : \tilde{M} \to M^\ast$ is the universal covering map $\nu : \tilde{M}_\tau = \widetilde{M_\tau \smallsetminus S_\tau} \to M_\tau \smallsetminus S_\tau$.
\item There is an algebraic surjective map $\Z : M \to \CC$ (the \dfn{central charge}) to the \dfn{Borel $\xi$-plane}, which is a submersion away from $S$, and whose restriction $\Z_\tau : M_\tau \to \CC$ to any nonzero fibre $M_{\tau \neq 0}$ is a fivefold covering map ramified at each of the ten Borel singularities in $S_\tau$, each with ramification order $5$, and distributed equally over two opposite branch points in the $\xi$-plane (the \dfn{Borel singular values}); see \autoref{250304114549}.
\end{enumerate}
}

The significance of understanding the global geometry of the Borel transform $\hat{\omega}$ is the ability to draw conclusions regarding the Borel summability of the formal solution $\hat{q}$.
We find (\autoref{250223133010}) that for every nonzero point in the $t$-plane, the formal power series $\hat{q} (t, \hbar)$ is Borel summable in all but two directions $\alpha_\pm$, called \dfn{Stokes directions}, corresponding to the two Borel singular values $\xi_\pm$; see \autoref{250313163646}.
As $t$ varies, the Stokes rays $\alpha_\pm$ rotate at $5/4$ the speed, which can be used to trace out maximal domains in the $t$-plane where Borel resummation in a fixed direction $\alpha$ is well-defined.

{\bfseries\autoref{250210133432} and \autoref{250210115249} (Borel summability in Stokes sectors; summary).}
{\itshape
For any phase $\alpha$, the $t$-plane is divided into five pairwise overlapping \dfn{Stokes regions} $U_k$ with a distinguished choice of the square-root branch (see \autoref{250224185222}).
For each Stokes region, the corresponding branch of the formal solution $\hat{q} (t, \hbar)$ of the deformed Painlevé I equation is Borel summable in the direction $\alpha$ locally uniformly for all $t \in U_k$.
Thus, the Borel resummation $q_\alpha (t,\hbar) = s_\alpha [\, \hat{q} \,] (t,\hbar)$ given by \eqref{250312063806} determines a holomorphic solution of the deformed Painlevé I equation well-defined for all $t \in U_k$ and all sufficiently small $\hbar$ in the halfplane bisected by $\alpha$.
Consequently, for every $\alpha$ there are in total five such special holomorphic solutions $q_\alpha$ (which we call the \dfn{deformed tritronquée solutions}) that are uniquely specified in sectors of the $t$-plane by their asymptotic expansion as $\hbar \to 0$ in the direction $\alpha$.
}

Finally, thanks to the detailed geometric description of the resurgent structure, we are able to calculate relevant quantities such as the Stokes jumps.

{\bfseries\autoref{250314214443} (Stokes phenomenon; summary).}
{\itshape
If $\alpha$ is a Stokes ray for a fixed nonzero $t_0$ and $\xi_0 \in \CC$ is the corresponding Borel singular value, then the formal power series $\hat{q} (t_0, \hbar)$ is laterally Borel summable in the direction $\alpha$, and the Stokes jump across the Stokes ray $\alpha$ (i.e., the difference be the left and right lateral resummations $q_\alpha^\textup{L}$ and $q_\alpha^\textup{R}$) is given by the formula
\begin{eqn}
	\Delta_{\alpha} \hat{q} (t_0, \hbar)
	\coleq q_\alpha^\textup{L} (t_0, \hbar) - q_\alpha^\textup{R} (t_0, \hbar)
	= e^{-\xi_0/\hbar} \Laplace_\alpha \big[ \Delta_{\xi_0} \omega \big] (t_0, \hbar)
\fullstop{,}
\end{eqn}
where $\Delta_{\xi_0} \omega (t_0, \xi) \coleq \omega (t_0, \xi_0 + \xi^\textup{L}) - \omega (t_0, \xi_0 + \xi^\textup{R})$ is the variation of $\omega$ at $\xi_0$; i.e., the difference between its values on two consecutive sheets of the universal cover of the punctured neighbourhood of $\xi_0$, so that $\xi^\textup{R} = e^{2 \pi i } \xi^\textup{L}$.
}

\begin{figure}[ht]
\centering
\begin{subfigure}{0.45\textwidth}
\centering
\includegraphics[width=\textwidth]{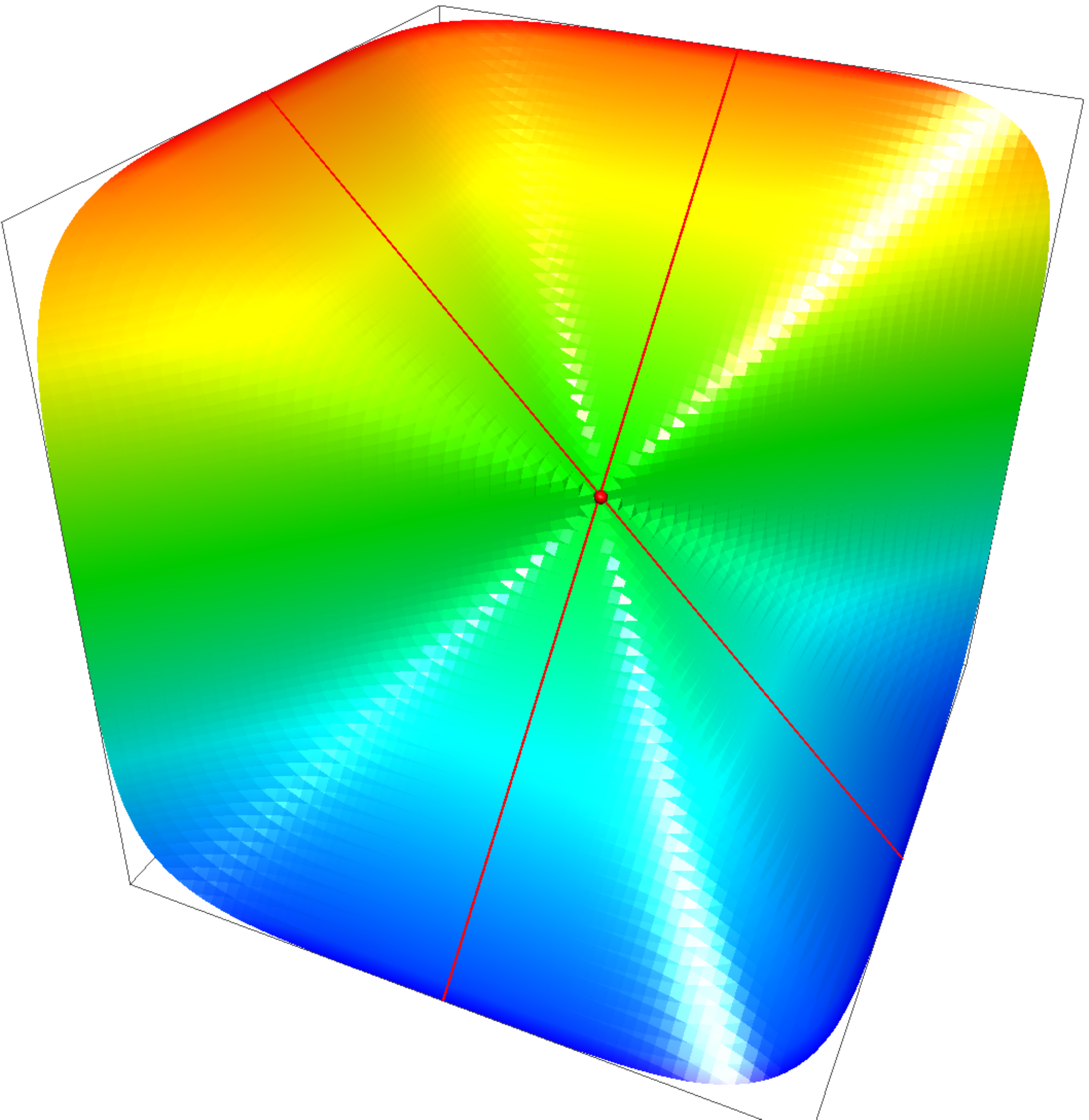}
\caption{The real slice of the Fermat quintic surface in $\CC^3$ given by the equation $x^5 + y^5 + z^5 = 0$.}
\label{250312110535}

\begin{tikzpicture}
\draw [black, ultra thick] (0,0) circle (1.5);
\fill [red] (15:1.5) circle (2pt) node [right] {$\alpha_+$};
\fill [red] (195:1.5) circle (2pt) node [left] {$\alpha_-$};
\draw [red, ->] (0,0) -- (15:1.4);
\draw [red, ->] (0,0) -- (195:1.4);
\fill [black] (0,0) circle (1pt);
\node at (105:1.2) {$A_1$};
\node at (-75:1.2) {$A_2$};
\end{tikzpicture}
\caption{The resurgent Stokes diagram at any nonzero point $t$.
There are only Stokes rays $\alpha_+, \alpha_-$.}
\label{250313163646}
\end{subfigure}
\begin{subfigure}{0.45\textwidth}
\begin{tikzpicture}[scale=0.8, every node/.style={scale=0.8}]
\begin{scope}[yshift=-6cm]
\draw [fill = grey, dashed, fill opacity=0.8] (0,1) -- (7,1) -- (5,-1) -- (-2,-1) -- cycle;
\draw [red, decorate, decoration={zigzag, segment length=4, amplitude=1}] (1,0) -- (-1,0);
\draw [red, decorate, decoration={zigzag, segment length=4, amplitude=1}] (4,0) -- (6,0);
\node [cross, red] at (1,0) {};
\node [cross, red] at (4,0) {};
\node at (6.5,-0.5) {$M_\tau$};
\end{scope}
\begin{scope}[yshift=-4.5cm]
\draw [fill = grey, dashed, fill opacity=0.8] (0,1) -- (7,1) -- (5,-1) -- (-2,-1) -- cycle;
\draw [red, decorate, decoration={zigzag, segment length=4, amplitude=1}] (1,0) -- (-1,0);
\draw [red, decorate, decoration={zigzag, segment length=4, amplitude=1}] (4,0) -- (6,0);
\node [cross, red] at (1,0) {};
\node [cross, red] at (4,0) {};
\end{scope}
\begin{scope}[yshift=-3cm]
\draw [fill = grey, dashed, fill opacity=0.8] (0,1) -- (7,1) -- (5,-1) -- (-2,-1) -- cycle;
\draw [red, decorate, decoration={zigzag, segment length=4, amplitude=1}] (1,0) -- (-1,0);
\draw [red, decorate, decoration={zigzag, segment length=4, amplitude=1}] (4,0) -- (6,0);
\node [cross, red] at (1,0) {};
\node [cross, red] at (4,0) {};
\end{scope}
\begin{scope}[yshift=-1.5cm]
\draw [fill = grey, dashed, fill opacity=0.8] (0,1) -- (7,1) -- (5,-1) -- (-2,-1) -- cycle;
\draw [red, decorate, decoration={zigzag, segment length=4, amplitude=1}] (1,0) -- (-1,0);
\draw [red, decorate, decoration={zigzag, segment length=4, amplitude=1}] (4,0) -- (6,0);
\node [cross, red] at (1,0) {};
\node [cross, red] at (4,0) {};
\end{scope}
\begin{scope}
\draw [fill = grey, dashed, fill opacity=0.8] (0,1) -- (7,1) -- (5,-1) -- (-2,-1) -- cycle;
\draw [red, decorate, decoration={zigzag, segment length=4, amplitude=1}] (1,0) -- (-1,0);
\draw [red, decorate, decoration={zigzag, segment length=4, amplitude=1}] (4,0) -- (6,0);
\node [cross, red] at (1,0) {};
\node [cross, red] at (4,0) {};
\end{scope}
\begin{scope}[yshift=-9.5cm]
\draw [fill = grey, dashed, fill opacity=0.8] (0,1) -- (7,1) -- (5,-1) -- (-2,-1) -- cycle;
\node [cross, red] at (1,0) {};
\node [cross, red] at (4,0) {};
\node [red] at (1,0) [below] {$\xi_-$};
\node [red] at (4,0) [below] {$\xi_+$};
\fill [black] (2.5,0) circle (1pt) node [below] {$0$};
\node at (6.5,-0.5) {$\CC_\xi$};
\draw [->] (2.5,2.25) -- (2.5,0.5);
\node at (2.5,1.5) [right] {$\Z_\tau$};
\end{scope}
\end{tikzpicture}
\caption{The Borel surface $M_{\tau}$ for any nonzero $\tau$.
The central charge $\Z_\tau : M_\tau \to \CC$ is a fivefold covering map with ten ramification points $S_\tau \subset M_\tau$ and two branch points $\xi_+, \xi_-$.}
\label{250304114549}
\end{subfigure}
\caption{}
\label{250313142325}
\end{figure}

\paragraph{Context.}
The literature on the subject of the Painlevé I equation is vast, appearing in many areas of mathematics such as analysis of PDEs, integrable systems, isomonodromic deformations, and enumerative geometry, but also in mathematical physics including string theory and random matrices.
There have been several attempts to describe the geometry of resurgence for the Painlev\'{e} I equation.
For example, the link with isomonodromic systems provides a natural framework to study the connection formulas and the direct monodromy problem \cite{Kapaev1993ConnectionFF, MR1404282, MR1746249, Iwaki-P1, Lisovyy_2017}.
More direct approaches focusing on the non-autonomous Hamiltonian formulation of the Painlev\'{e} I equation and the possible asymptotics at large $t$ of the solutions lead to the classification of tritronqu\'{e}es ($0$-parameter), tronqu\'{e}es ($1$-parameter), or general ($2$-parameter) solutions of the Painlev\'{e} I equation \cite{MR1854431,Garoufalidis:2010ya,MR3346717,delabaerebook} with several conjectures regarding instanton solutions and wall-crossing formulas \cite{Baldino:2022aqm,Aniceto:2011nu}.
The main difficulty in these approaches is to find a way to upgrade formal solutions given in terms of formal power series or formal transseries to analytic objects via resummation in a rigorous way.
Almost without exception, this requires a detailed description of the underlying geometry that controls the resummation method which has only been initiated in a few papers \cite{CostinBook,Sauzin,KAMIMOTO2022108533,Kontsevich-Soibelman,Fantini-Rellapaper,FantiniFenyes}.

More recently, a fully geometric approach to resurgence was developed by one of the authors in \cite{240622121512}, which, to the best of our knowledge, is the first instance that suggests the idea of engaging the full power of holomorphic Lie groupoids such as the fundamental groupoid of a Riemann surface.
The present paper may be regarded as a demonstration of the power of the tools and techniques proposed in \cite{240622121512}.
Having said that, in order to prove \autoref{250304131355}, it was necessary to overcome several new challenges that did not previously arise.
Our methods are by no means limited to the Painlevé I equation: we chose it as the focus of this paper because the Painlevé I equation is remarkable in that it provides a rare instance in mathematics where the analysis of the problem is rather nontrivial and interesting yet completely explicit.

\paragraph{Relation to the usual Painlevé I equation.}
Setting $\hbar = 1$ in the deformed Painlevé I equation yields the usual Painlevé I equation.
Alternatively, for nonzero $\hbar$, the deformed Painlevé I equation can be related to the usual Painlevé I equation by rescaling the variables as follows.
Choose a branch cut in the $\hbar$-plane emanating from the origin, and introduce new variables $x$ and $y(x)$ by
\begin{eqntag}
\label{250205090153}
	t = \hbar^{4/5} x
\qtext{and}
	q (t, \hbar) = \hbar^{2/5} y (x)
\fullstop
\end{eqntag}
Then $q$ is a solution of the deformed Painlevé I equation \eqref{250204124541} if and only if $y$ is a solution of the usual Painlevé I equation in the $x$-plane:
\begin{eqntag}
\label{250205090931}
	y'' = 6 y^2 + x
\fullstop
\end{eqntag}

The rescaling \eqref{250205090153} also relates the formal solution \eqref{250205071510} to the asymptotic solution $\hat{y} (x)$ as $x \to \infty$ which has a similar square-root ambiguity.
There is a special collection of maximally regular solutions of the usual Painlevé I equation which are asymptotic to $\hat{y} (x)$ as $x \to \infty$ in a sector of opening angle $4\pi/5$.
They are often called \textit{tritronquées solutions} \cite{MR1854431} or \textit{$0$-parameter solutions}.
For this reason, we refer to solutions $q (t,\hbar)$ of the deformed Painlevé I equation obtained by the Borel resummation \eqref{250312063806} as the \dfn{(deformed) tritronquées solutions} as well.
Whether or not the present method can be extended to tronquées ($1$-parameter) or general ($2$-parameters) solutions of the Painlevé I equation, whose natural starting point are rather formal $\hbar$-transseries remains an open question.

\paragraph{Organisation.}
The article is organised as follows.
In \autoref{250315160940}, we construct the formal solution $\hat{q}$ and prove that its coefficients exhibit at most factorial growth.
In \autoref{250313191046}, we state our main results about the resurgence of $\hat{q}$, its Borel summability properties, and the associated Stokes phenomenon.
The rest of the article is devoted to the proof of these assertions.
First, in \autoref{250218190702}, we shift the perspective to work with the associated first-order Hamiltonian system.
We make some convenient changes of variables that help us write down an Initial Value Problem for the Borel transform.
In \autoref{250315131538}, motivated by the goal of constructing a global solution, we introduce the basic geometric tools needed to give a global reformulation of this Initial Value Problem using the fundamental groupoid.
Then \autoref{250304152932} is the geometric heart of the article where we construct and describe the geometric spaces where our Initial Value Problem can be solved globally.
Finally, in \autoref{250304161635}, we construct the global solution using a Contraction Mapping Principle.

\paragraph{Acknowledgements.}
\sloppy{Olivier Marchal was supported by the fundamental junior IUF grant G752IUFMAR.
Nikita Nikolaev was supported by the European Union's Horizon 2020 Research and Innovation Programme under the Marie Skłodowska-Curie Grant Agreement No.\,101026083 (\href{https://cordis.europa.eu/project/id/101026083}{\texttt{AbQuantumSpec}}), as well as the Leverhulme Trust through the Leverhulme Research Project grant \textit{Extended Riemann-Hilbert Correspondence, Quantum Curves and Mirror Symmetry}.}

\counterwithin{equation}{section}
\section{The Formal Power Series Solution}
\label{250315160940}

We begin by searching for formal power series solutions of the deformed Painlevé I equation; i.e., formal power series in $\hbar$ with locally holomorphic (i.e., holomorphic but possibly multivalued) coefficients that satisfy \eqref{250204124541} in the formal sense of differentiating order-by-order in $\hbar$.
Thus, we introduce the following formal power series ansatz for $q$ as well as the corresponding momentum $p = \hbar \del_t q$:
\begin{eqntag}
\label{250205091941}
	\hat{q} (t,\hbar) \coleq \sum_{n=0}^\infty q_n (t) \hbar^n
\qqtext{and}
	\hat{p} (t,\hbar) \coleq \sum_{n=0}^\infty p_n (t) \hbar^n
\fullstop
\end{eqntag}
In this section, by way of derivation that culminates in \autoref{250204162846}, we show that such solutions exist and in fact there is only one such solution up to a square-root ambiguity.

\subsection{Existence and Uniqueness}

Let us insert the power series ansatz \eqref{250205091941} into the deformed Painlevé I equation and compare the coefficients of like powers of $\hbar$.
We find that the leading-order coefficient $q_0$ must be a root of the quadratic polynomial equation
\begin{eqntag}
\label{250205092551}
	6q_0^2 = t
\fullstop
\end{eqntag}
Furthermore, once a root $q_0$ is chosen, then all the higher-order coefficients of $\hat{q}$ can be recursively determined from $q_0$.
Meanwhile, the coefficients of $\hat{p}$ are determined from those of $\hat{q}$ thanks to the relation $\hat{p} = \hbar  \del_t \hat{q}$.
In this manner, we find the following:
\begin{eqntag}
\label{250205092716}
\begin{aligned}
	q_{2n-1} = 0
&\qqtext{and}
	q_{2n} = \frac{1}{12 q_0} \left( \ddot{q}_{2n-2} - 6 \sum_{i + j = n}^{i,j \neq 0} q_{2i} q_{2j} \right)
\qquad
	\forall n \geq 1
\fullstop{,}
\\
	p_{2n} = 0
&\qqtext{and}
	p_{2n+1} = \dot{q}_{2n}
\qquad
	\forall n \geq 0
\fullstop
\end{aligned}
\end{eqntag}
Here and everywhere, the empty sum is understood to mean $0$.
More explicitly, the first few coefficients of $\hat{q}$ are
\begin{eqn}
	q_2 = \frac{\ddot{q}_0}{12 q_0}
~,\quad
	q_4 = \frac{\ddot{q}_2 - 6 q_0^2}{12 q_0}
~,\quad
	q_6 = \frac{\ddot{q}_4 - 12 q_2 q_4}{12 q_0}
~,\quad
	q_8 = \frac{\ddot{q}_4 - 12 q_2 q_6 - 6 q_4^2}{12 q_0}
~,\quad
\ldots
\end{eqn}
Furthermore, using the relation \eqref{250205092551}, we can derive an explicit expression for all the derivatives of $q_0$ in terms of $q_0$ itself:
\begin{eqn}
	\dot{q}_0 = - \frac{1}{12q_0}
~,\qquad
	\ddot{q}_0 = - \frac{1}{144 q_0^3}
~,\qquad
	\dddot{q}_0 = - \frac{3}{1728 q_0^5}
~,\qquad
	\ddddot{q}_0 = - \frac{15}{20736 q_0^7}
~,\qquad
\ldots
\fullstop{,}
\end{eqn}
and more generally (with empty product understood to mean $1$)
\begin{eqntag}
\label{250205093815}
    q_0^\tinybrac{k} 
    	= -\frac{(2k-3)!}{12^k 2^{k-2}(k-2)! q_0^{2k-1}}
    	= -\frac{1}{12^k q_0^{2k-1}} \prod_{i=1}^{k-1} (2i-1)
\qquad
	\forall k \geq 1
\fullstop
\end{eqntag}
As a consequence, we can deduce by induction the explicit dependence on $t$ of all the coefficients of $\hat{q}$.
Namely, let us introduce the constant
\begin{eqntag}
\label{250206193643}
	\kappa \coleq i / \sqrt{6}
\fullstop{,}
\end{eqntag}
as well as the following sequences of complex numbers $a^\pm_{2n}$ and $b^\pm_{2n+1}$ for $n \geq 0$:
\begin{eqntag}
\label{250205150639}
	a_{2n}^\pm \coleq - (\pm 1)^{n-1} \frac{c_{n}}{3^n 2^{5n-1} \kappa^{n-1}}
\qqtext{and}
	b_{2n+1}^\pm \coleq - \tfrac{1}{2} (5n-6) a_{2n}^\pm
\fullstop{,}
\end{eqntag}
where $c_0 \coleq -1/2$ and $(c_{n})_{n \geq 1}$ is a positive integer sequence defined by the following recursion:
\begin{eqntag}
\label{250208130316}
	c_{n} \coleq 2 (5n-6) (5n-4) c_{n-1} + \sum_{i+j = n}^{i,j\neq 0} c_{i} c_{j}
\qqquad
	\forall n \geq 1
\fullstop
\end{eqntag}
Explicitly, the first few terms (for $n = 0,1,\ldots,5$) of the sequences \eqref{250205150639} are:
\begin{eqntag}
\begin{aligned}
	a^\pm_{2n} ~&:~ \pm \tfrac{i}{\sqrt{6}},\quad
		- \tfrac{1}{48},\quad
		\pm \tfrac{49 i \sqrt{6}}{4608},\quad
		+\tfrac{1225}{9216},\quad
		\mp \tfrac{4\,412\,401 i \sqrt{6}}{7\,077\,888},\quad
		- \tfrac{73\,560\,025}{2\,359\,296},\quad
		\ldots
\fullstop{;}
\\
	b^\pm_{2n+1} ~&:~ \pm \tfrac{3i}{\sqrt{6}},\quad
		- \tfrac{1}{96},\quad
		\mp \tfrac{49 i \sqrt{6}}{2304},\quad
		- \tfrac{1225}{2048},\quad
		\pm \tfrac{4\,412\,401 i \sqrt{6}}{1\,179\,648},\quad
		- \tfrac{1\,397\,640\,475}{4\,718\,592},\quad
		\ldots
\fullstop
\end{aligned}
\end{eqntag}

We summarise the content of the above discussion in the following proposition.

\begin{proposition}[Formal Existence and Uniqueness Theorem]
\label{250204162846}
The deformed Painlevé I equation \eqref{250204124541} has a unique formal power series solution $\hat{q} (t,\hbar)$ of the form \eqref{250205091941} whose coefficients are holomorphic but multivalued functions on the punctured $t$-plane $\CC \smallsetminus \set{0}$.
Namely, for any branch of the square root $\sqrt{t}$, the two branches of $\hat{q} (t,\hbar)$, as well as the two branches of the corresponding momentum $\hat{p} (t,\hbar) = \hbar \del_t \hat{q} (t,\hbar)$, can be expressed as follows:
\begin{eqntag}
\label{231217212302}
    \hat{q}_\pm (t, \hbar)
        = \sqrt{t} \sum_{n=0}^{\infty} a_{2n}^\pm t^{-5n/2} \hbar^{2n}
\qtext{and}
	\hat{p}_\pm (t, \hbar)
        = \frac{1}{\sqrt{t}} \sum_{n=0}^{\infty} b_{2n+1}^\pm t^{-5n/2} \hbar^{2n + 1}
\fullstop{,}
\end{eqntag}
where the coefficients $a_{2n}^\pm, b_{2n+1}^\pm \in \CC$ are defined by the recurrence relations \eqref{250205150639}.
\end{proposition}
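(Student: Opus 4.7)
The plan is to substitute the ansatz \eqref{250205091941} into the deformed Painlevé I equation \eqref{250204124541} and extract coefficient relations order-by-order in $\hbar$. The strategy proceeds in three inductive stages: first, derive the recursion that determines every coefficient from $q_0$; second, show that the odd-order coefficients all vanish; third, pin down the explicit $t$-dependence of the even coefficients and identify the resulting scalar recursion with \eqref{250208130316}.

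First I would expand $\hbar^2 \ddot{\hat{q}} = 6 \hat{q}^2 + t$ and equate coefficients of like powers of $\hbar$. The $\hbar^0$ relation yields the quadratic \eqref{250205092551}, whose two solutions $q_0(t) = \pm \kappa \sqrt{t}$ account for the square-root ambiguity labelled by $\pm$. At every higher order, isolating the $q_0 q_n$ term on both sides of the coefficient identity gives the closed recursion \eqref{250205092716} for $q_n$ in terms of $q_0, \ldots, q_{n-1}$; since $q_0$ is nonvanishing on $\CC \smallsetminus \set{0}$, division by $12 q_0$ is always admissible, which simultaneously establishes formal existence and uniqueness of $\hat{q}$ once $q_0$ is fixed and shows that every $q_n$ is a locally holomorphic function on $\CC \smallsetminus \set{0}$. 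An immediate strong induction then gives $q_{2n-1} = 0$: the base case $q_1 = 0$ is the $\hbar^1$ relation $12 q_0 q_1 = 0$, and in the recursion for $q_{2n+1}$ both $\ddot{q}_{2n-1}$ and every product $q_i q_j$ with $i + j = 2n+1$ and $i, j \neq 0$ necessarily contains an odd-indexed factor and therefore vanishes by the inductive hypothesis.

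Finally I would establish the explicit form $q_{2n}(t) = a_{2n}^\pm t^{(1-5n)/2}$ by strong induction on $n$, using the closed-form derivatives \eqref{250205093815} of $q_0$ to handle the base case. Assuming the claim for all $k < n$, each product $q_{2i} q_{2j}$ with $i + j = n$, $i,j \neq 0$ and the second derivative $\ddot{q}_{2n-2}$ is proportional to $t^{(2-5n)/2}$, so after division by $12 q_0 \propto t^{1/2}$ the resulting $q_{2n}$ is proportional to $t^{(1-5n)/2}$ as claimed. Matching scalar prefactors yields a recursion for $a_{2n}^\pm$; factoring out the explicit powers of $\kappa$ and the sign $(\pm 1)^{n-1}$ built into the ansatz \eqref{250205150639} reduces it to the universal recursion \eqref{250208130316} for the sequence $c_n$, which by the same induction is checked to consist of positive integers. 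The analogous formulae for $\hat{p}_\pm$ and the coefficients $b_{2n+1}^\pm$ follow by termwise differentiation of $\hat{q}_\pm$ via $\hat{p} = \hbar \del_t \hat{q}$. I expect the main obstacle to be bookkeeping rather than anything conceptual: tracking the sign $(\pm 1)^{n-1}$, the combinatorial prefactors, and the way the two branches interchange through the recursion requires patience, but no ingredient beyond induction and careful manipulation of \eqref{250205092716} is needed.
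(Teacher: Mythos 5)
Your proposal follows essentially the same route as the paper: the paper's ``proof'' is precisely the derivation preceding the proposition (substitute the ansatz, match powers of $\hbar$ to obtain the recursion \eqref{250205092716}, note that the odd-order coefficients vanish, and establish $q_{2n} = a_{2n}^\pm\, t^{(1-5n)/2}$ by induction using \eqref{250205093815}, reducing the scalar prefactors to the recursion \eqref{250208130316}). The only difference is that your inductions for the vanishing of the odd coefficients and for the explicit $t$-dependence are spelled out in more detail than in the paper, which merely asserts them.
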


\begin{definition}[formal solutions]
\label{250205091635}
We will refer to the formal power series solution of the deformed Painlevé I equation defined by \autoref{250204162846} as the \dfn{formal power series solution} or \dfn{formal $0$-parameter solution}, or simply \dfn{formal solution} from now on.
\end{definition}

The multivalued nature of the formal solution $\hat{q}$ is the choice of square-root branch that can be easily eliminated by passing to the double cover
\begin{eqntag}
\label{250313115427}
	\CC_\tau \to \CC_t
\qtext{given by the relation}
	\tau^2 = \kappa^2 t = -t/6
\fullstop
\end{eqntag}
Thus, the multivalued formal power series $\hat{q}$ and $\hat{p}$ can be regarded as formal power series with holomorphic coefficients on the punctured $\tau$-plane, where in fact all coefficients extend to the whole $\tau$-plane as meromorphic functions with a pole at the origin:
\begin{eqntag}
\label{250206201929}
	\hat{q} (\tau, \hbar) \coleq \sum_{n=0}^\infty \tilde{a}_{2n}^\pm \tau^{-5n+1} \hbar^{2n}
\qtext{and}
	\hat{p} (\tau, \hbar) \coleq \sum_{n=0}^\infty \tilde{b}_{2n+1}^\pm \tau^{-5n-1} \hbar^{2n+1}
\end{eqntag}
where $\tilde{a}_{2n}^\pm \coleq a_{2n}^\pm \kappa^{5n-1}$ and $\tilde{b}_{2n+1}^\pm \coleq b_{2n+1}^\pm \kappa^{5n+1}$.
In symbols, $\hat{q}, \hat{p} \in \cal{M} (\CC_\tau) \bbrac{\hbar}$, where $\cal{M} (\CC_\tau)$ is the space of meromorphic functions on $\CC_\tau$.
Notice that the order of the pole is unbounded as the order $n$ of the coefficients increases, so it is not possible to factorise out a common denominator (a telltale sign of breakdown of uniform asymptotics in the parameter $\hbar$).

\begin{remark}
Under the rescaling change of variables \eqref{250205090153}, the formal power series solution $\hat{q} (t, \hbar)$ is directly related to the unique multivalued asymptotic solution of the usual Painlevé I equation \eqref{250205090931} at large $x$.
Indeed, for any branches of $\sqrt{t}$ and $\sqrt[5]{\hbar}$, let $\sqrt{x}$ be the square-root branch such that $\sqrt{t} = \hbar^{2/5} \sqrt{x}$.
Then, upon using the change of variables $x = \hbar^{-2/5} t$, the formal Puiseux series
\begin{eqntag}
	\hat{y}_\pm (x) 
		\coleq \hbar^{-2/5} \hat{q}_\pm (t,\hbar)
		= \sqrt{x} \sum_{n = 0}^\infty a^\pm_{2n} x^{-5n/2}
		\in \CC \ppbracbig{x^{1/2}}
\end{eqntag}
is a formal solution of the usual Painlevé I equation \eqref{250205090931}.
\end{remark}

\subsection{Factorial Type}
\label{250313152915}

In this subsection, we demonstrate that the formal solution $\hat{q}$ is divergent for all $t$ by showing that its coefficients exhibit factorial growth.
First, let us recall some relevant notions.

\begin{definition}
\label{250312144829}
A formal power series
\begin{eqn}
	\hat{f} (\hbar) = \sum_{n=0}^\infty a_n \hbar^n \in \CC \bbrac{\hbar}
\end{eqn}
is said to be of \dfn{factorial type} (or is a \textit{Gevrey-1 series}) if its coefficients $a_n$ grow at most factorially with $n$; i.e., there are constants $\C,\M > 0$ such that 
\begin{eqn}
	|a_n| \leq \C \M^n n!
\qqquad
	\forall n \geq 0
\fullstop
\end{eqn}
Equivalently, we could demand the existence of a constant $\M > 0$ such that $|a_n| \leq \M^{n+1} n!$ for all $n \geq 0$.
Note that if $|a_0| \leq 1$ then we can always arrange $\C$ to be $1$.
\end{definition}

\begin{proposition}[factorial type]
\label{250224202049}
The formal power series solution $\hat{q} (t, \hbar)$ of the deformed Painlevé I equation \eqref{250204124541} is of factorial type, locally uniformly for all nonzero $t$.
More precisely, let $\hat{q} (t, \hbar)$ be either branch of the multivalued formal solution of the deformed Painlevé I equation \eqref{250204124541}.
Then every nonzero point $t_0 \in \CC_t$ has a neighbourhood $U_0$ such that there exist real constants $\C, \M > 0$ that provide the following uniform bound on $U_0$ for every $n$:
\begin{eqntag}
\label{250224202445}
	\big| q_n (t) \big| \leq \C \M^n n!
\qquad
	\forall t \in U_0
\fullstop
\end{eqntag}
\end{proposition}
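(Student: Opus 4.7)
The plan is to exploit the explicit form of the coefficients provided by \autoref{250204162846}: the odd-order coefficients vanish identically, and the even-order ones are $q_{2n}(t) = a_{2n}^\pm\, t^{(1-5n)/2}$, where $a_{2n}^\pm$ is, up to an elementary $n$-dependent prefactor of the form $(3^n 2^{5n-1} \kappa^{n-1})^{-1}$, the purely numerical sequence $c_n$ defined by the recursion \eqref{250208130316}. On any small closed disc $U_0$ centred at $t_0$ with $U_0 \subset \CC \smallsetminus \set{0}$, the factor $|t|^{(1-5n)/2}$ is dominated uniformly in $t \in U_0$ by $r^{1/2}\, r^{-5n/2}$, where $r \coleq \min_{t \in U_0} |t| > 0$. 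Absorbing all resulting geometric-in-$n$ factors into constants, the desired bound \eqref{250224202445} for even orders $n = 2m$ reduces to an estimate of the form $|c_m| \leq C_0 K^m (2m)!$ for some $C_0, K > 0$ independent of $m$. Since $q_n \equiv 0$ for odd $n$, the odd-order bound is trivial.

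The heart of the proof is therefore the combinatorial claim that there exist $C_0, K > 0$ such that $|c_n| \leq C_0 K^n (2n)!$ for all $n \geq 0$, which I would establish by strong induction. The base case $|c_0| = 1/2$ holds for any $C_0 \geq 1/2$. For the inductive step, applying the triangle inequality and the induction hypothesis to the recursion \eqref{250208130316} gives the estimate $|c_n| \leq 2(5n-6)(5n-4)\, C_0 K^{n-1} (2n-2)! + C_0^2 K^n \sum_{i+j = n,\, i,j \geq 1} (2i)! (2j)!$. The ratio $2(5n-6)(5n-4)(2n-2)!/(2n)!$ is bounded above by $25$ uniformly for $n \geq 1$, so the first contribution is at most $(25/K)\, C_0 K^n (2n)!$. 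For the convolution, the identity $(2i)!(2j)! = (2n)!/\binom{2n}{2i}$ together with the lower bound $\binom{2n}{2i} \geq \binom{2n}{2} = n(2n-1)$ valid for all $1 \leq i \leq n-1$ yields $\sum_{i+j=n,\, i,j \geq 1} (2i)!(2j)! \leq (n-1)\, (2n)!/(n(2n-1)) \leq (2n)!$. Collecting, $|c_n| \leq C_0 K^n (2n)! \cdot (25/K + C_0)$, so choosing for instance $C_0 = 1/2$ and $K \geq 50$ closes the induction.

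The obstacle I anticipate is essentially bookkeeping: the explicit prefactor relating $a_{2n}^\pm$ to $c_n$, the power $r^{-5n/2}$ arising from the uniform estimate on $|t|^{(1-5n)/2}$, and the repackaging from $(2m)!$ to $n!$ (with $n = 2m$) all contribute exponential-in-$n$ factors that must be absorbed cleanly into the final constant $\M$. None of this is conceptually difficult. The single essential ingredient is the binomial-coefficient inequality controlling the convolution sum, and it is precisely this inequality that ensures Gevrey-$1$ growth propagates correctly under the nonlinear recursion: without it, the quadratic self-interaction term could in principle degrade the growth rate.
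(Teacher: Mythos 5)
Your proposal is correct, and the outer reduction coincides with the paper's: both exploit the closed form $q_{2n} = a^\pm_{2n}\, t^{(1-5n)/2}$ from \autoref{250204162846} to trade the locally uniform bound in $t$ (on an annulus or small disc avoiding the origin) for a purely numerical growth estimate on the sequence $(c_n)$ of \eqref{250208130316}, the odd orders vanishing identically. Where you genuinely differ is in how that growth estimate is obtained. The paper's \autoref{250208122514} first majorises $c_n \leq \M_n (2n-1)!$ by the auxiliary sequence \eqref{250208130847}, absorbing the convolution via $a!\,b! \leq (a+b)!$, and then proves that this auxiliary sequence grows at most exponentially because its generating function satisfies a quadratic functional equation, invoking the Holomorphic Implicit Function Theorem to get a nonzero radius of convergence. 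You instead run a direct strong induction with the explicit ansatz $|c_n| \leq C_0 K^n (2n)!$, taming the convolution with the identity $(2i)!\,(2j)! = (2n)!/\binom{2n}{2i}$ and the unimodality bound $\binom{2n}{2i} \geq \binom{2n}{2} = n(2n-1)$ for $1 \leq i \leq n-1$; your numerics ($C_0 = 1/2$, $K \geq 50$) do close the induction, since $|2(5n-6)(5n-4)|\,(2n-2)!/(2n)! \leq 25$ for all $n \geq 1$. Your route is more elementary and self-contained, with fully explicit constants and no generating-function machinery; the paper's majorant argument gives the marginally sharper $(2n-1)!$ bound (immaterial for the Proposition) and is the more robust template when the recursion is not explicit enough to guess a clean ansatz. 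Two cosmetic slips, neither affecting validity: for $n=1$ the factor $2(5n-6)(5n-4)$ is negative, so your first inductive term should carry an absolute value, and the uniform bound on $|t|^{(1-5n)/2}$ should use $\max_{t \in U_0} |t|^{1/2}$ rather than $r^{1/2}$ for the first factor, a change only to the constant absorbed into $C$.
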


In order to prove this Proposition, we need to make a closer examination of the coefficients of $\hat{q}$ using \eqref{250205150639}.
In particular, the terms of the integer sequence $(c_n)_{n \geq 1}$ grow rather fast: the first few terms are
\begin{eqn}
	1
,~
	49							
,~
	9\,800						
,~
	4\,412\,401					
,~
	3\,530\,881\,200			
,~
	4\,414\,129\,955\,298		
,~
	7\,945\,866\,428\,953\,600
,~
	\ldots
\fullstop
\end{eqn}
We can give a more accurate appraisal of its growth as $n \to \infty$.

\begin{lemma}
\label{250208122514}
The integer sequence $(c_n)_{n \geq 1}$ grows like $(2n-1)!$; more precisely, there is a constant $\M > 0$ such that $c_n \leq \M^n (2n-1)!$ for all $n \geq 1$.
\end{lemma}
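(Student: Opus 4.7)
The plan is to prove this by induction on $n$ with the hypothesis $c_n \leq \M^n (2n-1)!$, where $\M > 0$ is a constant to be chosen sufficiently large at the end. The base case $n = 1$ reduces to $c_1 = 1 \leq \M$, which is essentially free. For the inductive step, one applies the recursion \eqref{250208130316} together with the inductive hypothesis on $c_{n-1}$ and on $c_1, \ldots, c_{n-1}$ in the convolution sum, and then divides the resulting inequality through by $\M^n (2n-1)!$. The task reduces to showing that
$$
\frac{\alpha_n}{\M} + \beta_n \leq 1,
\qquad\text{where}\qquad
\alpha_n \coleq \frac{2(5n-6)(5n-4)}{(2n-1)(2n-2)},
\quad
\beta_n \coleq \sum_{i=1}^{n-1} \frac{(2i-1)!(2(n-i)-1)!}{(2n-1)!}.
$$

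The linear term $\alpha_n$ captures the contribution from the first piece of the recursion and is easy to control: an elementary calculation shows $\alpha_n < 25/2$ for every $n \geq 2$ (the inequality reducing to $50n > 46$). The convolution term $\beta_n$ is the real obstacle, since at first sight each of its summands is a product of factorials competing against $(2n-1)!$. The plan is to rewrite each summand using the identity $(2i-1)!(2(n-i)-1)! = (2n-2)!/\binom{2n-2}{2i-1}$, so that
$$
\beta_n = \frac{1}{2n-1} \sum_{i=1}^{n-1} \frac{1}{\binom{2n-2}{2i-1}}.
$$
Since $\binom{2n-2}{2i-1} \geq 2n-2$ throughout the range of summation (with equality precisely at the extremes $i = 1$ and $i = n-1$), the inner sum is bounded by $(n-1)/(2n-2) = 1/2$, yielding $\beta_n \leq 1/(2(2n-1)) \leq 1/6$ for all $n \geq 2$.

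Combining these two estimates, the reduced inequality $\alpha_n/\M + \beta_n \leq 1$ follows from $25/(2\M) + 1/6 \leq 1$, which holds for $\M \geq 15$ (and for any larger value, if needed to accommodate the base case). This completes the induction. The technical heart of the argument is the convolution estimate, and the non-obvious step is the rewriting in terms of reciprocal binomial coefficients; once this reformulation is in place, the dominance of the two extreme summands makes the bound essentially immediate, and the overall estimate falls out without needing a more refined control of intermediate terms.
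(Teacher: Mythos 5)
Your proof is correct, and it takes a genuinely different route from the paper. The paper bounds the convolution term crudely via $(2i-1)!\,(2j-1)! \leq (2n-2)!$, which is too weak to close an induction with a single geometric constant; it therefore introduces an auxiliary majorant sequence $\M_n \coleq 13\M_{n-1} + \sum_{i+j=n} \M_i \M_j$ with $c_n \leq \M_n (2n-1)!$, and then proves $\M_n \leq \M^n$ by showing that the generating function $\sum_n \M_n z^n$ satisfies the quadratic equation $\hat f = z + 13 z \hat f + \hat f^2$ and invoking the Holomorphic Implicit Function Theorem to get a positive radius of convergence. You instead run a direct strong induction with the hypothesis $c_n \leq \M^n (2n-1)!$, and the step that makes this possible is your sharper treatment of the convolution: rewriting $(2i-1)!\,(2(n-i)-1)!/(2n-1)!$ as $\bigl((2n-1)\binom{2n-2}{2i-1}\bigr)^{-1}$ and using $\binom{2n-2}{2i-1} \geq 2n-2$ gives $\beta_n \leq \tfrac{1}{2(2n-1)} \leq \tfrac16$ for $n \geq 2$, uniformly bounded away from $1$, so the linear term can be absorbed by choosing $\M \geq 15$ (your computations $\alpha_n < 25/2$ via $50n > 46$ and $\tfrac{25}{2\M} + \tfrac16 \leq 1$ check out, as does the base case $c_1 = 1$). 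What each approach buys: yours is more elementary and self-contained, avoids generating functions entirely, and produces an explicit constant $\M = 15$; the paper's majorant-plus-generating-function method is less delicate in that it needs no sharp control of the convolution ratio, so it would transfer unchanged to recursions where that ratio cannot be pushed below $1$, at the cost of an inexplicit constant and an appeal to the implicit function theorem.
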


\begin{proof}
We demonstrate this in two steps.
First, we will inductively construct a sequence $(\M_n)_{n \geq 1}$ of positive real numbers such that $c_n \leq \M_n (2n-1)!$.
We will then show that there exists a constant $\M > 0$ such that $\M_n \leq \C^n$ for all $n$.

We can take $\M_1 \coleq 1$ because $c_1 = 1$.
Now, for any $n \geq 2$, let us assume that $\M_1, \ldots, \M_{n-1}$ have been constructed such that $c_i \leq \M_i (2i-1)!$ for all $i = 1, \ldots, n-1$.
Then using \eqref{250208130316}, we can estimate $c_n$ as follows:
\begin{eqns}
	c_n 
	&\leq 2 (5n-6)(5n-4) \M_{n-1} (2n-3)! + \sum_{i+j = n}^{i,j \neq 0} \M_i \M_j (2i-1)! (2j-1)!
\\	&\leq 13 (2n-2) (2n-1) \M_{n-1} (2n-3)! + \sum_{i+j = n}^{i,j \neq 0} \M_i \M_j (2n-2)!
\\	&\leq \left( 13 \M_{n-1} + \sum_{i+j = n}^{i,j \neq 0} \M_i \M_j \right) (2n-1)!
\fullstop
\end{eqns}
Here, in the second line, we used the general inequality $i! j! \leq (i+j+1)!$ and also simplified the bound by using $25/2 \leq 13$.
Thus, we define
\begin{eqntag}
\label{250208130847}
	\M_n \coleq 13 \M_{n-1} + \sum_{i+j = n}^{i,j \neq 0} \M_i \M_j
\fullstop
\end{eqntag}
In fact, this formula is valid for all $n \geq 1$ if we set $\M_0 \coleq 1/13$.
The first few terms of this sequence are
\begin{eqn}
	1,
\quad	14,
\quad	210,
\quad	3{,}346,
\quad	56{,}070,
\quad	978{,}838,
\quad	17{,}657{,}850,
\quad	327{,}020{,}330,
\quad\ldots
\fullstop
\end{eqn}

Now we show that there exists a constant $\M > 0$ such that $\M_n \leq \M^n$ for all $n$.
To this end, consider the following formal power series in an abstract variable $z$:
\begin{eqn}
	\hat{f} (z) \coleq \sum_{n=1}^\infty \M_n z^n \in \CC \bbrac{z}
\fullstop
\end{eqn}
Note that $\hat{f} (0) = 0$.
We will show that $\hat{f} (z)$ also has a non-zero radius of convergence.
The key observation is that $\hat{f} (z)$ satisfies the following algebraic equation:
\begin{eqn}
	\hat{f} (z) = z + 13 z \hat{f}(z) + \hat{f} (z)^2
\fullstop
\end{eqn}
This can be easily verified by expanding both sides in powers of $z$ and using the defining identity \eqref{250208130847} for the coefficients $\M_n$.
Now, examine the following function of two complex variables $(z,u)$:
\begin{eqn}
	\F (z,u) \coleq -u + z + 13 z u + u^2
\fullstop
\end{eqn}
It has the following properties:
\begin{eqn}
	\F (0,0) = 0
\qtext{and}
	\frac{\del \F}{\del u} \bigg|_{(z,u) = (0,0)} = -1 \neq 0
\fullstop
\end{eqn}
By the Holomorphic Implicit Function Theorem, there exists a unique holomorphic function $f(z)$ near $z = 0$ such that $f(0) = 0$ and $\F \big( z, f(z) \big) = 0$.
It follows that $\hat{f} (z)$ must be its necessarily convergent Taylor series expansion at $z = 0$, and so its coefficients $(\M_n)_{n \geq 1}$ must grow at most exponentially; i.e., there is a constant $\M > 0$ such that $\M_n \leq \M^n$ for all $n \geq 1$.
\end{proof}

\begin{proof}[Proof of \autoref{250224202049}.]
Recall that all the odd-order coefficients of $\hat{q}$ are zero, so we just need to prove that every $t_0$ has a neighbourhood $U_0$ such that there are constants $\C, \M > 0$ providing the bounds $|q_{2n} (t)| \leq \C \M^{2n} (2n)!$ for all $n \geq 1$ and all $t \in U_0$.
The proof exploits the recursive formula \eqref{250205150639} for the coefficients of $\hat{q}$, and in particular the growth estimate on the terms of the sequence $(c_n)$ established in \autoref{250208122514}.

Fix any nonzero $t_0$, and two radii $r,\R > 0$ such that $r < |t_0| < \R$.
Then, for all $t$ in the annulus $U_0 \coleq \set{ r < |t| < \R }$, we have $|q_0 (t)| = |\kappa| |t|^{1/2} \leq \sqrt{\R / 6}$.
Moreover, by \autoref{250208122514}, there is a constant $\tilde{\M} > 0$ such that for all $n \geq 1$,
\begin{eqn}
	\big| q_{2n} (t) \big|
		= |t|^{1/2} |a^\pm_{2n}| |t|^{-5n/2}
		\leq \frac{\sqrt{6^{n-1} \R} }{3^n 2^{5n-1} \sqrt{r^{5n}}} \tilde{\M}^n (2n-1)!
		\leq \C \M^{2n} (2n)!
\end{eqn}
for some $\C,\M > 0$.
\end{proof}

\section{Resurgence of the Deformed Painlevé I Equation}
\label{250313191046}

This section contains the main results of this paper.
We describe the full resurgent structure of the formal power series solution $\hat{q} (t, \hbar)$ of the deformed Painlevé I equation constructed in \autoref{250204162846}.
Our Main Theorem is stated in \autoref{250313153618}.
Then in \autoref{250304162404} we state a range of consequences of resurgence about the Borel summability of $\hat{q}$.
There we also deduce an Existence and Uniqueness Theorem of special solutions of the deformed Painlevé I equation which we term the \dfn{deformed tritronquée solutions} (\autoref{250304154226}).
Finally, in \autoref{250313153813} we describe the Stokes phenomenon associated with the Borel resummation of $\hat{q}$.

\subsection{The Resurgent Structure}
\label{250313153618}

The main result of this paper can be formulated concisely as follows.

\begin{theorem}
\label{250304131355}
The formal solution $\hat{q}$ of the deformed Painlevé I equation is resurgent.
\end{theorem}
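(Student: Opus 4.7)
The strategy is to convert the problem into a fixed-point equation for the Borel transform, identify the natural algebraic surface on which the answer extends, and then close the argument with a contraction mapping in a weighted space of holomorphic functions.

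First, I would recast \eqref{250204124541} as the first-order Hamiltonian system $\hbar \dot{q} = p$, $\hbar \dot{p} = 6 q^2 + t$, substitute the formal ansatz, and subtract the leading-order contribution $(q_0, p_0)$ determined by $6 q_0^2 = -t$ and $p_0 = 0$. The formal Borel transform in $\hbar$ applied to the reduced system produces an integro-differential fixed-point equation for $\hat{\omega}$ of the schematic shape
\begin{equation*}
\hat{\omega} (t, \xi) = K (t, \xi) + \mathcal{L}_t [ \hat{\omega} ] + \tfrac{1}{2} \hat{\omega} \ast_\xi \hat{\omega},
\end{equation*}
where $\mathcal{L}_t$ is a linear $\xi$-integral operator arising from the $12 q_0 \hat{q}_{\geq 1}$ contribution together with the two derivatives on the left-hand side, and the convolution nonlinearity encodes the $6 q^2$ term. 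The factorial-type bound of \autoref{250224202049} immediately gives the convergence of $\hat{\omega}$ in a neighbourhood of $\xi = 0$ for every nonzero $t$, providing the required Cauchy data.

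Second, I would construct the geometric space carrying the global analytic continuation. The leading-order Hamiltonian produces a canonical central-charge coordinate $Z = \int p_0 \d q_0$, which, after passing to the double cover $\tau^2 = -t/6$ that trivialises the square-root ambiguity, naturally lives on an algebraic fibration over $\CC_\tau$. Explicit integration of $Z$ along the level sets of the instanton action shows that the total space is cut out by a quintic relation, while the symmetry $q_0 \mapsto -q_0$ of \eqref{250312055354} descends to the involution $\sigma$; together these identify the Borel space with the Fermat quintic quotient described in \autoref{250304131355}. The ten ramification points $S_\tau$ of the fivefold map $Z_\tau : M_\tau \to \CC_\xi$ would be located by computing where $\d Z$ degenerates along each fibre, and their images in the $\xi$-plane by direct integration, producing the two opposite Borel singular values $\xi_\pm$.

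Third, I would lift the local fixed-point equation to a global equation on the universal cover $\nu : \tilde{M} \to M^\ast = M \smallsetminus S$. Using the fundamental groupoid $\Pi_1 (M^\ast)$ machinery of \cite{240622121512}, the convolution $\hat{\omega} \ast_\xi \hat{\omega}$ is globalised to integration along concatenated paths in $M^\ast$, turning the reduced system into a well-posed integral equation for a holomorphic function $\omega$ on $\tilde{M}$. On a Banach space of holomorphic functions on exhausting compact regions of $\tilde{M}$ equipped with exponential weights in $|Z|$, I would verify that the global right-hand side is a contraction on sufficiently small balls, iterate to produce a unique global $\omega$, and glue across regions to obtain the claimed exponential bounds. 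Uniqueness then forces $\omega$ to agree with $\hat{\omega}$ near the identity section, completing the endless continuation.

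The main obstacle I anticipate is the control of the quadratic nonlinearity after continuation: although the convolution is harmless near the origin of $\xi$, its analytic continuation along an arbitrary path in $M^\ast$ splits the path into two legs that can independently wind around distinct points of $S$, so one must carefully keep track of the branches of the continued $\omega \ast \omega$ to ensure the iterate remains in the weighted Banach space. Handling this systematically is precisely where the groupoid formalism of \cite{240622121512} is indispensable, and is the source of the genuinely new difficulties beyond the linear resurgence setting.
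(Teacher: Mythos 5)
Your overall architecture --- Borel transform, explicit identification of the quintic Borel geometry, globalisation via a fundamental-groupoid integral equation, and a contraction in an exponentially weighted Banach space --- is the same as the paper's, and your steps 1 and 2 and the weighted-norm fixed point in step 3 are sound in outline. The gap is in the passage from the local fixed-point equation to a transport problem that can actually be integrated along paths. The Borel transform of the reduced equation is the second-order integro-differential equation $(\del^2_t - 12 q_0 \del^2_\xi)\mathit{\Omega} = 6\del_\xi(\mathit{\Omega}\ast\mathit{\Omega})$, a variable-coefficient wave operator; your operator $\mathcal{L}_t$ silently absorbs its inverse, and your step 3 then treats the globalised equation as if it lived on the single groupoid $\Pi_1(M^\ast)$. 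But a single path records only one displacement, whereas inverting the wave operator requires integrating along \emph{both} characteristic families simultaneously. The essential move missing from your proposal is to pass to the fourfold cover $z^4 = -24t$, diagonalise the classical Jacobian, and split the problem into two coupled first-order equations along the vector fields $\pm\V - \del_\xi$; the Borel space is then built not from $\Pi_1(M^\ast)$ but as the fibre product of two copies of $\Pi_1(\CC_z)$ over the anchor maps $\varrho_\pm = (\rm{s}, \pm\Z)$, i.e.\ from \emph{pairs} of paths with common source and opposite central charge. Without this factorisation the cross terms (the value of one component along the other component's characteristic) cannot even be written down, so ``turning the reduced system into a well-posed integral equation on $\tilde{M}$'' does not go through as stated.

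Relatedly, the obstacle you flag at the end --- branch-tracking for the continued convolution --- is real but is not the main new difficulty; it is resolved automatically once one works on the fibre product, where synchronised parameterisations give canonical source- and target-truncations of a pair of paths and hence a well-defined convolution and integral operator. The genuinely new points are (i) the construction of the fibre-product space itself, which is where the Fermat quintic and the involution $\sigma$ actually come from ($\sigma$ is the deck transformation $z \mapsto -z$ swapping the two eigenvalues of the Jacobian, not the choice of root $q_0$), and (ii) the degeneration of the weighted estimates near the turning point, which the paper handles by exhausting $\CC^\ast_z$ by excised planes $\CC_z \smallsetminus \bar{\DD}$ with constants $\K_n \to \infty$, rather than by compact exhaustion of $\tilde{M}$; a compact exhaustion would not by itself yield the exponential bounds at infinity needed for Borel summability.
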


This Theorem packs a lot of information, so we break it down into three parts, presented as separate propositions that clarify and elaborate on the defining aspects of resurgence of the formal solution $\hat{q}$.
First, the convergence of the Borel transform of $\hat{q}$ is specified in \autoref{250312151012} in \autoref{250304162005}.
Second, the property of endless analytic continuation of the Borel transform is described in \autoref{250210171316} in \autoref{250304151341}.
Third, the property of exponential type of the Borel transform is expressed in \autoref{250313152819} in \autoref{250313152814}.
Thus, to prove \autoref{250304131355}, we need to prove Propositions \ref{250312151012}, \ref{250210171316}, and \ref{250313152819}, which will be done in \autoref{250304161635}.

\subsubsection{Convergence of the Borel transform.}
\label{250304162005}
First, we describe the convergence of the Borel transform $\hat{\omega}$.
Recall that the \dfn{Borel transform} of a formal power series in $\hbar$ is defined as the power series in an auxiliary variable $\xi$ (sometimes called the \dfn{Borel variable}) defined as follows:
\begin{eqntag}
\label{250208115459}
	\hat{\Borel} : \hat{f} (\hbar) = \sum_{n=0}^\infty a_n \hbar^n
	~~\mapstoo~~
	\hat{\Borel} \big[ \, \hat{f} \, \big] (\xi)
		\coleq \sum_{n=0}^\infty \tfrac{1}{n!} a_{n+1} \xi^n
\fullstop
\end{eqntag}
It is clear that a power series is of \hyperref[250312144829]{factorial type} if and only if its Borel transform is convergent.
In \autoref{250313152915}, we showed that the formal power series solution $\hat{q} (t, \hbar)$ of the deformed Painlevé I equation is of factorial type.
Therefore, we have the following result that follows immediately from \autoref{250224202049}.

\begin{proposition}[Convergence of the Borel transform]
\label{250312151012}
The Borel transform
\begin{eqntag}
\label{250207201408}
	\hat{\omega} (t,\xi) 
		\coleq \Borel \big[ \, \hat{q} \, \big] (t, \xi)
		= \sum_{n = 0}^\infty \frac{q_{n+1} (t)}{(n+1)!} \xi^n
\end{eqntag}
of the formal solution $\hat{q} (t, \hbar)$ of the deformed Painlevé I equation is a convergent power series in $\xi$, locally uniformly for all nonzero $t$.
\end{proposition}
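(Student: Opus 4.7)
The plan is to derive this proposition directly from the factorial-type bound in \autoref{250224202049}, applying a term-by-term estimate to the defining series \eqref{250207201408}. Since the hard analytic work (the delicate analysis of the integer sequence $(c_n)$ via an algebraic functional equation and the Holomorphic Implicit Function Theorem) is already absorbed in \autoref{250224202049}, what remains is essentially a bookkeeping exercise with geometric series.

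Concretely, I would fix any nonzero $t_0 \in \CC_t$ and invoke \autoref{250224202049} to produce a neighbourhood $U_0$ of $t_0$ and real constants $\C, \M > 0$ satisfying $|q_n(t)| \leq \C \M^n n!$ for every $n \geq 0$ and every $t \in U_0$. Substituting into the coefficients of \eqref{250207201408} then gives
\[
\left| \frac{q_{n+1}(t)}{(n+1)!} \, \xi^n \right| \;\leq\; \frac{\C \M^{n+1} (n+1)!}{(n+1)!} \, |\xi|^n \;=\; \C \M \bigl(\M |\xi|\bigr)^n
\qquad \forall t \in U_0, \; \forall n \geq 0.
\]
The right-hand side is summable uniformly on compact subsets of the open disc $\set{|\xi| < 1/\M}$, so the Weierstrass $M$-test shows that the series defining $\hat{\omega}(t,\xi)$ converges absolutely and uniformly there, with radius of convergence at least $1/\M$ uniformly in $t \in U_0$.

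To finish, I would observe that each coefficient $q_{n+1}(t)$ is holomorphic on $U_0$, so the uniform limit $\hat{\omega}(t,\xi)$ is jointly holomorphic in $(t,\xi)$ on $U_0 \times \set{|\xi| < 1/\M}$. Since $t_0$ was an arbitrary nonzero point, this establishes the claimed local uniform convergence of $\hat{\omega}$ in $\xi$ for all nonzero $t$. There is no genuine obstacle at this stage: the uniformity of $\C$ and $\M$ on $U_0$ furnished by \autoref{250224202049} is precisely what is needed to make the radius of convergence locally uniform, so the proposition is really just a restatement of the factorial-type bound in the Borel variable $\xi$.
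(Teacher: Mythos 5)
Your proposal is correct and follows essentially the same route as the paper: the paper derives \autoref{250312151012} immediately from the factorial-type bound of \autoref{250224202049}, noting that a series is of factorial type if and only if its Borel transform converges, which is exactly the term-by-term estimate and Weierstrass argument you spell out.
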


More concretely, this Proposition asserts that every nonzero point $t_0 \in \CC_t$ has a neighbourhood $U_0 \subset \CC_t$ such that any branch of $\hat{\omega} (t,\xi)$ in $U_0$ converges uniformly for all $t \in U_0$.
In symbols, this branch defines an element $\hat{\omega} \in \cal{O} (U_0) \cbrac{\xi}$.
In more practical terms, this means that there is a neighbourhood $\Xi_0 \subset \CC_\xi$ of the origin such that the chosen branch $\hat{\omega} (t, \xi)$ is a holomorphic function on $U_0 \times \Xi_0$.
More globally, the multivalued nature of $\hat{\omega} (t, \xi)$ can be resolved by passing to the square-root cover $\CC_\tau \to \CC_t$ given by the relation $\tau^2 = -t/6$, as we did in \eqref{250313115427}.
Thus, we can treat the Borel transform \eqref{250207201408} as a univalued function of $\tau$.
Abusing notation, we may write $\hat{\omega} (\tau, \xi) \coleq \hat{\omega} (t,\xi)$ where $\tau^2 = -t/6$, and so we obtain the following consequence of \autoref{250312151012}.

\begin{corollary}
\label{250313115511}
There is an open neighbourhood $W \subset \CC_\tau^\ast \times \CC_\xi$ of the subset $\set{\xi = 0}$ such that the Borel transform $\hat{\omega} (\tau,\hbar)$ of the formal solution $\hat{q} (\tau, \hbar)$ of the deformed Painlevé I equation defines a holomorphic function on $W$; i.e., $\hat{\omega} \in \cal{O} (W)$.
\end{corollary}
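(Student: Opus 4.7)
The plan is to deduce this corollary directly from \autoref{250312151012} by passing to the square-root cover $\CC_\tau \to \CC_t$ given by $\tau^2 = -t/6$ already introduced in \eqref{250313115427}. Under this substitution, the two multivalued branches $\hat{q}_\pm$ merge into the single formal series displayed in \eqref{250206201929}, whose coefficients are meromorphic on $\CC_\tau$ with poles only at the origin. Consequently, the coefficients of the Borel transform $\hat{\omega}(\tau,\xi)$ of \eqref{250207201408} are \emph{univalued} holomorphic functions on the punctured plane $\CC_\tau^\ast$, and the only remaining task is to upgrade convergence in $\xi$ to joint holomorphy in $(\tau,\xi)$ on some open neighbourhood of $\set{\xi = 0}$.

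First I would transport the factorial-type estimate of \autoref{250224202049} from the $t$-variable to the $\tau$-variable. For any $\tau_0 \in \CC_\tau^\ast$, the map $\tau \mapsto -6\tau^2$ is a local biholomorphism on a small disc around $\tau_0$, so the uniform bound $|q_n(t)| \leq \C \M^n n!$ furnished by \autoref{250224202049} near $t_0 \coleq -6\tau_0^2$ pulls back to a bound of the same shape on some open neighbourhood $U_{\tau_0} \subset \CC_\tau^\ast$ of $\tau_0$, with possibly adjusted constants $\C', \M' > 0$. This bound forces the series \eqref{250207201408} to converge normally on the bidisc-type domain $U_{\tau_0} \times \Xi_{\tau_0}$, where $\Xi_{\tau_0} \coleq \set{|\xi| < 1/(2\M')}$, and since each partial sum is a polynomial in $\xi$ with coefficients in $\cal{O}(U_{\tau_0})$, the Weierstrass theorem on uniform limits of holomorphic functions delivers a holomorphic limit on $U_{\tau_0} \times \Xi_{\tau_0}$.

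Finally I would assemble the global domain by taking
\begin{eqn}
	W \coleq \bigcup_{\tau_0 \in \CC_\tau^\ast} U_{\tau_0} \times \Xi_{\tau_0}
\fullstop
\end{eqn}
This is manifestly open in $\CC_\tau^\ast \times \CC_\xi$ and contains the slice $\set{\xi = 0}$. The locally defined holomorphic functions glue consistently across overlaps because they are all restrictions of one and the same power series \eqref{250207201408} whose coefficients are globally defined elements of $\cal{O}(\CC_\tau^\ast)$, independent of any local choices. I do not anticipate a genuine obstacle here: the hard analytic input—the factorial bound—has already been established in \autoref{250224202049}, and the step from \autoref{250312151012} to this corollary is essentially bookkeeping around the resolution of the square-root ambiguity on $\CC_\tau$.
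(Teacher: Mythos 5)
Your argument is correct and follows essentially the same route as the paper: the paper treats this corollary as an immediate consequence of \autoref{250312151012}, using the passage to the cover $\tau^2 = -t/6$ (so that the coefficients in \eqref{250206201929} become univalued and holomorphic on $\CC_\tau^\ast$) together with the locally uniform factorial bound of \autoref{250224202049} to get joint holomorphy on small products $U_{\tau_0}\times\Xi_{\tau_0}$, glued into a neighbourhood $W$ of $\set{\xi=0}$. Your write-up merely makes this bookkeeping explicit, which is exactly what the paper intends.
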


\subsubsection{Endless analytic continuation.}
\label{250304151341}
Second, we describe the property of endless analytic continuation of the Borel transform $\hat{\omega}$.
The next proposition presents the full resurgent-geometric structure.
In essence, it says that for every nonzero $t_0$, the convergent $\xi$-power series $\hat{\omega} (t_0, \xi)$ admits maximal analytic continuation to the universal cover of a Riemann surface punctured in ten points which is a fivefold covering of the Borel $\xi$-plane.
Moreover, these analytic continuations for different $t_0$ patch together to a global holomorphic function $\omega$ on a two-dimensional holomorphic manifold.

\begin{proposition}[endless analytic continuation]
\label{250210171316}
The Borel transform $\hat{\omega}$ admits endless analytic continuation $\omega$ to a two-dimensional holomorphic manifold.
More specifically:
\begin{enumerate}
\item There is a complex algebraic surface $M$ (called the \dfn{Borel space}) and an algebraic curve $S \subset M$ (called the locus of \dfn{Borel singularities}) such that the Borel transform $\hat{\omega}$ naturally extends to a global multivalued holomorphic function $\omega$ on the complement $M^\ast \coleq M \smallsetminus S$ contained in the smooth locus of $M$.
Namely, there is a two-dimensional holomorphic manifold $\tilde{M}$ (called the \dfn{Borel covering space}) with a holomorphic surjective submersion $\nu : \tilde{M} \to M^\ast$ such that $\omega$ is a global holomorphic function on $\tilde{M}$.
\item The surface $M$ is isomorphic to the quotient of the Fermat quintic surface in $\CC^3$ by the involution automorphism $\sigma : (u,v,w) \mapsto - (v,u,w)$:
\begin{eqntag}
\label{250313154124}
	M \cong \set{ (u,v,w) \in \CC^3 ~\big|~ u^5 + v^5 + w^5 = 0 } / \sigma
\fullstop
\end{eqntag}
This surface $M$ has only one singular point which corresponds to the origin in $\CC^3$, and $S$ is isomorphic to the quotient of the union of intersections of the Fermat quintic with the planes $u = 0$ and $v = 0$.
See \autoref{250312110535}.
\newpage
\item There is an algebraic surjective submersion $\rm{s} : M \to \CC_\tau$ (called the \dfn{source map}), such that every nonzero fibre $M_{\tau \neq 0} \coleq \rm{s}^\inv (\tau)$ (called the \dfn{Borel surface} at $\tau$) is a smooth algebraic curve isomorphic to the quotient of a plane quintic curve by the involution $\sigma$.
The intersection $S_\tau \coleq S \cap M_\tau$ consists of exactly ten points, and the submersion $\nu : \tilde{M} \to M^\ast$ restricts to the fibre $\tilde{M}_\tau \coleq (\rm{s} \circ \nu)^\inv (\tau)$ to define the universal covering map
\begin{eqntag}
\label{250313154127}
	\nu : \tilde{M}_\tau = \widetilde{M_\tau \smallsetminus S_\tau} \to M_\tau \smallsetminus S_\tau
\fullstop
\end{eqntag}
\item There is an algebraic surjective map $\Z : M \to \CC_\xi$ (called the \dfn{central charge}) to the Borel plane, which is a submersion away from $S$, and whose restriction $\Z_\tau : M_\tau \to \CC_\xi$ to any nonzero Borel surface $M_{\tau \neq 0}$ is a fivefold ramified covering map with ramification locus $S_\tau$ consisting of all ten Borel singularities in $M_\tau$, each with ramification order $5$, and distributed equally over two branch points $\xi_\pm = \xi_\pm (\tau) \in \CC_\xi$ (called the \dfn{Borel singular values}) satisfying $\xi_+ + \xi_- = 0$.
See \autoref{250304114549}.
\item For each nonzero $\tau$, the pair $(M_\tau, \Z_\tau)$ is an endless Riemann surface of algebraic type in the sense of \cite[Definition 1.6 and 1.7]{240622121512}, and $(\tilde{M}_\tau, \Z_\tau)$ is an endless Riemann surface of log-algebraic type.
\end{enumerate}
\end{proposition}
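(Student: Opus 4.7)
The plan is to reduce the proposition to a geometric analysis of an Initial Value Problem satisfied by the Borel transform $\hat{\omega}$, together with the construction of explicit algebraic models for the ambient surfaces. First, following the setup of \autoref{250218190702}, I would rewrite the deformed Painlevé I equation as the first-order Hamiltonian system for $(q, p = \hbar \del_t q)$, pass to the uniformising variable $\tau$ with $\tau^2 = -t/6$, and perform a linearising change of variables adapted to the leading root $q_0$. Borel-transforming in $\hbar$ converts this nonlinear system into a linear-plus-convolution equation for $\omega(\tau,\xi)$, whose linear part is a first-order differential operator in $(\tau,\xi)$. The characteristic variety of this operator determines two characteristic curves $\xi = \pm \Z(\tau)$, and the scaling symmetry inherited from the rescaling \eqref{250205090153} (which trades $\hbar^{4/5}$ with $t$) forces $\Z(\tau)$ to be a monomial of degree five in $\tau$. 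This already explains the source of the exponent $5$ and of the two opposite branch points $\xi_\pm$ in part (4).

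Next, I would construct the Borel space $M$ as the maximal algebraic surface on which the characteristic flow extends. Locally over a generic $\tau$, the fibre is obtained by adjoining a fifth-root coordinate centred on each of the two characteristic values $\pm \Z(\tau)$, so the natural global family over $\CC_\tau$ is cut out by an explicit system of polynomial equations in $\CC^3$. After a linear change of coordinates, these equations reduce to the Fermat quintic relation $u^5 + v^5 + w^5 = 0$, and the involution $\sigma : (u,v,w) \mapsto -(v,u,w)$ encodes precisely the square-root ambiguity $\tau \mapsto -\tau$ resolved by the double cover \eqref{250313115427}. From this algebraic model, parts (2), (3), and (4) reduce to direct verifications on the Fermat quintic: the source map $\rm{s}$ is read off the $w$-coordinate, the central charge $\Z$ is the $\sigma$-invariant symmetric function of $(u,v)$, fibrewise smoothness and the fivefold covering structure follow from the Jacobian criterion, and the locus $S$ is exhibited as the intersection of the quintic with $\set{uv = 0}$, consisting of ten points per generic fibre that pairwise collapse onto $\xi_\pm$ under $\Z_\tau$ with ramification order $5$. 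The unique singular point of $M$ is the fixed point of $\sigma$ at the origin, which lies over $\tau = 0$ and hence outside $M^\ast$.

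The Borel covering space $\tilde{M}$ and the global extension $\omega$ are then built in two stages. For each nonzero $\tau$, one sets $\tilde{M}_\tau \coleq \widetilde{M_\tau \smallsetminus S_\tau}$ and pulls back the Initial Value Problem along $\nu$; assembling these fibres into a family over $\CC_\tau^\ast$ using the holomorphic fundamental groupoid machinery introduced in \autoref{250315131538} yields the two-dimensional manifold $\tilde{M}$ together with the submersion $\nu : \tilde{M} \to M^\ast$ of part (1). Convergence of the germ $\hat{\omega}$ near the zero section is already provided by \autoref{250312151012}, which furnishes the initial condition. To establish the existence of a global holomorphic extension $\omega$ on all of $\tilde{M}$, I would reformulate the pulled-back Initial Value Problem as a fixed-point equation $\omega = T[\omega]$, where $T$ is an integral operator built from convolution along the characteristics of the linear part.

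The hardest step, to be carried out in \autoref{250304161635} via a Contraction Mapping Principle, is to exhibit a Banach space of holomorphic sections on $\tilde{M}$, with appropriate exponential growth weights, in which $T$ is a contraction. The delicate points are (a) ensuring that the characteristic convolution stays within the space of holomorphic sections when crossing between sheets of $\tilde{M}_\tau$, which relies on the algebraic structure of $\Z_\tau$ as a ramified cover and on controlling how paths in $\tilde{M}_\tau$ lift the characteristic flow, and (b) propagating uniform bounds on $\omega$ over arbitrary compact subsets of $\tilde{M}$, so that the fixed point extends endlessly rather than merely locally. Once these estimates are established, the algebraicity of $M$ and $\Z$ together with the global existence of $\omega$ immediately imply assertion (5), namely that $(M_\tau, \Z_\tau)$ is an endless Riemann surface of algebraic type and $(\tilde{M}_\tau, \Z_\tau)$ is of log-algebraic type.
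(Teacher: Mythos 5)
Your overall architecture (pass to the first-order system, Borel transform, build an algebraic surface fibred over the base, take fibrewise universal covers, and produce the global extension by a weighted contraction argument) does match the paper's, but two of your steps contain genuine errors or gaps. First, the bookkeeping of the covers is off by one level. The Borel singular values scale as $\xi_\pm \propto t_0^{5/4}$ (see \eqref{250314214209}), i.e.\ as $\tau^{5/2}$, so the central charge is a degree-five monomial not in $\tau$ but in the variable $z$ with $z^2 = 12\tau$ (equivalently $z^4 = -24t$, cf.\ \eqref{250206214139}), which resolves the sign of the eigenvalues $\pm\sqrt{12\tau}$ of the classical Jacobian. The polynomial model -- the Fermat quintic -- therefore lives over the $z$-plane, and the involution $\sigma$ is induced by the negation $z \mapsto -z$, not by the ambiguity $\tau \mapsto -\tau$ of the double cover \eqref{250313115427}; it is the $\sigma$-invariance $\hat{\omega}(-z,\xi) = \hat{\omega}(z,\xi)$ that lets $\omega$ descend, and the quotient by $\sigma$ is exactly what produces $M$ fibred over $\CC_\tau$ with fibres that are quotients of quintic curves, as items (2)--(3) assert. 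Your version either places the unquotiented quintic over $\CC_\tau$ (incompatible with $\xi_\pm \propto \tau^{5/2}$ and with the statement itself) or, after quotienting by $\tau \mapsto -\tau$, yields a surface fibred over $\CC_t$ rather than $\CC_\tau$.

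Second, the analytic core is underspecified at precisely the points where the paper does real work. After diagonalisation one does not have a single first-order operator with two characteristics but a coupled pair of equations with opposite characteristics; continuing the solution along a ray in the $\xi$-plane forces one to track simultaneously two distinct paths in the $z$-plane with common source and opposite central charges, and this is exactly what the fibre product $B = \Pi_1(\CC_z) \times_{\varrho_-\,\varrho_+} \Pi_1(\CC_z)$ and its two target-truncations are for -- the Fermat quintic is not a model found a posteriori, it literally \emph{is} this fibre product. Your sketch of the operator $T$ gives no substitute for this structure, and your worry (a) about convolution ``crossing sheets'' is not the actual difficulty: convolution along source-truncations is automatically holomorphic on the covering space. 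Finally, the contraction estimate cannot be run on a single weighted Banach space over all of $\tilde{M}$, because the coefficients $3z^{-2}$, $-3z^{-5}$, $-6z^{-8}$ blow up at the turning point $z = 0$; the paper must excise discs $|z| \leq \R$, prove contraction on each excised covering space with an excision-dependent weight $\K$, and then glue the fixed points along an exhaustion $\R_n \to 0$ using uniqueness. Your appeal to ``uniform bounds over arbitrary compact subsets'' neither identifies this obstruction nor supplies the exhaustion argument that overcomes it.
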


We will construct these geometric structures in \autoref{250304152932} and prove \autoref{250210171316} in \autoref{250304161635}.
Specifically, it will follow as a consequence of \autoref{250314125007}.
We may sometimes refer to the holomorphic function $\omega \in \cal{O} (\tilde{M})$ as the \dfn{global Borel transform} if we find it necessary to distinguish it from the \textit{a priori} only locally defined holomorphic function $\hat{\omega}$.
The property of endless analytic continuation of $\hat{\omega}$ may be stated in the following more elementary terms if we sacrifice the much more refined geometric information.

\begin{corollary}
\label{250304131641}
Fix any $t_0 \in \CC^\ast_t$, select a square-root branch $\sqrt{t}$ near $t_0$, and let $\hat{q} (t,\hbar)$ be the corresponding branch of the formal power series solution of the deformed Painlevé I equation.
Put 
\begin{eqntag}
\label{250314214209}
	\xi_\pm \coleq \pm \tfrac{1}{30} e^{\pi i/4} 24^{5/4} t_0^{5/4}
\fullstop
\end{eqntag}
Then the Borel transform $\hat{\omega} (t_0, \xi) \in \CC \cbrac{\xi}$ of $\hat{q} (t_0, \hbar)$ extends to a holomorphic function $\omega (t_0, \xi)$ on the universal cover of the twice punctured $\xi$-plane $\CC_\xi \smallsetminus \set{ \xi_+, \xi_-}$; in symbols, $\omega (t_0, \xi) \in \cal{O} \big( \widetilde{ \CC_\xi \smallsetminus \set{ \xi_\pm} } \big)$.
Moreover, since the universal cover of a twice-punctured complex plane is (noncanonically) isomorphic to the upper halfplane $\HH$, we can (noncanonically) view $\omega$ as a holomorphic vector-valued function on $\HH$; i.e., $\omega \in \cal{O} (\HH)$.
\end{corollary}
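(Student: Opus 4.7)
The plan is to deduce the Corollary directly from \autoref{250210171316} by restricting the global resurgent structure to the Borel surface over a single point $\tau_0 \in \CC_\tau$. The chosen branch of $\sqrt{t}$ near $t_0$ singles out a unique $\tau_0$ satisfying $\tau_0^2 = -t_0/6$, and under the cover $\CC_\tau \to \CC_t$ of \eqref{250313115427} the convergent series $\hat{\omega}(t_0,\xi)$ corresponds to the convergent series $\hat{\omega}(\tau_0,\xi)$ provided by \autoref{250313115511}. Applying items (3) and (4) of \autoref{250210171316} at $\tau_0$, I obtain the smooth Borel surface $M_{\tau_0}$ with ten marked Borel singularities $S_{\tau_0}$, the universal covering $\nu : \tilde{M}_{\tau_0} \to M_{\tau_0} \smallsetminus S_{\tau_0}$, and the fivefold ramified covering $\Z_{\tau_0} : M_{\tau_0} \to \CC_\xi$ whose only branch points are the two Borel singular values $\xi_\pm = \xi_\pm(\tau_0)$.

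The key observation is that the composition $\Z_{\tau_0} \circ \nu : \tilde{M}_{\tau_0} \to \CC_\xi \smallsetminus \set{\xi_+,\xi_-}$ is a holomorphic covering map. Indeed, $\nu$ is a universal cover by construction, and the restriction of $\Z_{\tau_0}$ to $M_{\tau_0} \smallsetminus S_{\tau_0}$ is an unramified fivefold holomorphic covering onto $\CC_\xi \smallsetminus \set{\xi_+,\xi_-}$ because the ramification locus of $\Z_{\tau_0}$ coincides with $S_{\tau_0}$ and maps exactly to $\set{\xi_+,\xi_-}$. Since $\tilde{M}_{\tau_0}$ is simply connected, the composition realises it as \emph{the} universal cover of the twice-punctured $\xi$-plane. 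Restricting the global Borel transform $\omega \in \cal{O}(\tilde{M})$ from \autoref{250210171316}(1) to the fibre $\tilde{M}_{\tau_0}$ and identifying $\tilde{M}_{\tau_0}$ with $\widetilde{\CC_\xi \smallsetminus \set{\xi_+,\xi_-}}$ through this covering produces a holomorphic function extending $\hat{\omega}(t_0,\xi)$ from a neighbourhood of $\xi = 0$, which is exactly the asserted endless analytic continuation.

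For the final claim, the twice-punctured complex plane $\CC_\xi \smallsetminus \set{\xi_+,\xi_-}$ becomes a thrice-punctured Riemann sphere after one-point compactification, hence carries a hyperbolic structure by Little Picard, and by the Uniformisation Theorem its universal cover is biholomorphic to $\HH$. Any such biholomorphism then recasts $\omega$ as a holomorphic function on $\HH$.

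The main obstacle is not in the logic above, which is essentially book-keeping on top of \autoref{250210171316}, but in verifying the explicit formula $\xi_\pm = \pm \tfrac{1}{30}e^{\pi i/4} 24^{5/4} t_0^{5/4}$ for the Borel singular values. This amounts to tracking constants through the definition of the central charge map on the Fermat-quintic model \eqref{250313154124} and the square-root cover $\tau^2 = -t/6$, together with careful bookkeeping of branches of the fractional power $t^{5/4}$. I expect to perform this calculation only after the explicit parametrisation of $M$ and $\Z$ in \autoref{250304152932} has been set up, at which point it reduces to direct substitution.
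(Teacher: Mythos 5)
Your argument is correct and is essentially the paper's own route: the corollary is presented there as an immediate consequence of \autoref{250210171316}, obtained exactly as you do by restricting the global Borel transform $\omega$ to the fibre $\tilde{M}_{\tau_0}$ and identifying that fibre, via the composition of the universal covering $\nu$ with the restricted central charge $\Z_{\tau_0}$, with the universal cover of $\CC_\xi \smallsetminus \{\xi_+, \xi_-\}$. The one step you defer, the explicit value of $\xi_\pm$, is a one-line consequence of the paper's formula $\xi_\pm = \pm\tfrac{1}{30} z_0^5$ together with $z_0^4 = -24\, t_0$, the apparent $e^{\pi i/4}$ versus $e^{5\pi i/4}$ branch discrepancy being absorbed by the overall sign $\pm$.
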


\subsubsection{Exponential type.}
\label{250313152814}
The third and final ingredient of resurgence is the exponential bounds in $\xi$ at infinity on the global Borel transform $\omega$.

\begin{proposition}[exponential type]
\label{250313152819}
The Borel transform $\hat{\omega} (t,\xi)$ has exponential type in every direction at infinity in $\xi$, locally uniformly for all nonzero $t$.
More precisely, the global Borel transform $\omega \in \cal{O} (\tilde{M})$ has exponential type in every direction at infinity on $\tilde{M}$, locally uniformly for all nonzero $\tau$.
That is to say, for every nonzero $\tau$ and any direction $\alpha$ at infinity in $\tilde{M}_\tau$, there are real constants $\C, \K > 0$ and a sectorial neighbourhood $\sfSigma_\tau \subset \tilde{M}_\tau$ whose opening contains $\alpha$ such that
\begin{eqntag}
\label{250313151019}
	\big| \omega (\bm{\gamma}) \big| \leq \C e^{\K |\Z (\bm{\gamma})|}
\qqquad
	\forall \bm{\gamma} \in \sfSigma_{\tau} \subset \tilde{M}_\tau
\fullstop{,}
\end{eqntag}
where $\Z (\bm{\gamma}) \coleq \Z (\nu (\bm{\gamma}))$.
Furthermore, every nonzero $\tau_0 \in \CC_\tau$ has a neighbourhood $U_0 \subset \CC_\tau$ with the property that $\C, \K$ can be chosen uniformly for all $\tau \in U_0$, provided that $\alpha$ and $\sfSigma_\tau$ are chosen continuously for all $\tau \in U_0$.
Moreover, $\K$ can be taken arbitrarily small provided that $\tau_0$ is sufficiently large.
\end{proposition}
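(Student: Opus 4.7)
The plan is to obtain the exponential bound on $\omega$ directly from the Contraction Mapping Principle that already underlies the construction of $\omega$ in~\autoref{250304161635}. Borel-transforming the deformed Painlevé I equation about the leading-order root $q_0$ produces an integral/convolution equation of the form $\omega = \omega_0 + L(\omega) + Q(\omega,\omega)$ on $\tilde M$, where $\omega_0$ is an explicit holomorphic initial datum, $L$ is a $\tau$-dependent linear integral operator coming from inverting the leading-order linear part, and $Q$ is a bilinear convolution operator---the Borel-image of the quadratic nonlinearity $6q^2$---whose integration is performed along paths parameterised by the central charge $\Z$. The global function $\omega$ of~\autoref{250210171316} is characterised as the unique fixed point of the operator $T(f) \coleq \omega_0 + L(f) + Q(f,f)$.

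Fix a nonzero $\tau_0 \in \CC_\tau$ and a direction $\alpha$ at infinity in $\tilde M_{\tau_0}$. Choose a sectorial neighbourhood $\sfSigma_{\tau_0} \subset \tilde M_{\tau_0}$ whose opening contains $\alpha$ and on which $\Z$ restricts to a biholomorphism onto a genuine sector in $\CC_\xi$ beyond a fixed neighbourhood of the singular values $\xi_\pm$; by the geometric description in~\autoref{250210171316}, this choice can be arranged to vary continuously with $\tau$ in a neighbourhood $U_0$ of $\tau_0$. On the Banach space of holomorphic functions on $\sfSigma_\tau$ endowed with the weighted sup-norm $\|f\|_\K \coleq \sup_{\bm{\gamma} \in \sfSigma_\tau} |f(\bm{\gamma})| e^{-\K |\Z(\bm{\gamma})|}$, the elementary Gevrey-1 calculation $\int_0^r e^{\K(r-s)} e^{\K s}\, \d{s} = r e^{\K r}$---valid because $Q$ is implemented by convolution along $\Z$-rectilinear paths inside $\sfSigma_\tau$---yields, after a mild absorption of the polynomial factor into a slightly larger weight, operator bounds $\|L(f)\|_\K \leq \varepsilon_\K(\tau) \|f\|_\K$ and $\|Q(f,g)\|_\K \leq \varepsilon_\K(\tau) \|f\|_\K \|g\|_\K$ with $\varepsilon_\K(\tau) \to 0$ as $\K \to \infty$, uniformly for $\tau \in U_0$. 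Since $\omega_0$ is bounded in this weighted norm for every $\K$ thanks to~\autoref{250313115511}, for $\K$ sufficiently large $T$ becomes a strict contraction on the closed ball $\{\|f\|_\K \leq 2 \|\omega_0\|_\K\}$. The unique fixed point produced by Banach's theorem must agree with $\omega$ by the uniqueness part of~\autoref{250304161635}, so $|\omega(\bm{\gamma})| \leq 2 \|\omega_0\|_\K \, e^{\K |\Z(\bm{\gamma})|}$, which is precisely~\eqref{250313151019}.

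Local uniformity of $\C, \K$ over $\tau \in U_0$ is automatic from the continuous $\tau$-dependence of $\omega_0$, the coefficients of $L$ and $Q$, and the sectorial geometry. The smallness of $\K$ as $|\tau_0| \to \infty$ reflects the scaling of the formal series: the coefficients $\tilde{a}_{2n}^\pm \tau^{-5n+1}$ in~\eqref{250206201929} inject negative powers of $\tau$ into $L$ and $Q$, so $\varepsilon_\K(\tau) \to 0$ also as $|\tau| \to \infty$, allowing the contraction threshold for $\K$ to shrink accordingly. The main obstacle is that $\sfSigma_\tau$ lives inside the universal cover $\tilde M_\tau$ rather than in a flat $\CC_\xi$, so the paths defining the convolution $Q$ must be chosen coherently inside $\sfSigma_\tau$ while avoiding the ramification locus $S_\tau$; verifying that $\Z$ is a genuine quasi-coordinate on $\sfSigma_\tau$ with well-defined rectilinear convolution paths is the essential geometric input, and it is precisely what the explicit fivefold covering description in~\autoref{250210171316} makes available.
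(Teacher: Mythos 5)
Your overarching idea --- that the exponential bound should come for free from a contraction-mapping construction in exponentially weighted norms, with $\K$ shrinking as the coefficients shrink (large $|\tau|$) --- is indeed the mechanism the paper uses. But the concrete reduction you propose has a genuine gap at the localisation step. You posit a fixed-point equation $\omega = \omega_0 + L(\omega) + Q(\omega,\omega)$ which you then solve on a Banach space of holomorphic functions on a sectorial neighbourhood $\sfSigma_\tau$ inside a \emph{single fibre} $\tilde{M}_\tau$, with the weighted sup-norm $\sup |f| e^{-\K|\Z|}$. No such fibrewise equation exists. The Borel transform of the deformed Painlevé I equation is the partial integro-differential equation \eqref{250311185553}, $(\del_t^2 - 12 q_0 \del_\xi^2)\mathit{\Omega} = 6\,\del_\xi(\mathit{\Omega}\ast\mathit{\Omega})$ (equivalently \eqref{250315103819}), which is second order in $t$: ``inverting the leading-order linear part'' is not a bounded integral operator in $\xi$ at fixed $\tau$, and the characteristics of the operators $\pm\V - \del_\xi$ transport the base point in $z$ (see the points $z^\pm_{\xi-s}$ in \autoref{250305163610}). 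Consequently the putative operator $T$ evaluated at a point of $\sfSigma_\tau$ depends on values of the unknown at \emph{other} values of $\tau$, so it does not map functions on $\sfSigma_\tau$ to functions on $\sfSigma_\tau$, and Banach's theorem cannot be applied there. The same problem undermines your uniqueness step: the paper never characterises $\omega$ as a fixed point of a fibrewise operator, so there is nothing for your sectorial fixed point to be matched against (and your sector excludes $\xi=0$, where the germ $\hat\omega$ actually lives).

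This is precisely the difficulty the paper's architecture is designed to remove: it passes to the first-order system \eqref{250216174108}, splits the unknown into the pair $\phi = (\phi_+,\phi_-)$ attached to the two eigendirections $\pm\lambda$, and lifts the coupled integral equations to the unfolded Borel covering space $\tilde B$ (the fibre product of the two anchor maps on $\Pi_1(\CC^\ast_z)$), where the operator $\Upsilon$ of \autoref{250305182841} is globally well defined because the truncations $\bar{\bm{\gamma}}^\pm_s$ of a path stay attached to that path even though their source points move in the $z$-plane. The contraction is then run not in a weighted sup-norm but in the path-based norm \eqref{250131144547} (sup over paths of an infimum over synchronised parameterisations of a weighted $L^1$-integral along the path), on excised spaces $\tilde B^\circ$ with $\K \sim \C$ controlled by the sup of $|a|,|b|,|c|$ there (\autoref{250221092010}, \autoref{250221114438}); this yields $\norm{\phi|_{\tilde B^\circ}}_\K \le 1$ (\autoref{250313194333}), and the exponential type follows from the elementary inequality $|\Z(\bm{\gamma})| \le |\bm{\gamma}|$ (\autoref{250314125137}). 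The bound for $\omega$ is then inherited through the explicit formula \eqref{250310180010} expressing $\omega$ as $f_0^+ + f_0^- + \int(\phi_+ + \phi_-)$, not by a separate fixed-point argument for $\omega$ itself. Your remark on the smallness of $\K$ for large $|\tau_0|$ matches the paper's (it comes from the decay of $a,b,c$ away from $z=0$), but to repair your proof you would have to abandon the fibrewise sectorial Banach space and work on a space fibred over paths in the $z$-plane --- at which point you have essentially reconstructed the paper's groupoid/fibre-product setup.
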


We will prove this Proposition in \autoref{250304161635}.
Specifically, it will follow as a consequence of \autoref{250314125137}.
The reason for denoting points of $\tilde{M}$ by the symbol $\bm{\gamma}$ will become clear in \autoref{250304152932}.
The property of exponential type of $\omega$ may be stated in the following more elementary terms using the point of view on $\omega$ as a multivalued function on the universal cover of the twice-punctured $\xi$-plane.

\begin{corollary}
\label{250224200423}
Assume the hypotheses of \autoref{250304131641}.
Then for any choice of branch of the multivalued holomorphic function $\omega (t_0, \xi) \in \cal{O} (\widetilde{\CC_\xi \smallsetminus \set{\xi_\pm}})$, there are constants $\C, \K > 0$ such that, for all $\xi$ sufficiently large,
\begin{eqntag}
\label{250313151023}
	\big| \omega (t_0, \xi) \big| \leq \C e^{\K |\xi|}
\fullstop
\end{eqntag}
Furthermore, every nonzero $t_0 \in \CC_t$ has a neighbourhood $U_0 \subset \CC_\tau$ such that $\C, \K$ can be chosen uniformly for all $t \in U_0$, provided that the branch of $\omega (t, \xi)$ is chosen continuously for all $t \in U_0$.
Moreover, $\K$ can be taken arbitrarily small provided that $t_0$ is sufficiently large.
\end{corollary}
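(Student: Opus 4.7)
The plan is to deduce this corollary directly from \autoref{250313152819} together with \autoref{250210171316}, by identifying the universal cover of the twice-punctured $\xi$-plane with the Borel covering $\tilde{M}_\tau$ via the central charge.

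First, I would fix $t_0 \in \CC_t^\ast$ along with a square-root branch; this picks out a value $\tau_0$ with $\tau_0^2 = -t_0/6$, and consequently a branch of the formal solution and of its Borel transform. By \autoref{250210171316}(iv), the central charge restricts to an unramified fivefold covering $\Z_{\tau_0} : M_{\tau_0} \smallsetminus S_{\tau_0} \to \CC_\xi \smallsetminus \set{\xi_\pm}$. Combining with the universal covering $\nu : \tilde{M}_{\tau_0} \to M_{\tau_0} \smallsetminus S_{\tau_0}$ from \autoref{250210171316}(iii), the composition $\Z_{\tau_0} \circ \nu : \tilde{M}_{\tau_0} \to \CC_\xi \smallsetminus \set{\xi_\pm}$ is a covering map whose total space is simply connected, hence is canonically identified with the universal cover $\widetilde{\CC_\xi \smallsetminus \set{\xi_\pm}} \to \CC_\xi \smallsetminus \set{\xi_\pm}$. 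Under this identification, the chosen branch of $\omega(t_0, \xi)$ coincides with $\omega \in \cal{O}(\tilde{M}_{\tau_0})$, and $|\xi|$ equals $|\Z(\bm{\gamma})|$ at any point $\bm{\gamma}$ lying over $\xi$. Verifying this matching of branch points (i.e., that the two branch points of $\Z_{\tau_0}$ are precisely the values $\xi_\pm$ given by \eqref{250314214209}) is already part of \autoref{250210171316}(iv) once the explicit form of the central charge is in hand.

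With this identification in place, the exponential bound becomes essentially a direct translation. \autoref{250313152819} supplies, for each direction at infinity in $\tilde{M}_{\tau_0}$, a sectorial neighbourhood carrying an exponential bound $|\omega(\bm{\gamma})| \leq C e^{K|\Z(\bm{\gamma})|}$. By a standard compactness argument over the circle of asymptotic angular directions $\arg \Z(\bm{\gamma}) \in \RR/2\pi\ZZ$, finitely many such sectors cover the region where $|\Z(\bm{\gamma})|$ is sufficiently large, and taking the maximum of the associated constants yields uniform $C, K > 0$. Transported through $\Z_{\tau_0} \circ \nu$, this yields $|\omega(t_0, \xi)| \leq C e^{K|\xi|}$ for all $\xi$ sufficiently large on $\widetilde{\CC_\xi \smallsetminus \set{\xi_\pm}}$, which is exactly \eqref{250313151023}.

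For the remaining assertions, the uniformity in $t$ follows by pulling back the uniformity statement in \autoref{250313152819} through the local biholomorphism $t \mapsto \tau$, yielding a neighbourhood $U_0$ of $t_0$ in $\CC_t$ on which $C, K$ may be chosen uniformly; the requirement that the branch of $\omega(t,\xi)$ vary continuously in $t$ translates into the requirement that the direction $\alpha$ and its sector $\sfSigma_\tau$ vary continuously in $\tau$. The claim that $K$ can be taken arbitrarily small for $t_0$ sufficiently large is immediate from the corresponding statement for large $\tau_0$. The only real subtlety lies in recognising that the universal cover of $\CC_\xi \smallsetminus \set{\xi_\pm}$ is realised concretely by the Borel covering space $\tilde{M}_{\tau_0}$; once this geometric identification is in hand via \autoref{250210171316}, no substantive further argument is required beyond the compactness aggregation above.
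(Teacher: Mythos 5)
Your proposal is correct and follows essentially the same route as the paper: the corollary is stated there as an elementary reformulation of \autoref{250313152819}, using precisely the identification of $(\tilde{M}_{\tau_0}, \Z_{\tau_0})$ with the universal cover of $\CC_\xi \smallsetminus \set{\xi_\pm}$ established in \autoref{250210171316} (at the unfolded level, \autoref{250315193721}), and your compactness aggregation of the direction-dependent constants is exactly the detail needed to pass from the per-direction bounds of \autoref{250313152819} to a single pair $\C, \K$. One caution: the finite-subcover step is valid only branch by branch --- the directions at infinity of the full universal cover $\tilde{M}_{\tau_0}$ do not form a compact set, so your claim that ``finitely many sectors cover the region where $|\Z(\bm{\gamma})|$ is sufficiently large'' should be read as referring to the region corresponding to the fixed branch (whose accessible $\xi$-directions form a compact set), which is all the corollary asserts; a single uniform bound over the entire universal cover does not follow from this argument.
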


\subsection{Borel Summability and Stokes Sectors}
\label{250304162404}

We now state several corollaries that articulate the Borel summability properties of the formal power series solution of the deformed Painlevé I equation.
Whilst these corollaries share a common theme and may appear rather similar in content, they apply to different scenarios and are neither equivalent nor interchangeable.
We believe that presenting a clear statement for each scenario separately enhances both clarity and applicability, making the ostensible repetitiveness a price worth paying.

\subsubsection{Arcs, sectors, and Stokes directions.}
The output of Borel resummation is a holomorphic function defined in sectors of the $\hbar$-plane.
To facilitate the description of Borel summability, let us introduce some notation and terminology regarding sectors in the $\hbar$-plane.

\enlargethispage{10pt}
\paragraph{Arcs and rays.}
Elements of $\RR / 2\pi\ZZ \cong \SS^1$ are called \dfn{phases}, \dfn{directions}, or \dfn{rays}.
We will use the symbol ``$\equiv$'' to mean equality of real numbers mod $2\pi$.
An \dfn{arc} $A \subset \RR / 2\pi\ZZ$ is a nonempty open interval; i.e., a nonempty, connected, and simply connected open subset.
We write arcs using the interval notation $(\alpha_1, \alpha_2)$ for a pair of rays $\alpha_1, \alpha_2 \in \SS^1$ respecting the standard orientation of the circle.
In this case, we say that $\alpha_1, \alpha_2$ are the \dfn{bounding rays} of $A$.
An arc $A = (\alpha_1, \alpha_2)$ has a well-defined length called \dfn{opening angle}: $|A| \coleq \alpha_2 - \alpha_1 \leq 2\pi$.
When $|A| = 2\pi$,~ $A$ is the complement of a single point in $\RR / 2\pi\ZZ$.
We denote the arc bisected by $\alpha$ with opening angle $\beta$ by 
\begin{eqn}
	\sfop{Arc}_\beta (\alpha) \coleq (\alpha - \beta/2, \alpha + \beta/2)
\fullstop
\end{eqn}
Similarly, given an arc $A = (\alpha_1, \alpha_2)$, we define
\begin{eqn}
	\sfop{Arc}_\beta (A) \coleq (\alpha_1 - \beta/2, \alpha_2 + \beta/2) = \Cup_{\alpha \in A} \sfop{Arc}_\beta (\alpha)
\fullstop
\end{eqn}
See \autoref{250227143304}.
This definition assumes that $|A| + \beta \leq 2\pi$, though it can be easily generalised by passing to the universal cover of $\SS^1$.

\paragraph{Sectors.}
The \dfn{real-oriented blowup} of the $\hbar$-plane $\CC_\hbar$ \textit{at the origin} is the bordered Riemann surface $\CC^\ast_\hbar \sqcup \SS^1$, often denoted by $[\CC_\hbar : 0]$, together with the holomorphic surjective map $[\CC_\hbar : 0] \to \CC_\hbar$, called the \dfn{blowdown map}, which sends the boundary circle $\SS^1$ to the point $0$ and is the identity map on the complement $[\CC_\hbar : 0] \smallsetminus \SS^1$.
Standard polar coordinates $(r,\theta)$ give a parameterisation of $[\CC_\hbar : 0]$ with respect to which the blowdown map has the familiar formula $(r, \theta) \mapsto \hbar = re^{i\theta}$.

A \dfn{sectorial domain} (or simply a \dfn{sector}) at the origin in the $\hbar$-plane $\CC_\hbar$ is any simply connected domain $U \subset \CC_\hbar^\ast$ such that there is a (necessarily unique) simply connected open subset $\tilde{U}$ in the real-oriented blowup $[\CC_\hbar : 0] = \CC^\ast_\hbar \sqcup \SS^1$ which intersects the boundary circle $\SS^1 = \RR / 2\pi \ZZ$ in an open arc $A \subset \SS^1$.
See \autoref{250227142106}.
This open arc $A$ is called the \dfn{opening} of $U$, and its length $|A|$ is called the \dfn{opening angle} of $U$.
In particular, a \dfn{straight sector} bisected by $\alpha \in \SS^1$ of opening angle $\beta$ and radius $r$ is the open set
\begin{eqn}
	\sfop{Sect}_\beta (\alpha; r) \coleq \set{ \hbar \in \CC^\ast ~\Big|~ \arg (\hbar) \in \sfop{Arc}_\beta (\alpha) \qtext{and} |\hbar| < r}
\fullstop
\end{eqn}
Similarly, we can define a straight sector of opening angle $\beta$, radius $r$, and \textit{bisected by an arc} $A \subset \SS^1$ is the open set
\begin{eqn}
	\sfop{Sect}_\beta (A; r) \coleq \set{ \hbar \in \CC^\ast ~\Big|~ \arg (\hbar) \in \sfop{Arc}_\beta (A) \qtext{and} |\hbar| < r}
\fullstop
\end{eqn}
Again, this definition assumes that $|A| + \beta \leq 2\pi$, though it can be easily generalised by passing to the universal cover of $\SS^1$.

\begin{definition}[Stokes directions]
\label{250224133114}
For any $t_0 \in \CC_t^\ast$, let $\theta_0 \coleq \arg (t_0)$.
We define the \dfn{Stokes directions} at $t_0$ to be the directions
\begin{eqn}
	\alpha_+ \coleq \tfrac{5}{4}(\theta_0 + \pi)
\qtext{and}
	\alpha_- \coleq \alpha_+ + \pi \equiv \tfrac{5}{4}(\theta_0 + \pi) + \pi
\fullstop
\end{eqn}
See \autoref{250221154526}.
All other directions are called \dfn{regular directions}, and we put
\begin{eqn}
	A_1 \coleq (\alpha_+, \alpha_-)
\qtext{and}
	A_2 \coleq (\alpha_-, \alpha_+)
\fullstop
\end{eqn}
\end{definition}

\begin{figure}[t]
\begin{adjustwidth}{-1cm}{1cm}
\centering
\begin{subfigure}{0.33\textwidth}
\begin{tikzpicture}
\begin{scope}[scale=1.2]
\fill [white, draw = black, ultra thick] (0,0) circle (35pt);
\draw [dashed, blue] (0,0) -- (130:45pt);
\draw [dashed, blue] (0,0) -- (-110:45pt);
\draw [dashed, orange] (0,0) -- (-20:35pt);
\draw [dashed, orange] (0,0) -- (40:35pt);
\draw [ultra thick, blue] (-110:35pt) arc (-110:130:35pt);
\draw [ultra thick, orange] (-20:33pt) arc (-20:40:33pt);
\draw [thin, blue, <->] (-110:38pt) arc (-110:130:38pt);
\draw [thin, black, <->] (40:32pt) arc (40:130:32pt);
\draw [thin, black, <->] (-20:32pt) arc (-20:-110:32pt);
\node at (130:43pt) [above, blue] {\scriptsize$\alpha_2 + \beta/2$};
\node at (-110:41pt) [below, blue] {\scriptsize$\alpha_1 - \beta/2$};
\node at (40:45pt) [orange] {$\alpha_2$};
\node at (-20:45pt) [orange] {$\alpha_1$};
\node at (85:25pt) {\scriptsize$\beta/2$};
\node at (-65:25pt) {\scriptsize$\beta/2$};
\node at (10:32pt) [left, orange] {$A$};
\node at (10:38pt) [right, blue] {$\sfop{Arc}_\beta (A)$};
\node at (210:27pt) {$\SS^1$};
\end{scope}
\end{tikzpicture}
\caption{Arcs on $\SS^1$.}
\label{250227143304}
\end{subfigure}
\hfill
\begin{subfigure}{0.66\textwidth}
\begin{tikzpicture}[scale=0.5]
\begin{scope}
\fill [grey] (-3,-5) rectangle (7,5);
\draw [blue, densely dashed, fill = blue, fill opacity = 0.3] (110:50pt) to [out=110,in=180] (1.5,4) arc (90:-100:4) to [out=165,in=260] (-100:50pt);
\fill [white, draw = red, ultra thick] (0,0) circle (50pt);
\draw [ultra thick, blue] (-100:50pt) arc (-100:110:50pt);
\node at (-1.5,4) {$[\CC_\hbar : 0]$};
\node at (110:45pt) [below] {$\alpha_2$};
\node at (-100:50pt) [above] {$\alpha_1$};
\node at (0:45pt) [above left, blue] {$A$};
\node at (25:130pt) [blue] {$\tilde{U}$};
\node at (180:20pt) [red] {$\SS^1$};
\node at (8.5,0) {$\too$};
\end{scope}
\begin{scope}[xshift=13cm]
\fill [grey] (-3,-5) rectangle (7,5);
\draw [blue, densely dashed, fill = blue, fill opacity = 0.3] (110:3pt) to [out=110,in=180] (1.5,4) arc (90:-100:4) to [out=165,in=260] (-100:3pt);
\fill [red, draw = red, ultra thick] (0,0) circle (3pt) node [left, red] {$0$};
\node at (-2,4) {$\CC_\hbar$};
\node at (25:130pt) [blue] {$U$};
\end{scope}
\end{tikzpicture}
\caption{The real-oriented blowup $[\CC_\hbar : 0] \to \CC_\hbar$ and a sectorial neighbourhood $U$ of the origin with opening $A = (\alpha_1, \alpha_2)$.}
\label{250227142106}
\end{subfigure}
\caption{}
\label{}
\end{adjustwidth}
\end{figure}

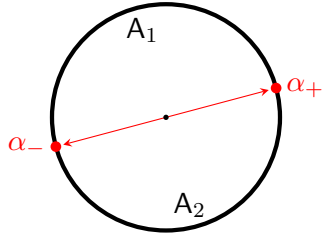
\begin{figure}[t]
\centering
\begin{tikzpicture}
\draw [black, ultra thick] (0,0) circle (1.5);
\fill [red] (15:1.5) circle (2pt) node [right] {$\alpha_+$};
\fill [red] (195:1.5) circle (2pt) node [left] {$\alpha_-$};
\draw [red, ->] (0,0) -- (15:1.4);
\draw [red, ->] (0,0) -- (195:1.4);
\fill [black] (0,0) circle (1pt);
\node at (105:1.2) {$A_1$};
\node at (-75:1.2) {$A_2$};
\end{tikzpicture}
\caption{The resurgent Stokes diagram at a point $t_0 \in \CC^\ast_t$ or $z_0 \in \CC^\ast_z$ such that $z_0^4 = -24 t_0$.
There are two Stokes rays at antipodal directions $\alpha_+$ and $\alpha_-$.
If we vary $t_0$ in the punctured $t$-plane by rotating anti-clockwise around the origin, these rays remain antipodal but rotate clockwise.
A rotation by $4 \pi / 5$ in the $t$-plane swaps the Stokes rays $\alpha_+, \alpha_-$.}
\label{250221154526}
\end{figure}

\subsubsection{Pointwise Borel summability.}
First, we spell out the case of Borel summability for a fixed nonzero point in the $t$-plane.
We start by considering a single regular direction.

\begin{proposition}[Pointwise Borel Summability in a Single Direction]
\label{250223133010}
Fix any $t_0 \in \CC_t^\ast$ and select a branch of $\hat{q} (t, \hbar)$ near $t_0$.
Then the formal power series $\hat{q} (t_0, \hbar) \in \CC \bbrac{\hbar}$ is stably Borel summable in every regular direction $\alpha$ at $t_0$.
Thus, the Borel resummation 
\begin{eqn}
	q_\alpha (t_0, \hbar) 
		\coleq s_\alpha \big[ \, \hat{q} \, \big] (t_0, \hbar)
\end{eqn}
in the direction $\alpha$ defines a holomorphic function on a sectorial neighbourhood $S \subset \CC_\hbar$ of the origin with opening $\sfop{Arc}_\pi (\alpha)$ which is asymptotic to $\hat{q} (t_0, \hbar)$ of uniform factorial type:
\begin{eqntag}
\label{250224152925}
	q_\alpha (t_0, \hbar) \simeq \hat{q} (t_0, \hbar)
\qquad
	\text{as $\hbar \to 0$ unif. along $\sfop{Arc}_\pi (\alpha)$}
\fullstop
\end{eqntag}
In fact, $q_\alpha (t_0, \hbar)$ is the unique holomorphic function on $S$ with this property.
Furthermore, the sectorial neighbourhood $S \subset \CC_\hbar$ can be chosen to be the straight sector $S = \sfop{Sect}_\pi (\alpha; r)$ of some radius $r > 0$ that can be taken arbitrarily large provided that $t_0$ is sufficiently large.
\end{proposition}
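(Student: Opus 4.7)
The plan is to assemble the three constituent results of the Main Theorem---\autoref{250312151012} (convergence at the origin), \autoref{250210171316} (endless analytic continuation), and \autoref{250313152819} (exponential type)---into the classical Borel--Laplace correspondence, specialised to the single point $t_0$. Fix any branch of $\hat{q}$ near $t_0$, let $\tau_0$ be the corresponding root of $\tau^2 = -t_0/6$, and let $\xi_\pm \in \CC_\xi$ be the Borel singular values given by \eqref{250314214209}. The hypothesis that $\alpha$ is regular amounts precisely to $\arg (\xi_\pm) \not\equiv \alpha$, equivalently, to the open ray $e^{i\alpha}\RR_+$ in the Borel plane avoiding both $\xi_+$ and $\xi_-$.

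First I would upgrade the one-dimensional analytic continuation of $\hat{\omega} (t_0, \xi)$ to a two-dimensional statement. By \autoref{250312151012}, $\hat{\omega} (t_0, \xi)$ is a convergent germ at $\xi = 0$; by \autoref{250210171316} (applied in the form of \autoref{250304131641}), this germ extends to a holomorphic function $\omega (t_0, \xi)$ on the universal cover of $\CC_\xi \smallsetminus \set{\xi_\pm}$. Since $\set{\xi_+, \xi_-}$ is discrete, there is an open arc $A \subset \SS^1$ containing $\alpha$, consisting entirely of regular directions, such that the open cone $\sfSigma \coleq \set{s e^{i\alpha'} \mid s > 0, \alpha' \in A}$ is entirely contained in $\CC_\xi \smallsetminus \set{\xi_\pm}$, and such that the chosen branch of $\omega (t_0, \xi)$ is holomorphic on $\sfSigma$.

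Next I would invoke \autoref{250313152819} (in the form of \autoref{250224200423}) to obtain constants $\C,\K > 0$ with $|\omega (t_0, \xi)| \leq \C e^{\K |\xi|}$ for all $\xi \in \sfSigma$ with $|\xi|$ sufficiently large. This allows me to define the Laplace integral
\begin{eqn}
	q_\alpha (t_0, \hbar)
	\coleq q_0 (t_0) + \int_{e^{i\alpha}\RR_+} e^{-\xi/\hbar} \omega (t_0, \xi) \d{\xi}
\fullstop{,}
\end{eqn}
which converges absolutely whenever $\Re (e^{i\alpha}/\hbar) > \K$; this region is an open disc of diameter $1/\K$ tangent to $0$ from the direction $e^{i\alpha}$. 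Cauchy's theorem, combined with the exponential bound to kill the arc at infinity, allows me to rotate the integration ray from $\alpha$ to any $\alpha' \in A$ without changing the value. The union of the resulting discs over $\alpha' \in A$ sweeps out a sectorial neighbourhood of the origin that contains the straight sector $\sfop{Sect}_\pi (\alpha; r)$ for some $r \propto 1/\K$, and on which $q_\alpha$ is a single-valued holomorphic function. This simultaneously delivers the holomorphic object and its stability under perturbations of $\alpha$ within $A$.

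Finally, the asymptotic statement \eqref{250224152925} is the standard Borel--Laplace calculation: split $\omega$ into the truncated Taylor polynomial plus a remainder $\xi^N R_N (\xi)$ bounded by $\C_N e^{\K |\xi|}$ with $\C_N$ of factorial growth in $N$ (inheritable from \autoref{250224202049}), integrate termwise and estimate the remainder to recover the factorial-type asymptotic uniformly on $\sfop{Arc}_\pi (\alpha)$. Uniqueness is then Watson's lemma (or the Nevanlinna--Sokal theorem) applied on a sector of opening $\pi$ with factorial-type remainder. The claim that $r$ can be taken arbitrarily large when $|t_0|$ is large follows directly from the last sentence of \autoref{250313152819}: $\K$ can be chosen arbitrarily small. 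The main obstacle I anticipate is purely bookkeeping: ensuring that the arc $A$, the cone $\sfSigma$, the branch of $\omega$, and the constants $\C,\K$ can all be chosen coherently, so that the various lateral Laplace integrals do agree on overlaps and glue to a single holomorphic function---but this coherence is packaged into the uniformity clauses of \autoref{250210171316} and \autoref{250313152819} and only needs to be unpacked carefully.
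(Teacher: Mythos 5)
Your proposal is correct and follows essentially the same route as the paper: the paper proves the statement via its analogue for the associated system (\autoref{250224130127}), where the regular ray avoids the Borel singular values, the exponential-type bound makes the Laplace integral converge, and membership of $\alpha$ in an open arc of regular directions yields a sectorial domain of opening $>\pi$ containing a straight sector of opening $\pi$, with $r$ large when $\K$ is small. Your only deviation is cosmetic --- you work directly with $\omega$ through \autoref{250304131641} and \autoref{250224200423} rather than through $\phi$, and you spell out the standard Watson/Nevanlinna--Sokal details that the paper leaves implicit.
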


This Proposition will follow as a consequence of \autoref{250224130127}.
To be clear about the order of the quantifiers in the final assertion, it says that for every $r > 0$, there is a point $t_0$ with $\arg (t_0) = \theta_0$ such that $q_\alpha (t_0, \hbar)$ defines a holomorphic function on $S = \sfop{Sect}_\pi (\alpha; r)$.
Let us also recall explicitly that the Borel resummation in the direction $\alpha$ is written in terms of the Laplace transform in the direction $\alpha$ of the Borel transform:
\begin{eqn}
	q_\alpha (t_0, \hbar) 
		= s_\alpha \big[ \, \hat{q} \, \big] (t_0, \hbar)
		= q_0 (t_0) + \Laplace_\alpha \big[\, \omega \,] (t_0, \hbar)
		= q_0 (t_0) + \int_{e^{i\alpha}\RR_+} e^{-\xi/\hbar} \omega (t_0, \xi) \d{\xi}
\fullstop
\end{eqn}

As we vary the ray $\alpha$ through an arc $A$ consisting of regular directions only, the different Borel resummations $q_\alpha (t_0, \hbar)$ can be patched together into a single holomorphic function $q_A (t_0, \hbar)$ on a larger sector in the $\hbar$-plane.
This leads to the following description of Borel summability along an arc of directions.

\begin{corollary}[Pointwise Borel Summability in an Arc of Directions]
\label{250224153234}
Fix any $t_0 \in \CC_t^\ast$, select a branch of $\hat{q} (t, \hbar)$ near $t_0$, and choose an arc $A \subset \SS^1$ of regular directions at $t_0$.
The Borel resummations $q_\alpha (t_0, \hbar)$ of $\hat{q} (t_0, \hbar)$ for $\alpha \in A$ assemble into a single holomorphic function $q_A (t_0, \hbar)$ defined on a sectorial neighbourhood $S \subset \CC_\hbar$ of the origin with opening $\sfop{Arc}_\pi (A)$ which is asymptotic to $\hat{q} (t_0, \hbar)$ of factorial type:
\begin{eqntag}
\label{250223133816}
	q_A (t_0, \hbar) \simeq \hat{q} (t_0, \hbar)
\qquad
	\text{as $\hbar \to 0$ along $\sfop{Arc}_\pi (A)$}
\fullstop
\end{eqntag}
In fact, $q_A (t_0, \hbar)$ is the unique holomorphic function on $S$ with this property.
Furthermore, if $A$ is not bounded by a Stokes ray at $t_0$, then the sectorial neighbourhood $S \subset \CC_\hbar$ can be chosen to be the straight sector $S = \sfop{Sect}_\pi (A; r)$ of some radius $r > 0$ that can be taken arbitrarily large provided that $t_0$ is sufficiently large.
\end{corollary}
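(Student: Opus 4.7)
The plan is to obtain $q_A(t_0,\hbar)$ by gluing together the pointwise Borel resummations $q_\alpha(t_0,\hbar)$ provided by \autoref{250223133010} as $\alpha$ varies through $A$, and then to derive the asymptotic expansion, uniqueness, and straight-sector statements from their single-direction counterparts together with a Watson-type uniqueness principle. The only nontrivial point is to verify that on any overlap $S_{\alpha_1}\cap S_{\alpha_2}$ for $\alpha_1,\alpha_2 \in A$, the two sectorial Borel sums $q_{\alpha_1}$ and $q_{\alpha_2}$ coincide; once this is established, patching is automatic and the remaining properties follow formally.

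This consistency will be established by the familiar contour-rotation argument for the Laplace integral $q_\alpha = q_0(t_0) + \Laplace_\alpha[\omega](t_0,\hbar)$. Since the subarc of $A$ from $\alpha_1$ to $\alpha_2$ consists entirely of regular directions at $t_0$, neither of the Borel singular values $\xi_\pm$ is swept by the family of rays $e^{i\alpha}\RR_+$ as $\alpha$ ranges over this subarc, so these rays lift to a continuous homotopy in the Borel surface $M_{\tau_0}$ (with $\tau_0^2 = -t_0/6$) avoiding the Borel singularities $S_{\tau_0}$; and the exponential-type bound of \autoref{250313152819} guarantees that $e^{-\xi/\hbar}\omega(t_0,\xi)$ decays rapidly along large-radius arcs joining the rotated rays, for $\hbar$ in the intersection of the corresponding halfplanes. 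Cauchy's theorem then yields $\Laplace_{\alpha_1}[\omega] = \Laplace_{\alpha_2}[\omega]$ on $S_{\alpha_1}\cap S_{\alpha_2}$. The main obstacle is the bookkeeping on the Borel covering space $\tilde M$ from \autoref{250210171316}: one must verify that the branch of $\omega(t_0,\xi)$ selected by the lift of $e^{i\alpha_1}\RR_+$ into $\tilde M_{\tau_0}$ is continuously transported by the rotation to the branch selected by the lift of $e^{i\alpha_2}\RR_+$, so that the Cauchy argument genuinely takes place on a single sheet of the universal cover $\nu:\tilde M_{\tau_0}\to M_{\tau_0}\smallsetminus S_{\tau_0}$.

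With this consistency in hand, setting $S \coleq \bigcup_{\alpha \in A} S_\alpha$ and defining $q_A$ by patching gives a holomorphic function on a sectorial neighbourhood of the origin with opening $\bigcup_{\alpha\in A}\sfop{Arc}_\pi(\alpha) = \sfop{Arc}_\pi(A)$. The asymptotic property \eqref{250223133816} is immediate from the corresponding uniform factorial asymptotic of each restriction $q_A|_{S_\alpha} = q_\alpha$ on $\sfop{Arc}_\pi(\alpha)$, and uniqueness on $S$ follows from Nevanlinna's refinement of Watson's Lemma applied to the opening $\sfop{Arc}_\pi(A)$ of length $|A| + \pi > \pi$, combined with the uniqueness clause of \autoref{250223133010} to pin down the restriction to each $S_\alpha$. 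For the final assertion, suppose that the bounding rays of $A$ are themselves regular. Then $\bar A$ is compact in the open set of regular directions, and \autoref{250223133010} supplies a straight sector $\sfop{Sect}_\pi(\alpha;r_\alpha)$ for each $\alpha \in \bar A$ with $r_\alpha$ bounded below by a continuous positive function of $\alpha$ (controlled by the distance to the nearest Borel singular value and by the exponential-type constants of \autoref{250313152819}); compactness yields a uniform $r > 0$ for which $S \supset \sfop{Sect}_\pi(A;r)$, and the fact that the constant $\K$ in \eqref{250313151019} can be made arbitrarily small for $|\tau_0|$ large translates into $r$ being arbitrarily large for $|t_0|$ sufficiently large.
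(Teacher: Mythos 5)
Your proposal is correct and follows essentially the same route as the paper: the paper also obtains $q_A$ by patching the single-direction resummations over the arc, with consistency on overlaps coming from contour rotation across regular directions justified by the endless continuation and exponential-type bounds, and with the straight-sector/large-radius refinement coming from the fact that the exponential-type constant can be taken small for large $|t_0|$. The only cosmetic difference is that the paper phrases the argument for the associated system $\hat{f}$ and its Borel transform $\phi$ (Corollary \ref{250224154010}, deduced from Proposition \ref{250224130127}) and then transfers it to $\hat{q}$ via the relation between $\omega$ and $\phi$, whereas you work directly with $\omega$ using Proposition \ref{250313152819}; both are equally valid.
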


Thus, for example, if we fix a real and positive $t_0$, then $\theta_0 \equiv 0$ and so $\alpha_+ = 5\pi/4$ and $\alpha_- = \pi/4$.
Then, for any two $\alpha$ and $\alpha'$ in $(\pi/4, 5\pi/4)$, the Borel resummations $q_{\alpha} (t_0, \hbar) \in \cal{O} (S)$ and $q_{\alpha'} (t_0, \hbar) \in \cal{O} (S')$ coincide on the double intersection $S \cap S'$, and therefore they patch together to define the same holomorphic function on $S \cup S'$.
This is true for all $\alpha \in (\pi/4, 5\pi/4)$, and so we end up with a holomorphic function defined on a sectorial neighbourhood $S$ whose opening is the entire circle $\SS^1$ except the direction $3\pi/4$.

Note that there is no claim regarding the radial size of the sectorial neighbourhood $S$ when $A$ is bounded by a Stokes ray.
In fact, in this case it is not possible to take $S$ to be a straight sector.
Instead, we can take $S$ to be a countable union of straight sectors of decreasing radius.
Namely, if we fix an exhaustive sequence $(A_n)$ of proper subarcs $A_n \Subset A$, then we get a sequence of straight sectors $\sfop{Sect}_\pi (A_n; r_n)$ of some radius $r_n > 0$, so we can take $S$ to be the union of all $\sfop{Sect}_\pi (A_n; r_n)$.
If $A$ is bounded by a Stokes ray, then the sequence $(r_n)$ necessarily goes to $0$ as $n \to \infty$.

\subsubsection{Locally uniform Borel summability.}
Next, we state the Borel summability property when $t$ is allowed to vary in a small local neighbourhood of a fixed nonzero point in the $t$-plane.
Again, we start by considering summability in a single regular direction.

\begin{proposition}[Locally Uniform Borel Summability in a Single Direction]
\label{250211094021}
Fix any $t_0 \in \CC_t^\ast$, select a branch of $\hat{q} (t, \hbar)$ near $t_0$, and choose a regular direction $\alpha$ at $t_0$.
Then there is a neighbourhood $U \subset \CC_t^\ast$ around $t_0$ such that $\hat{q} (t,\hbar)$ is stably Borel summable in the direction $\alpha$ uniformly for all $t \in U$.
Thus, there is a sectorial neighbourhood $S \subset \CC_\hbar$ of the origin with opening $\sfop{Arc}_\pi (\alpha)$ such that the Borel resummation $q_\alpha (t, \hbar) = s_\alpha [ \, \hat{q} \, ] (t,\hbar)$ in the direction $\alpha$ defines a holomorphic function on the domain $U \times S$ which is uniformly asymptotic to $\hat{q} (t, \hbar)$ of uniform factorial type:
\begin{eqntag}
\label{250224155430}
	q_\alpha (t, \hbar) \simeq \hat{q} (t, \hbar)
\qquad
	\text{as $\hbar \to 0$ unif. along $\sfop{Arc}_\pi (\alpha)$,}
\end{eqntag}
uniformly for all $t \in U$.
In fact, $q_\alpha$ is the unique holomorphic function on $U \times S$ with this property.
Furthermore, the sectorial neighbourhood $S \subset \CC_\hbar$ can be chosen to be the straight sector $S = \sfop{Sect}_\pi (\alpha; r)$ of some radius $r > 0$ that can be taken arbitrarily large provided that $t_0$ is sufficiently large and $U$ is sufficiently small.
\end{proposition}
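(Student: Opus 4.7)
The plan is to deduce locally uniform Borel summability from the locally uniform versions of convergence (\autoref{250312151012}), endless analytic continuation (\autoref{250210171316}), and exponential type (\autoref{250313152819}) of the Borel transform, combined with a parametric version of the Nevanlinna--Sokal theorem. The argument closely parallels the pointwise case \autoref{250223133010}, but adds an extra layer of uniformity in $t$.

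First, I would choose the neighbourhood $U \subset \CC^\ast_t$ around $t_0$. Since $\alpha$ is regular at $t_0$, i.e., $\alpha \not\equiv \alpha_\pm(t_0) = \tfrac{5}{4}(\arg(t_0) + \pi) \pmod{\pi}$, and the Stokes directions $t \mapsto \alpha_\pm(t)$ (equivalently, the Borel singular values $t \mapsto \xi_\pm(t)$ of \eqref{250314214209}) depend continuously on $t$, I can pick $U$ small enough and a compact arc $A \subset \SS^1$ with $\alpha$ in its interior so that $A$ consists entirely of regular directions at every $t \in \overline{U}$; equivalently, $\xi_\pm(t)$ stays uniformly bounded away from the ray $e^{i\alpha}\RR_+$ for all $t \in \overline{U}$.

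Next, by \autoref{250210171316}, the global Borel transform $\omega(t,\xi)$ admits analytic continuation along $e^{i\alpha}\RR_+$ for every $t \in U$, and by \autoref{250313152819}, after shrinking $U$ if necessary, there are constants $\C, \K > 0$ (with $\K$ arbitrarily small when $t_0$ is large) such that
\begin{eqn}
\big| \omega(t,\xi) \big| \leq \C e^{\K |\xi|}
\qquad \forall\, \xi \in e^{i\alpha}\RR_+, \ \forall\, t \in U.
\end{eqn}
Consequently the Laplace integral $q_\alpha(t,\hbar) = q_0(t) + \int_{e^{i\alpha}\RR_+} e^{-\xi/\hbar} \omega(t,\xi)\d{\xi}$ is absolutely and uniformly convergent whenever $\Re(e^{i\alpha}/\hbar) > \K$, i.e., whenever $\hbar \in \sfop{Sect}_\pi(\alpha; r)$ with $r \sim 1/\K$, arbitrarily large for large $t_0$. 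Joint holomorphy of $q_\alpha$ on $U \times S$ follows either from differentiating under the integral sign or from Morera combined with Fubini.

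For the asymptotic \eqref{250224155430}, I would expand $\omega(t,\xi) = \sum_{n=0}^{N-1} \tfrac{q_{n+1}(t)}{n!} \xi^n + R_N(t,\xi)$ and combine the factorial-type bounds of \autoref{250224202049} with the exponential bound above (via Cauchy estimates along circles of radius matched to $1/\K$) to obtain $|R_N(t,\xi)| \leq \C' (\M')^{N} N!\, |\xi|^N e^{\K|\xi|}$ on $e^{i\alpha}\RR_+$ uniformly in $t \in U$. Integrating term-by-term using $\int_{e^{i\alpha}\RR_+} e^{-\xi/\hbar}\xi^n \d{\xi} = n!\,\hbar^{n+1}$ recovers the $N$-th partial sum of $\hat{q}(t,\hbar)$, and the Laplace integral of $R_N$ is bounded by $\C'' (\M'')^{N} N!\, |\hbar|^{N+1}$ uniformly on $U \times S$ — precisely a uniform Gevrey-$1$ expansion. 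Uniqueness of $q_\alpha$ on $U \times S$ then follows from the parametric Nevanlinna--Sokal theorem, which applies fibrewise in $t$ since the sector has opening strictly greater than $\pi$.

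The main technical hurdle is genuine uniformity of the constants $\C, \K$ across $U$ while simultaneously ensuring that the moving singularities $\xi_\pm(t)$ never approach the Laplace contour. Both issues are isolated in the uniform statement of \autoref{250313152819} and in the continuity of $t \mapsto \xi_\pm(t)$; once these are in hand, the remaining analytic estimates are routine, and the assertion that $r$ may be taken arbitrarily large for large $t_0$ is inherited directly from the corresponding clause about $\K$ in \autoref{250313152819}.
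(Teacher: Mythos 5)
Your argument is correct in substance and rests on the same analytic core as the paper's, but it is routed differently. The paper deduces this Proposition from its analogue for the associated system, \autoref{250224161315}: there the locally uniform exponential bounds on the global Borel transform $\phi$ (\autoref{250314125137}) make the Laplace integral converge for all $z$ near $z_0$, the open arc of regular directions yields a sectorial domain of opening $\pi + 2\epsilon$ containing a straight sector of opening $\pi$ (exactly as in the proof of \autoref{250224130127}), and the statement for $\hat{q}$ is then transferred via the relations of \autoref{250224110231} and \autoref{250224110855}. You instead work directly at the level of $\hat{q}$ and $\omega$, invoking \autoref{250210171316} and the locally uniform clause of \autoref{250313152819} together with a parametric Watson/Nevanlinna--Sokal argument for the Gevrey-1 asymptotics and uniqueness; since those $\omega$-level propositions are themselves obtained from the $\phi$-level results, this is a legitimate repackaging rather than a new idea, and your explicit remainder estimate supplies detail the paper leaves to standard theory. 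Two small corrections: the set $\set{\hbar : \Re(e^{i\alpha}/\hbar) > \K}$ is a Borel disc (a sectorial neighbourhood with opening $\sfop{Arc}_\pi(\alpha)$), not the straight sector $\sfop{Sect}_\pi(\alpha; r)$ --- the straight sector is obtained only by rotating the Laplace contour through the arc $A$ of uniformly regular directions, which you have set up but should actually use at that step; and the uniqueness claim should be phrased for the opening-$\pi$ sectorial domain with \emph{uniform} factorial-type asymptotics (the statement's $S$ has opening exactly $\pi$, not strictly greater), which is precisely the setting of the Nevanlinna--Sokal theorem.
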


This Proposition will follow as a consequence of \autoref{250224161315}.

\begin{corollary}[Locally Uniform Borel Summability in an Arc of Directions]
\label{250224155618}
Fix any $t_0 \in \CC_t^\ast$, select a branch of $\hat{q} (t, \hbar)$ near $t_0$, and choose an arc $A \subset \SS^1$ of regular directions at $t_0$ which is not bounded by a Stokes ray at $t_0$.
Then there is a neighbourhood $U \subset \CC_t^\ast$ around $t_0$ such that $\hat{q} (t,\hbar)$ is Borel summable in every direction $\alpha \in A$ uniformly for all $t \in U$.
Thus, there is a sectorial neighbourhood $S \subset \CC_\hbar$ of the origin with opening $\sfop{Arc}_\pi (A)$ such that the Borel resummations $q_\alpha (t,\hbar)$ of $\hat{q} (t,\hbar)$ for $\alpha \in A$ assemble into a single holomorphic function $q_A (t,\hbar)$ defined on the domain $U \times S$ which is uniformly asymptotic to $\hat{q} (t_0, \hbar)$ of factorial type:
\begin{eqntag}
\label{250224160811}
	q_A (t, \hbar) \simeq \hat{q} (t, \hbar)
\qquad
	\text{as $\hbar \to 0$ along $\sfop{Arc}_\pi (A)$,}
\end{eqntag}
uniformly for all $t \in U$.
In fact, $q_A$ is the unique holomorphic function on $U \times S$ with this property.
Furthermore, the sectorial neighbourhood $S \subset \CC_\hbar$ can be chosen to be the straight sector $S = \sfop{Sect}_\pi (A; r)$ of some radius $r > 0$ that can be taken arbitrarily large provided that $t_0$ is sufficiently large and $U$ is sufficiently small.
\end{corollary}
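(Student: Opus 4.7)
The strategy is to deduce the arc statement from the single-direction \autoref{250211094021} by an explicit gluing argument based on Cauchy contour deformation, using the endless analytic continuation (\autoref{250210171316}) and the exponential type bound (\autoref{250313152819}) to justify the deformation.

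The first step is to apply \autoref{250211094021} at every $\alpha \in A$, obtaining a neighbourhood $U_\alpha$ of $t_0$ and a radius $r_\alpha > 0$ together with a Borel sum $q_\alpha$ which is holomorphic on $U_\alpha \times \sfop{Sect}_\pi(\alpha; r_\alpha)$ and uniformly asymptotic to $\hat{q}$ of factorial type. Because the closure $\overline{A}$ is disjoint from the Stokes directions $\alpha_\pm(t_0)$ by hypothesis, and $\alpha_\pm(t)$ varies continuously with $t$ (see \autoref{250224133114}), I can shrink $U_\alpha$ to a common neighbourhood $U$ of $t_0$ on which $\overline{A}$ stays uniformly separated from $\alpha_\pm(t)$. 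A compactness argument on $\overline{A}$ then extracts a common radius $r > 0$ such that every $q_\alpha$ is holomorphic on $U \times \sfop{Sect}_\pi(\alpha; r)$.

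The second step is the contour deformation. For any two nearby $\alpha, \alpha' \in A$ with overlapping sectors, I would check that $q_\alpha$ and $q_{\alpha'}$ agree on $U \times \bigl(\sfop{Sect}_\pi(\alpha; r) \cap \sfop{Sect}_\pi(\alpha'; r)\bigr)$ by rewriting the difference of the two Laplace integrals as a contour integral over the closed wedge bounded by the rays $e^{i\alpha}\RR_+$, $e^{i\alpha'}\RR_+$, and a large circular arc. By \autoref{250210171316}, $\omega(t,\xi)$ is single-valued holomorphic throughout this wedge (since the regular arc contains no Borel singularities), and by \autoref{250313152819} it satisfies a uniform exponential bound $|\omega(t,\xi)| \leq C e^{K|\xi|}$ for $t \in U$. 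For $\hbar$ in the common sector and $|\hbar|$ sufficiently small, $|e^{-\xi/\hbar}|$ decays strictly faster than $e^{K|\xi|}$ uniformly along rays in the wedge, so the large-arc contribution vanishes and the two integrals coincide. The $q_\alpha$ then glue into a single holomorphic $q_A$ on $U \times \sfop{Sect}_\pi(A; r)$; the uniform asymptotic expansion \eqref{250224160811} transfers from the single-direction statement, and uniqueness can be obtained either by restriction to each $\sfop{Sect}_\pi(\alpha; r)$ or directly by Watson-type rigidity since the total opening $|A| + \pi$ strictly exceeds $\pi$. The claim that $r$ may be taken arbitrarily large for $t_0$ sufficiently large and $U$ sufficiently small is inherited from the corresponding statements in \autoref{250211094021} and \autoref{250313152819} (where $K$ becomes arbitrarily small for $t_0$ large).

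The main technical obstacle is the uniformity across $A$: one has to arrange a single neighbourhood $U$ and a single radius $r$ that work for every $\alpha \in A$ simultaneously. This is precisely where the hypothesis that $A$ is not bounded by a Stokes ray enters, because $\overline{A}$ must remain a compact set of regular directions for all $t \in U$ in order for the radii $r_\alpha$ in \autoref{250211094021} and the exponential constant $K$ in \autoref{250313152819} not to degenerate. If $A$ terminated at a Stokes direction, then $r_\alpha$ would shrink to zero as $\alpha$ approached it, and $S$ would have to be built as a countable union of straight sectors with radii tending to $0$, as remarked after \autoref{250224153234}.
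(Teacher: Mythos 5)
Your proposal is correct and matches the paper's argument in substance: the paper likewise obtains the arc statement by patching the single-direction resummations over $A$, with agreement on overlaps justified by the absence of Borel singularities in the regular wedge together with the exponential bounds (this is exactly the patching mechanism described in the remark following \autoref{250224153234}, and your compactness step to extract a uniform $U$ and $r$ over $\overline{A}$ is where the hypothesis that $A$ is not bounded by a Stokes ray is used, as in the paper). The only cosmetic difference is that the paper officially routes the proof through the analogous statement for the associated system $\hat{f}$ (\autoref{250224160903}) and then transfers to $\hat{q}$, whereas you work directly with $\hat{q}$ and $\omega$; since \autoref{250210171316} and \autoref{250313152819} are stated for $\omega$ itself, this changes nothing essential.
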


\subsubsection{Borel summability in Stokes sectors.}
Now we describe the Borel summability properties in the large.
For this purpose, we fix a direction for resummation and describe the distinguished regions in the $t$-plane and the $\tau$-plane where the Borel resummation in this direction is well-defined.

\begin{definition}[Stokes lines and sectors]
\label{250224171850}
Fix any phase $\alpha \in \SS^1$.
A \dfn{Stokes line} in the $\tau$-plane is any infinite straight ray $e^{i \theta} \RR_+ \subset \CC_\tau$ with phase $\theta$ satisfying $5 \theta + 2 \alpha \equiv 0$.
The union of all Stokes lines is called the \dfn{Stokes graph}.
Any connected component $V \subset \CC_\tau$ of the complement of the Stokes graph is called a \dfn{Stokes sector}.
\end{definition}

Note that Stokes lines and Stokes sectors depend on the chosen phase $\alpha$, and that the equation $5 \theta + 2 \alpha \equiv 0$ has exactly five distinct solutions in $\SS^1$ distributed evenly around the circle.
So for the deformed Painlevé I equation, there are a total of five Stokes lines and therefore five Stokes sectors each of which is an infinite sector in the $\tau$-plane with opening angle $2\pi/5$.

If we choose the representative of $\alpha$ in $[0, 2\pi)$, we can describe the Stokes lines and sectors more explicitly; see \autoref{250224175327}.
The five numbers $\theta_k \coleq \tfrac{2}{5} (k\pi - \alpha) \in \RR$, for $k = 0, \ldots, 4$, determine the five distinct solutions of $5 \theta + 2 \alpha \equiv 0$.
So the five Stokes lines are $e^{i \theta_k} \RR_+$ and the five Stokes sectors are $V_k \coleq \set{ \theta_k < \arg (\tau) < \theta_{k+1} } \subset \CC_\tau^\ast$ where $k+1$ is understood mod $5$.
It is also helpful to be aware of their projections $e^{i\theta'_k} \RR_+$ and $U_k$ to the $t$-plane (which are also traditionally called \textit{Stokes lines} and \textit{Stokes sectors}), where $\theta'_k = 2\theta_k + \pi$; see \autoref{250224180406}.

\begin{figure}[t]
\begin{adjustwidth}{-1cm}{1cm}
\centering
\begin{subfigure}[t]{0.28\linewidth}
\centering
\begin{tikzpicture}
\clip (-2,-2) rectangle (2,2);
\fill [grey] (-2,-2) rectangle (2,2);
\draw [thick, orange] (0,0) -- (0:200pt);
\draw [thick, orange] (0,0) -- (72:200pt);
\draw [thick, orange] (0,0) -- (144:200pt);
\draw [thick, orange] (0,0) -- (216:200pt);
\draw [thick, orange] (0,0) -- (288:200pt);
\node at (36:35pt) {$V_{0}$};
\node at (108:35pt) {$V_{1}$};
\node at (180:35pt) {$V_{2}$};
\node at (252:35pt) {$V_{3}$};
\node at (324:35pt) {$V_{4}$};
\node [orange] at (-8:50pt) {$\theta_0$};
\node [orange] at (62:55pt) {$\theta_1$};
\node [orange] at (136:60pt) {$\theta_2$};
\node [orange] at (208:55pt) {$\theta_3$};
\node [orange] at (280:50pt) {$\theta_4$};
\node at (45:65pt) {$\CC_{\tau}$};
\end{tikzpicture}
\caption{}
\label{250224175327}
\end{subfigure}
\hfill
\begin{subfigure}[t]{0.68\linewidth}
\centering
\begin{tikzpicture}
\begin{scope}
\clip (-2,-2) rectangle (2,2);
\fill [grey] (-2,-2) rectangle (2,2);
\draw [thick, orange] (0,0) -- (180:200pt);
\draw [thick, orange] (0,0) -- (324:200pt);
\draw [thick, orange] (0,0) -- (108:200pt);
\node at (252:35pt) {$U_0$};
\node at (36:35pt) {$U_1$};
\node at (135:65pt) {$\CC_{t}$};
\node [orange] at (180:45pt) [below] {$\theta'_0$};
\node [orange] at (324:55pt) [below] {$\theta'_1$};
\node [orange] at (108:50pt) [right] {$\theta'_2$};
\end{scope}
\begin{scope}[xshift=4.25cm]
\clip (-2,-2) rectangle (2,2);
\fill [grey] (-2,-2) rectangle (2,2);
\draw [thick, orange] (0,0) -- (108:200pt);
\draw [thick, orange] (0,0) -- (252:200pt);
\draw [thick, orange] (0,0) -- (36:200pt);
\node at (180:35pt) {$U_2$};
\node at (324:35pt) {$U_3$};
\node at (135:65pt) {$\CC_{t}$};
\node [orange] at (108:45pt) [left] {$\theta'_2$};
\node [orange] at (252:50pt) [left] {$\theta'_3$};
\node [orange] at (36:45pt) [below right] {$\theta'_4$};
\end{scope}
\begin{scope}[xshift=8.5cm]
\clip (-2,-2) rectangle (2,2);
\fill [grey] (-2,-2) rectangle (2,2);
\draw [thick, orange] (0,0) -- (36:200pt);
\draw [thick, orange] (0,0) -- (180:200pt);
\draw [thick, orange] (0,0) -- (324:200pt);
\node at (108:35pt) {$U_4$};
\node at (252:35pt) {$U_0$};
\node at (135:65pt) {$\CC_{t}$};
\node [orange] at (38:50pt) [above] {$\theta'_4$};
\node [orange] at (180:45pt) [below] {$\theta'_0$};
\node [orange] at (324:55pt) [below] {$\theta'_1$};
\end{scope}
\end{tikzpicture}
\caption{}
\label{250224180406}
\end{subfigure}
\end{adjustwidth}
\caption{Stokes sectors $V_k$ and Stokes lines $e^{i\theta_k} \RR_+$ in the $\tau$-plane and their projections $U_k$ and $e^{i \theta'_k} \RR_+$ to the $t$-plane for $\alpha = 0$.
In this case, $\theta_k = 0, \tfrac{2\pi}{5}, \tfrac{4\pi}{5}, \tfrac{6\pi}{5}, \tfrac{8\pi}{5}$ and $\theta'_k = \pi, \tfrac{9\pi}{5}, \tfrac{3\pi}{5}, \tfrac{7\pi}{5}, \tfrac{2\pi}{5}, \tfrac{\pi}{5}$.}
\label{250224185222}
\end{figure}
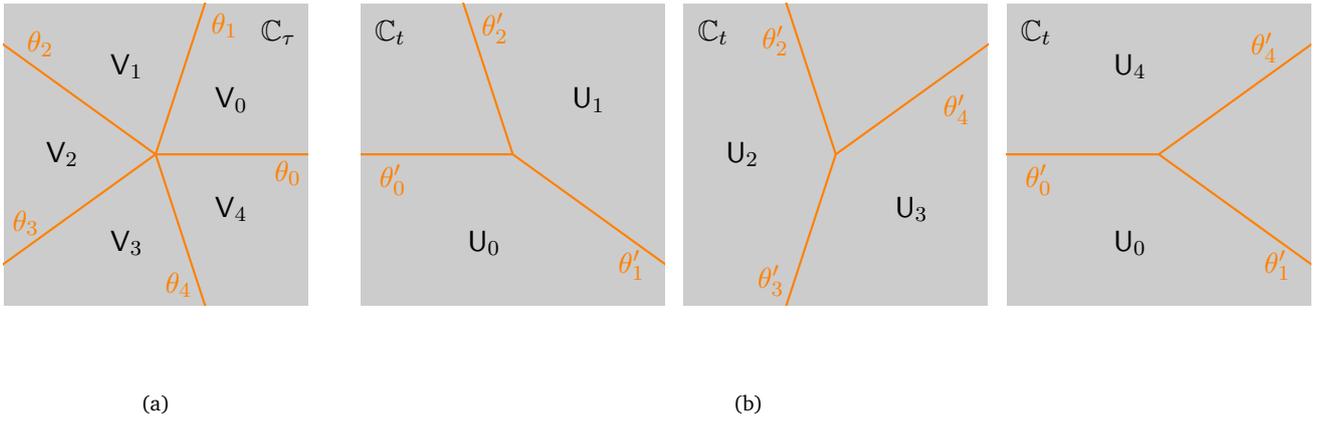

Observe that the choice of a Stokes sector $V \subset \CC_\tau$ determines a distinguished choice of square-root branch $\sqrt{t}$ in the projected Stokes sector $U \subset \CC_t$.
Consequently, it determines a distinguished choice of branch $\hat{q} (t,\hbar)$ in $U$ of the multivalued formal solution.
We call it the \textit{distinguished branch} of $\hat{q}$ associated with the given Stokes sector.

\begin{proposition}[Borel Summability in Stokes Sectors]
\label{250210133432}
Fix any phase $\alpha \in \SS^1$ and select a Stokes sector $V \subset \CC_\tau$.
Let $U \subset \CC_t$ be its projection to the $t$-plane, and let $\hat{q} (t,\hbar)$ be the distinguished branch on $U$ of the multivalued formal solution.
Then $\hat{q} (t,\hbar)$ is stably Borel summable in the direction $\alpha$, locally uniformly for all $t \in U$.
Thus, the Borel resummation $\hat{q}_\alpha (t,\hbar) = s_\alpha [\, \hat{q} \,] (t,\hbar)$ defines a holomorphic function on a domain $\UU \subset U \times \CC_\hbar$ with the following property: every $t_0 \in U$ has a neighbourhood $U_0 \subset U$ such that there is a sectorial neighbourhood $S_0 \subset \CC_\hbar$ of the origin with opening $\sfop{Arc}_\pi (\alpha)$ satisfying $U_0 \times S_0 \subset \UU$.
Furthermore, $q_\alpha$ is locally uniformly asymptotic to $\hat{q}$ of uniform factorial type:
\begin{eqntag}
	q_\alpha (t, \hbar) \simeq \hat{q} (t,\hbar)
\qquad
	\text{as $\hbar \to 0$ unif. along $\sfop{Arc}_\pi (\alpha)$,}
\end{eqntag}
locally uniformly for all $t \in U$.
In fact, $q_\alpha$ is the unique holomorphic function on $\UU$ with this property.
Moreover, $U$ is a maximal domain of Borel summability of $\hat{q}$ in the direction of $\alpha$; i.e., there does not exist another domain $U'$ properly containing $U$ such that $\hat{q} (t,\hbar)$ is stably Borel summable in the direction $\alpha$, locally uniformly for all $t \in U$.
\end{proposition}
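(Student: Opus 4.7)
The plan is to deduce this statement from the locally uniform Borel summability result \autoref{250211094021} by patching the local Borel resummations across the Stokes sector $U$, and then to prove maximality by exploiting the explicit position of the Borel singularities given by \autoref{250210171316}.

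First, I would observe that the geometric hypothesis of being inside $V$ is precisely what guarantees regularity of the direction $\alpha$ at every $t \in U$. Indeed, transporting the Stokes line condition $5\theta + 2\alpha \not\equiv 0$ on $\CC_\tau$ through the double cover $\tau^2 = -t/6$ shows that it is equivalent to $\alpha \neq \alpha_\pm(t)$ in the sense of \autoref{250224133114}. Consequently, \autoref{250211094021} applies at every $t_0 \in U$ with the distinguished branch of $\hat{q}$ selected by $V$, yielding a neighbourhood $U_0 \subset U$ of $t_0$, a sectorial neighbourhood $S_0 \subset \CC_\hbar$ of opening $\sfop{Arc}_\pi(\alpha)$, and a local Borel resummation $q_\alpha^{(t_0)} \in \cal{O}(U_0 \times S_0)$ uniformly asymptotic to $\hat{q}$ of uniform factorial type.

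Next I would patch these local pieces into a single holomorphic function $q_\alpha$ on the domain $\UU \coleq \bigcup_{t_0 \in U}(U_0 \times S_0) \subset U \times \CC_\hbar$, which by construction has the stated sectorial property at every $t_0 \in U$. The key point is that, because the projection $U \subset \CC_t^\ast$ of the Stokes sector $V$ is a simply connected sector of opening $4\pi/5$, the distinguished branch of $\hat{q}$ is a single univalent formal series on $U$; hence on the overlap of two local domains the two Borel resummations share the same formal asymptotic expansion with uniform factorial bounds on $\sfop{Arc}_\pi(\alpha)$, and the uniqueness clause of \autoref{250211094021} (a sectorial Watson-type statement applied to their difference) forces them to agree. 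This furnishes $q_\alpha \in \cal{O}(\UU)$ with the desired locally uniform asymptotic, and uniqueness of $q_\alpha$ on $\UU$ is inherited from the local uniqueness.

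Finally, I would establish maximality. Since $V$ is a full connected component of the complement of the Stokes graph in $\CC_\tau$, any open connected $U' \supsetneq U$ in $\CC_t$ must contain a point $t^\ast$ whose preimage lies on a Stokes line, equivalently at which $\alpha$ is a Stokes direction. By \autoref{250210171316} and the explicit expression \eqref{250314214209}, the Borel singular value $\xi_\pm(t^\ast)$ then lies exactly on the Laplace ray $e^{i\alpha}\RR_+$, so the Laplace integral defining $s_\alpha[\hat{q}](t^\ast,\hbar)$ would collide with a singularity of $\omega(t^\ast,\cdot)$, contradicting Borel summability at $t^\ast$ in direction $\alpha$. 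The main obstacle I foresee is not in any single step but in ensuring that the radii of the local sectors $S_0$ do not degenerate on compact subsets of $U$; this requires a genuinely locally uniform (rather than merely pointwise) version of the exponential bounds of \autoref{250313152819}, combined with a positive lower bound, uniform on compact subsets of $U$, for the distance from the Laplace ray $e^{i\alpha}\RR_+$ to the set of Borel singular values $\{\xi_+(t),\xi_-(t)\}$ as $t$ varies — a property that itself follows from the algebraic dependence of $\xi_\pm$ on $t$ and the absence of Stokes points in $U$.
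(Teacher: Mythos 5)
Your proposal is correct and follows essentially the same route as the paper: the paper derives this proposition from its analogue for the associated system (\autoref{250305151549}), which in turn rests on exactly the ingredients you identify — regularity of $\alpha$ throughout the Stokes sector, patching of the locally uniform resummations of \autoref{250211094021}, locally uniform exponential bounds from \autoref{250313152819}, and maximality from the fact that the sector is bounded by Stokes lines where the Laplace ray meets a Borel singular value. The only cosmetic difference is that the paper carries out this argument at the level of $\hat{f}$ on the $z$-plane and transfers to $\hat{q}$ via \autoref{250224110231}, whereas you patch directly in the $t$-plane; the logical content is the same.
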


This Proposition will follow as a consequence of \autoref{250305151549}.

\subsubsection{Existence and uniqueness of deformed tritronquée solutions.}
We conclude this subsection with an existence and uniqueness result for special solutions of the deformed Painlevé I equation, which follows immediately from \autoref{250210133432}.

\begin{corollary}[Exact Existence and Uniqueness Theorem]
\label{250210115249}
Fix any phase $\alpha \in \SS^1$ and select a Stokes sector $V \subset \CC_\tau$.
Let $U \subset \CC_t$ be its projection to the $t$-plane, and let $\hat{q} (t,\hbar)$ be the distinguished branch on $U$ of the multivalued formal solution.
Then the deformed Painlevé I equation has a unique holomorphic solution $q = q (t,\hbar)$ defined on a domain $\UU \subset U \times \CC_\hbar$ which is locally uniformly asymptotic to $\hat{q}$ of uniform factorial type:
\begin{eqntag}
\label{241025133201}
	q (t, \hbar) \simeq \hat{q} (t, \hbar)
\qquad
	\text{as $\hbar \to 0$ unif. along $\sfop{Arc}_\pi (\alpha)$,}
\end{eqntag}
locally uniformly for all $t \in U$.
\end{corollary}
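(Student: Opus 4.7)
The plan is to take $q \coleq q_\alpha = s_\alpha\big[\,\hat{q}\,\big]$, the Borel resummation furnished by \autoref{250210133432}, and then verify two things: that $q_\alpha$ actually solves the deformed Painlev\'{e}~I equation \eqref{250204124541}, and that any other holomorphic function with the stated asymptotic property must coincide with it. Existence of the candidate itself requires no new work: \autoref{250210133432} already produces a holomorphic function $q_\alpha$ on a domain $\UU \subset U \times \CC_\hbar$ of exactly the required shape, locally uniformly asymptotic to $\hat{q}$ of uniform factorial type along $\sfop{Arc}_\pi(\alpha)$.

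To see that $q_\alpha$ solves \eqref{250204124541}, I would invoke the standard fact that on its natural domain the Borel resummation operator $s_\alpha$ is a morphism of differential $\CC$-algebras. Multiplication on the series side corresponds to convolution on the Borel side, which is compatible with the Laplace integral thanks to the exponential bounds from \autoref{250313152819}. Differentiation with respect to $t$ commutes with $s_\alpha$ by differentiation under the integral sign, again justified by those exponential bounds together with the local uniformity in $t$ recorded in \autoref{250313152819}. Applying $s_\alpha$ to the formal identity $\hbar^2 \ddot{\hat{q}} - 6\hat{q}^2 - t = 0$, which holds by \autoref{250204162846}, therefore yields the genuine analytic identity $\hbar^2 \ddot{q}_\alpha - 6 q_\alpha^2 - t = 0$ on $\UU$.

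For uniqueness, suppose $q'$ is another holomorphic solution on some domain $\UU'$ with the same asymptotic property. At each $t_0 \in U$, the difference $q'(t_0,\hbar) - q_\alpha(t_0,\hbar)$ is holomorphic on a sectorial neighbourhood of the origin with opening $\sfop{Arc}_\pi(\alpha)$ and asymptotic to zero to all orders with uniform factorial bounds, so Watson's lemma (or equivalently the Nevanlinna characterisation of Borel summability) forces it to vanish. This is precisely the uniqueness clause already asserted by \autoref{250210133432}, which is why the corollary reduces to that proposition.

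I expect no genuine obstacle: all the hard analytic input, namely endless analytic continuation of $\hat{\omega}$, the exponential bounds on the global Borel transform, and the resulting Gevrey-$1$ asymptotics of the Laplace integral, has already been carried out in Propositions~\ref{250210171316}, \ref{250313152819}, and \ref{250210133432}. The only step deserving any care is confirming that Borel resummation truly preserves the nonlinear differential equation, but this reduces to a convolution estimate implicit in the exponential bounds already established in \autoref{250313152819}.
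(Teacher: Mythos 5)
Your proposal is correct and follows essentially the paper's route: the corollary is deduced directly from \autoref{250210133432}, with existence given by the Borel resummation $q_\alpha$ and uniqueness being exactly that proposition's ``unique holomorphic function with this property'' clause (the Watson/Nevanlinna-type rigidity you cite). The one step you argue differently is that $q_\alpha$ actually solves \eqref{250204124541}: you invoke the differential-algebra-morphism property of $s_\alpha$ (products $\leftrightarrow$ convolutions, differentiation under the Laplace integral, justified by \autoref{250313152819}), whereas the paper obtains the solution property implicitly through the Borel--Laplace equivalence of \autoref{250215122306} combined with the change of variables of \autoref{250215120832}; both justifications are standard and valid.
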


\begin{definition}[Deformed Tritronquée Solutions]
\label{250304154226}
We refer to the solutions of the deformed Painlevé I equation defined in \autoref*{250210115249} as the \dfn{deformed tritronquée solutions} or the \dfn{deformed $0$-parameter solutions}.
\end{definition}

Observe that there are exactly five distinct deformed tritronquée solutions for every fixed phase $\alpha$, one of each of the five Stokes sectors in the $t$-plane with phase $\alpha$.

\subsection{The Stokes Phenomenon}
\label{250313153813}
In this subsection, we describe the failure of Borel summability in a Stokes direction and the associated Stokes phenomenon.

\begin{proposition}[Lateral Borel Summability]
\label{250225101025}
Fix any $t_0 \in \CC_t^\ast$, select a branch of $\hat{q} (t, \hbar)$ near $t_0$, and suppose $\alpha$ is a Stokes direction at $t_0$.
Then the formal power series $\hat{q} (t_0, \hbar) \in \CC \bbrac{\hbar}$ is laterally Borel summable in the direction $\alpha$.
Thus, the left and right lateral Borel resummations 
\begin{eqntag}
\label{250315125248}
	q_\alpha^\textup{L/R} (t_0, \hbar) 
		\coleq s_\alpha^\textup{L/R} \big[ \, \hat{q} \, \big] (t_0, \hbar)
		= q_0 + \Laplace_\alpha^\textup{L/R} \big[ \, \omega \, \big] (t_0, \hbar)
\end{eqntag}
in the direction $\alpha$ define two holomorphic functions on a sectorial neighbourhood $S \subset \CC_\hbar$ of the origin with opening $\sfop{Arc}_\pi (\alpha)$, each of which is asymptotic to $\hat{q} (t_0, \hbar)$ of factorial type:
\begin{eqntag}
\label{250312152300}
	q_\alpha^\textup{L/R} (t_0, \hbar) \simeq \hat{q} (t_0, \hbar)
\qquad
	\text{as $\hbar \to 0$ along $\sfop{Arc}_\pi (\alpha)$}
\fullstop
\end{eqntag}
\end{proposition}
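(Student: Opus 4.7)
The plan is to leverage the structural results already established about the global Borel transform $\omega$. Since $\alpha$ is a Stokes direction at $t_0$, one of the two Borel singular values $\xi_\pm$ of Corollary~\ref{250304131641} lies on the ray $e^{i\alpha}\RR_+$; call it $\xi_0$. By that corollary, $\omega(t_0,\xi)$ is holomorphic on the universal cover of $\CC_\xi \smallsetminus \set{\xi_+,\xi_-}$, and by Corollary~\ref{250224200423} each of its branches satisfies an exponential bound $|\omega(t_0,\xi)| \leq C e^{K|\xi|}$ for suitable constants $C, K > 0$. I would define the lateral contour $\gamma_\alpha^{\textup{L}}$ (respectively $\gamma_\alpha^{\textup{R}}$) as the concatenation of the segment of $e^{i\alpha}\RR_+$ from $0$ to a point just before $\xi_0$, a small semicircular detour around $\xi_0$ on the left (respectively right), and the remaining portion of $e^{i\alpha}\RR_+$ beyond $\xi_0$ read on the sheet of the universal cover reached after the chosen detour. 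Both contours avoid $\set{\xi_+,\xi_-}$ and are eventually straight rays in the direction $\alpha$.

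Next I would verify that the lateral Laplace integrals
\begin{eqn}
	\Laplace_\alpha^{\textup{L/R}}[\omega](t_0,\hbar)
		\coleq \int_{\gamma_\alpha^{\textup{L/R}}} e^{-\xi/\hbar}\,\omega(t_0,\xi)\,d\xi
\end{eqn}
define holomorphic functions of $\hbar$ on a sectorial neighbourhood $S \subset \CC_\hbar$ of the origin with opening $\sfop{Arc}_\pi(\alpha)$. Writing $\hbar = |\hbar| e^{i\phi}$, absolute convergence of the tail integral along the ray in the direction $\alpha$ requires $\cos(\alpha - \phi) > K|\hbar|$, which carves out precisely such a sectorial neighbourhood whose opening is the open halfplane $\arg(\hbar) \in (\alpha - \pi/2, \alpha + \pi/2)$ and whose radius shrinks near the boundary rays. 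Holomorphy on $S$ then follows by differentiation under the integral, and independence of each integral on the particular shape of its detour (within a fixed lateral class) is immediate from Cauchy's theorem applied to $\omega$ on the punctured neighbourhood of $\xi_0$.

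It remains to prove that each $q_\alpha^{\textup{L/R}}(t_0,\hbar) \coleq q_0(t_0) + \Laplace_\alpha^{\textup{L/R}}[\omega](t_0,\hbar)$ is asymptotic to $\hat{q}(t_0,\hbar)$ of factorial type as $\hbar \to 0$ along $\sfop{Arc}_\pi(\alpha)$. This is a Watson-type argument adapted to the lateral setting. By Proposition~\ref{250312151012}, the Borel transform $\hat{\omega}(t_0,\xi)$ is convergent on some open disk around $\xi = 0$ of radius strictly less than $|\xi_0|$, so for any $N \geq 0$ one can write $\omega(t_0,\xi) = \sum_{n=0}^{N-1}\frac{q_{n+1}(t_0)}{(n+1)!}\xi^n + R_N(t_0,\xi)$ on that disk; the factorial-type bound on the coefficients of $\hat{q}$ furnished by Proposition~\ref{250224202049} translates into $|R_N(t_0,\xi)| \leq C_0 M_0^N N!\, |\xi|^N$ uniformly on a slightly smaller disk. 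I would then split each lateral contour into a near piece inside this disk and a far piece, integrate the polynomial summand termwise via $\int_0^{\infty e^{i\alpha}} e^{-\xi/\hbar} \xi^n d\xi = n!\,\hbar^{n+1}$ (correcting by the exponentially small error coming from truncating the ray at the disk boundary), and bound the remainder integral on the near piece by $C_1 M_1^N N!\, |\hbar|^{N+1}$ using the estimate on $R_N$. The contribution of the far piece is controlled by the exponential bound on $\omega$ together with the exponential smallness of $e^{-\xi/\hbar}$ for $\hbar \in S$, and is absorbed into the same factorial-type remainder. Adding back the constant $q_0(t_0)$ yields the partial sum $\sum_{n=0}^{N} q_n(t_0)\hbar^n$ with the required factorial-type error estimate.

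The main obstacle is the bookkeeping around $\xi_0$: the portions of $\gamma_\alpha^{\textup{L}}$ and $\gamma_\alpha^{\textup{R}}$ beyond the singularity lie on distinct sheets of the universal cover of the twice-punctured $\xi$-plane, so one must verify that the exponential bound of Corollary~\ref{250224200423} applies to the particular branches reached after either detour, with constants that can be chosen uniformly. The global geometric picture of Propositions~\ref{250210171316} and~\ref{250313152819} — in particular the fact that the Borel covering space is $\tilde{M}_{\tau} = \widetilde{M_\tau \smallsetminus S_\tau}$ fibrewise — ensures that both post-detour rays lift to sectorial neighbourhoods at infinity in $\tilde{M}_\tau$ of the kind to which the exponential-type estimate of Proposition~\ref{250313152819} applies, so the Watson estimates above hold on both lateral contours with uniform constants and the proposition follows.
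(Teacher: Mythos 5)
Your proposal is correct and takes essentially the same route as the paper: the paper likewise deduces lateral Borel summability directly from the endless analytic continuation of $\omega(t_0,\xi)$ onto the universal cover of $\CC_\xi \smallsetminus \{\xi_+,\xi_-\}$ together with the exponential-type bounds (\autoref{250210171316}, \autoref{250313152819}, via their analogues \autoref{250314125007} and \autoref{250314125137} for $\phi$), with the Gevrey-1 asymptotics being exactly the standard Watson-type estimate you spell out for the lateral contours. One small correction to your bookkeeping: since the Borel coefficients $q_{n+1}(t_0)/(n+1)!$ grow only geometrically by \autoref{250224202049}, the tail bound should be $|R_N(t_0,\xi)| \leq C_0 M_0^N |\xi|^N$ with no $N!$, the factorial then emerging from the Laplace integral $\int_0^\infty e^{-c s/|\hbar|} s^N \, ds \leq N!\,(|\hbar|/c)^{N+1}$; your stated remainder bound with the extra $N!$ would only deliver a Gevrey-2 error, whereas the corrected version gives the required factorial-type asymptotics.
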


Recall that the left and right lateral Laplace transforms in the direction $\alpha$ are given by
\begin{eqntag}
\label{250315125628}
	\Laplace_\alpha^\textup{L/R} \big[ \, \omega \, \big] (t_0, \hbar)
		\coleq \int_{e^{i \alpha} \RR_+^\textup{L/R}} e^{-\xi/\hbar} \omega (t_0, \xi) \d{\xi}
\fullstop{,}
\end{eqntag}
where the contours $e^{i \alpha} \RR_+^\textup{L}$ and $e^{i \alpha} \RR_+^\textup{R}$ are the infinite straight rays in the direction $\alpha$ which avoid $\xi_0$ on the left and on the right, respectively.
Let us also stress the absence of the qualifier ``uniform'' in \eqref{250312152300}, for otherwise there could only be one such holomorphic function.
This would mean that the left and right lateral Borel resummations of $\hat{q}$ actually agree in the direction $\alpha$, which is not typically the case.
This discrepancy is an example of the Stokes phenomenon, which we now describe more precisely.

\begin{definition}[Stokes jump]
\label{250314213817}
Fix any $t_0 \in \CC_t^\ast$, select a branch of $\hat{q} (t, \hbar)$ near $t_0$, and suppose $\alpha$ is a Stokes direction at $t_0$.
Let $q_\alpha^\textup{L/R} (t_0, \hbar) \in \cal{O} (S)$ be the two lateral Borel resummations of $\hat{q} (z_0, \hbar)$ in the direction $\alpha$ from \autoref{250225101025}.
The \dfn{Stokes jump} across $\alpha$ from right to left is the difference between the left and right lateral Borel resummations of $\hat{q}$ in the direction $\alpha$:
\begin{eqntag}
\label{250314214012}
	\Delta_{\alpha} \hat{q} (t_0, \hbar) 
		\coleq q_\alpha^\textup{L} (t_0, \hbar) - q_\alpha^\textup{R} (t_0, \hbar)
\fullstop
\end{eqntag}
\end{definition}

The Stokes jump is clearly a holomorphic function on $S$; i.e., $\Delta_{\alpha} \hat{q} (t_0, \hbar) \in \cal{O} (S)$.
Furthermore, since both $q_\alpha^\textup{L}$ and $q_\alpha^\textup{R}$ admit the same asymptotic expansion $\hat{q}$ as $\hbar \to 0$ in $S$, the Stokes jump is asymptotic to $0$ of factorial type:
\begin{eqntag}
\label{250314214016}
	\Delta_{\alpha} \hat{q} (t_0, \hbar) \simeq 0
\qquad
	\text{as $\hbar \to 0$ along $\sfop{Arc}_\pi (\alpha)$}
\fullstop
\end{eqntag}
Again, let us point out the absence of the qualifier ``uniformly'' in \eqref{250314214016}, for otherwise it would force the Stokes jump to be identically zero.

\begin{definition}[variation]
\label{250314214037}
Fix any $t_0 \in \CC_t^\ast$, select a branch of $\hat{q} (t, \hbar)$ near $t_0$, and suppose $\alpha$ is a Stokes ray at $t_0$.
Let $q_\alpha^\textup{L/R} (t_0, \hbar) \in \cal{O} (S)$ be the two lateral Borel resummations of $\hat{q} (z_0, \hbar)$ in the direction $\alpha$ from \autoref{250225101025}.
Let $\xi_0 \in \CC_\xi$ the Borel singular value corresponding to the visible singularity in the direction $\alpha$ at $t_0$; namely, $\xi_0$ is given by \eqref{250314214209} for some choice of sign.
The \dfn{variation} of the multivalued function $\omega (t_0, \xi)$ \textit{at $\xi_0$} is the difference between its values on two consecutive sheets:
\begin{eqntag}
\label{250314214433}
	\Delta_{\xi_0} \omega (t_0, \xi) 
		\coleq \omega (t_0, \xi_0 + \xi^\textup{L}) - \omega (t_0, \xi_0 + \xi^\textup{R})
\fullstop
\end{eqntag}
where $\xi^\textup{L}$ and $\xi^\textup{R}$ are two preimages of $\xi$ on the universal cover of the punctured neighbourhood of $\xi_0$ related to each other by the anti-clockwise primitive generator of the deck transformations; i.e., $\xi^\textup{R} = e^{2 \pi i } \xi^\textup{L}$.
\end{definition}

The variation $\Delta_{\xi_0} \omega (t_0, \xi)$ may be regarded as a sectorial germ at the origin in the Borel $\xi$-plane.
But since both $\omega (t_0, \xi_0 + \xi^\textup{L})$ and $\omega (t_0, \xi_0 + \xi^\textup{R})$ admit analytic continuation in the direction $\alpha$ with exponential type at infinity, we obtain the following characterisation of the Stokes jump in terms of the Laplace transform of the Borel transform's variation at the visible singularity.

\begin{proposition}[Stokes Phenomenon]
\label{250314214443}
Fix any $t_0 \in \CC_t^\ast$, select a branch of $\hat{q} (t, \hbar)$ near $t_0$, and suppose $\alpha$ is a Stokes ray at $t_0$.
Let $q_\alpha^\textup{L/R} (t_0, \hbar) \in \cal{O} (S)$ be the two lateral Borel resummations of $\hat{q} (z_0, \hbar)$ in the direction $\alpha$ from \autoref{250225101025}.
Let $\xi_0 \in \CC_\xi$ the Borel singular value corresponding to the visible singularity in the direction $\alpha$ at $t_0$; namely, $\xi_0$ is given by \eqref{250314214209} for some choice of sign.
Then for all $\hbar \in S$, the Stokes jump across $\alpha$ is
\begin{eqntag}
\label{250315130035}
	\Delta_{\alpha} \hat{q} (t_0, \hbar)
	= e^{-\xi_0/\hbar} \Laplace_\alpha \big[ \Delta_{\xi_0} \omega \big] (t_0, \hbar)
\fullstop{,}
\end{eqntag}
where $\Delta_{\xi_0} \omega (t_0, \xi)$ is the variation of $\omega$ at $\xi_0$ given by \eqref{250314214433}.
\end{proposition}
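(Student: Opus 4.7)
The plan is a contour-deformation argument applied to the lateral Laplace representations \eqref{250315125248}. Writing out the difference,
\begin{eqn}
	\Delta_{\alpha} \hat{q} (t_0, \hbar)
		= \int_{e^{i\alpha}\RR_+^\textup{L}} e^{-\xi/\hbar} \omega (t_0, \xi) \d{\xi}
		- \int_{e^{i\alpha}\RR_+^\textup{R}} e^{-\xi/\hbar} \omega (t_0, \xi) \d{\xi}
\fullstop{,}
\end{eqn}
the two contours differ only by how they detour around the visible Borel singular value $\xi_0$; by \autoref{250210171316} the other singular value $-\xi_0$ sits on the opposite ray at $t_0$ and plays no role in this deformation. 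The goal is to collapse the difference onto a contour running from $\xi_0$ out to infinity along the ray $\xi_0 + e^{i\alpha} \RR_+$, traversed on two consecutive sheets of the universal cover of a punctured neighbourhood of $\xi_0$. This is precisely the geometric shape of the Laplace transform of a sectorial germ at $\xi_0$.

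To carry this out, I would first truncate each ray at distance $R$ from the origin and close the thin region they enclose by a circular arc of radius $R$. The exponential bounds on $\omega$ supplied by \autoref{250313152819}, combined with the exponential decay of $e^{-\xi/\hbar}$ for $\hbar$ in the half-plane bisected by $\alpha$, force the arc contribution to vanish as $R \to \infty$, provided $1/|\hbar|$ exceeds the growth rate $\K$ appearing in \eqref{250313151019}. Cauchy's theorem, applied on the Borel covering space $\tilde{M}_{\tau_0}$ where $\omega$ is single-valued holomorphic, then lets me replace the difference by a Hankel-type contour encircling $\xi_0$. Deforming this contour to hug the ray $\xi_0 + e^{i\alpha}\RR_+$ on both sheets and closing at $\xi_0$ by a small circle of radius $\epsilon$, substituting $\xi = \xi_0 + \zeta$, and factoring out $e^{-\xi_0/\hbar}$, the two ray contributions reassemble into
\begin{eqn}
	e^{-\xi_0/\hbar} \int_{e^{i\alpha}\RR_+} e^{-\zeta/\hbar} \bigl[ \omega(t_0, \xi_0 + \zeta^\textup{L}) - \omega(t_0, \xi_0 + \zeta^\textup{R}) \bigr] \d{\zeta}
	= e^{-\xi_0/\hbar} \Laplace_\alpha \big[ \Delta_{\xi_0} \omega \big] (t_0, \hbar),
\end{eqn}
using the definition of the variation from \autoref{250314214037}; convergence of this Laplace integral at infinity in $\zeta$ is immediate from the exponential type of $\omega$ inherited by $\Delta_{\xi_0}\omega$.

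The step requiring the most care, and which I would treat as the main obstacle, is verifying that the contribution from the small circle of radius $\epsilon$ around $\xi_0$ vanishes as $\epsilon \to 0$. Here I would invoke the geometric description of \autoref{250210171316}: the central charge $\Z_\tau$ has ramification order $5$ at each point of $S_\tau$, so a local uniformiser $w$ on $M_{\tau_0}$ near a point of $S_\tau$ satisfies $\zeta = w^5$, and $\omega$ pulled back to $\tilde{M}_{\tau_0}$ is holomorphic in $w$, possibly with an additional logarithmic factor from the passage to the universal cover. In any case $\omega$ is locally integrable in $\zeta$ near $\xi_0$, so that the small-circle contribution is bounded by $2\pi \epsilon \cdot \sup_{|\zeta| = \epsilon} |e^{-\zeta/\hbar}\omega|$, which tends to zero as $\epsilon \to 0$. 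Combining this with the contour deformation above yields the claimed identity \eqref{250315130035}, and the asymptotic property \eqref{250314214016} of the Stokes jump follows automatically from the exponential prefactor $e^{-\xi_0/\hbar}$ being beyond all orders as $\hbar \to 0$ inside $S$.
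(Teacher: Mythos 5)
Your proposal is correct and follows essentially the same route as the paper's own proof: the difference of the two lateral Laplace integrals is rewritten as a keyhole (Hankel) contour around $\xi_0$, lifted to the Borel surface via $\Z_{\tau_0}$, collapsed onto the ray $\xi_0 + e^{i\alpha}\RR_+$ on two consecutive sheets, and the shift $\xi \to \xi_0 + \xi$ extracts the factor $e^{-\xi_0/\hbar}$ and the variation $\Delta_{\xi_0}\omega$. You are in fact somewhat more explicit than the paper about why the arc at infinity and the small circle at $\xi_0$ contribute nothing, details the paper absorbs into the word ``homotopic''.
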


\begin{proof}
This is a matter of a simple calculation.
Expanding \eqref{250314214012} according to the definition \eqref{250315125248} and especially \eqref{250315125628}, we have:
\begin{eqn}
	\Delta_{\alpha} \hat{q} (t_0, \hbar) 
		= q_\alpha^\textup{L} (t_0, \hbar) - q_\alpha^\textup{R} (t_0, \hbar)
		= \Laplace_\alpha^\textup{L} \big[ \, \omega \, \big] (t_0, \hbar) 
			- \Laplace_\alpha^\textup{R} \big[ \, \omega \, \big] (t_0, \hbar)
		= \int_{\mathscr{C}} e^{-\xi/\hbar} \omega (t_0, \xi) \d{\xi}
\end{eqn}
where the integration contour $\mathscr{C}$ homotopic to the keyhole contour around $\xi_0$ obtained by concatenating the inverse of the contour $e^{i \alpha} \RR_+^\textup{R}$ with $e^{i \alpha} \RR_+^\textup{L}$.
Let $\tau_0$ be root of $\tau^2 = -t/6$ corresponding to the chosen branch of $\hat{q}$ near $t_0$.
Then the two arms of this keyhole contour lift via the central charge $\Z_{\tau_0} : M_{\tau_0} \to \CC_\xi$ to infinite paths on two consecutive sheets of the Borel surface emanating from the visible Borel singularity above $\xi_0$.
Therefore, the above contour integral may be written like so:
\begin{eqn}
	\Delta_{\alpha} \hat{q} (t_0, \hbar)
	= 
	\int_{\xi_0}^{e^{i \alpha} \cdot \infty} e^{-\xi/\hbar} \Big( \omega (t_0, \xi^\textup{L}) - \omega (t_0, \xi^\textup{R}) \Big) \d{\xi}
\fullstop{,}
\end{eqn}
where $\omega (t_0, \xi^\textup{L})$ and $\omega (t_0, \xi^\textup{R})$ denote the values at $(t_0, \xi)$ of the two relevant branches of the multivalued function $\omega (t_0, \xi)$.
Shifting the integration variable $\xi \to \xi_0 + \xi$, the factor $e^{-\xi_0/\hbar}$ appears and the integral becomes the Laplace transform of the variation \eqref{250314214433}:
\begin{eqn}
	e^{-\xi_0/\hbar} \int_{0}^{e^{i \alpha} \cdot \infty} e^{-\xi/\hbar} 
		\Big( \omega (t_0, \xi_0 + \xi^\textup{L}) - \omega (t_0, \xi_0 + \xi^\textup{R}) \Big)
		\d{\xi}
	= e^{-\xi_0/\hbar} \Laplace_\alpha \big[ \Delta_{\xi_0} \omega \big] (t_0, \hbar)	
\fullstop
\end{eqn}
So we obtain the desired identity \eqref{250315130035}.
\end{proof}

\section{The Borel-Laplace Transformation}
\label{250218190702}

In this section, we derive the equation for the Borel transform $\hat{\omega} = \Borel [\, \hat{q} \,]$ of the formal solution $\hat{q}$ of the deformed Painlevé I equation.

\paragraph{}\removespace
First, let us recall some elementary properties of the Borel-Laplace transformation.
The Borel transform of a monomial $\hbar^n$ with $n \geq 1$ is given by $\Borel [\hbar^n] = \xi^{n-1} / (n-1)!$, whilst $\Borel [c] = 0$ whenever $c$ is independent of $\hbar$.
The Borel transform preserves $t$-derivatives and converts the standard product of function into the convolution product.
Namely, if $\hat{f}, \hat{g}$ are any two formal power series in $\hbar$ with $t$-dependent coefficients, then $\Borel [\, \del_t \hat{f} \,] = \del_t \Borel [\, \hat{f} \,]$ and $\Borel [\, \hat{f} \cdot \hat{g} \,] = \Borel [\, \hat{f} \,] \ast \Borel [\, \hat{g} \,] + f_0 \Borel [\, \hat{g} \,] + g_0 \Borel [\, \hat{f} \,]$ where $f_0 (t) \coleq \hat{f} (t,0)$ and $g_0 (t) \coleq \hat{g} (t,0)$.
From these rules, one can deduce the following two identities:
\begin{eqn}
	\Borel [\, \hbar \hat{f} \,] = 1 \ast \Borel [\, \hat{f} \,] = \int_0^\xi \Borel [\, \hat{f} \,] \d{\xi'}
\qqtext{and}
	\Borel [\, \hbar^\inv \hat{f} \,] = \del_\xi \Borel [\, \hat{f} \,]
\fullstop
\end{eqn}

\paragraph{}\removespace
The most naive approach to obtaining an equation for the Borel transform $\hat{\omega}$ is to apply the Borel-Laplace transformation directly to the deformed Painlevé I equation \eqref{250204124541}.
From the standard rules recalled above it we find that $\Borel [\, \del^2_t \hat{q} \,] = \del^2_t \hat{\omega}$ and $\Borel [\, \hat{q}^2 \,] = \hat{\omega} \ast \hat{\omega} + 2 q_0 \hat{\omega}$
Consequently, the Borel transform $\hat{\omega}$ satisfies the following integro-differential equation in the unknown variable $\omega$:
\begin{eqntag}
\label{250315103819}
	\xi \ast \del^2_t \omega + \xi q_0 = 6 \omega \ast \omega + 2q_0 \omega
\fullstop
\end{eqntag}
This equation can be slightly improved if we make a change of variables $q \to \Q$ given by $q = q_0 + \hbar \Q$ and rewrite \eqref{250315103819} instead as an equation for the Borel transform of the power series $\hat{\Q} \coleq \hbar^\inv (\hat{q} - q_0)$.
Since $6q_0^2 + t = 0$, the deformed Painlevé I equation \eqref{250204124541} gets transformed under this change of variables into the equation $\hbar^2 \ddot{q}_0 + \hbar^3 \ddot{\Q} = 12 q_0 \hbar \Q + 6 \hbar^2 \Q^2$.
If we divide through by $\hbar^3$ and rearrange, we find
\begin{eqn}
	\ddot{\Q} - 12 q_0 \hbar^\inv \Q = \hbar^\inv (6\Q^2 - \ddot{q}_0)
\fullstop
\end{eqn}
Consequently, the Borel transform $\hat{\!\mathit{\Omega}} \coleq \Borel [\, \hat{\Q} \,]$ satisfies the following non-linear partial integro-differential equation in the unknown variable $\mathit{\Omega}$:
\begin{eqntag}
\label{250311185553}
	\big( \del^2_t - 12 q_0 \del^2_\xi \big) \mathit{\Omega} 
		= 6 \del_\xi (\mathit{\Omega} \ast \mathit{\Omega} )
\fullstop
\end{eqntag}
Note that the Borel transforms $\hat{\omega}$ and $\hat{\!\mathit{\Omega}}$ are related by the identity
\begin{eqntag}
\label{250315105907}
	\hat{\omega} = 1 \ast \hat{\!\mathit{\Omega}}
\qqtext{i.e.}
	\hat{\omega} (t,\xi) = \int_0^\xi \hat{\!\mathit{\Omega}} (t,s) \d{s}
\fullstop
\end{eqntag}
In this paper, we will construct a global holomorphic solution of \eqref{250315103819} or equivalently of \eqref{250311185553}.
However, instead of working with these equations directly, we take a different approach by passing to the associated first-order Hamiltonian system for the deformed Painlevé I equation.
We further conveniently simplify this Hamiltonian system in \autoref{250208144823} before applying the Borel-Laplace transformation in \autoref{250221201337}.

\subsection{The Associated System}
\label{250208144823}

Let us motivate some of the constructions in this subsection.
Recall that, when analysing a nonlinear differential system $\dot{\mathbf{x}} = \F (\mathbf{x})$, a common and powerful technique in dynamical systems is to consider its linearisation around a stationary solution $\mathbf{x}_0$ (which means $\F (\mathbf{x}_0) = 0$).
This amounts to replacing the nonlinear matrix-valued function $\F (\mathbf{x})$ on the righthand side with the linear matrix-valued function $\J_0 \mathbf{x}$ given as the multiplication by the Jacobian $\J_0$ of $\F$ at the stationary solution $\mathbf{x}_0$.
Then the Hartman-Grobman Theorem asserts that, locally around $\mathbf{x}_0$, the nonlinear differential system $\dot{\mathbf{x}} = \F (\mathbf{x})$ behaves exactly like its linearisation $\dot{\mathbf{x}} = \J_0 \mathbf{x}$.
Meanwhile, the behaviour of the linearisation is completely governed by the eigenvalues of the Jacobian $\J_0$.
Peculiarly, our analysis of the deformed Painlevé I equation \eqref{250204124541} has a similar spirit.

\subsubsection{The classical Jacobian.}
Instead of the deformed Painlevé I equation \eqref{250204124541}, we work with the equivalent Hamiltonian system:
\begin{eqntag}
\label{241030114918}
	\hbar \del_t \mtx{ q \\ p }
	= \F (q,p,t,\hbar) = \mtx{ p \\ 6q^2 + t }
\fullstop
\end{eqntag}
The Jacobian of this system is by definition the matrix
\begin{eqn}
	\J (q,p)
		\coleq \frac{\del \F}{\del (q,p)}
		= \mtx{ \frac{\del \F_1}{\del q} & \frac{\del \F_1}{\del p}
			\\ \frac{\del \F_2}{\del q} & \frac{\del \F_2}{\del p}}
	 	= \mtx{ 0 & 1 \\ 12 q & 0}
\fullstop
\end{eqn}
If $\hat{q}$ is any formal solution lent to us by \autoref{250204162846}, then we may evaluate the Jacobian $\J$ at the leading-order solution $(q_0, p_0) = (q_0, 0)$, which results in the holomorphic matrix
\begin{eqn}
	\J_0 (t) \coleq \J \big|_{(q,p) = ( q_0 (t), 0 )} = \mtx{ 0 & 1 \\ 12 q_0 (t) & 0 }
\fullstop
\end{eqn}
We refer to it as the \dfn{classical Jacobian} of Painlevé I.
The classical Jacobian is a multivalued matrix function on the $t$-plane which can be viewed on the $\tau$-plane as a well-defined entire matrix-valued function that we denote by the same letter:
\begin{eqntag}
\label{250206213044}
	\J_0 (\tau) \coleq \mtx{ 0 & 1 \\ 12 \tau & 0 }
\fullstop
\end{eqntag}
Away from the origin, $\J_0$ has two nonvanishing holomorphic, but multivalued, eigenvalues $\pm \sqrt{12 \tau}$.
To remove the square-root ambiguity, we pass to another double cover
\begin{eqntag}
\label{250206214139}
	\pi : \CC_z \to \CC_\tau
\qtext{given by the relation}
	z^2 = 12 \tau
\fullstop
\end{eqntag}
Note that $\CC_z$ is also a fourfold cover of the $t$-plane:
\begin{eqntag}
\label{250208173123}
	\CC_z \to \CC_\tau
\qtext{given by the relation}
	z^4 = - 24 t
\fullstop
\end{eqntag}

\begin{definition}[{cf. \cite[Definition 3]{MY220915172017}}]
\label{250208105949}
We refer to the ramification point at $z = 0$ as the \dfn{transition point}, and to the branch points at $\tau = 0$ or $t = 0$ as the \dfn{turning points}.
\end{definition}

The turning point $t = 0$ or $\tau = 0$ is the only point where the classical Jacobian $\J_0$ is not invertible.
The two multivalued eigenvalues of $\J_0$ are just the two branches of the same globally defined holomorphic function $z$ on this double cover $\CC_z$.

An obvious advantage of the fourfold cover $\CC_z$ is that by regarding the leading-order Jacobian $\J_0$ as defined on the $z$-plane,
\begin{eqntag}
	\J_0 (z) \coleq \mtx{ 0 & 1 \\ z^2 & 0 }
\fullstop{,}
\end{eqntag}
it is globally diagonalisable away from the transition point.
Take the holomorphic matrix
\begin{eqnstag}
\label{250208170723}
	\G (z) \coleq \mtx{ 1 & 1 \\ -z & z}
\qtext{with}
	\G (z)^\inv = \frac{1}{2z} \mtx{ z & -1 \\ z & + 1}
\qtext{so}
	\G^\inv \J_0 \G = \mtx{ -z & 0 \\ 0 & + z }
\fullstop
\end{eqnstag}
In the next subsection, we will use $\G$ as a gauge transformation, so let us record here the resulting gauge term:
\begin{eqn}
	\G^\inv \del_z \G = \frac{1}{2z} \mtx{ +1 & -1 \\ -1 & +1}
\fullstop	
\end{eqn}

\subsubsection{A preliminary transformation.}
\label{250219104611}
Now we make a change of variables
\begin{eqntag}
\label{250215121808}
	(q,p) \leftrightarrow (\Q,\P)
\qqtext{given by}
	q = q_0 + \hbar \Q
\qtext{and}
	p = \hbar \P
\fullstop
\end{eqntag}
Recalling that $p_0 = 0$ and $p_1 = \del_t q_0$, the Hamiltonian system \eqref{241030114918} gets transformed into
\begin{eqntag}
\label{241029141541}
	\hbar \del_t
	\mtx{ \Q \\ \P }
	=
	\mtx{ \P - p_1 \\ 12 q_0 \Q + 6 \hbar \Q^2 }
	=
	\S (t,\hbar; \Q, \P)
	\coleq
	\mtx{ 0 & 1 \\ 12q_0 & 0 }
	\mtx{ \Q \\ \P }
	+
	\mtx{ - p_1 \\ 6 \hbar \Q^2 }
\fullstop
\end{eqntag}
Notice that $\S (t,0;0,0) = 0$ and that
\begin{eqntag}
	\frac{\del \S (t,0; \Q, \P)}{\del (\Q,\P)} \bigg|_{(\Q,\P) = (0,0)}
		= \J_0 (t)
		= \mtx{ 0 & 1 \\ 12q_0 (t) & 0 }
\fullstop{,}
\end{eqntag}
where $\J_0$ is the classical Jacobian.
As explained in \autoref{250208144823}, $\J_0$ admits a global diagonalisation $\G^\inv \J_0 \G = \diag (-z, + z)$ when regarded on the fourfold cover $\CC_z \to \CC_t$ given by $z^4 = - 24 t$.
We therefore make two more changes of variables: first, we do the change
\begin{eqntag}
\label{250215121526}
	(t, \Q, \P) \leftrightarrow (z, \tilde{\Q}, \tilde{\P})
\qtext{given by}
	\tilde{\Q} (z,\hbar) \coleq \Q (t,\hbar)
\qtext{and} \tilde{\P} (z, \hbar) \coleq \P (t, \hbar)
\fullstop{,}
\end{eqntag}
in the sense that $\tilde{\Q} (z,\hbar) = \Q \big( -z^4 / 24, \hbar)$, etc; and then we do the change
\begin{eqntag}
\label{250215121531}
	(z, \tilde{\Q}, \tilde{\P}) \leftrightarrow (z, f_-, f_+)
	\qtext{given by}
	\mtx{ \tilde{\Q} \\ \tilde{\P} }
		= \G \mtx{ f_- \\ f_+ }
		= \mtx{ f_+ + f_- \\ z (f_+ - f_-) }
\fullstop
\end{eqntag}
Under these changes, --- and after defining $\tilde{p}_1 (z) \coleq p_1 (t)$, using the relation $\del_t = - 6 z^{-3} \del_z$, and multiplying by $\G^\inv$ on the left --- the differential system \eqref{241029141541} is transformed into the following system:
\begin{eqn}
	- \frac{6\hbar}{z^3} \del_z \mtx{ f_- \\ f_+}
	- \mtx{-z & 0 \\ 0 & +z} \mtx{f_- \\ f_+}
	=	- \frac{\tilde{p}_1}{2} \mtx{1 \\ 1}
		- \mtx{-1 \\ +1} \left( \frac{3 \hbar}{z^4}  (f_+ - f_-)
		- \frac{3\hbar}{z} (f_+ - f_-)^2 \right)
\fullstop
\end{eqn}
Then we multiply on the left by the diagonal matrix $\diag (-z, +z)^\inv$ and swap the rows to obtain the following differential system:
\begin{eqntag}
\label{250208174617}
\begin{cases}
\displaystyle
	- 6\hbar z^{-4} \del_z f_- - f_-
		= 3\hbar z^{-2} (f_+ - f_-)^2 - 3\hbar z^{-5} (f_+ - f_-) - \tfrac{1}{2} z^{-1} \tilde{p}_1
\fullstop{,}
\\[5pt]\displaystyle
	+ 6\hbar z^{-4} \del_z f_+ - f_+
		= 3\hbar z^{-2} (f_+ - f_-)^2 - 3\hbar z^{-5} (f_+ - f_-) + \tfrac{1}{2} z^{-1} \tilde{p}_1
\fullstop
\end{cases}
\end{eqntag}
Finally, let us rewrite this differential system in a more standardised form.
Introduce the coefficients $a (z) \coleq +3z^{-2}$ and $b (z) \coleq -3z^{-5}$ as well as the meromorphic vector field $\V \coleq - 6 z^{-4} \del_z = z^{-1} \del_t$.
Then the differential system \eqref{250208174617} becomes
\begin{eqntag}
\label{250216174108}
\begin{cases}
\displaystyle
	+ \hbar \V f_+ - f_+
		= \hbar a (z) (f_+ - f_-)^2 + \hbar b (z) (f_+ - f_-) - \tfrac{1}{2z} \tilde{p}_1
\fullstop{,}
\\[5pt]\displaystyle
	- \hbar \V f_- - f_-
		= \hbar a (z) (f_+ - f_-)^2 + \hbar b (z) (f_+ - f_-) + \tfrac{1}{2z} \tilde{p}_1
\fullstop
\end{cases}
\end{eqntag}
We summarise these transformations as follows.

\begin{lemma}
\label{250215120832}
Upon a choice of a root $z$ of $z^4 = -24t$, the deformed Painlevé I Hamiltonian system \eqref{241030114918} is equivalent to the differential system \eqref{250216174108} via the sequence of transformations
\begin{eqn}
	(t; q, p) \leftrightarrow (t; \Q, \P) \leftrightarrow (z; \tilde{\Q}, \tilde{\P}) \leftrightarrow (z; f_+, f_-)
\end{eqn}
given by \eqref{250215121808}, \eqref{250215121526}, and \eqref{250215121531}.
\end{lemma}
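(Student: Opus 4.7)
The plan is to verify the lemma by performing the three substitutions in sequence and matching the resulting equations. Each transformation is explicit, so the proof reduces to direct algebra: the substance is organized bookkeeping rather than any new idea. Two leading-order identities drive all the simplifications, namely the constraint $6 q_0^2 + t = 0$ from \eqref{250205092551} and the relation $\dot{q}_0 = p_1$ from \eqref{250205092716}.

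First, substituting $q = q_0 + \hbar \Q$ and $p = \hbar \P$ into \eqref{241030114918}: the first row yields $\hbar \dot{q}_0 + \hbar^2 \del_t \Q = \hbar \P$, which upon using $\dot{q}_0 = p_1$ and rearranging becomes $\hbar \del_t \Q = \P - p_1$. The second row gives $\hbar^2 \del_t \P = 6 (q_0 + \hbar \Q)^2 + t$, and the leading-order constraint collapses the zero-order piece so that $\hbar^2 \del_t \P = 12 q_0 \hbar \Q + 6 \hbar^2 \Q^2$; dividing by $\hbar$ produces the second row of \eqref{241029141541}. Next, passing to the $z$-variable via $z^4 = -24 t$, implicit differentiation gives $\del_t = -6 z^{-3} \del_z$, while the root choice $12 q_0 = z^2$ converts the classical Jacobian into the form $\J_0 (z)$ of \eqref{250206213044}. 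This is a pure coordinate change that rewrites \eqref{241029141541} in the variables $(z; \tilde{\Q}, \tilde{\P})$.

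Finally, I would apply the gauge transformation $\mtx{\tilde{\Q} \\ \tilde{\P}} = \G \mtx{f_- \\ f_+}$. Writing the system after the first two steps as $-6 \hbar z^{-3} \del_z Y = \J_0 Y + N$ with $Y = \mtx{\tilde{\Q} \\ \tilde{\P}}$ and $N = \mtx{-\tilde{p}_1 \\ 6 \hbar \tilde{\Q}^2}$, substituting $Y = \G f$, expanding $\del_z (\G f) = (\del_z \G) f + \G \del_z f$, and multiplying on the left by $\G^\inv$ produces
\begin{eqn}
	-6 \hbar z^{-3} \del_z f - 6 \hbar z^{-3} \bigl( \G^\inv \del_z \G \bigr) f = \bigl( \G^\inv \J_0 \G \bigr) f + \G^\inv N,
\end{eqn}
into which the precomputed identities $\G^\inv \J_0 \G = \mtx{-z & 0 \\ 0 & +z}$ and $\G^\inv \del_z \G = \tfrac{1}{2z}\mtx{+1 & -1 \\ -1 & +1}$ from \eqref{250208170723} can be inserted directly. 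Expanding $\G^\inv N$ in terms of $\tilde{\Q} = f_+ + f_-$ and $\tilde{p}_1$, then multiplying each row by the reciprocal of the corresponding diagonal eigenvalue, swapping rows, and finally introducing $\V = -6 z^{-4} \del_z$ together with the coefficients $a (z) = 3 z^{-2}$ and $b (z) = -3 z^{-5}$ brings the system into the required form \eqref{250216174108}. The main obstacle is purely algebraic sign bookkeeping, in particular the chain-rule sign in $\del_t \mapsto -6 z^{-3} \del_z$ and the ordering convention used when multiplying by the inverse of the diagonal matrix; no conceptually new ingredient arises.
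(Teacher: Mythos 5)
Your proposal is correct and follows essentially the same route as the paper's own derivation in the passage preceding the lemma: substitute $q = q_0 + \hbar \Q$, $p = \hbar \P$ and simplify with $6 q_0^2 + t = 0$ and $p_1 = \dot{q}_0$, pass to $z$ via $z^4 = -24t$ with $\del_t = -6 z^{-3} \del_z$ and $12 q_0 = z^2$, then gauge-transform by $\G$, multiply by $\diag(-z,+z)^{-1}$, swap rows, and introduce $a$, $b$, $\V$. Since the paper's proof is exactly this sequence of explicit algebraic steps, your plan matches it with no substantive difference.
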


These transformations can be summarised more explicitly as to the following relations:
\begin{eqn}
	\mtx{ q \\ p }
	= \mtx{ q_0 + \hbar \big( f_+ + f_- \big)
		\\ q_0 + z \hbar \big( f_+ - f_- \big)}
\qtext{and}
	\mtx{ f_+ \\ f_- }
	= \frac{1}{2z\hbar} \mtx{ z (q - q_0) + p \\ z (q - q_0) - p }
\fullstop
\end{eqn}

By \autoref{250204162846}, $(\hat{q}, \hat{p})$ is the unique formal $\hbar$-power series solution of the Hamiltonian system \eqref{241030114918} defined on the punctured $\tau$-plane.
By \autoref{250215120832}, the differential system \eqref{250216174108} has a unique formal $\hbar$-power series solution $(\hat{f}_+, \hat{f}_-)$,
\begin{eqn}
	\hat{f}_\pm (z, \hbar) \coleq \sum_{n=0}^\infty f_n^\pm (z) \hbar^n
\fullstop{,}
\end{eqn}
whose coefficients are explicitly expressible in terms of those of $\hat{q}$ and $\hat{p}$ as follows:
\begin{eqntag}
\label{250216160341}
\begin{aligned}
	f_0^\pm (z) &\coleq \pm \tfrac{1}{2z} p_1 (t) = \pm 3 z^{-3}
\fullstop{,}
\\
	f_n^\pm (z) &\coleq \tfrac{1}{2z} \big( z q_{n+1} (t) \pm p_{n+1} (t) \big)
\qquad
	\forall\, n \geq 1
\fullstop
\end{aligned}
\end{eqntag}
In particular, let us record here the relationship between the formal power series $\hat{q}$ and $\hat{f}$.

\begin{lemma}
\label{250224110231}
The formal solution $\hat{q} (t, \hbar)$ of the deformed Painlevé I equation \eqref{250204124541} can be expressed in terms of the formal solution $\hat{f} (z, \hbar)$ of the differential system \eqref{250216174108} as follows:
\begin{eqntag}
\label{250218092253}
	\hat{q} (t,\hbar) = q_0 (t,\hbar) + \hbar \big( \hat{f}_+ (z,\hbar) + \hat{f}_- (z,\hbar) \big)
\qtext{where}
	z^4 = -24 t
\fullstop
\end{eqntag}
Accordingly, the power series $\hat{f}_+, \hat{f}_-$ have the following symmetry under negation $z \mapsto -z$:
\begin{eqn}
	\hat{f}_\pm (-z,\hbar) = \hat{f}_\mp (z, \hbar)
\fullstop
\end{eqn}
\end{lemma}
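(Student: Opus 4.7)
The plan is to verify both assertions directly from the explicit chain of changes of variables recorded in \autoref{250215120832} together with the coefficient formulas \eqref{250216160341}. I do not anticipate any substantial obstacle here, since this lemma is essentially a bookkeeping consequence of the transformations already performed in \autoref{250219104611}.

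For the first identity \eqref{250218092253}, I would simply compose the three substitutions $(q,p) \to (\Q,\P) \to (\tilde{\Q},\tilde{\P}) \to (f_-,f_+)$ given by \eqref{250215121808}, \eqref{250215121526}, and \eqref{250215121531}. The first gives $q = q_0 + \hbar \Q$; the second is merely a rebasing from $t$ to $z$ via $z^4 = -24t$; and the third reads off $\tilde{\Q} = f_+ + f_-$ from the top row of the matrix $\G$ in \eqref{250208170723}. Composing yields $q = q_0 + \hbar(f_+ + f_-)$, and applying this order-by-order in $\hbar$, combined with the formal uniqueness asserted in \autoref{250204162846} and \autoref{250215120832}, delivers the stated identity for the formal power series.

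For the symmetry $\hat{f}_\pm(-z,\hbar) = \hat{f}_\mp(z,\hbar)$, the crucial observation is that the involution $z \mapsto -z$ leaves $t = -z^4/24$ invariant, and hence leaves the coefficients $q_{n+1}(t)$ and $p_{n+1}(t)$ in \eqref{250216160341} unchanged. The symmetry then falls out term by term: at leading order, $f_0^\pm(-z) = \pm 3(-z)^{-3} = \mp 3 z^{-3} = f_0^\mp(z)$; and for $n \geq 1$, the prefactor $1/(2z)$ contributes an overall sign flip while the $\pm p_{n+1}(t)$ term supplies the compensating swap, giving $f_n^\pm(-z) = f_n^\mp(z)$. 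Summing over $n$ yields the claim.

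A more conceptual alternative, which I would record as a remark, is to observe that the differential system \eqref{250216174108} is itself invariant under the simultaneous involution $(z, f_+, f_-) \mapsto (-z, f_-, f_+)$: the vector field $\V = -6z^{-4}\del_z$ and the coefficient $b(z) = -3z^{-5}$ each flip sign, the coefficient $a(z) = 3z^{-2}$ and the quantity $(f_+ - f_-)^2$ are preserved, the linear term $(f_+ - f_-)$ flips sign absorbing the sign from $b$, and the source $\tilde{p}_1/(2z)$ flips sign to swap the right-hand sides of the two equations. Formal uniqueness of the $\hbar$-power series solution then forces the symmetry without reference to the explicit coefficient formulas. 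Either route is routine; the direct computational argument via \eqref{250216160341} is probably the cleanest for the paper.
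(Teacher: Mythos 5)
Your proposal is correct and takes essentially the same route as the paper, which treats this lemma as a bookkeeping consequence of composing the changes of variables \eqref{250215121808}, \eqref{250215121526}, \eqref{250215121531} and reading the symmetry off the coefficient formulas \eqref{250216160341}. The only point worth making explicit is that $z \mapsto -z$ fixes not just $t = -z^4/24$ but also $\tau = z^2/12$, hence the chosen square-root branch, so the multivalued coefficients $q_{n+1}(t)$ and $p_{n+1}(t)$ are genuinely unchanged and your term-by-term cancellation (and the alternative uniqueness argument) goes through.
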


\subsection{The Borel-Laplace Transformation}
\label{250221201337}

Next, we apply the \textit{Borel transform} to the differential system \eqref{250216174108}.
Using standard properties of the Borel transform, we find that it converts \eqref{250216174108} into the following system of equations:
\begin{eqntag}
\label{250216175521}
\begin{cases}
\displaystyle
	(+ \V - \del_\xi) \phi_+
		= a (z) (\phi_+ - \phi_-)^{\ast 2} + b (z) (\phi_+ - \phi_-)
\fullstop{,}
\\[5pt]\displaystyle
	(- \V - \del_\xi) \phi_-
		= a (z) (\phi_+ - \phi_-)^{\ast 2} + b (z) (\phi_+ - \phi_-)
\fullstop
\end{cases}
\end{eqntag}
This is a system of \textit{integro}-differential equations because of the presence of the convolution product in the variable $\xi$ such as
\begin{eqn}
	(\phi_+ \ast \phi_-) (z,\xi) \coleq \int_0^\xi \phi_+ (z,s) \phi_- (z, \xi-s) \d{s}
\fullstop
\end{eqn}
The new unknown variables $(\phi_+, \phi_-)$ are related to $(f_+, f_-)$ via the Borel-Laplace transformation with some pre-selected phase $\alpha \in \RR / 2\pi\ZZ$:
\begin{eqntag}
\label{250208181527}
	\phi_\pm = \Borel_\alpha \big[ f_\pm \big]
\qtext{and}
	f_\pm = f_0^\pm + \Laplace_\alpha \big[ \phi_\pm \big]
\fullstop
\end{eqntag}
For this relationship to make sense, solutions $\phi_\pm$ and $f_\pm$ must respectively admit well-defined Laplace and Borel transforms.
In effect, this means that the Borel-Laplace transformation \eqref{250208181527} is an equivalence not between the systems but between suitable boundary value problems.
To be more precise, we summarise this as follows.

\begin{lemma}
\label{250215122306}
Let $\alpha \in \RR / 2\pi\ZZ$ be any phase.
Consider the differential system \eqref{250216174108} with the following asymptotic boundary conditions:
\begin{eqntag}
\label{250215123234}
	f_\pm (z,\hbar) \simeq \hat{f}_\pm (z,\hbar)
\quad
	\text{as $\hbar \to 0$ unif. along $\sfop{Arc} (\alpha)$}
\fullstop{,}
\end{eqntag}
locally uniformly in $z$.
Then this asymptotic boundary value problem is equivalent to finding solutions $\hat{\phi} = (\hat{\phi}_+, \hat{\phi}_-)$ of the Initial Value Problem \eqref{250216175521} which are Laplace-transformable in the direction $\alpha$.
Thus, such solutions of \eqref{250216175521} are related to solutions of the asymptotic boundary value problem \eqref{250216174108} and \eqref{250215123234} via the Borel-Laplace transformation \eqref{250208181527}.
\end{lemma}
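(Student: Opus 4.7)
The plan is to establish this equivalence by running the Borel-Laplace dictionary in both directions, treating the formal content first and then the analytic matching. First I would verify directly that applying $\Borel$ in the variable $\hbar$ to each line of \eqref{250216174108} produces exactly \eqref{250216175521}. Using the rules recalled at the start of \autoref{250218190702}, the $\hbar$-independent source term $\mp \tfrac{1}{2z}\tilde p_1$ has vanishing Borel transform; the LHS operator $\pm\hbar\V - 1$ produces $\pm\V f_0^\pm \pm 1\ast\V\phi_\pm - \phi_\pm$ via $\Borel[\hbar\hat g] = g_0 + 1\ast\Borel[\hat g]$, with $\V$ commuting with $\Borel$; and the quadratic RHS term is handled by the product rule $\Borel[\hat g\hat h] = \Borel[\hat g]\ast\Borel[\hat h] + g_0\Borel[\hat h] + h_0\Borel[\hat g]$. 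The leading-order balance $f_0^\pm = \pm\tfrac{1}{2z}\tilde p_1$ from \eqref{250216160341} eliminates the $\xi$-constant contributions, and differentiating once in $\xi$ clears the remaining convolutions $1\ast(-)$, leaving \eqref{250216175521} together with the initial conditions $\phi_\pm(z,0) = f_1^\pm(z)$ that make it an IVP.

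Second I would prove the ``$\Leftarrow$'' direction. Suppose $\hat\phi = (\hat\phi_+,\hat\phi_-)$ is a holomorphic solution of \eqref{250216175521} whose components admit analytic continuation along $e^{i\alpha}\RR_+$ with at most exponential growth at infinity. Then $\Laplace_\alpha[\hat\phi_\pm]$ is holomorphic in $\hbar$ in a halfplane bisected by $\alpha$, and I set $f_\pm \coleq f_0^\pm + \Laplace_\alpha[\hat\phi_\pm]$. Applying the Laplace dictionary ($\Laplace[\psi_1\ast\psi_2] = \Laplace[\psi_1]\Laplace[\psi_2]$ and $\hbar\Laplace[\del_\xi\psi] = \Laplace[\psi] - \hbar\psi(0)$) to \eqref{250216175521} and folding the $\xi = 0$ boundary contributions into the constant source $\mp\tfrac{1}{2z}\tilde p_1$ via the leading-order identity, one recovers \eqref{250216174108}. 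The uniform factorial-type asymptotic \eqref{250215123234} then follows from a standard Watson-type lemma applied to the sectorial germ $\hat\phi_\pm$ at $\xi = 0$, together with the exponential bound along $e^{i\alpha}\RR_+$. The ``$\Rightarrow$'' direction is the reverse: given a solution $f_\pm$ of \eqref{250216174108} satisfying \eqref{250215123234}, a Nevanlinna-Sokal-type theorem guarantees that $f_\pm - f_0^\pm$ admits a genuine Borel transform in the direction $\alpha$, defining $\phi_\pm = \Borel_\alpha[f_\pm]$ as a holomorphic function on a sectorial tubular neighbourhood of $e^{i\alpha}\RR_+$; reversing the computation of the first step then shows that $(\phi_+,\phi_-)$ satisfies \eqref{250216175521}.

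The only non-routine point is the algebraic bookkeeping produced by the nonlinear term $(f_+ - f_-)^2$: the product rule for $\Borel$ introduces cross terms proportional to $(f_0^+ - f_0^-)(\phi_+ - \phi_-)$ whose $z$-dependence must be precisely absorbed into the coefficients $a(z), b(z)$ of \eqref{250216175521}. Keeping track of this algebra, and verifying that it comes out cleanly thanks to the specific choice of gauge \eqref{250208170723} which diagonalises the classical Jacobian, is the principal (though purely computational) hurdle. Once the formal identification is in place, the analytic equivalence of the two boundary value problems reduces to standard Borel-Laplace summability theory.
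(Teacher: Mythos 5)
Your proposal is correct and follows essentially the same route as the paper, which presents this lemma as a summary of the standard Borel--Laplace dictionary (term-by-term Borel transform of the system, the initial condition $\phi_\pm(z,0)=f_1^\pm(z)$ arising from the order-$\hbar$ coefficient, and Watson/Nevanlinna--Sokal-type arguments for the analytic equivalence between Laplace-transformability and uniform Gevrey-1 asymptotics) without writing out a separate proof. The one point you rightly single out --- the cross term $2a(z)\bigl(f_0^+(z)-f_0^-(z)\bigr)\bigl(\phi_+-\phi_-\bigr)$ generated by the product rule applied to $(f_+-f_-)^2$, which must be reconciled with the coefficient $b(z)$ appearing in \eqref{250216175521} --- is indeed the only nontrivial piece of bookkeeping, and since the paper does not carry it out either, it is worth verifying explicitly rather than asserting that it ``comes out cleanly.''
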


Under the Borel-Laplace transformation of \autoref{250215122306}, the unique $\hbar$-formal power series solution $\hat{f} = (\hat{f}_+, \hat{f}_-)$ of the differential system \eqref{250216174108} gets transformed into a unique $\xi$-power series solution $\hat{\phi} = (\hat{\phi}_+, \hat{\phi}_-)$ defined as the formal Borel transform
\begin{eqntag}
\label{250216160456}
	\hat{\phi}_\pm (z,\xi) = \sum_{n=0}^\infty \phi^\pm_n (z) \xi^n
	\coleq \Borel \big[ \, \hat{f}_\pm \, \big] (z, \xi)
	= \sum_{n=0}^\infty \tfrac{1}{n!} f^\pm_{n+1} (z) \xi^n
\fullstop
\end{eqntag}
Explicitly, using \eqref{250216160341},
\begin{eqn}
	\phi^\pm_n (z) = \tfrac{1}{2z n!} \big( z q_{n+2} (t) \pm p_{n+2} (t) \big)
\fullstop
\end{eqn}
In particular, using \eqref{250218092253}, we record here the relationship between the Borel transforms $\hat{\omega}$ and $\hat{\phi}$.

\begin{lemma}
\label{250224110855}
The Borel transform $\hat{\omega} (t,\xi)$ of the formal power series solution $\hat{q} (t,\hbar)$ of the deformed Painlevé I equation can be expressed in terms of the Borel transform $\hat{\phi} (z, \xi)$ of the formal power series solution $\hat{f} (z, \hbar)$ of the differential system \eqref{250216174108} as follows:
\begin{eqntag}
\label{250223140730}
	\hat{\omega} (t,\xi) = f_0^+ (z) + f_0^- (z) + \int_0^\xi \big( \hat{\phi}_+ (z,s) + \hat{\phi}_- (z,s) \big) \d{s}
\qtext{where}
	z^4 = -24 t
\fullstop
\end{eqntag}
Accordingly, the components $\hat{\phi}_+, \hat{\phi}_-$ as well as $f_0^+, f_0^-$ have the following symmetry under the negation map $z \mapsto -z$:
\begin{eqn}
	\hat{\phi}_\pm (-z,\xi) = \hat{\phi}_\mp (z, \xi)
\qtext{and}
	f_0^\pm (-z) = f_0^\mp (z)
\fullstop
\end{eqn}
\end{lemma}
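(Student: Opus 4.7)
The plan is to obtain the formula by applying the Borel transform (in the variable $\hbar$) to the expression for $\hat{q}$ given in \autoref{250224110231}, namely $\hat{q}(t,\hbar) = q_0(t) + \hbar\bigl(\hat{f}_+(z,\hbar) + \hat{f}_-(z,\hbar)\bigr)$ with $z^4 = -24t$. Since $q_0(t)$ is independent of $\hbar$, the constant-term rule recalled at the start of \autoref{250218190702} gives $\hat{\Borel}\bigl[q_0\bigr] = 0$, so I only need to compute $\hat{\Borel}\bigl[\hbar\hat{f}_\pm\bigr]$ and add the two contributions together.

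For the two $\hbar \hat{f}_\pm$ terms, I would avoid citing the general convolution identity (which is slightly delicate when the series has a nonzero constant term) and instead just read off the answer from the defining series. Writing $\hat{f}_\pm(z,\hbar) = \sum_{n\ge 0} f_n^\pm(z)\hbar^n$ we get $\hbar\hat{f}_\pm = \sum_{n\ge 0} f_n^\pm(z)\hbar^{n+1}$, so by the formula \eqref{250208115459} defining $\hat{\Borel}$,
\begin{equation*}
\hat{\Borel}\bigl[\hbar\hat{f}_\pm\bigr](z,\xi)
	= \sum_{n\ge 0} \frac{f_n^\pm(z)}{n!}\xi^n
	= f_0^\pm(z) + \sum_{n\ge 1} \frac{f_n^\pm(z)}{n!}\xi^n
	= f_0^\pm(z) + \int_0^\xi \hat{\phi}_\pm(z,s)\,\d{s},
\end{equation*}
where in the last equality I use the definition \eqref{250216160456} of $\hat{\phi}_\pm$ as $\hat{\Borel}\bigl[\hat{f}_\pm\bigr]$ and integrate the resulting power series in $\xi$ term by term. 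Summing the $+$ and $-$ contributions yields exactly the claimed identity \eqref{250223140730}.

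For the symmetry assertion, I would observe that both assertions are immediate consequences of the corresponding symmetry $\hat{f}_\pm(-z,\hbar) = \hat{f}_\mp(z,\hbar)$ from \autoref{250224110231}. Indeed, comparing $\hbar$-coefficients gives $f_n^\pm(-z) = f_n^\mp(z)$ for every $n\ge 0$; the case $n = 0$ is the second symmetry (which can also be verified directly from the explicit formula $f_0^\pm(z) = \pm 3z^{-3}$ in \eqref{250216160341}), and by the definition of $\hat{\phi}_\pm$ as the series $\sum_{n\ge 0} f_{n+1}^\pm(z)\xi^n/n!$ the relation extends to $\hat{\phi}_\pm(-z,\xi) = \hat{\phi}_\mp(z,\xi)$.

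There is no substantive obstacle: the statement is a direct translation of \autoref{250224110231} under $\hat{\Borel}$, and the only minor care needed is to track the constant $\hbar$-coefficient $f_0^\pm$ that the Borel transform would otherwise discard, which is precisely why the boundary terms $f_0^+(z) + f_0^-(z)$ appear outside the integral in \eqref{250223140730}.
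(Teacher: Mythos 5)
Your proposal is correct and follows exactly the route the paper intends: the lemma is stated as a direct record of applying the Borel transform $\hat{\Borel}$ to the relation $\hat{q} = q_0 + \hbar(\hat{f}_+ + \hat{f}_-)$ of \autoref{250224110231}, with the constant term $q_0$ killed and the $\hbar$-shift producing the boundary terms $f_0^\pm$ plus the $\xi$-integral of $\hat{\phi}_\pm$, and the symmetry read off coefficientwise. Your term-by-term verification (using the standard normalisation $\hat{\Borel}[\hbar^{n+1}] = \xi^n/n!$ of \eqref{250208115459}, consistent with \eqref{250312070005}) is exactly the right level of care here.
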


It follows that the vector-valued power series $\hat{f} = (\hat{f}_+, \hat{f}_-)$ is also of factorial type locally uniformly for all nonzero $z$, and therefore its Borel transform $\hat{\phi} = (\hat{\phi}_+, \hat{\phi}_-)$ is a convergent power series in $\xi$, locally uniformly for all nonzero $z$.
We conclude that $\hat{\phi}$ is the unique $\xi$-power series solution --- and hence the unique local holomorphic solution near $\xi = 0$ --- of the integro-differential system \eqref{250216175521} satisfying the initial condition
\begin{eqntag}
\label{250220083244}
	\hat{\phi}_\pm (z, 0) 
		= \phi^\pm_0 (z) 
		= f^\pm_1 (z)
		= \tfrac{1}{2} q_2 (t)
		= \tfrac{1}{2} q_2 \big( -z^4 / 24 \big)
		= -6 z^{-8}
		\eqcol c(z)
\fullstop
\end{eqntag}

We summarise the content of this section in the following proposition.

\begin{proposition}
\label{250221201747}
Consider the following Initial Value Problem on $\CC_z \times \CC_\xi$:
\begin{eqntag}
\label{250306084618}
\begin{cases}
\displaystyle
	(+ \V - \del_\xi) \phi_+
		= \Phi
\fullstop{,}
\\[5pt]\displaystyle
	(- \V - \del_\xi) \phi_-
		= \Phi
\fullstop{,}
\end{cases}
\qqtext{such that}
	\phi_\pm (z,0) = c(z)
\fullstop{,}
\end{eqntag}
where $\V = -6z^{-4} \del_z$, $c(z) = -6z^{-8}$, and $\Phi$ is the following expression in $\phi_+, \phi_-$ with coefficients $a(z) = 3z^{-2}$, $b(z) = -3z^{-5}$:
\begin{eqn}
	\Phi = \Phi (z, \xi; \phi_+, \phi_-)
	\coleq a (\phi_+ - \phi_-)^{\ast 2} + b (\phi_+ - \phi_-)
\fullstop
\end{eqn}
This Initial Value Problem has a unique local holomorphic solution $\hat{\phi} = (\hat{\phi}_+, \hat{\phi}_-)$ defined on an open neighbourhood $\Xi \subset \CC^\ast_z \times \CC_\xi$ of $\xi = 0$.
\end{proposition}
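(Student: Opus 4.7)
The plan is to take $\hat\phi \coleq \Borel[\hat f]$, where $\hat f = (\hat f_+, \hat f_-)$ is the unique formal $\hbar$-power series solution of the differential system \eqref{250216174108} supplied by \autoref{250215120832}, and then verify separately that (a) it formally satisfies the Initial Value Problem \eqref{250306084618}, (b) its $\xi$-Taylor coefficients assemble into a holomorphic function on an open neighbourhood $\Xi$ of $\set{\xi = 0}$ in $\CC^\ast_z \times \CC_\xi$, and (c) any local holomorphic solution of \eqref{250306084618} must coincide with it.

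For (a), the Borel-transform rules recalled at the start of the section turn \eqref{250216174108} into \eqref{250216175521}, and the $\xi^0$-coefficient of \eqref{250216160456} combined with \eqref{250216160341} yields $\hat\phi_\pm(z, 0) = \tfrac{1}{2}q_2(-z^4/24) = -6 z^{-8} = c(z)$, as already recorded in \eqref{250220083244}. For (b), I invoke a factorial-type propagation: \autoref{250224202049} provides uniform factorial-type bounds on the coefficients of $\hat q$ on compact subsets of $\CC_t^\ast$, which pass to $\hat p = \hbar \del_t \hat q$ by differentiation in $t$ and then to $\hat f_\pm$ via the linear formulas \eqref{250216160341}; since $\Borel$ converts a factorial-type series into a convergent power series whose radius is controlled from below by the factorial rate, the local uniformity in $z$ upgrades $\hat\phi$ to a holomorphic function on an open neighbourhood $\Xi \subset \CC^\ast_z \times \CC_\xi$ of $\set{\xi = 0}$.

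For (c), I rewrite \eqref{250306084618} as $\del_\xi \phi_\pm = \pm \V \phi_\pm - \Phi$, substitute a Taylor expansion $\phi_\pm(z, \xi) = \sum_{n \geq 0} \phi_n^\pm(z)\xi^n$, and use that the convolution $(\phi_+ - \phi_-)^{\ast 2}$ has $\xi^n$-coefficient
\begin{eqn}
\sum_{i+j = n-1} \frac{i!\, j!}{n!}(\phi_i^+ - \phi_i^-)(\phi_j^+ - \phi_j^-)
\end{eqn}
(vacuous when $n = 0$) to read off, for every $n \geq 0$, the recursion
\begin{eqn}
(n+1)\phi_{n+1}^\pm = \pm \V \phi_n^\pm - b(z)(\phi_n^+ - \phi_n^-) - a(z) \sum_{i+j = n-1} \frac{i!\, j!}{n!}(\phi_i^+ - \phi_i^-)(\phi_j^+ - \phi_j^-).
\end{eqn}
Together with the initial datum $\phi_0^\pm = c$, this uniquely determines every coefficient $\phi_n^\pm$, so any local holomorphic solution must agree with $\hat\phi$ on $\Xi$ by the identity theorem. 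The only real subtlety throughout is organisational rather than analytic: one must keep careful track of how the convolution in $\xi$, the meromorphic vector field $\V$ in $z$, and the Borel transform in $\hbar$ interact cleanly in the setting of formal series with locally holomorphic $z$-coefficients. All of the genuine analytic content is packaged in the factorial bound of \autoref{250224202049}.
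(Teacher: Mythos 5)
Your proposal is correct and follows essentially the same route as the paper: convergence of $\hat{\phi}$ is obtained by transporting the factorial-type bound of \autoref{250224202049} through the linear relations \eqref{250216160341}, and uniqueness comes from the fact that the Borel-transformed system determines the $\xi$-Taylor coefficients recursively from the initial datum \eqref{250220083244}. The paper merely leaves the coefficient recursion implicit where you write it out explicitly, which is a harmless (indeed clarifying) difference.
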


Our main goal now is to construct the analytic continuation of $\hat{\phi}$ in the variable $\xi$.
We will do so by building a global solution $\phi$ of the Initial Value Problem \eqref{250306084618} whose Taylor expansion at $\xi = 0$ is $\hat{\phi}$.

\section{Global Solutions to Initial Value Problems}
\label{250315131538}

In this section, we describe --- in as concrete terms as possible --- the basic geometric tools needed for the construction of global solutions to the Initial Value Problem \eqref{250306084618}.

\subsection{Fundamental Groupoids}

\subsubsection{Review of fundamental groupoids.}
Recall that the \dfn{fundamental groupoid} $\Pi_1 (X)$ of a Riemann surface $X$ is the space of all continuous paths $\gamma : [0,1] \to X$ considered up to homotopy with fixed endpoints.
It is a two-dimensional holomorphic manifold equipped with two holomorphic surjective submersions
\begin{eqn}
	\rm{s}, \rm{t} : \Pi_1 (X) \to X
\end{eqn}
called the \dfn{source} and \dfn{target maps}, which respectively extract the source $\rm{s} (\gamma) \coleq \gamma (0)$ and the target $\rm{t} (\gamma) \coleq \gamma (1)$.
For any point $x \in X$, the fibres $\rm{s}^\inv (x)$ and $\rm{t}^\inv (x)$ are simply connected Riemann surfaces called the \dfn{source} and \dfn{target fibres} of $x$.
There is a closed embedding $X \inj \Pi_1 (X)$, called the \dfn{identity bisection}, given by viewing each point $x \in X$ as the constant path $1_x \in \Pi_1 (X)$; we denote by $1_X \subset \Pi_1 (X)$ the submanifold of constant paths.
It is called a \textit{bi}section because $X \inj \Pi_1 (X)$ is a section of both the source and the target map.
The fundamental groupoid has a \dfn{groupoid composition law} $\circ$ given by the usual concatenation of paths, which we shall always write from right to left; i.e., ``$\gamma_2 \circ \gamma_1$'' means first $\gamma_1$ then $\gamma_2$.
Not all paths are composable, though, but only those that have suitably matching source and target.
The space of such \dfn{composable paths} is the fibre product
\begin{eqn}
	\Pi_1 (X) \underset{\rm{s},\rm{t}}{\times} \Pi_1 (X)
		\coleq \set{ (\gamma_2, \gamma_1) \in \Pi_1 (X) \times \Pi_1 (X) 
			~\Big|~ \rm{s} (\gamma_2) = \rm{t} (\gamma_1) }
\fullstop
\end{eqn}
The groupoid law then becomes a holomorphic map
\begin{eqn}
	\Pi_1 (X) \underset{\rm{s},\rm{t}}{\times} \Pi_1 (X) \too \Pi_1 (X)
\qtext{sending}
	(\gamma_2, \gamma_1) \mapstoo \gamma_2 \circ \gamma_1
\fullstop
\end{eqn}
Altogether, these properties mean $\Pi_1 (X)$ has the structure of a \dfn{holomorphic Lie groupoid}, or \dfn{groupoid} for short.
This fact is often denoted by $\Pi_1 (X) \rightrightarrows X$.

\emph{Note.}
From now on, whenever we say ``path'' we usually implicitly mean ``path considered up to homotopy with fixed endpoints''.
This additional detail will be clear from the context, but if we ever need to emphasise the contrary, we may say ``concrete path''.
To be precise, by a \dfn{parameterised path} we mean any smooth map $\gamma : I \to X$ from a closed interval $I = [a,b] \subset \RR$ (or more generally an oriented complex straight line segment $I = [a,b] \subset \CC$).
A \textit{reparameterisation} of $\gamma : I \to X$ is any other parameterised path $\gamma' : I' \to X$ such that there is an orientation-preserving diffeomorphism $g : I \iso I'$ such that $\gamma' = \gamma \circ g$.
Then a \dfn{concrete path} is a parameterised path considered up to reparameterisation, and a \dfn{path} is a concrete path considered up to homotopy with fixed endpoints.

A key feature of the fundamental groupoid is that the identity bisection $1_X$ intersects all source and all target fibres  transversely.
Consequently, the tangent space $T_x X$ at any $x \in X$ is canonically isomorphic to the tangent space of the source fibre $\rm{s}^\inv (x)$ at the point $1_x \in 1_X$ and also to the tangent space of the target fibre $\rm{t}^\inv (x)$ at $1_x$.
Denoting by $\rm{s}_\ast$ and $\rm{t}_\ast$ the derivatives of $\rm{s}$ and $\rm{t}$, the latter two spaces are canonically isomorphic to $\ker (\rm{s}_\ast |_{1_x})$ and $\ker (\rm{t}_\ast |_{1_x})$, respectively.
Altogether, these vector space isomorphisms arrange themselves into the following natural isomorphisms of holomorphic line bundles on $X$:
\begin{eqntag}
\label{250219092627}
	\cal{ker} (\rm{t}_\ast) |_X \cong \cal{T}_X \cong \cal{ker} (\rm{s}_\ast) |_X
\fullstop
\end{eqntag}
Here, e.g., $\cal{ker} (\rm{t}_\ast) |_X$ is the holomorphic line bundle on $X$ whose fibre at any $x \in X$ is the vector space $\ker (\rm{t}_\ast |_{1_x})$.
It is the restriction of the holomorphic line bundle $\cal{ker} (\rm{t}_\ast)$ on $\Pi_1 (X)$ whose fibre at any $\gamma \in \Pi_1 (X)$ is the vector subspace $\ker (\rm{t}_\ast |_\gamma) \subset T_\gamma \Pi_1 (X)$ of the tangent space to $\Pi_1 (X)$ at $\gamma$ consisting of vectors tangent to the target fibre $\rm{t}^\inv (\gamma)$.

For any $x \in X$ and any $\gamma \in \rm{s}^\inv (x)$, the left multiplication by $\gamma$ yields the isomorphism $\ker (\rm{t}_\ast |_{1_x}) \cong \ker (\rm{t}_\ast |_{\gamma})$.
Similarly, for any $\gamma \in \rm{t}^\inv (x)$, the right multiplication by $\gamma$ yields the isomorphism $\ker (\rm{s}_\ast |_{1_x}) \cong \ker (\rm{s}_\ast |_{\gamma})$.
As a result, we have the following natural isomorphisms of holomorphic line bundles on $\Pi_1 (X)$:
\begin{eqntag}
\label{250219092834}
	\cal{ker} (\rm{t}_\ast) \cong \rm{s}^\ast \big( \cal{ker} (\rm{t}_\ast) |_X \big)
\qtext{and}
	\cal{ker} (\rm{s}_\ast) \cong \rm{t}^\ast \big( \cal{ker} (\rm{s}_\ast) |_X \big)
\fullstop
\end{eqntag}
Combining \eqref{250219092627} and \eqref{250219092834}, we conclude that every tangent vector field $\V \in \cal{T}_X$ has two canonical lifts to vector fields $\V^\rm{s}, \V^\rm{t} \in \cal{T}_{\Pi_1 (X)}$ on the groupoid $\Pi_1 (X)$, one of which is tangent to the target foliation and the other to the source foliation.
More precisely, $\V^\rm{s}$ is the unique vector field (called the \dfn{source-lift} of $\V$) such that $\V^\rm{s} \in \cal{ker} (\rm{t}_\ast)$ and $\rm{s}_\ast \V^\rm{s} = \V$; similarly, $\V^\rm{t}$ is the unique vector field (called the \dfn{target-lift} of $\V$) such that $\V^\rm{t} \in \cal{ker} (\rm{s}_\ast)$ and $\rm{t}_\ast \V^\rm{t} = \V$.

\subsubsection{The fundamental groupoid of the $z$-plane.}
Consider the fundamental groupoid $\Pi_1 (\CC_z)$ of the $z$-plane.
This groupoid is very simple to describe completely explicitly.
Since $\CC_z$ is simply connected, any path $\gamma$ with source $z_0$ and target $z_1 \in \CC_z$ is homotopic to the straight line interval $[z_0, z_1]$.
This sets up a holomorphic isomorphism
\begin{eqntag}
\label{250205161722}
	\Pi_1 (\CC_z) \iso \CC_u \times \CC_z
\qtext{by sending}
	\gamma \simeq [z_0, z_1] \mapsto (u,z) = (z_1 - z_0, z_0)
\fullstop
\end{eqntag}

In this trivialisation, the source, target, and groupoid law maps of $\Pi_1 (\CC_z)$ become
\begin{eqn}
	\rm{s} (u,z) = z,
\qquad
	\rm{t} (u,z) = z + u,
\qquad
	(u_2, z_2) \circ (u_1, z_1) = (u_1 + u_2, z_1)
\fullstop{,}
\end{eqn}
provided that $z_2 = z_1 + u_1$.
In fact, we can see that $\Pi_1 (\CC_z)$ is an algebraic manifold and all the groupoid structure maps are algebraic maps.
The source and target fibres of any point $z_0 \in \CC_z$ are respectively the subsets 
\begin{eqn}
	\rm{s}^\inv (z_0) = \set{ (u,z) ~\big|~ z = z_0}
\qtext{and}
	\rm{t}^\inv (z_0) = \set{ (u,z) ~\big|~ z + u = z_0}
\fullstop
\end{eqn}
Both of these are simply connected and isomorphic to $\CC$.
The identity bisection $1_{\CC_z}$ is the subset defined by $u = 0$.
Notice that the target map restricts to each source fibre to give an isomorphism $\rm{t} : \rm{s}^\inv (z_0) \iso \CC_z$.
Similarly, $\rm{s} : \rm{t}^\inv (z_0) \iso \CC_z$.
See \autoref{250226193156}.

The tangent space to any point in $\CC_z$ is spanned by the coordinate tangent vector field $\del_z$, and the tangent space to any point in $\Pi_1 (\CC_z)$ is spanned in the trivialisation by the tangent vectors $\del_u, \del_z$.
With respect to these bases, the derivatives $\rm{s}_\ast, \rm{t}_\ast$ of the source and target maps are respectively the $2 \!\times\! 1$ matrices $\big[ 0 ~~ 1 \big]$ and $\big[ 1 ~~ 1 \big]$.
Thus, the line bundles $\cal{ker} (\rm{t}_\ast)$ and $\cal{ker} (\rm{s}_\ast)$ are respectively spanned by the vector fields $\del_z - \del_u$ and $\del_u$.
If $\V = v(z) \del_z$ is a vector field on $\CC_z$, its source- and target-lifts expressed in the basis $\del_u, \del_z$ are respectively the vector fields $\V^\rm{s} = v(z) (\del_z - \del_u)$ and $\V^\rm{t} = v(z) \del_u$.

\begin{definition}[{cf. \cite[Definition 2.6]{240622121512}}]
\label{250208170930}
A path $\gamma \in \Pi_1 (\CC_z)$ is \dfn{critical} if it terminates at the transition point $z = 0$.
The subset of all critical paths and the subset of critical paths with source $z_0 \in \CC_z$ are denoted respectively by
\begin{eqntag}
	G \coleq \rm{t}^\inv (0) \subset \Pi_1 (\CC_z)
\qqtext{and}
	G_{z_0} \coleq \rm{t}^\inv (0) \cap \rm{s}^\inv (z_0)
\fullstop
\end{eqntag}
\end{definition}

Observe that $G_{z_0}$ contains just one element, the straight line segment from $z_0$ to $0$.
In the trivialisation $\Pi_1 (\CC_z) \cong \CC_u \times \CC_z$, these subsets are
\begin{eqn}
	G \cong \set{ (u,z) ~\big|~ z + u = 0 } \subset \CC_u \times \CC_z
\qqtext{and}
	G_{z_0} \cong \set{ (-z_0, z_0) } \subset \CC_u \times \CC_z
\fullstop
\end{eqn}

\begin{figure}[t]
\begin{adjustwidth}{-1cm}{-0.5cm}
\centering
\begin{subfigure}[t]{0.45\textwidth}
\centering
\begin{tikzpicture}
\begin{scope}
\fill [grey] (-3,-3) rectangle (3,3);
\node at (0,3) [above] {$\Pi_1 (\CC_z) \cong \CC_u \times \CC_z$};
\draw [ultra thick] (3,0) -- (-3,0) node [left] {$\CC_z$};
\draw [->] (-3.25,-3.25) -- (-2.5,-3.25) node [midway, below] {$z$};
\draw [->] (-3.25,-3.25) -- (-3.25,-2.5) node [midway, left] {$u$};
\foreach \i in {1,...,7}
{
	\draw [thin] ({-3 + 0.75*\i},-3) -- ({-3 + 0.75*\i},3);
}
\begin{scope}
\clip (-3,-3) rectangle (3,3);
\foreach \i in {1,...,7}
{
	\draw [thin] ({-6+0.75*\i},3) -- ({0+0.75*\i},-3);
}
\end{scope}
\end{scope}
\end{tikzpicture}
\caption{Real slice of $\Pi_1 (\CC_z)$ in the trivialisation.
Vertical lines are the source fibres, slanted lines are the target fibres.}
\label{250226193156}
\end{subfigure}
\hfill
\begin{subfigure}[t]{0.55\textwidth}
\centering
\begin{tikzpicture}
\begin{scope}
\fill [grey] (-4,-3) rectangle (4,3);
\node at (0,3) [above] {$\Pi_1 (\CC^\ast_z) \cong \CC_u \times \CC_z^\ast$};
\draw [ultra thick] (4,0) -- (-4,0) node [left] {$\CC^\ast_z$};
\draw [->] (-4.25,-3.25) -- (-3.5,-3.25) node [midway, below] {$z$};
\draw [->] (-4.25,-3.25) -- (-4.25,-2.5) node [midway, left] {$u$};
\foreach \i in {0,...,4}
{
	\draw [thin] ({-3.75 + 0.75*\i},-3) -- ({-3.75 + 0.75*\i},3);
}
\foreach \i in {6,...,10}
{
	\draw [thin] ({-3.75 + 0.75*\i},-3) -- ({-3.75 + 0.75*\i},3);
}
\draw [ultra thick, white] (0,-3) -- (0,3);
\begin{scope}
\clip (-4,-3) rectangle (4,3);
\foreach \i in {-5,...,-1}
{
\draw[smooth, domain=-3:3] 
    plot ({(0.75*\i)*e^(-\x)},\x);
}
\foreach \i in {1,...,5}
{
\draw[smooth, domain=-3:3] 
    plot ({(0.75*\i)*e^(-\x)},\x);
}
\end{scope}
\end{scope}
\end{tikzpicture}
\caption{Real slice of $\Pi_1 (\CC_z^\ast)$ in the trivialisation.
Vertical lines are the source fibres, curved lines are the target fibres.
Notice that the subset $\CC_u \times \set{0}$ is missing.}
\label{250226211137}
\end{subfigure}
\end{adjustwidth}
\caption{Fundamental groupoids of $\CC_z$ and $\CC^\ast_z$.}
\label{250226213320}
\end{figure}

\subsubsection{The fundamental groupoid of the punctured $z$-plane.}
Now we puncture $\CC_z$ at the origin, $\CC^\ast_z \coleq \CC_z \smallsetminus \set{0}$, and consider the fundamental groupoid $\Pi_1 (\CC^\ast_z)$ whose source and target maps we are also denoted by $\rm{s}, \rm{t} : \Pi_1 (\CC^\ast_z) \to \CC^\ast_z$.
This groupoid can likewise be described rather explicitly.
The key observations are: first, that for any two points $z_0, z_1 \in \CC^\ast_z$, there is a complex number $u \in \CC$, unique up to addition of an integer multiple of $2\pi i$, such that $z_1 = e^u z_0$; and second, that any two paths from $z_0$ to $z_1$ are homotopic if and only if they wind the same number of times around the origin.
So if we denote by $\gamma_{u,z}$ the concrete path in $\CC_z^\ast$ from $z$ to $e^{u} z$ defined by $\gamma_{u,z} (r) \coleq e^{r u} z$ for $r \in [0,1]$ (see \autoref{240614104619}), then the groupoid $\Pi_1 (\CC^\ast_z)$ can be trivialised as follows:
\begin{eqntag}
\label{250205162515}
	\Pi_1 (\CC^\ast_z) \iso \CC_u \times \CC^\ast_z
\qtext{sending}
	\gamma_{u,z} \mapsto (u,z)
\fullstop
\end{eqntag}
To see that this map indeed defines an isomorphism, notice that although $e^{u}z_0$ and $e^{u + 2\pi i n} z_0$ define the same target point $z_1$, the parametrised curves $\gamma_{u,z}$ and $\gamma_{u + 2\pi i n, z}$ are certainly not the same and not homotopic (relative to their endpoints).
In fact, we can recognise that the integer $n$ is (essentially) the winding number around the origin; see \autoref{240614104619}.

In this trivialisation, the source, target, and groupoid law maps of $\Pi_1 (\CC^\ast_z)$ become
\begin{eqn}
	\rm{s} (u,z) = z,
\qquad
	\rm{t} (u,z) = e^u z,
\qquad
	(u_2, z_2) \circ (u_1, z_1) = (u_1 + u_2, z_1)
\fullstop{,}
\end{eqn}
provided $z_2 = e^{u_1} z_1$.
Observe that, unlike the groupoid $\Pi_1 (\CC_z)$, the groupoid $\Pi_1 (\CC^\ast_z)$ is not algebraic because its target map $\rm{t}$ is transcendental.
See \autoref{250226211137}.

\begin{figure}[t]
\begin{adjustwidth}{-5cm}{-5cm}
\centering
\begin{subfigure}{0.3\textwidth}
    \includegraphics[trim={10cm 5cm 10cm 5cm}, clip, width=\textwidth]{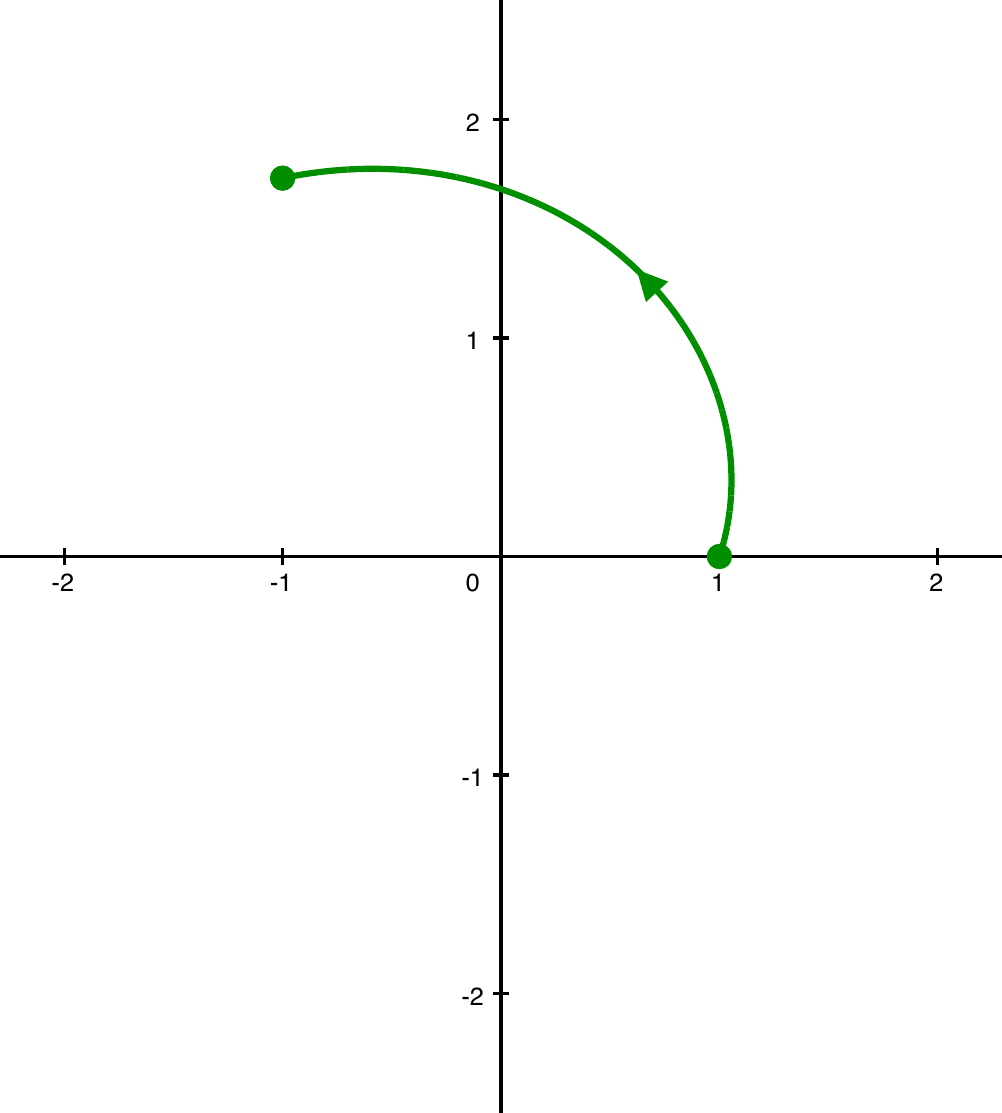}
	\caption{$n=0$}
	\label{240614101351}
\end{subfigure}
\begin{subfigure}{0.3\textwidth}
    \includegraphics[trim={10cm 5cm 10cm 5cm}, clip, width=\textwidth]{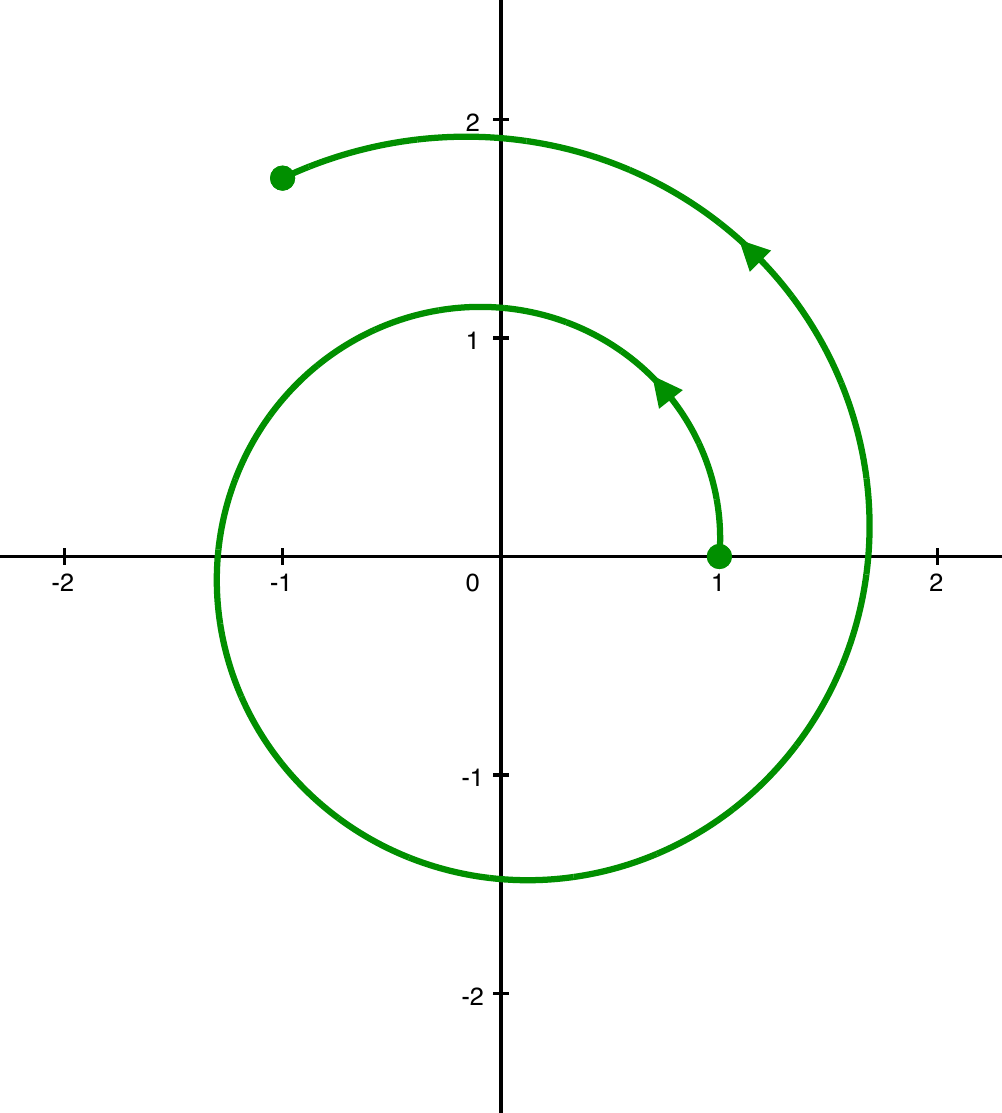}
	\caption{$n=1$}
	\label{240614101352}
\end{subfigure}
\begin{subfigure}{0.3\textwidth}
    \includegraphics[trim={10cm 5cm 10cm 5cm}, clip, width=\textwidth]{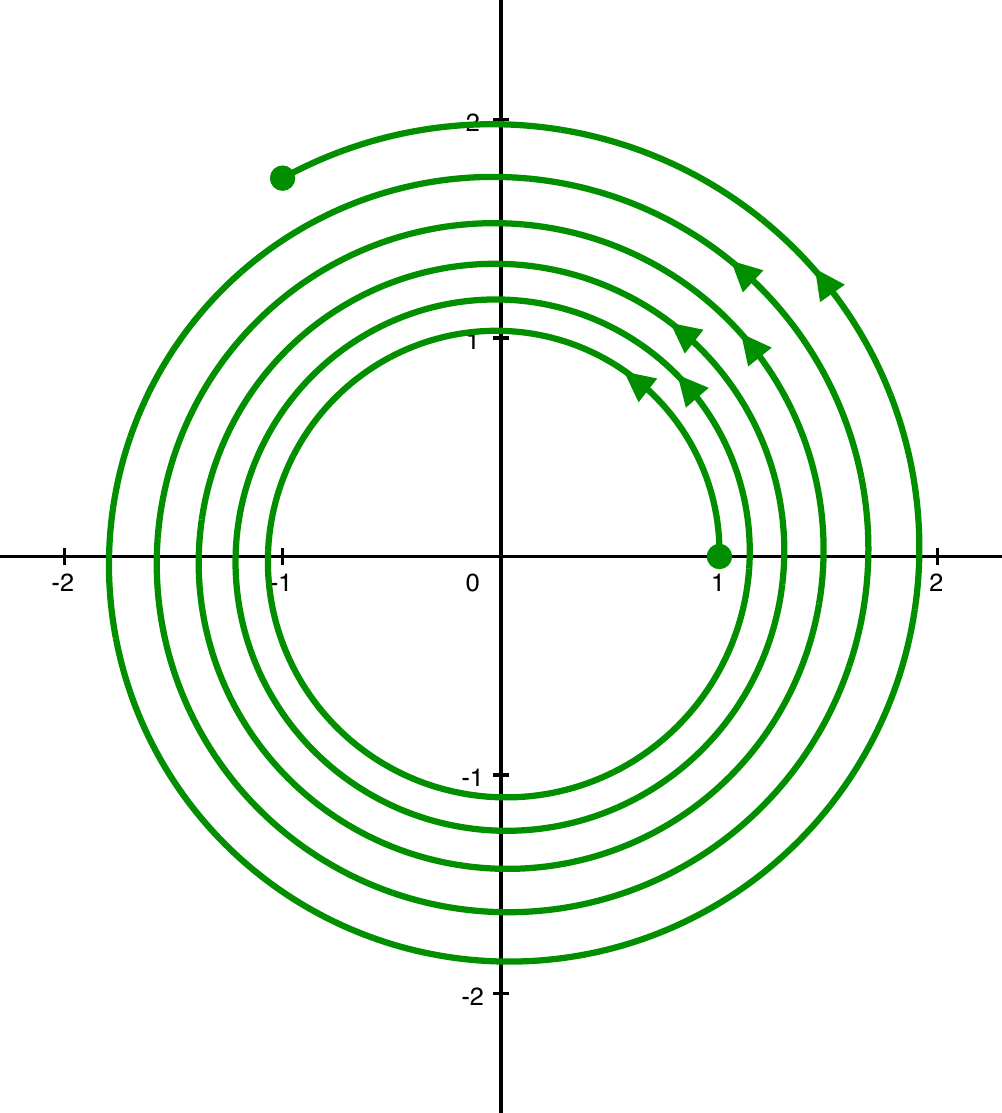}
	\caption{$n=5$}
	\label{240614101353}
\end{subfigure}
\begin{subfigure}{0.3\textwidth}
    \includegraphics[trim={10cm 5cm 10cm 5cm}, clip, width=\textwidth]{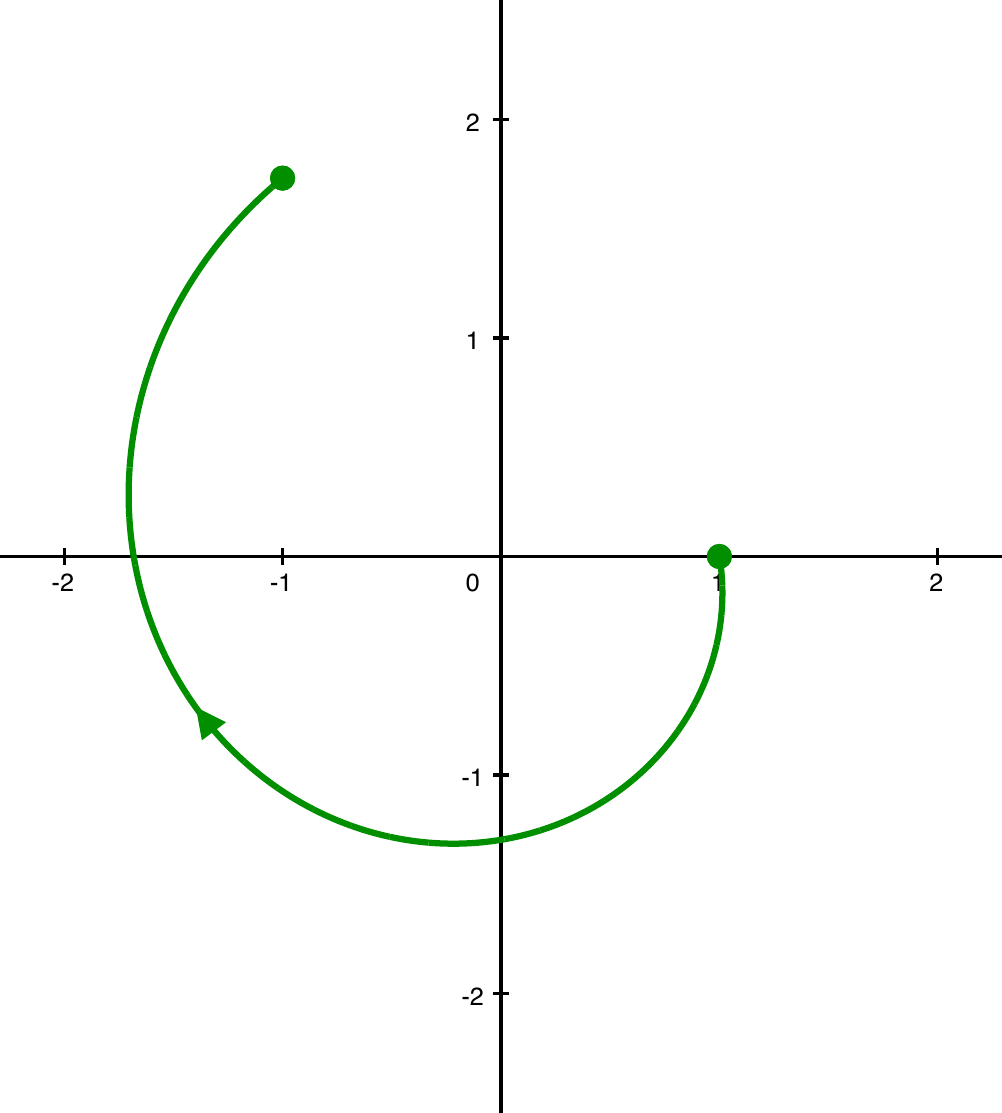}
	\caption{$n=-1$}
	\label{240614101354}
\end{subfigure}
\end{adjustwidth}
\caption{Plots of the parameterised curve $\gamma_{u,z} (r) = e^{ru} z$ with $r \in [0,1]$ where $u = \log(2) + 2\pi i /3 + 2 \pi i n$ for the displayed value of $n$, representing distinct homotopy classes of paths with source at $z = 1$ and target at $z = 2e^{2\pi i/3}$.
Observe that the integer $n$ is nothing but the winding number of each curve around the origin.}
\label{240614104619}
\end{figure}

The source and target fibres in $\Pi_1 (\CC^\ast_z)$ of any point $z_0 \in \CC^\ast_z$ are respectively the subsets 
\begin{eqn}
	\rm{s}^\inv (z_0) \cong \set{ (u,z) ~\big|~ z = z_0}
\qtext{and}
	\rm{t}^\inv (z_0) \cong \set{ (u,z) ~\big|~ e^{u} z = z_0}
\fullstop
\end{eqn}
Once again, both of these subsets are simply connected and isomorphic to $\CC$.
The identity bisection $1_{\CC^\ast_z}$ is also given by $u = 0$.
However, the restriction of the target map to each source fibre is no longer an isomorphism.
Instead, it is the universal covering map of $\CC^\ast_z$ based at $z_0$:
\begin{eqn}
	\rm{t} : \rm{s}^\inv (z_0) \cong \CC_u \to \CC_z^\ast
\qtext{given by}
	u \mapsto z = e^u z_0
\fullstop
\end{eqn}
Similarly, the source map $\rm{s}$ restricts to each target fibre $\rm{t}^\inv (z_0)$ determining the universal covering map $\rm{s} : u \mapsto z = e^{-u} z_0$ of $\CC^\ast_z$ based at $z_0$.
More precisely, $\rm{t}^\inv (z_0)$ is canonically isomorphic to the standard universal cover based at $z_0$ by sending each path $\gamma \in \rm{t}^\inv (z_0)$ to its inverse $\gamma^\inv \in \rm{s}^\inv (z_0)$; in coordinates, this operation is the negation map $u \mapsto -u$.

Like for $\Pi_1 (\CC_z)$, the tangent space to any point $\Pi_1 (\CC^\ast_z)$ is also spanned by the tangent vectors $\del_u, \del_z$ in this trivialisation, but the expressions for the derivatives $\rm{s}_\ast, \rm{t}_\ast$ in this basis are more interesting: they are $\big[ 0 ~~ 1 \big]$ and $\big[ e^u z ~~~ e^u \big] = e^u \big[ z ~~ 1 \big]$, respectively.
Thus, the line bundles $\cal{ker} (\rm{t}_\ast)$ and $\cal{ker} (\rm{s}_\ast)$ are respectively spanned by the vector fields $z \del_z - \del_u$ and $\del_u$.
If $\V = v(z) \del_z$ is a vector field on $\CC^\ast_z$, its source- and target-lifts expressed in the basis $\del_u, \del_z$ are respectively the vector fields $\V^\rm{s} = z^\inv v(z) (z\del_z - \del_u)$ and $\V^\rm{t} = e^{-u} z^\inv v(z) \del_u$.

\emph{Note.}
Whenever we need to explicitly distinguish the source and target maps of $\Pi_1 (\CC^\ast_z)$ from those of $\Pi_1 (\CC_z)$ (as we do in \autoref{250307105659}), we will denote them by $\tilde{\rm{s}}, \tilde{\rm{t}} : \Pi_1 (\CC^\ast_z) \to \CC^\ast_z$.
Otherwise, the intended meaning of the symbols $\rm{s}$ and $\rm{t}$ will either be clear from the context, said in words, or the discussion applies equally to both situations.

\subsubsection{The source-fibrewise universal cover.}
\label{250307105659}
The inclusion $\CC^\ast_z \inj \CC_z$ induces a holomorphic map $\nu : \Pi_1 (\CC^\ast_z) \to \Pi_1 (\CC_z)$ which is neither injective nor surjective.
Surjectivity fails because $\Pi_1 (\CC^\ast_z)$ contains no paths that begin or end at the origin, and injectivity fails because a loop around the origin is contractible in $\CC_z$ but not in $\CC^\ast_z$.
However, if we \textit{restrict} the groupoid $\Pi_1 (\CC_z)$ to $\CC^\ast_z$ then we do get a holomorphic surjective map
\begin{eqntag}
\label{250209092625}
	\nu : \Pi_1 (\CC^\ast_z) \too \Pi_1 (\CC_z) \big|_{\CC^\ast_z} 
		\coleq \rm{s}^\inv (\CC^\ast_z) \cap \rm{t}^\inv (\CC^\ast_z)
\fullstop
\end{eqntag}
See \autoref{250403114158}.
The restriction of $\Pi_1 (\CC_z)$ to $\CC^\ast_z$ is a holomorphic groupoid $\Pi_1 (\CC_z) |_{\CC^\ast_z} \rightrightarrows \CC^\ast_z$ in its own right: it is the dense open subgroupoid of $\Pi_1 (\CC_z)$ obtained by removing the source and target fibres of the origin in the $z$-plane, $\rm{s}^\inv (0)$ and $\rm{t}^\inv (0)$.
The restricted groupoid's source fibre $\rm{s}^\inv (z_0) |_{\CC^\ast_z}$ of any nonzero $z_0 \in \CC_z$ is therefore equal to the unrestricted groupoid's source fibre $\rm{s}^\inv (z_0)$ punctured along its intersection with the target fibre $\rm{t}^\inv (0)$, which is the one-element subset $\rm{s}^\inv (z_0) \cap \rm{t}^\inv (0) = G_{z_0}$ consisting of the unique critical path in $\CC_z$ starting at $z_0$:
\begin{eqntag}
\label{250209104616}
	\rm{s}^\inv (z_0) |_{\CC^\ast_z} = \rm{s}^\inv (z_0) \smallsetminus G_{z_0}
\fullstop
\end{eqntag}
Let us use the notation $\tilde{\rm{s}}, \tilde{\rm{t}}$ for the source and target maps of $\Pi_1 (\CC^\ast_z)$ to distinguish them from those of $\Pi_1 (\CC_z)$.
Then the restriction of $\nu$ to each source fibre $\tilde{\rm{s}}^\inv (z_0)$ is a simply connected covering map, hence it is nothing but the universal covering map of the restricted groupoid's source fibre $\rm{s}^\inv (z_0) |_{\CC^\ast_z}$ based at the point $1_{z_0}$, or equivalently of the unrestricted groupoid's punctured source fibre based at the point $1_{z_0}$:
\begin{eqntag}
\label{250209092822}
	\tilde{\rm{s}}^\inv (z_0)
	= \widetilde{ \rm{s}^\inv (z_0) \smallsetminus G_{z_0} }
	\xrightarrow{~~\nu~~}
	\rm{s}^\inv (z_0) \smallsetminus G_{z_0}
	= \rm{s}^\inv (z_0) \big|_{\CC^\ast_z}
\fullstop
\end{eqntag}
Consequently, $\nu$ is an isomorphism from an open neighbourhood of $1_{z_0} \in \Pi_1 (\CC^\ast_z)$ to an open neighbourhood of $1_{z_0} \in \Pi_1 (\CC_z) |_{\CC^\ast_z}$.
We summarise these observations as follows.

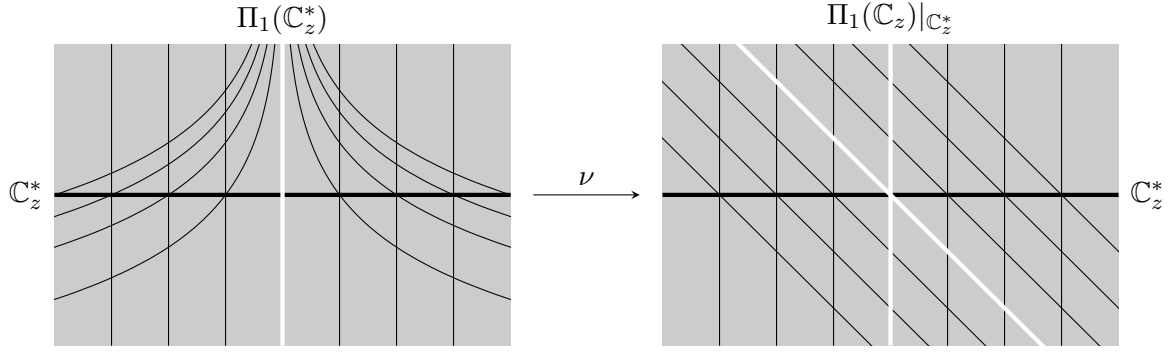
\begin{figure}[t]
\centering
\begin{tikzpicture}
\begin{scope}
\fill [grey] (-3,-2) rectangle (3,2);
\node at (0,2) [above] {$\Pi_1 (\CC^\ast_z)$};
\foreach \i in {2,...,4}
{
	\draw [thin] ({-3.75 + 0.75*\i},-2) -- ({-3.75 + 0.75*\i},2);
}
\foreach \i in {6,...,8}
{
	\draw [thin] ({-3.75 + 0.75*\i},-2) -- ({-3.75 + 0.75*\i},2);
}
\begin{scope}
\clip (-3,-2) rectangle (3,2);
\foreach \i in {-4,...,-1}
{
\draw[smooth, domain=-2:2] 
    plot ({(0.75*\i)*e^(-\x)},\x);
}
\foreach \i in {1,...,4}
{
\draw[smooth, domain=-2:2] 
    plot ({(0.75*\i)*e^(-\x)},\x);
}
\end{scope}
\draw [ultra thick] (3,0) -- (-3,0) node [left] {$\CC^\ast_z$};
\draw [ultra thick, white] (0,-2) -- (0,2);
\draw [->] (3.3,0) -- node [above] {$\nu$} (4.7,0);
\end{scope}

\begin{scope}[xshift=8cm]
\fill [grey] (-3,-2) rectangle (3,2);
\node at (0,2) [above] {$\Pi_1 (\CC_z) |_{\CC^\ast_z}$};
\draw [ultra thick] (-3,0) -- (3,0) node [right] {$\CC^\ast_z$};

\foreach \i in {1,...,7}
{
	\draw [thin] ({-3 + 0.75*\i},-2) -- ({-3 + 0.75*\i},2);
}
\begin{scope}
\clip (-3,-2) rectangle (3,2);
\foreach \i in {1,...,7}
{
	\draw [thin] ({-6+0.75*\i},3) -- ({0+0.75*\i},-3);
}
\end{scope}
\draw [ultra thick, white] (2.1,-2.1) -- (-2.1,2.1);
\draw [ultra thick, white] (0,-2) -- (0,2);
\end{scope}
\end{tikzpicture}
\caption{The source-fibrewise universal cover $\nu$.}
\label{250403114158}
\end{figure}

\begin{lemma}
\label{250212200132}
The map $\nu$ is the source-fibrewise universal covering map with basepoints along the identity bisection $1_{\CC^\ast_z}$, and it restricts to a holomorphic isomorphism in a neighbourhood of $1_{\CC^\ast_z}$.
That is, there is an open neighbourhood $\Omega \subset \Pi_1 (\CC_z) |_{\CC^\ast_z}$ of $1_{\CC^\ast_z}$ and an open neighbourhood $\tilde{\Omega} \subset \Pi_1 (\CC^\ast_z)$ of $1_{\CC^\ast_z}$ such that $\nu$ restricts to an isomorphism $\nu : \tilde{\Omega} \iso \Omega$.
\end{lemma}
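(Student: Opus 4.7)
Both claims reduce to an explicit check in the trivializations of $\Pi_1 (\CC_z)$ and $\Pi_1 (\CC^\ast_z)$ already established in \eqref{250205161722} and \eqref{250205162515}. The approach is to first compute the coordinate expression of $\nu$ and its restriction to a source fibre, then read off the covering and local-isomorphism properties.

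First I would verify the source-fibrewise universal covering assertion. Fix any $z_0 \in \CC^\ast_z$. Via the trivialization \eqref{250205162515}, the source fibre $\tilde{\rm{s}}^\inv (z_0) \subset \Pi_1 (\CC^\ast_z)$ is identified with $\CC_u \times \set{z_0} \cong \CC_u$ by sending $u \mapsto \gamma_{u,z_0}$. By the very definition of $\nu$, the image $\nu (\gamma_{u,z_0})$ is the homotopy class in $\CC_z$ of the concrete path $r \mapsto e^{ru} z_0$, which coincides with the straight-segment class $[z_0, e^u z_0]$; in the trivialization \eqref{250205161722} this corresponds to the point $(e^u z_0 - z_0,\, z_0)$ of $\Pi_1 (\CC_z) |_{\CC^\ast_z}$. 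Using \eqref{250209104616}, $\rm{s}^\inv (z_0) |_{\CC^\ast_z} \cong \CC_u \smallsetminus \set{-z_0}$ (the missing point being $G_{z_0}$), and composing with the target map of $\Pi_1 (\CC_z)$ recovers the usual exponential map $u \mapsto e^u z_0$ onto $\CC^\ast_z$. Since the total space $\tilde{\rm{s}}^\inv (z_0) \cong \CC_u$ is connected and simply connected, and since $\nu|_{\tilde{\rm{s}}^\inv (z_0)}$ is a holomorphic covering (its derivative $e^u z_0$ is nowhere zero and it is surjective onto $\rm{s}^\inv (z_0) |_{\CC^\ast_z}$), this is the universal covering map of the target fibre based at $1_{z_0} \leftrightarrow u = 0$.

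Second, I would exhibit the neighbourhoods $\tilde{\Omega}, \Omega$ of $1_{\CC^\ast_z}$ on which $\nu$ is a biholomorphism. In the same trivializations, $\nu$ has the global coordinate expression
\begin{eqn}
	\nu : (u, z) \longmapsto \big( (e^u - 1) z,\, z \big)
\fullstop
\end{eqn}
Its Jacobian at a point $(0, z_0) \in 1_{\CC^\ast_z}$ is upper triangular with diagonal entries $(e^u z, 1)|_{u = 0} = (z_0, 1)$, hence has determinant $z_0 \neq 0$; so $\nu$ is a local biholomorphism in a neighbourhood of every point of $1_{\CC^\ast_z}$. To promote this to global injectivity on a neighbourhood, take
\begin{eqn}
	\tilde{\Omega} \coleq \set{ (u,z) \in \Pi_1 (\CC^\ast_z) ~\big|~ |u| < \pi }
\fullstop{,}
\end{eqn}
on which the exponential $u \mapsto e^u$ is injective, so that $\nu|_{\tilde{\Omega}}$ is injective as well. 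Therefore $\nu$ restricts to a biholomorphism from $\tilde{\Omega}$ onto its image $\Omega \coleq \nu (\tilde{\Omega})$, which by the open mapping theorem is an open neighbourhood of $1_{\CC^\ast_z}$ in $\Pi_1 (\CC_z) |_{\CC^\ast_z}$.

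I do not expect any real obstacle, since every step is a direct coordinate computation in the explicit trivializations. The only point requiring mild care is reconciling the two descriptions of the fibre $\rm{s}^\inv (z_0) |_{\CC^\ast_z}$ — namely, the trivialization-inherited identification with $\CC_u \smallsetminus \set{-z_0}$ versus the intrinsic identification with $\CC^\ast_z$ via the target map — and recognising that under either description the composition with $\nu$ is the universal covering map based at $1_{z_0}$.
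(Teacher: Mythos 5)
Your proposal is correct and follows essentially the same route as the paper: the paper likewise works in the trivialisations \eqref{250205161722} and \eqref{250205162515}, identifies the fibrewise restriction of $\nu$ with the exponential map $u \mapsto e^u z_0$ (hence the universal covering of the punctured source fibre based at $1_{z_0}$), and deduces the local isomorphism near the identity bisection, with your explicit choice $\tilde{\Omega} = \set{|u| < \pi}$ merely making the uniform injectivity step concrete. One small caution: ``nowhere-vanishing derivative plus surjectivity'' does not by itself imply a covering map, but this does not harm your argument since you have already identified the fibrewise map with the exponential covering via the target-map identification $\rm{t} : \rm{s}^\inv(z_0)|_{\CC^\ast_z} \iso \CC^\ast_z$.
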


Explicitly with respect to our trivialisations, the map $\nu : \Pi_1 (\CC^\ast_z) \to \Pi_1 (\CC_z)$ is
\begin{eqn}
	\nu : \CC_u \times \CC_z^\ast \to \CC_u \times \CC_z
	\qtext{given by}
		(u,z) \mapsto (e^u z - z, z)
\fullstop{,}
\end{eqn}
and the restriction of the groupoid $\Pi_1 (\CC_z)$ to $\CC^\ast_z$ is the dense open subset
\begin{eqn}
	\Pi_1 (\CC_z) \big|_{\CC^\ast_z} \cong \set{ (u,z) ~\big|~ z \neq 0 \qtext{and} z + u \neq 0 }
\fullstop
\end{eqn}
The restricted groupoid's source and target fibres of any nonzero $z_0$ are the subsets 
\begin{eqn}
	\rm{s}^\inv (z_0) |_{\CC^\ast_z} \cong \set{ (u,z_0) ~\big|~ z_0 + u \neq 0 }
\qtext{and}
	\rm{t}^\inv (z_0) |_{\CC^\ast_z} \cong \set{(u,z) ~\big|~ z + u = z_0 ~~\&~~ z \neq 0 }	
\end{eqn}
both of which are isomorphic to $\CC^\ast$.
Meanwhile, restricting to the source fibre $\tilde{\rm{s}}^\inv (z_0)$,
\begin{eqn}
	\nu : \tilde{\rm{s}}^\inv (z_0) \to \rm{s}^\inv (z_0) \big|_{\CC^\ast_z}
\qtext{is equivalent to}
	(u, z_0) \mapsto (e^u z_0 - z_0, z_0)
\fullstop
\end{eqn}
Notice that, as expected, the point $(-z_0,z_0)$ is not in the image of this map because $z_0 \neq 0$, whilst the distinguished point $(0,z_0) \in \tilde{\rm{s}}^\inv (z_0)$ representing the constant path $1_{z_0} \in \Pi_1 (\CC^\ast_z)$ gets sent to the base point $(0,z_0) \in \rm{s}^\inv (z_0)$ representing the constant path $1_{z_0} \in \Pi_1 (\CC_z) |_{\CC^\ast_z}$.

\subsubsection{Canonical retractions.}
\label{250306154548}
Pick any path $\gamma \in \Pi_1 (\CC^\ast_z)$ with source $z_0$ and target $z_1$.
Then $\gamma$ represents a point in the source fibre $\rm{s}^\inv (z_0)$ which is the universal cover of $\CC^\ast_z$ based at $z_0$.
Such a universal cover is a simply connected space with a distinguished point represented by the constant path $1_{z_0} \in \Pi_1 (\CC^\ast_z)$.
Therefore, up to homotopy with fixed endpoints, there is a unique path $\tilde{\gamma}^\rm{s}$ on the space $\rm{s}^\inv (z_0)$ which starts at the point $1_{z_0}$ and ends at the point $\gamma$.
Similarly, $\gamma$ represents a point in the target fibre $\rm{t}^\inv (z_1)$, which is likewise the universal cover of $\CC^\ast_z$ based at $z_1$, so there is a unique path $\tilde{\gamma}^\rm{t}$ on the space $\rm{t}^\inv (z_1)$ which starts at the point $\gamma$ and ends at the point $1_{z_1}$.

\begin{definition}[source/target-retractions]
\label{250225130210}
For any $\gamma \in \Pi_1 (\CC^\ast_z)$ with source $z_0$ and target $z_1$, we refer to the unique element $\tilde{\gamma}^\rm{s} \in \Pi_1 \big( \rm{s}^\inv (z_0) \big)$ with source $1_{z_0}$ and target $\gamma$ as the \dfn{source-retraction} of $\gamma$.
Similarly, we refer to the unique element $\tilde{\gamma}^\rm{t} \in \Pi_1 \big( \rm{t}^\inv (z_1) \big)$ with source $\gamma$ and target $1_{z_1}$ as the \dfn{target-retraction} of $\gamma$.
\end{definition}

Choosing a concrete path $[0,1] \to \rm{s}^\inv (z_0)$ representing $\tilde{\gamma}^\rm{s}$ is the same as choosing a one-parameter family of paths on $\CC^\ast_z$ interpolating between the constant path $1_{z_0}$ and the given path $\gamma$.
In other words, $\tilde{\gamma}^\rm{s}$ is represented by a homotopy through paths with source $z_0$ between the constant path $1_{z_0}$ and the given path $\gamma$; i.e., a continuous family $(\gamma_s, s \in [0,1])$ where each $\gamma_s$ is an element of $\Pi_1 (\CC^\ast_z)$ with source $z_0$, such that $\gamma_0 = 1_{z_0}$ and $\gamma_1 = \gamma$; see \autoref{250301112229}.
Similarly, choosing a concrete path $[0,1] \to \rm{t}^\inv (z_1)$ representing $\tilde{\gamma}^\rm{t}$ is the same as choosing a homotopy $(\bar{\gamma}_s, s \in [0,1])$ through paths with target $z_1$ between the constant path $1_{z_1}$ and the given path $\gamma$; see \autoref{250301110312}.

A homotopy $(\gamma_s)$ representing the source-retraction $\tilde{\gamma}^\rm{s}$ determines a concrete representative of the path $\gamma$ itself by assembling the target points of each $\gamma_s$; i.e., $\gamma : [0,1] \to \CC^\ast_z$ is given by $\gamma (s) = \rm{t} (\gamma_s)$.
Similarly, a homotopy $(\bar{\gamma}_s)$ representing the target-retraction $\tilde{\gamma}^\rm{t}$ determines a concrete representative of the path $\gamma$ by assembling the source points of each $\bar{\gamma}_s$; i.e., $\gamma : [0,1] \to \CC^\ast_z$ is given by $\gamma (s) = \rm{s} (\bar{\gamma}_s)$.

Conversely, given a concrete path $\gamma : [0,1] \to \CC^\ast_z$, it determines a homotopy $(\gamma_s)$ representing the source-retraction $\tilde{\gamma}^\rm{s}$ by assembling the truncations $\gamma_s \coleq \gamma |_{[0,s]} : [0,s] \subset [0,1] \to \CC^\ast_z$.
It also determines a homotopy $(\bar{\gamma}_s)$ representing the target-retraction $\tilde{\gamma}^\rm{t}$ by assembling the truncations $\bar{\gamma}_s \coleq \gamma |_{[1-s,1]} : [1-s,1] \subset [0,1] \to \CC^\ast_z$.

\begin{definition}[source/target-truncations]
\label{250306125556}
Given a concrete path ${\gamma : [0,1] \to \CC^\ast_z}$, we define its \dfn{source-truncation} and \dfn{target-truncation} at time $s \in [0,1]$ to be the restriction of $\gamma$ to the subintervals $[0,s]$ and $[1-s,1]$, respectively, which we denote as follows:
\begin{eqn}
	\gamma_s \coleq \gamma |_{[0,s]} : [0,s] \to \CC^\ast_z
\qqtext{and}
	\bar{\gamma}_s \coleq \gamma |_{[1-s,1]} : [1-s,1] \to \CC^\ast_z
\fullstop
\end{eqn}
\end{definition}

\begin{figure}[t]
\centering
\begin{subfigure}{0.45\textwidth}
\centering
\begin{tikzpicture}
\begin{scope}[xshift=-4.5cm]
\fill [darkgreen] (0,0) circle (2pt) node [below] {$z_0$};
\fill [darkgreen] (0,3) circle (2pt) node [above] {$z_1$};
\draw [darkgreen, thick, dashed] (0,0) to [out=150, in=225] (0.5,1) to [out=45, in=-60] (-0.5,2) to [out=120, in=250] (0,3);
\node [darkgreen] at (0,0) [left, xshift=-5, yshift=3] {$1_{z_0}$};
\end{scope}
\begin{scope}[xshift=-3cm]
\fill [darkgreen] (0,0) circle (2pt) node [below] {$z_0$};
\fill [darkgreen] (0,3) circle (2pt) node [above] {$z_1$};
\fill [darkgreen] (0.5,1) circle (2pt) node [below right, scale=0.75] {$z_s$};
\draw [darkgreen, thick, ->-=0.5] (0,0) to [out=150, in=225] (0.5,1);
\draw [darkgreen, thick, dashed] (0.5,1) to [out=45, in=-60] (-0.5,2) to [out=120, in=250] (0,3);
\node [darkgreen] at (0.9,1.5) {$\gamma_s$};
\end{scope}
\begin{scope}[xshift=-1.5cm]
\fill [darkgreen] (0,0) circle (2pt) node [below] {$z_0$};
\fill [darkgreen] (0,3) circle (2pt) node [above] {$z_1$};
\fill [darkgreen] (-0.5,2) circle (2pt) node [above right, scale=0.75] {$z_s$};
\draw [darkgreen, thick] (0,0) to [out=150, in=225] (0.5,1);
\draw [darkgreen, thick, ->-=0.01] (0.5,1) to [out=45, in=-60] (-0.5,2);
\draw [darkgreen, thick, dashed] (-0.5,2) to [out=120, in=250] (0,3);
\end{scope}
\begin{scope}
\fill [darkgreen] (0,0) circle (2pt) node [below] {$z_0$};
\fill [darkgreen] (0,3) circle (2pt) node [above] {$z_1$};
\draw [darkgreen, thick] (0,0) to [out=150, in=225] (0.5,1);
\draw [darkgreen, thick, ->-=0.5] (0.5,1) to [out=45, in=-60] (-0.5,2);
\draw [darkgreen, thick] (-0.5,2) to [out=120, in=250] (0,3);
\node [darkgreen] at (0,1.5) [above right] {$\gamma$};
\end{scope}
\end{tikzpicture}
\caption{Source-retraction of $\gamma$: a homotopy $(\gamma_s)$ through paths with source $z_0$ between the constant path $1_{z_0}$ and the given path $\gamma$.
Each $\gamma_s : [0,s] \to \CC^\ast_z$ is the source-truncation of $\gamma : [0,1] \to \CC^\ast_z$ starting at $z_0$ and terminating at $z_s$.
}
\label{250301112229}
\end{subfigure}
\hfill
\begin{subfigure}{0.45\textwidth}
\centering
\begin{tikzpicture}
\begin{scope}
\fill [darkgreen] (0,0) circle (2pt) node [below] {$z_0$};
\fill [darkgreen] (0,3) circle (2pt) node [above] {$z_1$};
\draw [darkgreen, thick] (0,0) to [out=150, in=225] (0.5,1);
\draw [darkgreen, thick, ->-=0.5] (0.5,1) to [out=45, in=-60] (-0.5,2);
\draw [darkgreen, thick] (-0.5,2) to [out=120, in=250] (0,3);
\node [darkgreen] at (0,1.5) [above right] {$\gamma$};
\end{scope}
\begin{scope}[xshift=-1.5cm]
\fill [darkgreen] (0,0) circle (2pt) node [below] {$z_0$};
\fill [darkgreen] (0,3) circle (2pt) node [above] {$z_1$};
\fill [darkgreen] (0.5,1) circle (2pt) node [below right, scale=0.75] {$z_{1-s}$};
\draw [darkgreen, thick, dashed] (0,0) to [out=150, in=225] (0.5,1);
\draw [darkgreen, thick, ->-=0.99] (0.5,1) to [out=45, in=-60] (-0.5,2);
\draw [darkgreen, thick] (-0.5,2) to [out=120, in=250] (0,3);
\node [darkgreen] at (0.9,1.5) {$\bar{\gamma}_s$};
\end{scope}
\begin{scope}[xshift=-3cm]
\fill [darkgreen] (0,0) circle (2pt) node [below] {$z_0$};
\fill [darkgreen] (0,3) circle (2pt) node [above] {$z_1$};
\fill [darkgreen] (-0.5,2) circle (2pt) node [xshift=1, right, scale=0.75] {$z_{1-s}$};
\draw [darkgreen, thick, dashed] (0,0) to [out=150, in=225] (0.5,1) to [out=45, in=-60] (-0.5,2);
\draw [darkgreen, thick, ->-=0.5] (-0.5,2) to [out=120, in=250] (0,3);
\end{scope}
\begin{scope}[xshift=-4.5cm]
\fill [darkgreen] (0,0) circle (2pt) node [below] {$z_0$};
\fill [darkgreen] (0,3) circle (2pt) node [above] {$z_1$};
\draw [darkgreen, thick, dashed] (0,0) to [out=150, in=225] (0.5,1) to [out=45, in=-60] (-0.5,2) to [out=120, in=250] (0,3);
\node [darkgreen] at (0,3) [left, xshift=-5] {$1_{z_1}$};
\end{scope}
\end{tikzpicture}
\caption{Target-retraction of $\gamma$: a homotopy $(\bar{\gamma}_s)$ through paths with target $z_1$ between the constant path $1_{z_1}$ and the given path $\gamma$.
Each $\bar{\gamma}_s : [0,s] \to \CC^\ast_z$ is the target-truncation of $\gamma : [0,1] \to \CC^\ast_z$ starting at $z_{1-s}$ and terminating at $z_1$.
}
\label{250301110312}
\end{subfigure}
\caption{Canonical retractions.}
\label{250301112427}
\end{figure}
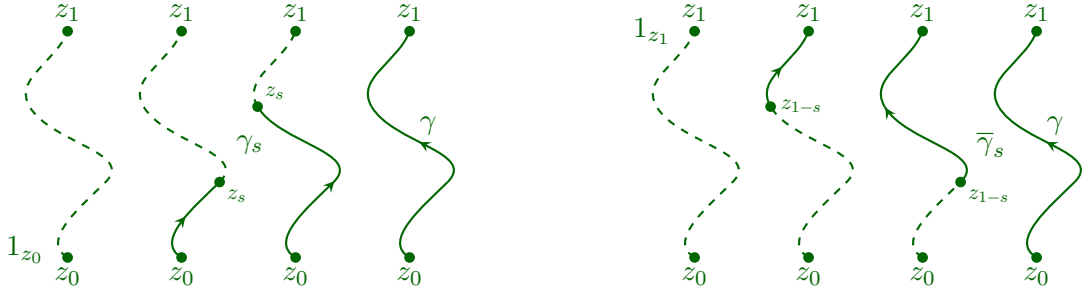

\subsection{The Central Charge}
\label{250315133506}

The $z$-plane carries a natural holomorphic differential form
\begin{eqn}
	\lambda \coleq z \d{t} = - \tfrac{1}{6} z^4 \d{z}
\end{eqn}
arising from its structure as the fourfold branched cover $z^4 = - 24 t$ of the $t$-plane.
Its significance to us is that it is dual to the meromorphic vector field $\V \coleq z^{-1} \del_t = - 6 z^{-4} \del_z$ appearing on the lefthand side of \eqref{250306084618} in the sense that
\begin{eqn}
	\lambda (\V) = 1
\fullstop
\end{eqn}
Integrating $\lambda$ along paths in the $z$-plane defines a global holomorphic function on the fundamental groupoid which will play a central role in our constructions.

\begin{definition}[central charge]
\label{250225170308}
For $X = \CC_z$ or $X = \CC^\ast_z$, the \dfn{central charge} on $\Pi_1 (X)$ associated with the holomorphic differential form $\lambda$ is the holomorphic function
\begin{eqn}
	\Z : \Pi_1 (X) \to \CC
\qtext{given by}
	\Z : \gamma \mapsto \xi = \Z (\gamma) = \int_\gamma \lambda
\fullstop
\end{eqn}
\end{definition}

The central charge is a \textit{groupoid $1$-cocycle} on $\Pi_1 (X)$ which means it respects the groupoid law: i.e., $\Z (\gamma_2 \circ \gamma_1) = \Z (\gamma_2) + \Z (\gamma_1)$ for all pairs of composable paths $\gamma_1, \gamma_2 \in \Pi_1 (X)$, and also $\Z (\gamma^\inv) = - \Z (\gamma)$ for all $\gamma \in \Pi_1 (X)$.
The two central charges $\Z : \Pi_1 (\CC_z) \to \CC$ and $\Z : \Pi_1 (\CC^\ast_z) \to \CC$ are intertwined by the map $\nu : \Pi_1 (\CC^\ast_z) \to \Pi_1 (\CC_z)$ in the sense that we have the following commutative diagram:
\begin{eqn}
\begin{tikzcd}[row sep = tiny, column sep = large]
		\Pi_1 (\CC^\ast_z)
			\ar[dd, "\nu"']
			\ar[dr, "\Z"]
\\
&		\CC	\fullstop
\\		\Pi_1 (\CC_z)
			\ar[ur, "\Z"']
\end{tikzcd}
\end{eqn}

Explicitly in our trivialisations, if $\gamma \in \Pi_1 (\CC_z)$ is any path from $z$ to $z + u$, then its central charge is the complex number
\begin{eqntag}
\label{250226060217}
	\xi = \Z (\gamma) 
		= \int_\gamma \lambda
		= \int_{z}^{z + u} \big( -\tfrac{1}{6} s^4 \big) \d{s}
		= \tfrac{1}{30} \big( z^5 - (z + u)^5 \big)
\fullstop
\end{eqntag}
Similarly, if $\gamma \in \Pi_1 (\CC^\ast_z)$ is any path from $z$ to $e^u z$, then its central charge is the complex number
\begin{eqntag}
\label{250226061238}
	\xi = \Z (\gamma)
		= \int_\gamma \lambda
		= \int_{z}^{e^u z} \big( -\tfrac{1}{6} s^4 \big) \d{s}
		= \tfrac{1}{30} ( 1 - e^{5u} ) z^5
\fullstop
\end{eqntag}

\begin{lemma}
\label{250214121905}
The central charge $\Z : \Pi_1 (\CC_z) \to \CC$ is a holomorphic submersion everywhere except at the point $1_0 \in \Pi (\CC_z)$; i.e., the constant path at the origin in the $z$-plane.
By comparison, the central charge $\Z : \Pi_1 (\CC^\ast_z) \to \CC$ is a holomorphic submersion everywhere.
\end{lemma}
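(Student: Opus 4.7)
The plan is to prove both assertions by direct computation in the explicit trivialisations of $\Pi_1(\CC_z)$ and $\Pi_1(\CC^\ast_z)$ set up earlier. Since submersivity at a point is equivalent to the non-vanishing of the derivative of $\Z$ at that point, and $\Z$ is a scalar-valued map of a two-dimensional manifold, it suffices to check that the gradient $(\partial_u \Z, \partial_z \Z)$ is nonzero at every relevant point.

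For $\Pi_1(\CC_z) \cong \CC_u \times \CC_z$, I would use the explicit formula
\begin{equation*}
	\Z(u,z) = \tfrac{1}{30}\bigl(z^5 - (z+u)^5\bigr)
\end{equation*}
from \eqref{250226060217} and compute
\begin{equation*}
	\partial_u \Z = -\tfrac{1}{6}(z+u)^4
	\qquad\text{and}\qquad
	\partial_z \Z = \tfrac{1}{6}\bigl(z^4 - (z+u)^4\bigr)
\fullstop
\end{equation*}
Then $\partial_u \Z = 0$ forces $z + u = 0$, which substituted into $\partial_z \Z$ yields $\tfrac{1}{6} z^4$; this vanishes only when $z = 0$, and then automatically $u = 0$ too. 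Hence the only common zero of the two partials is $(u,z) = (0,0)$, which in our trivialisation represents the constant path $1_0$. At every other point $\Z$ is a submersion.

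For $\Pi_1(\CC^\ast_z) \cong \CC_u \times \CC^\ast_z$, I would use \eqref{250226061238} to write
\begin{equation*}
	\Z(u,z) = \tfrac{1}{30}(1 - e^{5u}) z^5
\fullstop
\end{equation*}
The partial in the $u$-direction is $\partial_u \Z = -\tfrac{1}{6} e^{5u} z^5$, which never vanishes because $z \neq 0$ on $\CC^\ast_z$ and $e^{5u}$ is nowhere zero. Thus the derivative of $\Z$ is surjective at every point, and $\Z$ is a submersion on all of $\Pi_1(\CC^\ast_z)$.

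There is no serious obstacle here: the content of the lemma is really a transparent statement about the zero locus of the differential of the quintic expressions in \eqref{250226060217} and \eqref{250226061238}, and the trivialisations reduce everything to the elementary observations above. The only mild point worth underlining is that the apparent critical points $z + u = 0$ of the $\Pi_1(\CC_z)$-central charge, which form the submanifold $G$ of critical paths, become genuine critical points of $\Z$ only at the identity $1_0$; away from $1_0$ the $\partial_z$-direction compensates. This is precisely what allows the $\Pi_1(\CC^\ast_z)$-version to be a submersion everywhere once the origin has been removed.
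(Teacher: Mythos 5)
Your proof is correct: the partial derivatives you compute agree with the matrices $\big[\d\Z_\gamma\big]$ recorded in \eqref{250307121002} and \eqref{250307121006}, and the conclusion that the only common zero is $(u,z)=(0,0)$ (resp.\ no zero at all when $z\neq 0$) is exactly right. The route differs slightly from the paper's stated proof, which is coordinate-free: there one uses the identity $\d\Z_\gamma = (\rm{t}^\ast\lambda - \rm{s}^\ast\lambda)_\gamma$ to observe that surjectivity can only fail when $\lambda$ vanishes at \emph{both} the source and the target of $\gamma$ (because $\ker(\rm{s}_\ast)$ and $\ker(\rm{t}_\ast)$ span the tangent space, so a nonzero $\lambda$ at either endpoint already makes $\d\Z_\gamma$ nonzero), and then notes that a path in $\CC_z$ with both endpoints at the origin is contractible, hence equal to $1_0$; for $\CC^\ast_z$ no path has an endpoint at $0$, so the same argument gives submersivity everywhere. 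Your coordinate computation is in fact precisely the explicit verification the paper appends immediately after its proof, so nothing is lost; what the intrinsic argument buys is that it depends only on the zero locus of $\lambda$ and on simple-connectedness, which is the form reused in \autoref{250214122849} when restricting to source fibres, whereas your version buys complete elementary transparency in the chosen trivialisations. Your closing remark that the locus $z+u=0$ (the critical paths $G$) yields genuine critical points of $\Z$ only at $1_0$ is also a correct and relevant observation.
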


\begin{proof}
The central charge $\Z : \Pi_1 (\CC_z) \to \CC$ fails to be a submersion at $\gamma \in \Pi_1 (\CC_z)$ if and only if its derivative $\d \Z_\gamma : T_\gamma \Pi_1 (\CC_z) \to T_{\Z (\gamma)} \CC$ is not surjective.
This happens if and only if $\lambda$ has a zero at both the source $\rm{s} (\gamma)$ and the target $\rm{t} (\gamma)$ because 
\begin{eqntag}
\label{250307121013}
	\d \Z_\gamma
		= (\rm{t}^\ast \lambda - \rm{s}^\ast \lambda)_\gamma 
		= \lambda_{\rm{t} (\gamma)} - \lambda_{\rm{s} (\gamma)}
\fullstop
\end{eqntag}
But any such $\gamma$ is contractible and therefore homotopic to the constant path $1_0$.
The submersiveness of $\Z : \Pi_1 (\CC^\ast_z) \to \CC$ follows by the same argument.
\end{proof}

Explicitly using formula \eqref{250226060217} for the central charge $\Z : \Pi_1 (\CC_z) \to \CC$, the $1 \!\times\! 2$ matrix of $\d{\Z}_\gamma$ with respect to the basis $(\del_u, \del_z)$ of $T_\gamma \Pi_1 (\CC_z)$ and $\del_\xi$ of $T_{\Z (\gamma)} \CC$ is 
\begin{eqntag}
\label{250307121002}
	\big[ \d \Z_\gamma \big]_{\del_\xi}^{(\del_u, \del_z)}
		= \Big[ \tfrac{\del{\Z}}{\del{u}} ~~~ \tfrac{\del{\Z}}{\del{z}} \Big] 
		= \Big[ - \tfrac{1}{6} (z + u)^4 ~~~~ \tfrac{1}{6} (z^4 - (z+u)^4) \Big]
\fullstop
\end{eqntag}
Clearly, this linear map fails to be surjective if and only if $(u,z) = (0,0)$.
Similarly using formula \eqref{250226061238} for the central charge $\Z : \Pi_1 (\CC^\ast_z) \to \CC$, the $1 \!\times\! 2$ matrix of $\d{\Z}_\gamma$ is 
\begin{eqntag}
\label{250307121006}
	\big[ \d \Z_\gamma \big]_{\del_\xi}^{(\del_u, \del_z)}
		= \Big[ \tfrac{\del{\Z}}{\del{u}} ~~~ \tfrac{\del{\Z}}{\del{z}} \Big] 
		= \Big[ - \tfrac{1}{6} e^{5u} z^5 ~~~~ \tfrac{1}{6} (1 - e^{5u}) z^5 \Big]
\fullstop
\end{eqntag}
This linear map is always surjective because $z$ is nonzero.

\begin{lemma}
\label{250214122849}
The restriction of the central charge $\Z : \Pi_1 (\CC_z) \to \CC$ to the source fibre $\rm{s}^\inv (z_0) \subset \Pi_1 (\CC_z)$ of any $z_0 \in \CC_z$ determines a fivefold ramified covering map
\begin{eqntag}
\label{250209113514}
	\Z_{z_0} \coleq \Z \big|_{\rm{s}^\inv (z_0)} : \rm{s}^\inv (z_0) \to \CC_\xi
\fullstop
\end{eqntag}
It has just one ramification point of order $5$ located at the point represented by the critical path $\gamma_0$ with source at $z_0$; i.e., at the only point of the critical locus $G_{z_0} = \set{\gamma_0} \subset \rm{s}^\inv (z_0)$.
The corresponding branch point is located at this critical path's central charge:
\begin{eqn}
	\xi_0 \coleq \Z (\gamma_0) = \tfrac{1}{30} z_0^5
\fullstop
\end{eqn}
Similarly, the restriction of $\Z : \Pi_1 (\CC^\ast_z) \to \CC$ to the source fibre $\tilde{\rm{s}}^\inv (z_0) \subset \Pi_1 (\CC^\ast_z)$ of any $z_0 \in \CC^\ast_z$ is the universal covering map based at the origin $0$ of the once-punctured complex $\xi$-plane $\CC_\xi \smallsetminus \set{\xi_0}$:
\begin{eqntag}
\label{250214123417}
	\Z_{z_0} \coleq \Z \big|_{\tilde{\rm{s}}^\inv (z_0)} : 
		\tilde{\rm{s}}^\inv (z_0) \cong \widetilde{\CC_\xi \smallsetminus \set{\xi_0}}
		\to \CC_\xi \smallsetminus \set{\xi_0}
\fullstop
\end{eqntag}
\end{lemma}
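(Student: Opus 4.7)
The plan is to prove both statements by direct explicit calculation using the trivialisations \eqref{250205161722} and \eqref{250205162515} together with the formulas \eqref{250226060217} and \eqref{250226061238} for the central charge already derived in the excerpt. In particular, the source fibres $\rm{s}^\inv(z_0)$ and $\tilde{\rm{s}}^\inv(z_0)$ are identified with copies of $\CC_u$ via $u \mapsto (u, z_0)$, and the restricted central charges take very concrete forms that can be analysed by elementary means.

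First I would handle the unpunctured case. Under the trivialisation, the map $\Z_{z_0} : \rm{s}^\inv(z_0) \to \CC_\xi$ becomes
\begin{eqn}
    \Z_{z_0}(u) = \tfrac{1}{30}\bigl(z_0^5 - (z_0 + u)^5\bigr)
    \fullstop
\end{eqn}
This is a polynomial of degree $5$ in $u$, hence a surjective branched cover of degree $5$ from $\CC_u$ to $\CC_\xi$. The critical points are the zeros of $\d\Z_{z_0}/\d u = -\tfrac{1}{6}(z_0 + u)^4$, which happen only at $u = -z_0$. This unique critical point corresponds precisely to the path $(u, z_0) = (-z_0, z_0)$, whose target is $z_0 + u = 0$; i.e., the critical path $\gamma_0 \in G_{z_0}$. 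Shifting $v \coleq u + z_0$, the local expansion near $v = 0$ is $\Z_{z_0}(u) - \xi_0 = -\tfrac{1}{30} v^5$, which establishes both that $\xi_0 = \tfrac{1}{30} z_0^5$ is the unique branch point and that the ramification order there equals $5$.

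Next I would handle the punctured case. Here the trivialisation gives
\begin{eqn}
    \Z_{z_0}(u) = \tfrac{z_0^5}{30}\bigl(1 - e^{5u}\bigr)
    = \xi_0 - \tfrac{z_0^5}{30}\, e^{5u}
    \fullstop
\end{eqn}
Since $z_0 \neq 0$, the assignment $w \mapsto \xi_0 - \tfrac{z_0^5}{30}\, w$ is a biholomorphism $\CC^\ast \iso \CC_\xi \smallsetminus \set{\xi_0}$, so $\Z_{z_0}$ factors as the composition of the exponential map $u \mapsto e^{5u} : \CC_u \to \CC^\ast$ with this affine isomorphism. The exponential $u \mapsto e^{5u}$ is a classical universal covering map of $\CC^\ast$, and composition with a biholomorphism preserves the covering property; hence $\Z_{z_0} : \tilde{\rm{s}}^\inv(z_0) \to \CC_\xi \smallsetminus \set{\xi_0}$ is itself a universal covering map. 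Finally, the base point of $\tilde{\rm{s}}^\inv(z_0)$ is the constant path $1_{z_0}$, corresponding to $u = 0$, and $\Z_{z_0}(0) = 0$, so this universal cover is indeed based at the origin $0 \in \CC_\xi \smallsetminus \set{\xi_0}$, as claimed.

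The only subtlety worth highlighting is the identification of the ramification point with the critical path and the matching of base points in the punctured case; both follow from the explicit formulas and the groupoid-theoretic interpretation of the trivialisations already set up in the excerpt. No step appears to pose a substantive obstacle.
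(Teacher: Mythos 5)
Your proposal is correct and follows essentially the same route as the paper: the paper's own proof also reduces everything to the explicit formulas \eqref{250226060217} and \eqref{250226061238} in the trivialisation, identifying the unique critical point at $u=-z_0$ and the branch point $\xi_0=\tfrac{1}{30}z_0^5$. The only cosmetic differences are that the paper first phrases the critical-point computation intrinsically via $\d(\Z_{z_0})_\gamma=\rm{t}^\ast\lambda_\gamma$ and deduces the universal-cover claim from simple connectedness of $\tilde{\rm{s}}^\inv(z_0)$, whereas you exhibit the explicit factorisation through $u\mapsto e^{5u}$ — both are equally valid.
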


\begin{proof}
The map $\Z_{z_0}$ fails to be a submersion at $\gamma \in \rm{s}^\inv (z_0)$ if and only if the differential $\d (\Z_{z_0})_{\gamma} : T_\gamma \big( \rm{s}^\inv (z_0)\big) \to T_{\Z(\gamma)} \CC$ is not surjective.
Since $\d{\Z}_\gamma = (\rm{t}^\ast \lambda - \rm{s}^\ast \lambda)_{\gamma}$ and $\rm{s} (\gamma) = z_0$ is fixed, we find that 
\begin{eqn}
	\d{(\Z_{z_0})}_\gamma = \rm{t}^\ast \lambda_\gamma
\fullstop
\end{eqn}
Consequently, this differential is not surjective if and only if $\lambda$ has a zero at the target point $\rm{t} (\gamma)$, which happens if and only if $\gamma$ is critical, so $\gamma = \gamma_0 \in G_{z_0} \cong {(-z_0, z_0)}$.
Formula \eqref{250226060217} for $\Z : \Pi_1 (\CC_z) \to \CC$ gives an explicit demonstration of the fact that the restriction $\Z_{z_0}$ is a fivefold ramified covering map of the $\xi$-plane with only one ramification point located at $(u,z) = (-z_0, z_0)$ and only one branch point located at $\xi_0 = \Z (-z_0, z_0) = \tfrac{1}{30} z_0^5$.

The same argument shows that $\Z_{z_0} : \tilde{\rm{s}}^\inv (z_0) \to \CC$ is a covering map which therefore must be a universal covering map since $\tilde{\rm{s}}^\inv (z_0)$ is simply connected.
In particular, we can see from the explicit expression \eqref{250226061238} that the restriction $\Z_{z_0}$ is an infinitely-sheeted covering map of the $\xi$-plane with a single logarithmic branch point located at $\xi_0$.
\end{proof}

\subsubsection{The anchor map.}

Combining the groupoid source map with the central charge yields a holomorphic surjection that will play a major role in our constructions.

\begin{definition}[anchor map]
\label{250307123217}
For $X = \CC_z$ or $X = \CC^\ast_z$, the \dfn{anchor map} on $\Pi_1 (X)$ associated with the holomorphic differential form $\lambda$ is the holomorphic surjective map
\begin{eqn}
	\varrho \coleq (\rm{s}, \Z) : \Pi_1 (X) \to X \times \CC_\xi
\fullstop
\end{eqn}
\end{definition}

Like the two central charges, the two anchor maps $\varrho : \Pi_1 (\CC_z) \to \CC_z \times \CC_\xi$ and $\varrho : \Pi_1 (\CC^\ast_z) \to \CC^\ast_z \times \CC_\xi$ are intertwined by the map $\nu : \Pi_1 (\CC^\ast_z) \to \Pi_1 (\CC_z)$ in the sense that they fit into the commutative diagram
\begin{eqn}
\begin{tikzcd}[row sep = tiny, column sep = large]
		\Pi_1 (\CC^\ast_z)
			\ar[dd, "\nu"']
			\ar[dr, "\varrho"]
\\
&		\CC_z \times \CC_\xi \fullstop
\\		\Pi_1 (\CC_z)
			\ar[ur, "\varrho"']
\end{tikzcd}
\end{eqn}

\begin{lemma}
\label{250210081550}
The anchor map $\varrho$ is an isomorphism near the punctured identity bisection $\CC^\ast_z$.
More precisely, the anchor map $\varrho : \Pi_1 (\CC^\ast_z) \to \CC^\ast_z \times \CC_\xi$ as well as the restricted anchor map $\varrho : \Pi_1 (\CC_z) |_{\CC^\ast_z} \to \CC^\ast_z \times \CC_\xi$ define biholomorphisms near the identity bisection $1_{\CC^\ast_z}$.
That is to say, there is an open neighbourhood $\Xi \subset \CC^\ast_z \times \CC_\xi$ of $\xi = 0$ as well as open neighbourhoods $\Omega \subset \Pi_1 (\CC_z) |_{\CC^\ast_z}$ and $\tilde{\Omega} \subset \Pi_1 (\CC^\ast)$ of $1_{\CC^\ast_z}$ (see \autoref{250303172643}) that fit into the following commutative diagram where every arrow is a biholomorphism:
\begin{eqn}
\begin{tikzcd}[row sep = tiny, column sep = large]
		\tilde{\Omega}
			\ar[dd, "\nu"']
			\ar[dr, "\varrho"]
\\
&		\Xi \fullstop
\\		\Omega
			\ar[ur, "\varrho"']
\end{tikzcd}
\end{eqn}
\end{lemma}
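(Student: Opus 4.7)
The plan is to verify that, along the identity bisection $1_{\CC^\ast_z}$, the differential of $\varrho = (\rm{s}, \Z)$ is a linear isomorphism, so that the Holomorphic Inverse Function Theorem produces the desired local biholomorphism. The commutative diagram then follows from the compatibility of $\nu$ with both $\rm{s}$ and $\Z$ together with \autoref{250212200132}.

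First, I would compute the Jacobian of $\varrho$ at an arbitrary point $1_{z_0}$ with $z_0 \neq 0$. In the trivialisation of $\Pi_1(\CC_z)$, using the basis $(\del_u, \del_z)$ of the tangent space and $(\del_z, \del_\xi)$ of the tangent space to $\CC_z \times \CC_\xi$, formula \eqref{250307121002} gives at $(u,z) = (0, z_0)$ that $\d\Z = \big[-\tfrac{1}{6} z_0^4 ~~ 0\big]$. Combined with $\d\rm{s} = [0 ~~ 1]$, this yields the Jacobian matrix
\[
\big[\d\varrho_{1_{z_0}}\big]
= \begin{pmatrix} 0 & 1 \\ -\tfrac{1}{6} z_0^4 & 0 \end{pmatrix},
\]
with nonzero determinant $\tfrac{1}{6} z_0^4$. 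The analogous calculation for $\Pi_1(\CC^\ast_z)$ using \eqref{250307121006} gives a matrix with determinant $\tfrac{1}{6} z_0^5 \neq 0$. Hence, at each point of $1_{\CC^\ast_z}$, both instances of $\varrho$ are local biholomorphisms.

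Since the restriction of $\varrho$ to $1_{\CC^\ast_z}$ is the closed embedding $1_{z_0} \mapsto (z_0, 0)$ (in particular injective), a standard tubular-neighbourhood argument upgrades the pointwise local inverses into a single biholomorphism $\varrho : \Omega \iso \Xi$ defined on some open neighbourhood $\Omega \subset \Pi_1(\CC_z)|_{\CC^\ast_z}$ of $1_{\CC^\ast_z}$ and an open neighbourhood $\Xi \subset \CC^\ast_z \times \CC_\xi$ of $\CC^\ast_z \times \set{0}$. The same reasoning applied in $\Pi_1(\CC^\ast_z)$ produces a biholomorphism $\varrho : \tilde\Omega \iso \Xi'$ onto some $\Xi'$; after replacing $\Xi$ by $\Xi \cap \Xi'$ and shrinking $\Omega, \tilde\Omega$ accordingly, both target neighbourhoods coincide. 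Finally, \autoref{250212200132} furnishes a biholomorphism $\nu : \tilde\Omega \iso \Omega$ after a further shrinkage, and the identity $\varrho \circ \nu = \varrho$ (which is immediate from $\rm{s} \circ \nu = \rm{s}$ and $\Z \circ \nu = \Z$) closes the diagram with all arrows biholomorphisms.

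The only subtlety, rather than a genuine obstacle, is the passage from the pointwise Inverse Function Theorem to a biholomorphism on a neighbourhood of the whole noncompact bisection $1_{\CC^\ast_z}$, together with the compatible shrinking of the three neighbourhoods $\tilde\Omega, \Omega, \Xi$ so that they fit into one commutative diagram. Both points are resolved by standard tubular-neighbourhood and intersection arguments; the nonvanishing of the Jacobian encodes precisely the fact that we avoid the transition point $z = 0$, which is exactly the content of working over $\CC^\ast_z$ rather than $\CC_z$.
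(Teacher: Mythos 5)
Your proposal is correct in outline but takes a genuinely different route from the paper. You argue via the Jacobian of $\varrho$ along the identity bisection (your determinants $\tfrac{1}{6}z_0^4$ and $\tfrac{1}{6}z_0^5$ are right), invoke the Holomorphic Inverse Function Theorem pointwise, and then appeal to a globalisation (``tubular-neighbourhood'') lemma to get a single biholomorphism on a neighbourhood of the noncompact bisection. The paper never uses the IFT: instead it fixes $z_0$, uses the nonvanishing of $\lambda$ to get a small simply connected $U \ni z_0$ on which $\Z_{z_0} : U \iso V$ is a biholomorphism, and then takes $W_1$ to be the \emph{connected component} of $\varrho^\inv(U_0 \times \DD)$ containing $1_{z_0}$, proving bijectivity of $\varrho : W_1 \to W_2$ by hand from the simple connectedness of $U$. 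That construction is doing exactly the work your argument delegates to the globalisation lemma: the anchor is badly non-injective globally (on each source fibre $\Z_{z_0}$ is a fivefold, respectively infinite, covering), so the injectivity of $\varrho$ on a neighbourhood of the bisection is the real content, and the paper's connected-component step makes explicit why the other sheets and the nontrivial loops are excluded; it also produces a canonical local inverse (sending $(z_1,\xi)$ to the unique path inside $U$), which makes the gluing over varying $z_0$ transparent. Your route buys brevity and generality, but the phrase ``standard tubular-neighbourhood argument'' should be backed up: for a noncompact submanifold the passage from pointwise local invertibility to injectivity on a neighbourhood is not automatic, and it is valid here precisely because $\varrho$ restricts on $1_{\CC^\ast_z}$ to a proper closed embedding onto $\CC^\ast_z \times \set{0}$ (closed in $\CC^\ast_z \times \CC_\xi$); you should state and use this hypothesis explicitly, or give the pinching/locally-finite-cover argument. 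With that caveat, and with the bookkeeping you already indicate for matching $\Xi$ and composing with $\nu$ via \autoref{250212200132} (noting $\rm{s} \circ \nu = \rm{s}$ and $\Z \circ \nu = \Z$), your proof closes the diagram just as the paper's does.
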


\begin{figure}[t]
\centering
\begin{tikzpicture}
\begin{scope}
\fill [grey] (-3,-2) rectangle (3,2);
\node at (0,2) [above] {$\Pi_1 (\CC^\ast_z)$};
\foreach \i in {2,...,4}
{
	\draw [thin] ({-3.75 + 0.75*\i},-2) -- ({-3.75 + 0.75*\i},2);
}
\foreach \i in {6,...,8}
{
	\draw [thin] ({-3.75 + 0.75*\i},-2) -- ({-3.75 + 0.75*\i},2);
}
\begin{scope}
\clip (-3,-2) rectangle (3,2);
\foreach \i in {-4,...,-1}
{
\draw[smooth, domain=-2:2] 
    plot ({(0.75*\i)*e^(-\x)},\x);
}
\foreach \i in {1,...,4}
{
\draw[smooth, domain=-2:2] 
    plot ({(0.75*\i)*e^(-\x)},\x);
}
\end{scope}
\draw [ultra thick] (3,0) -- (-3,0) node [left] {$\CC^\ast_z$};
\begin{scope}
\clip (-3,-2) rectangle (3,2);
\draw[thick, fill=blue, fill opacity = 0.3, blue, dashed] (0,0) to[closed, curve through =
	{ (1.25,1.75) (1.75,1.5) (3,1) (4,0) (3,-2) (2,-1.5) }] (0.4,-1);
\draw[thick, fill=blue, fill opacity = 0.3, blue, dashed] (0,0) to[closed, curve through =
	{ (-1.25,1.75) (-1.75,1.5) (-3,1) (-4,0) (-3,-2) (-2,-1.5) }] (-0.4,-1);
\end{scope}
\draw [ultra thick, white] (0,-2) -- (0,2);
\node [blue] at (2.7,0.75) {$\tilde{\Omega}$};
\draw [->] (0,-2.2) -- node [right] {$\nu$} (0,-2.8);
\draw [->] (3.25,-0.5) -- node [above right] {$\varrho$} (4.75,-1.5);
\end{scope}

\begin{scope}[yshift=-5cm]
\fill [grey] (-3,-2) rectangle (3,2);
\node at (0,-2) [below] {$\Pi_1 (\CC_z) |_{\CC^\ast_z}$};
\draw [ultra thick] (3,0) -- (-3,0) node [left] {$\CC^\ast_z$};

\foreach \i in {1,...,7}
{
	\draw [thin] ({-3 + 0.75*\i},-2) -- ({-3 + 0.75*\i},2);
}
\begin{scope}
\clip (-3,-2) rectangle (3,2);
\foreach \i in {1,...,7}
{
	\draw [thin] ({-6+0.75*\i},3) -- ({0+0.75*\i},-3);
}
\end{scope}
\begin{scope}
\clip (-3,-2) rectangle (3,2);
\draw[thick, fill=blue, fill opacity = 0.3, blue, dashed] (0,0) to[curve through = 
	{ (0.3,1) (1,1.5) (3,1.5) (4,0) (3,-1) (1,-0.6)}] (0,0);
\draw[thick, fill=blue, fill opacity = 0.3, blue, dashed] (0,0) to[curve through =
	{ (-0.3,-1) (-1,-1.5) (-3,-1.5) (-4,0) (-3,1) (-1,0.6)}] (0,0);
\end{scope}
\draw [ultra thick, white] (2.1,-2.1) -- (-2.1,2.1);
\draw [ultra thick, white] (0,-2) -- (0,2);
\node [blue] at (2.7,1) {$\Omega$};
\draw [->] (3.25,0.5) -- node [below right] {$\varrho$} (4.75,1.5);
\end{scope}

\begin{scope}[yshift=-2.5cm, xshift=8cm]
\fill [grey] (-3,-2) rectangle (3,2);
\node at (0,2) [above] {$\CC^\ast_z \times \CC_\xi$};
\draw [ultra thick] (-3,0) -- (3,0) node [right] {$\CC^\ast_z$};
\begin{scope}
\clip (-3,-2) rectangle (3,2);
\draw[thick, fill=blue, fill opacity = 0.3, blue, dashed] (0,0) to[closed, curve through =
	{ (0.5,1) (1,1) (3,1) (4,0) (3,-1) (1,-1) }] (0.5,-1);
\draw[thick, fill=blue, fill opacity = 0.3, blue, dashed] (0,0) to[closed, curve through =
	{ (-0.5,1) (-1,1) (-3,1) (-4,0) (-3,-1) (-1,-1) }] (-0.5,-1);
\end{scope}
\draw [ultra thick, white] (0,-2) -- (0,2);
\node [blue] at (2.5,0.5) {$\Xi$};
\end{scope}

\end{tikzpicture}
\caption{Isomorphisms of \autoref{250210081550}.}
\label{250303172643}
\end{figure}

\begin{proof}
It is enough to prove the claim for $\varrho : \Pi_1 (\CC_z) |_{\CC^\ast_z} \to \CC^\ast_z \times \CC_\xi$ as the rest follows from \autoref{250212200132}.
Choose an arbitrary point $z_0 \in \CC^\ast_z$, and regard it both as the point $1_{z_0} \in 1_{\CC^\ast_z}$ on the identity bisection of $\Pi_1 (\CC_z) \big|_{\CC^\ast_z}$ and as the point $(z_0,0) \in \CC^\ast_z \times \CC_\xi$.
It is enough to show that there is are open neighbourhoods $W_1 \subset \Pi_1 (\CC_z) \big|_{\CC^\ast_z}$ of $1_{z_0}$ and $W_2 \subset \CC^\ast_z \times \CC_\xi$ of $(z_0, 0)$ such that the anchor map $\varrho$ restricts to give a biholomorphism $W_1 \iso W_2$.

Since the differential $\lambda$ is nonvanishing in a neighbourhood of $z_0$, there is a simply connected neighbourhood $U \subset \CC^\ast_z$ of $z_0$ such that the map $\Z_{z_0} : z \mapsto \xi = \int_{z_0}^z \lambda$ defines a biholomorphism from $U$ to the disc $V \subset \CC_\xi$ centred at the origin of some radius $r > 0$.
Note in particular that the central charge of the straight line segment $[0,z_0]$ must be strictly greater than $r$ in absolute value.

Fix a strictly smaller concentric disc $V_0 \subset V$ of radius $r_0 < r$, and let $U_0 \subset U$ be the preimage of $V_0$ under $\Z_0$.
Finally, let $\DD \subset \CC$ be a disc centred at the origin of radius $r - r_0$.
Notice that for any point $\xi_0 \in V_0$ and any $\xi \in \DD$, the point $\xi_0 + \xi$ is still contained in $V$.
Now we define the neighbourhoods $W_1$ and $W_2$ as follows:
\begin{eqn}
	W_1 \coleq \varrho^\inv (U_0 \times \DD) \big|^\textup{c} \subset \Pi_1 (\CC_z) \big|_{\CC^\ast_z}
\qqtext{and}
	W_2 \coleq U_0 \times \DD \subset \CC_z^\ast \times \CC_\xi
\fullstop{,}
\end{eqn}
where `` $ \dummy |^\textup{c}$ '' means the connected component containing the constant path $1_{z_0}$.

Let us make a clarifying comment regarding the open set $W_1$.
The preimage $\varrho^\inv (U_0 \times \DD)$ in $\Pi_1 (\CC_z) |_{\CC^\ast_z}$ is an open subset of consisting of all homotopy classes of paths $\gamma$ on $\CC_z^\ast$ that start in $U_0$ and whose central charge has the property that $\Z (\gamma) \in \DD$; i.e., $|\Z (\gamma)| < r - r_0$.
This implies in particular that the target point of $\gamma$ is necessarily contained in $U$, but it does not mean that the path $\gamma$ is contained in $U$ (more precisely, it does not mean that $\gamma$ is homotopic to a path that is entirely contained in $U$).
Indeed, this subset clearly contains the constant path $1_{z_0}$, but it also contains the homotopy class of any nontrivial loop $\ell$ in $\CC^\ast_z$ based at $z_0$.
However, such a loop $\ell$ does not lie in the same connected component as the constant path $1_{z_0}$ because there is no way to define a one-parameter family of paths interpolating between $1_{z_0}$ and $\ell$ whose central charge remains in $\DD$.
Indeed, the central charge of the straight line segment $[0,z_0]$ is strictly bigger than $r > r - r_0$ in absolute value.
Taking the connected component eliminates such nontrivial loops and ensures that any such path $\gamma$ is (homotopic to a path) entirely contained in $U$.

We claim that the anchor map defines an isomorphism $\varrho : W_1 \iso W_2$.
Suppose $\gamma, \gamma' \in W_1$ are two paths on $\CC^\ast_z$ with source at a point $z_1 \in U_0$ and such that $\xi = \Z (\gamma) = \Z (\gamma') \in \DD$.
By the construction of $W_1$, they are (homotopic to paths) entirely contained in $U$.
But since $U$ is simply connected, their homotopy class in $U$ is completely determined by their endpoints.
If we put $\xi_0 \coleq \Z_0 (z_1)$, then $\xi_0 + \xi \in V$, so $\gamma$ and $\gamma'$ terminate at the same point in $U$, hence define the same element in $W_1$.
Conversely, if $(z_1, \xi) \in U_0 \times \DD$, then there is a unique point $z_2 \in U$ such that $\Z_{z_0} (z_2) = \Z_{z_0} (z_1) + \xi$, so the path $\gamma$ in $U$ going from $z_1$ to $z_2$ is such that $\varrho (\gamma) = (z_1, \xi)$.
\end{proof}

\subsubsection{Source- and target-lifts.}
\label{250307191800}
The central charge relates the canonical source- and target-lifts of $\V = -6z^{-4} \del_z$ with the standard coordinate vector field on $\CC_\xi$ as we now explain.

Let $\V^\rm{s}, \V^\rm{t}$ denote the source- and target-lifts of $\V$ to the fundamental groupoid of the $z$-plane and also the source- and target-lifts to fundamental groupoid of the punctured $z$-plane.
Thus, they are the unique holomorphic vector fields on $\Pi_1 (\CC_z) |_{\CC^\ast_z}$ and respectively on $\Pi_1 (\CC^\ast_z)$ which satisfy the following identities:
\begin{eqntag}
\label{250226130638}
	\rm{s}_\ast \V^\rm{s} = \V,
\quad
	\rm{t}_\ast \V^\rm{s} = 0,
\qqtext{and}
	\rm{s}_\ast \V^\rm{t} = 0,
\quad
	\rm{t}_\ast \V^\rm{t} = \V
\fullstop
\end{eqntag}

Explicitly in the trivialisation $\Pi_1 (\CC_z) \cong \CC_u \times \CC_z$, the source- and target-lifts of $\V$ to the groupoid $\Pi_1 (\CC_z) |_{\CC^\ast_z}$ are
\begin{eqn}
	\V^\rm{s} = -6z^{-4} (\del_z - \del_u)
\qtext{and}
	\V^\rm{t} = -6z^{-4} \del_u
\fullstop
\end{eqn}
Notice that $\V^\rm{s}, \V^\rm{t}$ readily extend to the whole groupoid $\Pi_1 (\CC_z)$ as meromorphic vector fields.
Similarly, in the trivialisation $\Pi_1 (\CC^\ast_z) \cong \CC_u \times \CC^\ast_z$, the source- and target-lifts of $\V$ to the groupoid $\Pi_1 (\CC^\ast_z)$ are
\begin{eqn}
	\V^\rm{s} = -6z^{-5} (z\del_z - \del_u)
\qtext{and}
	\V^\rm{t} = -6e^{-u} z^{-5} \del_u
\fullstop
\end{eqn}

\begin{lemma}
\label{250222194723}
The source- and target-lifts of $\V$ satisfy the following identities:
\begin{eqntag}
\label{250226132732}
\begin{aligned}
	\Z_\ast \V^\rm{s} = - \del_\xi
&\qqtext{and}
	\Z_\ast \V^\rm{t} = + \del_\xi
\fullstop{;}
\\
	\varrho_\ast \V^\rm{s}
		= \V - \del_\xi
&\qqtext{and}
	\varrho_\ast \V^\rm{t}
		= + \del_\xi
\fullstop
\end{aligned}
\end{eqntag}
\end{lemma}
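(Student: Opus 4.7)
The plan is to reduce everything to the infinitesimal formula already recorded in \eqref{250307121013}, namely
\begin{eqn}
	\d \Z_\gamma = (\rm{t}^\ast \lambda - \rm{s}^\ast \lambda)_\gamma = \lambda_{\rm{t} (\gamma)} \circ \rm{t}_\ast - \lambda_{\rm{s} (\gamma)} \circ \rm{s}_\ast
\fullstop{,}
\end{eqn}
and then to combine it with the defining properties \eqref{250226130638} of the source- and target-lifts together with the duality relation $\lambda (\V) = 1$ used in the construction of the central charge. No auxiliary geometric input seems needed; both $X = \CC_z$ and $X = \CC^\ast_z$ can be handled uniformly, since $\d \Z$ has the same expression on either fundamental groupoid.

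\textbf{Step 1: First identity.} I would evaluate $\d \Z$ on $\V^\rm{s}$ pointwise at any $\gamma$, using $\rm{s}_\ast \V^\rm{s} = \V$ and $\rm{t}_\ast \V^\rm{s} = 0$:
\begin{eqn}
	\d \Z_\gamma (\V^\rm{s})
		= \lambda_{\rm{t} (\gamma)} \big( \rm{t}_\ast \V^\rm{s} \big) - \lambda_{\rm{s} (\gamma)} \big( \rm{s}_\ast \V^\rm{s} \big)
		= \lambda (0) - \lambda (\V)
		= - 1
\fullstop
\end{eqn}
Under the canonical identification $T_{\Z (\gamma)} \CC_\xi \cong \CC \cdot \del_\xi$, this exactly says $\Z_\ast \V^\rm{s} = -\del_\xi$. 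The same calculation with the roles of $\rm{s}$ and $\rm{t}$ swapped, using $\rm{s}_\ast \V^\rm{t} = 0$ and $\rm{t}_\ast \V^\rm{t} = \V$, gives $\d \Z_\gamma (\V^\rm{t}) = \lambda (\V) - \lambda (0) = +1$, hence $\Z_\ast \V^\rm{t} = +\del_\xi$.

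\textbf{Step 2: Second identity.} Because $\varrho = (\rm{s}, \Z)$, its derivative factors componentwise as $\varrho_\ast = (\rm{s}_\ast, \Z_\ast)$ under the canonical splitting $T(X \times \CC_\xi) = \rm{pr}_1^\ast T X \oplus \rm{pr}_2^\ast T \CC_\xi$. Combining the results of Step 1 with $\rm{s}_\ast \V^\rm{s} = \V$ and $\rm{s}_\ast \V^\rm{t} = 0$ gives
\begin{eqn}
	\varrho_\ast \V^\rm{s}
		= (\V, - \del_\xi)
		= \V - \del_\xi
\qqtext{and}
	\varrho_\ast \V^\rm{t}
		= (0, + \del_\xi)
		= + \del_\xi
\fullstop{,}
\end{eqn}
as required.

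There is no real obstacle here: once \eqref{250307121013} and the duality $\lambda(\V) = 1$ are in hand, the lemma is essentially a bookkeeping exercise, and the signs are forced by the choice of orientation in which $\Z(\gamma) = \int_\gamma \lambda$ accumulates from source to target. As a sanity check one could reproduce the identities by direct computation in the explicit trivialisations of \autoref{250315133506}: for example, on $\Pi_1 (\CC_z)$, $\V^\rm{t} = -6 z^{-4} \del_u$ together with $\del_u \Z = -\tfrac{1}{6}(z+u)^4$ yields $\V^\rm{t} \Z = +1$, matching the coordinate-free derivation.
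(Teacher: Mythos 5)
Your proof is correct and takes essentially the same route as the paper: you evaluate $\d\Z = \rm{t}^\ast \lambda - \rm{s}^\ast \lambda$ on the lifts using the defining properties \eqref{250226130638} together with $\lambda(\V) = 1$, and reduce the second row to the first via $\varrho = (\rm{s}, \Z)$, exactly as in the paper's argument. One small caveat on your closing sanity check: in the trivialisation the target-lift is $\V^\rm{t} = -6(z+u)^{-4}\del_u$ rather than $-6z^{-4}\del_u$ (the latter gives $\V^\rm{t}\Z = (z+u)^4/z^4$, not $+1$); the formula you quote inherits a slip in the paper's trivialised expressions, but this does not affect your coordinate-free derivation.
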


\begin{proof}
Since $\varrho = (\rm{s}, \Z)$, it is enough to demonstrate the first row in \eqref{250226132732}.
Recall that the differential of $\Z$ can be expressed as $\d \Z = \rm{t}^\ast \lambda - \rm{s}^\ast \lambda$.
But $\rm{t}^\ast \lambda (\V^\rm{s}) = \lambda ( \rm{t}_\ast \V^\rm{s}) = 0$ and $\rm{s}^\ast \lambda (\V^\rm{s}) = \lambda (\rm{s}_\ast \V^\rm{s}) = \lambda (\V) = 1$ thanks to \eqref{250226130638}.
\end{proof}

These identities can be demonstrated explicitly in our trivialisations.
For the central charge $\Z : \Pi_1 (\CC_z) \to \CC$, the $1 \!\times\! 2$ matrix of $\d{\Z}_\gamma$ for any $\gamma \in \Pi_1 (\CC_z)$ in the bases $(\del_u, \del_z)$ and $\del_\xi$ is given by \eqref{250307121002}, so if we write $\V^\rm{s}$ and $\V^\rm{t}$ as column vectors in the basis $(\del_u, \del_z)$, then the two identities in the first row of \eqref{250226132732} are the result of simple matrix multiplication
\begin{eqns}
	\Big[ - \tfrac{1}{6} (z + u)^4 ~~~~ \tfrac{1}{6} (z^4 - (z+u)^4) \Big] \mtx{ 6z^{-4} \\ -6z^{-4} }
		&= -1
\fullstop{,}
\\
	\Big[ - \tfrac{1}{6} (z + u)^4 ~~~~ \tfrac{1}{6} (z^4 - (z+u)^4) \Big] \mtx{ -6z^{-4} \\ 0 }
		&= +1
\fullstop
\end{eqns}
Similarly, for the central charge $\Z : \Pi_1 (\CC^\ast_z) \to \CC$, the $1 \!\times\! 2$ matrix of $\d{\Z}_\gamma$ for any $\gamma \in \Pi_1 (\CC^\ast_z)$ is given by \eqref{250307121006}, so the two identities in the first row of \eqref{250226132732} become
\begin{eqns}
	\Big[ - \tfrac{1}{6} e^{5u} z^5 ~~~~ \tfrac{1}{6} (1 - e^{5u}) z^5 \Big] \mtx{ 6z^{-5} \\ -6z^{-4} } 
		&= -1
\fullstop{,}
\\
	\Big[ - \tfrac{1}{6} e^{5u} z^5 ~~~~ \tfrac{1}{6} (1 - e^{5u}) z^5 \Big] \mtx{ -6z^{-5} \\ 0 } 
		&= +1
\fullstop
\end{eqns}

\subsubsection{Local vector field flow.}
The central charge appears naturally when considering the complex flow of the vector field $\V$ as we explain next.

Recall that a \textit{complex flow} on $\CC^\ast_z$ is a holomorphic map $\varphi : \Xi \subset \CC^\ast_z \times \CC_\xi \to \CC_z$ from an open subset $\Xi$ containing $\CC^\ast_z \times \set{0}$ (called the \textit{flow domain}) with the property that $\varphi (z,0) = z$ and $\varphi (z, \xi_1 + \xi_2) = \varphi \big( \varphi (z, \xi_1), \xi_2 \big)$ for all nonzero $z$ and all $\xi_1, \xi_2 \in \CC$ such that $(z, \xi_1), (z, \xi_2), (z, \xi_1 + \xi_2) \in \Xi$.
It is called a \textit{global flow} if $\Xi = \CC^\ast_z \times \CC_\xi$.
The flow $\varphi$ is \textit{generated by} the vector field $\V$ if $\V$ is the velocity field of $\varphi$ in the usual sense of satisfying the identity $\del_\xi \varphi (z,\xi) \big|_{\xi = 0} = \V_z$ for all nonzero $z$.
The \textit{inverse flow} of $\V$ is by definition the flow of $-\V$.

The flow $\varphi (z_0, \xi)$ of any nonzero $z_0 \in \CC_z$ for sufficiently small time $\xi \in \CC$ along $\V$ is the unique point $z_\xi \in \CC^\ast_z$ satisfying $\int_{z_0}^{z_\xi} \lambda = \xi$.
That is to say, $z_\xi$ is such that the central charge of the straight line segment $[z_0, z_\xi]$ is $\xi$; i.e., $\Z \big( [z_0, z_\xi] \big) = \xi$.
Explicitly, $z_\xi = (z_0^5 - 30 \xi)^{1/5}$ where the quintic-root branch is such that $(z_0^5)^{1/5} = z_0$.
This quintic-root branch is of course well-defined only in a neighbourhood of $z_0$.
We can summarise these relationships in the following elementary Lemma.

\begin{lemma}
\label{250226181410}
Fix any nonzero $z_0 \in \CC_z$ and let $\varphi_0 (\xi) \coleq \varphi (z_0, \xi)$.
Then there is a neighbourhood $U_0 \subset \CC^\ast_z$ of $z_0$, neighbourhoods $\tilde{\Omega}^\rm{s}_0 \subset \tilde{\rm{s}}^\inv (z_0)$ and $\tilde{\Omega}^\rm{t}_0 \subset \tilde{\rm{t}}^\inv (z_0)$, as well as $\Omega^\rm{s}_0 \subset \rm{s}^\inv (z_0)$ and $\Omega^\rm{t}_0 \subset \rm{t}^\inv (z_0)$, all containing the constant path $1_{z_0}$, and a neighbourhood $\Xi_0 \subset \CC_\xi$ of the origin (see \autoref{250303172643}) such that we have the following commutative diagrams where every arrow is a biholomorphism:
\begin{eqn}
\begin{tikzcd}[row sep = small]
	\tilde{\Omega}^\rm{s}_0
		\ar[dd, "\nu"']
		\ar[dr, "\Z"]
		\ar[drr, bend left, "\rm{t}"]
\\
&	\Xi_0
		\ar[r, "\varphi_0"]
&	U_0
\\
	\Omega^\rm{s}_0
		\ar[ur, "\Z"']
		\ar[urr, bend right, "\rm{t}"']
\end{tikzcd}
\qqtext{and}
\begin{tikzcd}[row sep = small]
	\tilde{\Omega}^\rm{t}_0
		\ar[dd, "\nu"']
		\ar[dr, "-\Z"]
		\ar[drr, bend left, "\rm{s}"]
\\
&	\Xi_0
		\ar[r, "\varphi_0"]
&	U_0
\fullstop
\\
	\Omega^\rm{t}_0
		\ar[ur, "-\Z"']
		\ar[urr, bend right, "\rm{s}"']
\end{tikzcd}
\end{eqn}
\end{lemma}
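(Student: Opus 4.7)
The plan is to build the required neighbourhoods explicitly and verify all commutativity from one identity that expresses the target map as the flow of $\V$ parameterised by the central charge. Since $z_0 \neq 0$, the vector field $\V = -6z^{-4}\del_z$ is holomorphic and nonvanishing at $z_0$, so standard ODE theory produces a unique local complex flow $\varphi_0$ starting at $z_0$, given explicitly by $\varphi_0(\xi) = (z_0^5 - 30\xi)^{1/5}$ with the quintic-root branch fixed by $\varphi_0(0) = z_0$. Because $\del_\xi\varphi_0(0) = \V_{z_0} \neq 0$, a sufficiently small disc $\Xi_0 \subset \CC_\xi$ about the origin maps biholomorphically onto a disc $U_0 \subset \CC_z^\ast$ about $z_0$.

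Next I would set up the source-fibre neighbourhoods using \autoref{250214122849}: that lemma identifies $\Z_{z_0} : \rm{s}^\inv(z_0) \to \CC_\xi$ as a fivefold ramified cover whose unique ramification point is the critical path $\gamma_0 \in G_{z_0}$ lying over $\xi_0 = z_0^5/30 \neq 0$. Hence $\Z_{z_0}$ is a local biholomorphism at $1_{z_0}$, and after shrinking $\Xi_0$ so that $\overline{\Xi_0}$ avoids $\xi_0$, taking $\Omega^\rm{s}_0$ to be the connected component of $\Z_{z_0}^{-1}(\Xi_0)$ containing $1_{z_0}$ yields a biholomorphism $\Z : \Omega^\rm{s}_0 \iso \Xi_0$. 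The same argument applied to the universal cover via the second part of \autoref{250214122849} produces $\tilde{\Omega}^\rm{s}_0 \subset \tilde{\rm{s}}^\inv(z_0)$ with $\Z : \tilde{\Omega}^\rm{s}_0 \iso \Xi_0$ a biholomorphism, and \autoref{250212200132} provides the biholomorphism $\nu : \tilde{\Omega}^\rm{s}_0 \iso \Omega^\rm{s}_0$ after further shrinking so that $\tilde{\Omega}^\rm{s}_0$ lies inside $\tilde{\Omega}$.

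Commutativity of the first diagram then reduces to the identity $\rm{t} = \varphi_0 \circ \Z$ on $\Omega^\rm{s}_0$. Any $\gamma \in \Omega^\rm{s}_0$ is represented by a short path from $z_0$ to $w = \rm{t}(\gamma) \in U_0$, and \eqref{250226060217} gives $\Z(\gamma) = (z_0^5 - w^5)/30$, which rearranges to $w = (z_0^5 - 30\Z(\gamma))^{1/5}$; the quintic-root branch here agrees with the one defining $\varphi_0$ by continuity and a shared value at $\gamma = 1_{z_0}$, so $\rm{t}(\gamma) = \varphi_0(\Z(\gamma))$. Composing with $\nu$ transfers the identity to $\tilde{\Omega}^\rm{s}_0$, and biholomorphicity of every arrow labelled $\rm{t}$ follows from that of $\Z$ and $\varphi_0$. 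For the target-fibre diagram I would invoke the groupoid cocycle $\Z(\gamma^\inv) = -\Z(\gamma)$ and path inversion to transport the construction: setting $\Omega^\rm{t}_0 \coleq (\Omega^\rm{s}_0)^\inv$ (and likewise $\tilde{\Omega}^\rm{t}_0$), one finds $\rm{s}(\gamma) = \rm{t}(\gamma^\inv) = \varphi_0(\Z(\gamma^\inv)) = \varphi_0(-\Z(\gamma))$, which accounts for the $-\Z$ appearing in the second diagram.

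The main technical obstacle I anticipate is not any single step but the iterated shrinking of $\Xi_0$ needed to guarantee simultaneously: that $\varphi_0$ is a biholomorphism; that $\overline{\Xi_0}$ lies in a disc disjoint from $\xi_0 = z_0^5/30$; that the correct connected component or sheet of both the fivefold cover $\rm{s}^\inv(z_0)$ and the universal cover $\tilde{\rm{s}}^\inv(z_0)$ is selected near $1_{z_0}$; and that the quintic-root branch used to define $\varphi_0$ coincides with the one implicit in \eqref{250226060217} on $\Omega^\rm{s}_0$. Each of these is individually routine once \autoref{250212200132} and \autoref{250214122849} are in hand, but keeping all four conditions compatible is where the bookkeeping lies.
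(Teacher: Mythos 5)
Your proposal is correct and follows essentially the route the paper intends: the paper states this as an elementary consequence of the explicit flow formula $z_\xi = (z_0^5 - 30\xi)^{1/5}$, the defining relation $\Z([z_0,z_\xi]) = \xi$, and the covering/isomorphism statements of \autoref{250212200132} and \autoref{250214122849}, which is exactly what you assemble, including the branch-matching argument for $\rm{t} = \varphi_0 \circ \Z$. Your handling of the second diagram via path inversion and the cocycle identity $\Z(\gamma^\inv) = -\Z(\gamma)$ likewise matches the paper's observation that inversion identifies the target fibre with the source fibre, so only the routine bookkeeping of compatible shrinkings remains, as you note.
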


\subsubsection{Convolution product.}
The central charge also allows us to make sense of a convolution product $\ast$ on the groupoids which is the natural lift of the standard convolution product on the $\xi$-plane.
Recall that the latter is defined by
\begin{eqntag}
\label{250307073427}
	(f \ast g) (z,\xi) \coleq \int_0^\xi f(z,u) g(z,\xi-u) \d{u}
\end{eqntag}
for any holomorphic functions $f(z,\xi)$ and $g(z,\xi)$ that are well-defined along the integration domain which is the complex interval $[0,\xi]$.
For $\xi$ sufficiently small (namely, such that $(z,\xi) \in \Xi$), we may use the central charge $\Z$ to interpret the interval $[0,\xi]$ as a parameterised curve $\gamma : [0,\xi] \to \CC_z$ with source at $z$ and central charge $\Z(\gamma) = \xi$.
Then using the anchor map $\varrho$, the points $(z,u)$ and $(z,\xi-u)$ in the integrand may be interpreted as the source-truncations of $\gamma$ at times $u$ and $\xi-u$, respectively.
That is to say, $(z,u) = \varrho (\gamma_u)$ and $(z,\xi-u) = \varrho (\gamma_{\xi - u})$, where
\begin{eqn}
	\gamma_u \coleq \gamma |_{[0,u]} : [0,u] \subset [0,\xi] \to \CC_z
\qtext{and}
	\gamma_{\xi - u} \coleq \gamma |_{[0,\xi - u]} : [0,\xi - u] \subset [0,\xi] \to \CC_z
\fullstop
\end{eqn}
Next, let us notice that the differential $\d{u}$ is the value of the differential $\d{\xi}$ at the point $u$ which is the target point of the interval $[0,u]$.
Meanwhile, the differential of the restriction of $\Z$ to the source fibre $\rm{s}^\inv (z)$ is $\d{(\Z_z)} = \Z^\ast_z \d{\xi} = \rm{t}^\ast \lambda$, so $\Z^\ast_z \d{u}$ is the value of $\rm{t}^\ast \lambda$ at the target point of the source-truncation $\gamma_u$
\begin{eqn}
	\Z^\ast_z \d{u} = \Z^\ast_z \d{\xi}_u = \rm{t}^\ast \lambda_{\gamma_u}
\fullstop
\end{eqn}
Consequently, if we put $\tilde{f} \coleq \varrho^\ast f$ and $\tilde{g} \coleq \varrho^\ast g$, then formula \eqref{250307073427} may be written as follows:
\begin{eqntag}
\label{250307081855}
	(\tilde{f} \ast \tilde{g}) (\gamma) \coleq \int_0^{\xi} \tilde{f} (\gamma_u) \tilde{g} (\gamma_{\xi - u}) \D{u}
\qtext{where}
	\D{u} \coleq \rm{t}^\ast \lambda_{\gamma_u}
\fullstop
\end{eqntag}
To upgrade this formula to the whole groupoid, we may reparameterise $\gamma : [0,\xi] \to \CC_z$ by its \textit{arc-length} in the following sense.

\begin{definition}[arc-length]
\label{250217105208}
We define the \dfn{arc-length} (or simply \textit{length}) of a concrete path $\gamma : I \to \CC_z$ to be the positive real number
\begin{eqn}
	|\gamma| \coleq \int_\gamma |\lambda|
\fullstop
\end{eqn}
Any such $\gamma$ has a natural \textit{arc-length parameterisation} $\gamma : [0, \L] \to \CC_z$ where $\L \coleq |\gamma|$ in the usual sense that the restriction of $\gamma$ to any subinterval $[a,b] \subset [0,\L]$ has arc-length $b-a$.
\end{definition}

\begin{definition}[arc-length truncations]
For any concrete path $\gamma : I \to \CC_z$ with arc-length $|\gamma| = \L$, we define the 	\dfn{arc-length source-truncation} of $\gamma$ and the \dfn{arc-length target-truncation} of $\gamma$ respectively as the following parameterised curves:
\begin{eqn}
	\gamma_s \coleq \gamma |_{[0,s]} : [0,s] \subset [0, \L] \to \CC_z
\qtext{and}
	\bar{\gamma}_s \coleq \gamma |_{[\L-s,\L]} : [\L-s,\L] \subset [0, \L] \to \CC_z
\fullstop
\end{eqn}
\end{definition}

Thus, $\gamma_s$ is the source portion of $\gamma$ of length $|\gamma_s| = s$, whilst $\bar{\gamma}_s$ is the target portion of $\gamma$ of length $|\bar{\gamma}_s| = s$.
Note that the arc-length truncations determine two one-parameter families of paths $(\gamma_s)_{s \in [0,\L]}$ and $(\bar{\gamma}_s)_{s \in [0,\L]}$ which interpolate between the constant paths $\gamma_0 = 1_{z_0}$, $\bar{\gamma}_0 = 1_{z_1}$ and the given path $\gamma_\L = \bar{\gamma}_\L = \gamma$, where $z_0$ and $z_1$ are the source and target of $\gamma$.
In other words, they give concrete representatives of the canonical retractions $\tilde{\gamma}^\rm{s}$ and $\tilde{\gamma}^\rm{t}$ of $\gamma$; see \autoref{250301112427}.

\begin{definition}[convolution product]
\label{250306170911}
Let $X = \CC_z$ or $X = \CC^\ast_z$.
For any pair of holomorphic functions $f,g$ on the groupoid $\Pi_1 (X)$, their \dfn{convolution product} is the holomorphic function $f \ast g$ on $\Pi_1 (X)$ given by the following formula: for all $\gamma \in X$,
\begin{eqntag}
\label{250306175130}
	(f \ast g) (\gamma) \coleq \int_0^{\L} f (\gamma_s) g (\gamma_{\L-s}) \D{s}
\qtext{where}
	\D{s} \coleq \rm{t}^\ast \lambda_{\gamma_s}
\fullstop{,}
\end{eqntag}
where $\gamma_s$ is the arc-length source-truncation of any representative of $\gamma$ of length $|\gamma|=\L$.
\end{definition}

That \eqref{250306175130} only depends on the homotopy class $\gamma$ and not on the concrete representative of $\gamma$ is a consequence of the fact that $f$, $g$, and $\lambda$ are holomorphic.
The following lemma explains the sense in which the convolution product that we have defined on the groupoid is the canonical lift of the standard convolution product from the Borel $\xi$-plane.

\begin{lemma}
\label{250306181554}
The pullback by the anchor $\varrho$ distributes over the convolution product $\ast$.
That is, for any two holomorphic functions $f = f(z,\xi)$ and $g = g(z,\xi)$, we have the following identity of holomorphic functions on the groupoid $\Pi_1 (X)$ where $X$ is $\CC_z$ or $\CC^\ast_z$:
\begin{eqntag}
\label{250307141604}
	\varrho^\ast (f \ast g) = (\varrho^\ast f) \ast (\varrho^\ast g)
\fullstop
\end{eqntag}
\end{lemma}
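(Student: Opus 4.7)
My plan is to prove the identity by unpacking both sides into explicit contour integrals and matching them through a change of integration variable from arc-length to central charge. Fix $\gamma \in \Pi_1 (X)$ with source $z_0 \coleq \rm{s} (\gamma)$ and central charge $\xi \coleq \Z (\gamma)$, and set $L \coleq |\gamma|$. The left-hand side unfolds, by the definitions of $\varrho^\ast$ and of the standard convolution on the $\xi$-plane, to
\begin{eqn}
	\varrho^\ast (f \ast g) (\gamma) = (f \ast g) (z_0, \xi) = \int_0^\xi f (z_0, u) \, g(z_0, \xi - u) \, \d{u}
\fullstop
\end{eqn}
Unfolding the right-hand side via Definition \ref{250306170911} and using that both $\gamma_s$ and $\gamma_{L-s}$ are source-truncations of $\gamma$ (so both have source $z_0$), we have $\varrho (\gamma_s) = (z_0, \Z (\gamma_s))$ and $\varrho (\gamma_{L-s}) = (z_0, \Z (\gamma_{L-s}))$; the integrand therefore collapses to $f (z_0, \Z (\gamma_s)) \, g(z_0, \Z (\gamma_{L-s}))$.

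The heart of the proof is the change of variable $u = \Z (\gamma_s)$. From the proof of \autoref{250214122849}, the restricted central charge $\Z_{z_0} : \rm{s}^\inv (z_0) \to \CC_\xi$ has differential $\d \Z_{z_0} = \rm{t}^\ast \lambda$, so along the source-retraction $s \mapsto \gamma_s$ we have $\d{u} = \D{s}$. As $s$ runs from $0$ to $L$, the point $u = \Z (\gamma_s)$ traces a path $\Gamma \subset \CC_\xi$ from $0$ to $\xi$. By the homotopy independence of the concrete representative of $\gamma$ (noted immediately after Definition \ref{250306170911}), we may choose a representative whose source-retraction projects via $\Z_{z_0}$ to the straight segment $[0, \xi] \subset \CC_\xi$, i.e., a representative for which $\Z(\gamma_s)$ parameterizes this segment linearly. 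Such a representative exists because $\Z_{z_0}$ is a covering map ramified only at the critical path $\gamma_0 \in G_{z_0}$ (\autoref{250214122849}), so the segment $[0, \xi]$ can be pulled back to the source fibre whenever it avoids the branch point $\Z (\gamma_0)$. For this "central-charge-linear" representative, $\Z (\gamma_{L-s}) = \xi - \Z (\gamma_s) = \xi - u$, and the right-hand side becomes $\int_0^\xi f (z_0, u) \, g(z_0, \xi - u) \, \d{u}$, matching the left-hand side.

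The main subtlety is the existence of the central-charge-linear representative when the straight segment $[0, \xi]$ comes close to the branch value $\Z (\gamma_0)$, or passes through a singularity of $f(z_0, \dummy)$ or $g(z_0, \dummy)$. Both situations are handled by perturbing the segment to a homotopic contour in $\CC_\xi$ that avoids the obstruction, pulling this deformed contour back to the source fibre, and appealing to the holomorphy of the integrand to conclude that the integral is unchanged. Since both sides of the claimed identity are holomorphic functions on $\Pi_1 (X)$ and agree on the open dense subset where this deformation succeeds, they agree everywhere.
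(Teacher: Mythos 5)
Your core computation coincides with the paper's own justification: the paper gives no separate proof of this lemma because Definition \autoref{250306170911} is \emph{derived} (in the passage from \eqref{250307073427} to \eqref{250307081855}) precisely by your change of variables $u = \Z(\gamma_u)$, $\d{u} = \rm{t}^\ast \lambda_{\gamma_u}$, applied to the straight segment $[0,\xi]$ viewed as a curve via the central charge, so the identity holds essentially by construction near the identity bisection. The one inaccurate step is your closing density claim. The set of $\gamma$ admitting a representative whose source-retraction projects to (a perturbation of) the straight segment $[0,\Z(\gamma)]$ is open but \emph{not} dense: for $X = \CC_z$ the restriction $\Z_{z_0} : \rm{s}^\inv (z_0) \to \CC_\xi$ is a fivefold branched cover (\autoref{250214122849}), and lifts of straight segments based at $1_{z_0}$ only fill the slit sheet containing the basepoint, leaving the other four sheets unreached; for $X = \CC^\ast_z$ the cover is infinite-sheeted and the reachable set is even smaller relatively. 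Note also that on a non-straight contour your key identity $\Z(\gamma_{L-s}) = \xi - \Z(\gamma_s)$ fails, so the perturbed integral is not matched to the left-hand side by a direct Cauchy argument; it is matched only through the representative-independence of the groupoid convolution, which the paper asserts after Definition \autoref{250306170911}. Neither point sinks the argument: granting that both sides are holomorphic on the connected manifold $\Pi_1(X)$, agreement on a nonempty open subset already forces agreement everywhere by the identity theorem, so density is neither available nor needed. With ``open dense'' replaced by ``nonempty open, plus the identity theorem'', your proof is correct and is the same argument the paper intends.
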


\subsubsection{The Laplace transform.}
Similarly, the central charge allows us to interpret the Laplace transform as an integral on infinitely long paths on the groupoid.
Recall that the Laplace transform in a direction $\alpha$ is defined by
\begin{eqntag}
\label{250310161541}
	\Laplace [\, f \,] (z,\hbar) \coleq \int_{e^{i\alpha} \RR_+} e^{-\xi/\hbar} f(z,\xi) \d{\xi}
\end{eqntag}
for any holomorphic function $f (z,\xi)$ defined in a neighbourhood of the integration contour $e^{i\alpha} \RR_+$.
For this integral to converge, at least in a sectorial domain of $\hbar=0$, $f$ must satisfy exponential bounds for large $\xi \in e^{i\alpha} \RR_+$ of the form $|f(z,\xi)| \leq e^{\K |\xi|}$ for some $\K > 0$ independent of $\xi$.

Fix any nonzero $z_0 \in \CC_z$ and suppose the infinite ray $e^{i \alpha} \RR_+$ does not pass through the branch point of $\xi_0 = \tfrac{1}{30} z_0^5$ of the central charge $\Z_{z_0} : B_{z_0} \to \CC_\xi$.
Then $e^{i \alpha} \RR_+$ has a unique lift to an infinite path $\ell$ on $\Pi_1 (\CC^\ast_z)$ starting at $1_{z_0}$.
If we put $\tilde{f} = \varrho^\ast f$, then we can interpret the Laplace transform \eqref{250310161541} as follows:
\begin{eqntag}
\label{250310162642}
	\Laplace [\, \tilde{f} \,] (z, \hbar)
		\coleq \int_{\ell} e^{-\Z (\ell_s)/\hbar} \tilde{f} (\ell_s) \D{s}
\qtext{where}
	\D{s} = \rm{t}^\ast \lambda_{\ell_s}
\fullstop
\end{eqntag}

\subsection{Initial Value Problems on Groupoids}

In this subsection, we make a partial attempt at reformulating the Initial Value Problem \eqref{250306084618} in global geometric terms.
The correct reformulation appears in \autoref{250221200950} and is logically independent of this subsection.
The discussion is this subsection has been included for purely pedagogical reasons, in the hope that it will clarify and help motivate the geometric constructions of the next section.

\subsubsection{First attempt.}
\label{250227123556}
As a first attempt at finding global solutions of the Initial Value Problem \eqref{250306084618}, let us consider an Initial Value Problem (IVP) of the following much simpler form:
\begin{eqntag}
\label{250227103350}
	\V f = \F
\qtext{such that}
	f (z_0) = a
\fullstop{,}
\end{eqntag}
where $(z_0, a) \in \CC^\ast_z \times \CC$ is any point and $\F = \F \big(z, f(z) \big)$ for some holomorphic map $\F : \CC^\ast_z \times \CC \to \CC$.
Recall that a common strategy to solve such problems is to express the vector field $\V$ in a canonical form that admits straightforward integration.
That is, we look for a local holomorphic change of coordinates $z \mapsto \xi$ with respect to which $\V$ becomes the coordinate vector field $\del_\xi$.
A suitable such change of coordinates is provided by the flow $\varphi$ of $\V$: making use of the chosen basepoint $z_0$, we define
\begin{eqn}
	\varphi_0 : \xi \mapsto z = z_\xi \coleq \varphi (z_0, \xi)
\fullstop
\end{eqn}
By \autoref{250226181410}, there are neighbourhoods $U_0 \subset \CC^\ast_z$ of $z_0$ and $\Xi_0 \subset \CC_\xi$ of the origin where $\varphi_0$ defines a biholomorphism and $\V$ becomes simply the coordinate vector field $\del_\xi$:
\begin{eqn}
	(\varphi_0)_\ast \del_\xi = \V
\fullstop
\end{eqn}
Then, if we write $\tilde{f} = \varphi_0^\ast f$ and $\tilde{\F} = \varphi_0^\ast \F$, the IVP \eqref{250227103350} becomes simply $\del_\xi \tilde{f} = \tilde{\F}$ with $\tilde{f} (0) = a$, which is equivalent to the integral equation $\tilde{f} (\xi) = a + \int_{0}^\xi \tilde{\F} \big( u, \tilde{f} (u) \big) \d{u}$.
This integral equation is well-defined for all sufficiently small $\xi$ and it can now be solved, subject to usual mild assumptions on $\F$, using a standard fixed-point argument to build a solution $f$ in a neighbourhood of $z_0$.
Undoing the coordinate transformation, we find the integral equation in the original variables, well-posed for all $z \in U_0$:
\begin{eqntag}
\label{250227185153}
	f(z) = a + \int_0^\xi \F \big( z_u, f(z_u) \big) \d{u}
\fullstop{,}
\end{eqntag}
where $\xi$ and $z_u$ are uniquely defined by the identities $\xi = \int_{z_0}^z \lambda$ and $u = \int_{z_0}^{z_u} \lambda$.

A shortcoming of this approach is that the domain of definition of a solution obtained in this manner is limited by the size of the neighbourhood $U_0$ of $z_0$ which is ultimately limited by the size of the flow domain $\Xi \subset \CC^\ast_z \times \CC_\xi$ of $\varphi$.
Thus, using this strategy to find global solutions requires the flow to be global.
But global flows are rare, especially in holomorphic geometry.
In particular, our vector field $\V$ does not admit a global flow because of the pole at $z = 0$.
As a result, obtaining \textit{global} solutions of differential equations of the form \eqref{250227103350} poses additional challenges.

This challenge can be addressed by passing to the fundamental groupoid $\Pi_1 (\CC^\ast_z) \rightrightarrows \CC^\ast_z$ and interpreting the given IVP using the canonical lifts of the vector field $\V$ as follows.

\begin{lemma}
\label{250225133855}
Let $U_0 \subset \CC^\ast_z$, $\Xi_0 \subset \CC_\xi$, and $\tilde{\Omega}^\rm{s}_0 \subset \rm{s}^\inv (z_0) \subset \Pi_1 (\CC^\ast_z)$ be as described in \autoref{250226181410}.
Let $\tilde{\V} = \V^\rm{t}$ be the target-lift of $\V$ to $\Pi_1 (\CC^\ast_z)$.
Set $\tilde{f} = \rm{t}^\ast f$ and $\tilde{\F} = \rm{t}^\ast \F$, and consider the following Initial Value Problem on $\rm{s}^\inv (z_0)$:
\begin{eqntag}
\label{250227120026}
	\tilde{\V} \tilde{f} = \tilde{\F}
\qtext{such that}
	\tilde{f} (1_{z_0}) = a
\fullstop
\end{eqntag}
Then the Initial Value Problem \eqref{250227103350} is equivalent to the Initial Value Problem \eqref{250227120026} over the isomorphism $\rm{t} : \tilde{\Omega}^\rm{s}_0 \iso U_0$.
\end{lemma}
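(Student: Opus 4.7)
The plan is to exploit the biholomorphism $\rm{t} : \tilde{\Omega}^\rm{s}_0 \iso U_0$ furnished by \autoref{250226181410} together with the defining intertwining property $\rm{t}_\ast \V^\rm{t} = \V$ of the target-lift. Both constructions are already in hand, so the proof will amount to little more than verifying that pullback along $\rm{t}$ matches the two IVPs termwise.

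First, I would observe that because $\rm{t}$ restricts to a biholomorphism $\tilde{\Omega}^\rm{s}_0 \iso U_0$, the map $f \mapsto \tilde{f} \coleq \rm{t}^\ast f$ is a bijection between $\cal{O}(U_0)$ and $\cal{O}(\tilde{\Omega}^\rm{s}_0)$. Since the constant path $1_{z_0}$ satisfies $\rm{t}(1_{z_0}) = z_0$, the initial condition transforms cleanly: $\tilde{f}(1_{z_0}) = f(z_0) = a$ if and only if $f(z_0) = a$. This handles the boundary data.

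Next, I would verify that the differential equations match. The chain rule, applied to the target-lift identity $\rm{t}_\ast \V^\rm{t} = \V$, gives
\begin{eqn}
    \V^\rm{t} (\rm{t}^\ast f) = \rm{t}^\ast (\V f)
\end{eqn}
for any $f \in \cal{O}(U_0)$. Hence $\tilde{\V} \tilde{f} = \rm{t}^\ast(\V f)$, and the lifted equation $\tilde{\V}\tilde{f} = \tilde{\F}$ holds on $\tilde{\Omega}^\rm{s}_0$ if and only if $\V f = \F$ holds on $U_0$, after pullback by the biholomorphism $\rm{t}$. The one point requiring minor care is the nonlinear term: since $\F = \F(z, f(z))$ depends on $f$ only through its pointwise value, the pullback by $\rm{t}$ of the composite $z \mapsto \F(z, f(z))$ equals $\gamma \mapsto \F(\rm{t}(\gamma), \tilde{f}(\gamma))$, which is precisely what we want $\tilde{\F}$ to mean at a point $\gamma \in \tilde{\Omega}^\rm{s}_0$.

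I do not anticipate a genuine obstacle: \autoref{250226181410} already provides the biholomorphism we need, and \eqref{250226130638} together with \autoref{250222194723} encode exactly the functorial behaviour of $\V^\rm{t}$ under $\rm{t}$. The only thing one might stumble on is conflating the target-lift with the source-lift; one must verify that the \emph{target}-lift (not the source-lift) is the correct choice, which is forced by the requirement that $\tilde{f} = \rm{t}^\ast f$ (rather than $\rm{s}^\ast f$) so that the initial condition $f(z_0)=a$ becomes a condition at the single point $1_{z_0}$ in the source fibre $\rm{s}^\inv(z_0)$.
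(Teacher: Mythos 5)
Your proposal is correct and follows exactly the route the paper intends: the paper leaves this lemma as an immediate verification, defining the equivalence precisely by the pullback correspondence $f \leftrightarrow \tilde{f} = \rm{t}^\ast f$ over the biholomorphism $\rm{t} : \tilde{\Omega}^\rm{s}_0 \iso U_0$ from \autoref{250226181410}, with the equations matched via the $\rm{t}$-relatedness $\rm{t}_\ast \V^\rm{t} = \V$ and the initial condition matched at $\rm{t}(1_{z_0}) = z_0$. Your explicit chain-rule check and the remark on interpreting $\tilde{\F}$ pointwise fill in precisely the steps the paper omits as routine.
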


By being equivalent \textit{over} the isomorphism $\rm{t}$ we mean simply that $f = (\rm{t}^\inv)^\ast \tilde{f}$ is a solution of \eqref{250227120026} on $U_0$ if and only if $\tilde{f} = \rm{t}^\ast f$ is a solution of \eqref{250227120026} on $\tilde{\Omega}_0^\rm{s}$.

The IVP \eqref{250227120026} makes sense not just over $\tilde{\Omega}_0^\rm{s}$ but over the entire source fibre $\rm{s}^\inv (z_0) \subset \Pi_1 (\CC^\ast_z)$, which is the universal cover of $\CC^\ast_z$ based at $z_0$.
Therefore, a global solution of \eqref{250227120026} can be viewed as a global but multivalued solution of \eqref{250227103350}.
An advantage of this point of view is that the IVP \eqref{250227120026} can be written equivalently as an integral equation \textit{globally} on the source fibre $\rm{s}^\inv (z_0)$.
This can be achieved through a series of simple observations as follows.

First, inspecting the integral equation \eqref{250227185153}, we notice that if we interpret $z$ as the target point of a parameterised path $\gamma : [0,\xi] \to \CC^\ast_z$ with source $z_0$, then the point $z_u$ in the integrand can be interpreted as the target point of the source-truncation $\gamma_u \coleq \gamma |_{[0,u]}$.
Meanwhile, $\xi = \Z (\gamma)$ and $u = \Z (\gamma_u)$, so \eqref{250227185153} can be written for all $\gamma \in \tilde{\Omega}^\rm{s}_0$ as follows:
\begin{eqntag}
\label{250228084937}
	\tilde{f} (\gamma) = a + \int_0^{\xi} \tilde{\F} \big( \gamma_u, \tilde{f} (\gamma_u) \big) \d{u}
\fullstop
\end{eqntag}
From this point of view, the one-form $\d{u}$ in the integrand is nothing but the value of $\d{\xi}$ at $u$ which is the target point of the line segment $[0,u]$.
But since $\Z^\ast \d{\xi} = \rm{t}^\ast \lambda - \rm{s}^\ast \lambda$, we can write $\d{u} = \d{\xi}_u = \rm{t}^\ast \lambda_{\gamma_u}$.
Consequently, integral equation \eqref{250228084937} may be written like so:
\begin{eqntag}
\label{250307154206}
	\tilde{f} (\gamma) = a + \int_0^{\xi} \tilde{\F} \big( \gamma_u, \tilde{f} (\gamma_u) \big) \D{u}
\qtext{where}
	\D{u} = \rm{t}^\ast \lambda_{\gamma_u}
\fullstop
\end{eqntag}
Now, the interval $[0,\xi]$ parameterising the path $\gamma$ is actually a straight line segment in the complex plane, but we can change the parameterisation to the real interval $[0, |\xi|]$ by interpreting $|\xi|$ as the \hyperref[250217105208]{arc-length} of $\gamma$ with respect to $\lambda$ because $|\xi| = \int_\gamma |\lambda|$.
In that case, $\gamma_u = \gamma_s$ whenever $|u| = s$, and so the integral equation \eqref{250228084937} may be written as follows:
\begin{eqntag}
\label{250228085002}
	\tilde{f} (\gamma) = a + \int_0^{|\xi|} \tilde{\F} \big( \gamma_s, \tilde{f} (\gamma_s) \big)
		\D{s}
\qtext{where}
	\D{s} = \rm{t}^\ast \lambda_{\gamma_s}
\fullstop
\end{eqntag}
This equation is well-posed for all $\gamma \in \tilde{\Omega}^\rm{s}_0$ and any arc-length source-truncation $\gamma_s$ of $\gamma$.
But now it is clear how to extend the integral equation from $\tilde{\Omega}^\rm{s}_0$ to the entire source fibre $\rm{s}^\inv (z_0)$.

\begin{lemma}
\label{250225135551}
The Initial Value Problem \eqref{250227120026} over $\rm{s}^\inv (z_0)$ is equivalent to the following integral equation:
\begin{eqntag}
\label{250219124250}
	\tilde{f} (\gamma) = a + \int_0^{|\gamma|} \tilde{\F} \big( \gamma_s, \tilde{f} (\gamma_s) \big) \D{s}
\qtext{with}
	\D{s} = \rm{t}^\ast \lambda_{\gamma_s}
\fullstop{,}
\end{eqntag}
where $\gamma_s$ is any arc-length source-truncation of $\gamma \in \rm{s}^\inv (z_0)$.
\end{lemma}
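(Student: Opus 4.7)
The plan is to extend the local equivalence established in \autoref{250225133855} to the entire source fibre $\rm{s}^\inv (z_0)$, exploiting the fact that this fibre is a simply connected one-complex-dimensional manifold (the universal cover of $\CC^\ast_z$ based at $z_0$).

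I would begin by verifying that the right-hand side of \eqref{250219124250} is well-defined for every $\gamma \in \rm{s}^\inv (z_0)$ and every holomorphic function $\tilde{f}$ on $\rm{s}^\inv (z_0)$. A concrete arc-length representative of $\gamma$ in $\CC^\ast_z$ determines a concrete source-retraction path $\sigma : [0, |\gamma|] \to \rm{s}^\inv (z_0)$ via $\sigma (s) = \gamma_s$. Let $\eta$ be the holomorphic 1-form on $\rm{s}^\inv (z_0)$ whose value at a point $\gamma' \in \rm{s}^\inv (z_0)$ is $\tilde{\F}\big( \gamma', \tilde{f} (\gamma') \big) \cdot (\rm{t}^\ast \lambda)|_{\gamma'}$. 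Then the integrand in \eqref{250219124250} is precisely the pullback $\sigma^\ast \eta$. Since every holomorphic 1-form on a one-complex-dimensional manifold is automatically closed, and $\rm{s}^\inv (z_0)$ is simply connected, the integral $\int_\sigma \eta$ depends only on the endpoints of $\sigma$, hence only on $\gamma$. In particular, the map $\gamma \mapsto a + \int_0^{|\gamma|} \tilde{\F}(\gamma_s, \tilde{f}(\gamma_s))\, \D{s}$ is a well-defined holomorphic primitive of $\eta$ on $\rm{s}^\inv (z_0)$, independent of the choice of concrete representative.

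For the forward implication ``IVP $\Rightarrow$ integral equation'', I would fix a holomorphic solution $\tilde{f}$ of \eqref{250227120026} and note that the identities $\rm{s}_\ast \dot\sigma = 0$, $\rm{t}_\ast \dot\sigma = \dot\gamma$, $\rm{t}_\ast \V^\rm{t} = \V$, and $\lambda (\V) = 1$ force
\begin{eqn}
\dot\sigma (s) = \lambda \big( \dot\gamma (s) \big) \cdot \V^\rm{t} \big|_{\sigma (s)}.
\end{eqn}
Combining the chain rule with the ODE $\V^\rm{t} \tilde{f} = \tilde{\F}$ then gives
\begin{eqn}
\tfrac{d}{ds} \big( \tilde{f} \circ \sigma \big) (s)
= \lambda \big( \dot\gamma (s) \big) \cdot \tilde{\F} \big( \gamma_s, \tilde{f} (\gamma_s) \big),
\end{eqn}
and integrating from $s = 0$ to $s = |\gamma|$, using the initial condition $\tilde{f} (1_{z_0}) = a$ together with the identity $\D{s} = \lambda (\dot\gamma)\, ds$, yields \eqref{250219124250}.

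For the converse, suppose $\tilde{f}$ satisfies \eqref{250219124250}. Evaluating at $\gamma = 1_{z_0}$ (so $|\gamma| = 0$) immediately gives $\tilde{f}(1_{z_0}) = a$. Moreover, by the first paragraph, $\tilde{f}$ coincides on $\rm{s}^\inv (z_0)$ with a holomorphic primitive of $\eta$, so $d\tilde{f} = \eta$ as holomorphic 1-forms on the source fibre. Evaluating on $\V^\rm{t}$:
\begin{eqn}
\V^\rm{t} \tilde{f} (\gamma)
= \eta (\V^\rm{t})|_\gamma
= \tilde{\F} \big( \gamma, \tilde{f} (\gamma) \big) \cdot \lambda (\V)
= \tilde{\F} \big( \gamma, \tilde{f} (\gamma) \big),
\end{eqn}
as required. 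The central technical ingredient is the path-independence in the opening paragraph; once this is in hand, both directions of the equivalence reduce to a single application of the fundamental theorem of calculus, with local compatibility with \eqref{250228085002} supplied automatically by \autoref{250225133855}.
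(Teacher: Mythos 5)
Your proof is correct. It is worth noting, though, that it is organised differently from the paper's treatment: the paper obtains \eqref{250219124250} by first passing to flow coordinates near $z_0$ (where $\V$ becomes $\del_\xi$), writing the resulting local integral equation \eqref{250227185153}, reinterpreting it on the groupoid via the source-retraction and the identity $\d{u} = \rm{t}^\ast\lambda_{\gamma_u}$, reparameterising by arc-length, and then simply asserting that the extension from $\tilde{\Omega}^\rm{s}_0$ to all of $\rm{s}^\inv (z_0)$ is clear. You instead give a direct global argument on the source fibre: since $\V^\rm{t}$ spans $\cal{ker}(\rm{s}_\ast)$ along the fibre and $\rm{t}^\ast\lambda (\V^\rm{t}) = \lambda(\V) = 1$, the retraction curve $\sigma(s) = \gamma_s$ satisfies $\dot\sigma = \lambda(\dot\gamma)\,\V^\rm{t}$, so both directions of the equivalence reduce to the fundamental theorem of calculus, with well-definedness coming from the fact that the relevant holomorphic $1$-form on the simply connected fibre has path-independent integrals. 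This supplies exactly the justification the paper leaves implicit (both for the global extension and for independence of the concrete representative), at the cost of not exhibiting the link to the flow-coordinate picture that the paper uses to motivate the later constructions; the underlying mechanism — the compatibility $\rm{t}_\ast \V^\rm{t} = \V$, $\lambda(\V) = 1$ encoded in \autoref{250222194723} — is the same in both.
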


If it exists, a solution $\tilde{f}$ of this integral equation does not depend on the choice of concrete representative $\gamma$ and defines a global holomorphic function on the entire source fibre $\rm{s}^\inv (z_0)$.
Since the latter is the universal covering space of $\CC^\ast_z$ based at $z_0$, the function $\tilde{f}$ descends via the target map $\rm{t} : \rm{s}^\inv (z_0) \to \CC^\ast_z$ to a global but multivalued holomorphic function on $\CC^\ast_z$ which satisfies the Initial Value Problem \eqref{250227103350}.

\subsubsection{Second attempt.}
\label{250305163823}
As a second attempt at finding global solutions of the Initial Value Problem \eqref{250306084618}, let us now consider an Initial Value Problem of the following form that bears a closer resemblance to \eqref{250306084618}:
\begin{eqntag}
\label{250227123632}
	(\V - \del_\xi) f = \F
\qtext{such that}
	f (z,0) = a(z)
\fullstop{,}
\end{eqntag}
where $a (z)$ is a given holomorphic function on $\CC^\ast_z$ and $\F = \F \big( z, \xi; f (z,\xi) \big)$ for some holomorphic map $\F : \CC^\ast_z \times \CC_\xi \times \CC \to \CC$.
Recall again that the usual strategy is to put the vector field $\V - \del_\xi$ in canonical form that can be integrated.
Thus, we search for a holomorphic change of coordinates $(z,\xi) \mapsto (w,u)$ with respect to which $\V - \del_\xi$ becomes the coordinate vector field $-\del_u$ (which we have negated for convenience, with the benefit of hindsight), and such that the locus $u = 0$ corresponds to the submanifold where the initial condition is posed.

A suitable such change of coordinates is provided by the flow $\Phi$ of $-(\V - \del_\xi)$ (or, in other words, the inverse flow of $\V - \del_\xi$).
It can be expressed in terms of the flow $\varphi$ of $\V$ as $\Phi (z, \xi, u) = \big( \varphi (z, -u), \xi + u \big)$.
In our case, the submanifold of initial conditions is given by $\xi = 0$, so we define a local coordinate change by
\begin{eqn}
	\Phi_0 : (w, u) \mapsto (z,\xi) = \Phi (w,0,u) = \big( \varphi (w,-u), u \big)
\fullstop
\end{eqn}
Under this local coordinate change,
\begin{eqn}
	( \Phi_0 )_\ast (-\del_u) = \V - \del_\xi
\fullstop
\end{eqn}

\begin{lemma}
\label{250228121628}
There is a neighbourhood $\Xi \subset \CC^\ast_z \times \CC_\xi$ of $\xi = 0$, neighbourhoods $\tilde{\Omega} \subset \Pi_1 (\CC^\ast_z)$ and $\Omega \subset \Pi_1 (\CC_z) |_{\CC^\ast_z}$ of the identity bisection $1_{\CC^\ast_z}$, and a neighbourhood $W \subset \CC_w^\ast \times \CC_u$ of $u = 0$ such that we have the following commutative diagram where every arrow is a biholomorphism:
\begin{eqn}
\begin{tikzcd}[row sep = small]
	\tilde{\Omega}
		\ar[dd, "\nu"']
		\ar[dr, "\varrho"]
		\ar[drr, bend left, "{(\rm{t}, \Z)}"]
\\
&	\Xi
&	W
	\ar[l, "\Phi_0"]
\fullstop
\\
	\Omega
		\ar[ur, "\varrho"']
		\ar[urr, bend right, "{(\rm{t}, \Z)}"']
\end{tikzcd}
\end{eqn}
\end{lemma}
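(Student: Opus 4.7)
The plan is to deduce this lemma from \autoref{250210081550} (which already supplies the neighbourhoods $\tilde{\Omega}, \Omega, \Xi$ with $\nu$ and $\varrho$ biholomorphic) by establishing two additional facts: that $\Phi_0$ is a local biholomorphism, and that there is a factorisation identity $\varrho = \Phi_0 \circ (\rm{t}, \Z)$ near the identity bisection. First I would check that $\Phi_0 (w,u) = \big( \varphi (w,-u), u \big)$ is a local biholomorphism from a neighbourhood of $\set{u = 0}$ in $\CC^\ast_w \times \CC_u$ onto a neighbourhood of $\set{\xi = 0}$ in $\CC^\ast_z \times \CC_\xi$. Since $\Phi_0$ restricts to the identity on $\set{u = 0}$, its Jacobian there is unit-upper-triangular, so the Inverse Function Theorem applies; alternatively one verifies this directly from the closed-form expression $\varphi (w,-u) = (w^5 + 30 u)^{1/5}$, choosing the quintic-root branch that agrees with $w$ at $u = 0$. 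After possibly shrinking the $\Xi$ supplied by \autoref{250210081550}, we obtain $W \subset \CC^\ast_w \times \CC_u$ such that $\Phi_0 : W \iso \Xi$.

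Next I would establish the factorisation identity $\varrho = \Phi_0 \circ (\rm{t}, \Z)$ on $\Omega$ and on $\tilde{\Omega}$, which unpacks to $\varphi\big( \rm{t} (\gamma), -\Z (\gamma) \big) = \rm{s} (\gamma)$ for every path $\gamma$ sufficiently close to the identity bisection. This geometric statement is the content of the duality $\lambda (\V) = 1$: along any integral curve of $\V$ the accumulated central charge coincides with flow time, and since $\lambda$ is a holomorphic one-form on a Riemann surface, $\Z (\gamma)$ depends only on the homotopy class of $\gamma$. Consequently, for sufficiently short $\gamma$, any concrete representative is homotopic to an integral-curve arc of $\V$ of flow time $\Z (\gamma)$ joining $\rm{s}(\gamma)$ to $\rm{t}(\gamma)$, which is precisely what the identity asserts; it can also be checked directly in the trivialisations \eqref{250205161722} and \eqref{250205162515} using the explicit formulas \eqref{250226060217} and \eqref{250226061238} for the central charge.

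Combining these two ingredients, the factorisation $(\rm{t}, \Z) = \Phi_0^{\inv} \circ \varrho$ exhibits $(\rm{t}, \Z)$ as a composition of biholomorphisms on each of $\tilde{\Omega}$ and $\Omega$, yielding biholomorphisms $\tilde{\Omega} \iso W$ and $\Omega \iso W$, and the commutativity of the full displayed diagram then follows from that of \autoref{250210081550} together with our key identity. The only delicate point I anticipate is choosing all four neighbourhoods compatibly so that the maps have matching domains and codomains; this amounts to running the pointwise shrinking argument used for \autoref{250210081550} in tandem with the shrinking required to keep $\Phi_0$ a biholomorphism, which is routine because every map in the diagram restricts to the identity on the identity bisection.
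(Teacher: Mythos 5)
Your proposal is correct and follows exactly the route the paper leaves implicit for this lemma: it is asserted on the strength of \autoref{250210081550} (and \autoref{250226181410}), with $\Phi_0$ a local biholomorphism fixing the slice $\set{u=0}$ and the key identity $\varphi\big(\rm{t}(\gamma), -\Z(\gamma)\big) = \rm{s}(\gamma)$ for paths near the identity bisection supplying commutativity. Your verification of that identity (homotoping short paths to flow arcs inside the simply connected charts of the proof of \autoref{250210081550}, or checking it in the trivialisations) and the compatibility of the neighbourhoods is sound.
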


Then, if we write $\tilde{f} = \Phi_0^\ast f$, $\tilde{\F} = \Phi_0^\ast \F$, and $\tilde{a} = \Phi_0^\ast a$, the IVP \eqref{250227123632} becomes simply $\del_u \tilde{f} = \tilde{\F}$ with initial condition $\tilde{f} (w, 0) = \tilde{a} (w)$.
This IVP is equivalent to the integral equation $\tilde{f} (w,u) = \tilde{a} (w) - \int_0^u \tilde{\F} \big(w,s; \tilde{f} (w,s) \big) \d{s}$, which is well-defined for all $w$ and all sufficiently small $u$; more precisely, for all $(w,u) \in W$.
It can now be solved, subject to usual mild assumptions on $\F$, using a standard fixed-point argument to build a local solution $f$.
Undoing the coordinate transformation, we find the integral equation in the original variables:
\begin{eqntag}
\label{250228123756}
	f (z, \xi) = a (z) - \int_0^\xi \F \big( z_{\xi - s}, s; f ( z_{\xi - s}, s) \big) \d{s}
\qquad\forall (z,\xi) \in \Xi
\fullstop{,}
\end{eqntag}
where $z_{\xi - s} \coleq \varphi( z, \xi-s )$.
In order to give a global interpretation of this integral equation, we pass to the fundamental groupoid $\Pi_1 (\CC^\ast_z) \rightrightarrows \CC^\ast_z$.

\begin{lemma}
\label{250228131924}
Suppose $\F : \CC^\ast_z \times \CC_\xi \times \CC \to \CC$ is a holomorphic map.
Let $\tilde{\Omega}, \Xi, W$ be as in \autoref{250228121628}, and set $\tilde{f} = \varrho^\ast f$, $\tilde{\F} = \varrho^\ast \F$, and $\tilde{a} = \rm{s}^\ast a$.
Let $\tilde{\V} = \tilde{\V}^\rm{s}$ be the source-lift of $\V$ to the fundamental groupoid $\Pi_1 (\CC^\ast_z)$, so that $\varrho_\ast \tilde{\V} = \V - \del_\xi$.
Then the Initial Value Problem \eqref{250227123632} is equivalent over $\Xi$ to the Initial Value Problem over $\tilde{\Omega}$ given by:
\begin{eqntag}
\label{250228132252}
	\tilde{\V} \tilde{f} = \tilde{\F}
\qtext{such that}
	\tilde{f} \big|_{1_{\CC^\ast_z}} = \tilde{a}
\fullstop
\end{eqntag}
\end{lemma}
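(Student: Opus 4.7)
The plan is to prove the equivalence by directly transferring the PDE and its initial data across the biholomorphism $\varrho : \tilde{\Omega} \iso \Xi$ supplied by \autoref{250210081550}, using the two ingredients established earlier: the pushforward identity $\varrho_\ast \V^\rm{s} = \V - \del_\xi$ from \autoref{250222194723} and the fact that the identity bisection $1_{\CC^\ast_z}$ corresponds under $\varrho$ to the locus $\xi = 0$ inside $\Xi$. Because $\varrho$ is a biholomorphism between $\tilde{\Omega}$ and $\Xi$, pullback along $\varrho$ sets up a bijection between holomorphic functions on these two open sets, so it suffices to verify that this bijection intertwines the two IVPs.

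For the forward direction, I assume $f \in \cal{O}(\Xi)$ satisfies \eqref{250227123632} and set $\tilde{f} \coleq \varrho^\ast f$. The standard naturality of vector fields under a diffeomorphism (applied fibrewise on the biholomorphism $\varrho$) gives the identity $\V^\rm{s}(\varrho^\ast f) = \varrho^\ast\big((\varrho_\ast \V^\rm{s}) f\big)$. Substituting $\varrho_\ast \V^\rm{s} = \V - \del_\xi$ from \autoref{250222194723} and using the hypothesis $(\V - \del_\xi) f = \F$, I obtain
\begin{eqn}
\tilde{\V} \tilde{f} = \V^\rm{s}(\varrho^\ast f) = \varrho^\ast\big((\V - \del_\xi) f\big) = \varrho^\ast \F = \tilde{\F}
\fullstop
\end{eqn}
For the initial condition, on any constant path $1_z \in 1_{\CC^\ast_z} \subset \tilde{\Omega}$, one has $\rm{s}(1_z) = z$ and $\Z(1_z) = 0$, hence $\varrho(1_z) = (z,0)$. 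Therefore $\tilde{f}(1_z) = f(z,0) = a(z) = \rm{s}^\ast a(1_z) = \tilde{a}(1_z)$, so $\tilde{f}$ restricts to $\tilde{a}$ along the identity bisection.

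For the converse, given $\tilde{f} \in \cal{O}(\tilde{\Omega})$ solving \eqref{250228132252}, I define $f \coleq (\varrho^\inv)^\ast \tilde{f} \in \cal{O}(\Xi)$ and run the same computation in reverse. Concretely, applying $\varrho^\ast$ to the equation $(\V - \del_\xi) f = \F$ and using $\varrho_\ast \V^\rm{s} = \V - \del_\xi$ together with the biholomorphism property of $\varrho$, the two sides are seen to be equivalent; and the initial value along $\set{\xi = 0} \subset \Xi$ is read off from the value of $\tilde{f}$ along $1_{\CC^\ast_z}$ via the identification $\varrho(1_z) = (z,0)$.

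There is no real obstacle in this argument: the entire content of the Lemma has been repackaged by the earlier identities $\varrho_\ast \V^\rm{s} = \V - \del_\xi$ and $\varrho|_{1_{\CC^\ast_z}} = (\mathrm{id}, 0)$, plus the biholomorphism property from \autoref{250210081550}. The only point requiring a small comment is the naturality formula $\V^\rm{s}(\varrho^\ast f) = \varrho^\ast((\varrho_\ast \V^\rm{s}) f)$; this is entirely standard for a biholomorphism, but worth stating explicitly since the source and target spaces have a priori different dimensions of structure (groupoid versus plane) even though they are locally isomorphic through $\varrho$ on $\tilde{\Omega}$.
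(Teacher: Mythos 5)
Your proof is correct and takes essentially the approach the paper intends: the paper states this lemma without a separate proof, treating it as an immediate consequence of the anchor map being a biholomorphism $\varrho : \tilde{\Omega} \iso \Xi$ near the identity bisection (\autoref{250210081550}, \autoref{250228121628}), the pushforward identity $\varrho_\ast \V^\rm{s} = \V - \del_\xi$ (\autoref{250222194723}), and the identification of $1_{\CC^\ast_z}$ with the locus $\xi = 0$, which are exactly the ingredients you combine.
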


This IVP makes sense not just over $\tilde{\Omega}$ but over the whole fundamental groupoid $\Pi_1 (\CC^\ast_z)$, and the integral equation \eqref{250228123756} can be extended to $\Pi_1 (\CC^\ast_z)$ as follows.

First, we notice that if we interpret $z$ in \eqref{250228123756} as the source of a curve $\gamma : [0,\xi] \to \CC^\ast_z$ with central charge $\Z (\gamma) = \xi$, then the point $z_{\xi - s}$ in the integrand can be interpreted as the source of the truncated curve $\gamma_{s} \coleq \gamma |_{[\xi-s,\xi]}$ defined on the subinterval $[\xi-s,\xi] \subset [0,\xi]$ with central charge $\Z (\gamma_s) = s$.
Consequently, \eqref{250228123756} can be written on $\tilde{\Omega}$ as follows:
\begin{eqntag}
\label{250228134348}
	\tilde{f} (\gamma) = \tilde{a} - \int_0^{\Z (\gamma)} \tilde{\F} \big( \gamma_s; \tilde{f} (\gamma_s) \big) \d{s}
\qqquad
	\forall \gamma \in \tilde{\Omega} \subset \Pi_1 (\CC^\ast_z)
\fullstop
\end{eqntag}
Finally, upon reparameterising $\gamma$ using its arc-length, we come to the following conclusion.

\begin{lemma}
\label{250228134234}
The Initial Value Problem \eqref{250228132252} is equivalent to the following integral equation, well-defined for all $\gamma \in \Pi_1 (\CC^\ast_z)$:
\begin{eqntag}
\label{250228134355}
	\tilde{f} (\gamma) = \tilde{a} - \int_0^{|\gamma|} \tilde{\F} \big( \gamma_s; \tilde{f} (\gamma_s) \big) \D{s}
\qqquad
	\forall \gamma \in \Pi_1 (\CC^\ast_z)
\fullstop{,}
\end{eqntag}
where $\D{s} \coleq \rm{s}^\ast \lambda_{\gamma_s}$ and $\gamma_s$ is any arc-length target-truncation of $\gamma$.
\end{lemma}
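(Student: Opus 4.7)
The strategy is to extend the local integral equation~\eqref{250228134348}, already established on the neighbourhood $\tilde{\Omega}$ of the identity bisection via Lemma~\ref{250228131924}, to the whole groupoid $\Pi_1 (\CC^\ast_z)$, exploiting the fact that the source-lift $\tilde{\V} = \V^{\rm{s}}$ lies in $\cal{ker} (\rm{t}_\ast)$ and is therefore tangent to each target fibre $\rm{t}^\inv (z_1)$, which is simply connected.

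Fix $\gamma \in \Pi_1 (\CC^\ast_z)$ with source $z_0 = \rm{s} (\gamma)$ and target $z_1 = \rm{t} (\gamma)$, and set $\L \coleq |\gamma|$. The arc-length target-truncations $(\bar{\gamma}_s)_{s \in [0, \L]}$ trace out a canonical smooth path inside $\rm{t}^\inv (z_1)$ connecting $1_{z_1}$ (at $s = 0$) to $\gamma$ (at $s = \L$). The tangent $\tau_s$ of this family at $\bar{\gamma}_s$ lies in $\ker (\rm{t}_\ast)|_{\bar{\gamma}_s}$ and is therefore a scalar multiple of $\tilde{\V}|_{\bar{\gamma}_s}$; indeed $\rm{s}_\ast \tau_s = -\dot{\gamma} (\L - s)$ whilst $\rm{s}_\ast \tilde{\V} = \V$ with $\lambda (\V) = 1$, so the scalar factor is recorded exactly by the evaluation $(\rm{s}^\ast \lambda)(\tau_s) \d{s} = \D{s}$. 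Now assume $\tilde{f}$ solves the IVP~\eqref{250228132252}. By the chain rule, $\del_s \tilde{f} (\bar{\gamma}_s) \d{s} = \tau_s (\tilde{f}) \d{s} = (\tilde{\V} \tilde{f})(\bar{\gamma}_s) \D{s}$. Substituting $\tilde{\V} \tilde{f} = \tilde{\F}$ and integrating from $s = 0$ (using $\tilde{f} (1_{z_1}) = \tilde{a}(1_{z_1})$) to $s = \L$ (where $\bar{\gamma}_\L = \gamma$) reproduces~\eqref{250228134355}. Conversely, given a solution of the integral equation, flowing $\gamma$ along $\tilde{\V}$ stays inside $\rm{t}^\inv (z_1)$ and modifies the upper limit and the integrand coherently; the Leibniz rule then recovers $\tilde{\V} \tilde{f} = \tilde{\F}$, whilst restricting to $1_{\CC^\ast_z}$ (where $|\gamma| = 0$) yields $\tilde{f}|_{1_{\CC^\ast_z}} = \tilde{a}$.

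Well-definedness of the RHS of~\eqref{250228134355} as a function of the homotopy class $\gamma$ (rather than a concrete representative) follows because the integrand $\tilde{\F} (\cdot; \tilde{f}(\cdot)) \cdot \rm{s}^\ast \lambda$ is a holomorphic 1-form, and its integral along any path in the simply connected target fibre depends only on endpoints. I expect the main technical obstacle to be the precise bookkeeping of orientation and sign conventions when identifying the complex-parameter integral in~\eqref{250228134348} with the arc-length version in~\eqref{250228134355}: one must track how the phase of $\lambda(\dot{\gamma})$ along non-straight paths matches the Jacobian of the reparameterisation between central-charge time and arc-length. Holomorphicity of every ingredient---which is what makes contour integrals path-independent in simply connected domains---ensures that no information is lost in this translation, but the conventions must be tracked explicitly throughout.
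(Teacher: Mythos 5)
Your proof is correct in substance, and it takes a genuinely different route from the paper's. The paper offers no separate proof of this lemma: it is presented as the endpoint of the preceding derivation --- the local equivalence with the integral equation \eqref{250228123756} obtained through the inverse-flow coordinates $\Phi_0$ (\autoref{250228121628}, \autoref{250228131924}), its groupoid rewriting \eqref{250228134348} in terms of target-truncations on $\tilde{\Omega}$, and an arc-length reparameterisation whose right-hand side visibly extends to all of $\Pi_1 (\CC^\ast_z)$ --- with the global equivalence itself left implicit (the subsection is avowedly pedagogical). You instead integrate $\tilde{\V} \tilde{f} = \tilde{\F}$ directly along the canonical target-retraction of $\gamma$ (\autoref{250225130210}) inside the target fibre $\rm{t}^\inv (z_1)$, using that $\tilde{\V} = \V^\rm{s}$ spans $\cal{ker} (\rm{t}_\ast)$ there and that target fibres are simply connected; this yields both implications and the homotopy-invariance of the right-hand side in one coordinate-free stroke, and is the cleaner way to actually justify the global statement. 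Your evaluation of the constant term at $1_{\rm{t} (\gamma)}$ is also the right one (in the local model the foot of the characteristic is the flowed point $\varphi (z,\xi)$, not $z$), and is more careful than the shorthand ``$\tilde{a}$'' in \eqref{250228134355} and ``$a(z)$'' in \eqref{250228123756}.

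The one piece of bookkeeping you flag but leave unresolved is the sign, and as written your conventions do not literally reproduce the displayed formula. With your identification $\D{s} = (\rm{s}^\ast \lambda)(\tau_s) \, \d{s}$ (pullback of $\rm{s}^\ast \lambda$ along the family of truncations), the fundamental theorem of calculus gives $\tilde{f} (\gamma) = a (\rm{t} (\gamma)) + \int_0^{|\gamma|} \tilde{\F} \, \D{s}$, with a plus sign. The minus sign in \eqref{250228134355} corresponds to the other natural reading of $\rm{s}^\ast \lambda_{\gamma_s}$, namely $\lambda$ at the source point of $\gamma_s$ evaluated on the forward tangent of $\gamma$, i.e. $\lambda \big( \dot{\gamma} (\L - s) \big) \, \d{s}$ with $\L = |\gamma|$. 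Since the target is fixed along the family, $\d \Z (\tau_s) = (\rm{t}^\ast \lambda - \rm{s}^\ast \lambda)(\tau_s) = - (\rm{s}^\ast \lambda)(\tau_s)$, so the two readings differ exactly by a sign, and the second is the one inherited from the central-charge integration variable of \eqref{250228134348}. Once this convention is fixed your computation reproduces the lemma; nothing conceptual is missing.
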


\subsubsection{Third attempt.}
\label{250315132104}
Let us now consider an Initial Value Problem of the following form that readily specialises to our Initial Value Problem \eqref{250306084618}:
\begin{eqntag}
\label{250303130505}
\begin{cases}
	(+\V - \del_\xi) f_+ = \F_+
\\	(-\V - \del_\xi) f_- = \F_-
\end{cases}
\qqtext{such that}
	f_\pm (z,0) = a_\pm (z)
\fullstop{,}
\end{eqntag}
where $a_\pm (z)$ is a given holomorphic function on $\CC^\ast_z$ and $\F_\pm = \F_\pm \big(z, \xi; f_+ (z,\xi), f_- (z,\xi) \big)$ is some given holomorphic expression in $f_\pm$ which depends holomorphically on $(z,\xi) \in \CC^\ast_z \times \CC_\xi$.
The standard approach to this problem is to construct a holomorphic coordinate transformation $(z,\xi) \mapsto (u_+, u_-)$ that puts both vector fields $\pm \V - \del_\xi$ in canonical form $\del_{u_\pm}$ simultaneously.

We follow a slightly different approach.
Instead of searching for a single transformation that expresses $\pm \V - \del_\xi$ as coordinate vector fields, we search for a suitable transformation for each vector field individually.
Thus, for each choice of $\pm$, we look for a holomorphic change of coordinates $(z,\xi) \mapsto (w, u)$ with respect to which the vector field $\pm \V - \del_\xi$ becomes the (negated) coordinate vector field $- \del_u$, and such that the locus $u = 0$ in both cases corresponds to the submanifold of initial conditions.

A suitable such change of coordinates is provided by the inverse flow $\Phi_\pm$ of $\pm \V - \del_\xi$, which can be expressed in terms of the flow $\varphi_\pm$ of $\pm \V$ as follows:
\begin{eqn}
	\Phi_\pm (z,\xi,u) = \big( \varphi_\pm (z,-u), \xi + u \big)
\fullstop
\end{eqn}
In our case, the submanifold of initial conditions is given by $\xi = 0$, so we define two local coordinate changes by
\begin{eqn}
	\Phi^\pm_0 : (w, u) \mapsto (z, \xi) 
		= \Phi_\pm (w,0,u) 
		= \big( \varphi_\pm (w,-u), u \big)
		= (w^\pm_{-u}, u)
\fullstop
\end{eqn}
Here, $w^\pm_{-u}$ is the unique point satisfying the identity $-u = \pm \int_{w}^{w^\pm_{-u}} \lambda$.
Under this local coordinate change, we have
\begin{eqn}
	(\Phi^\pm_0)_\ast (-\del_u) = \pm \V - \del_\xi
\fullstop
\end{eqn}

In words, given a point $(w,u)$, applying the local biholomorphism $\Phi^\pm_0$ amounts to flowing the point $w$ for time $-u$ along the vector field $\pm \V$.
Conversely, given a point $(z, \xi)$, applying the inverse biholomorphism $(\Phi^\pm_0)^\inv$ amounts to flowing the point $z$ for time $+\xi$ along the vector field $\pm \V$.
See \autoref{250303130219}.

The compositions $\Phi_0^+ \circ (\Phi_0^-)^\inv$ and $\Phi_0^- \circ (\Phi_0^+)^\inv$ are nontrivial local automorphisms in the $(z,\xi)$-space for all nonzero $z$ and sufficiently small $\xi$.
In words, given a point $(z, \xi)$, applying the composition $\Phi_0^+ \circ (\Phi_0^-)^\inv$ amounts to first flowing $z$ to a point $w$ for time $\xi$ along the vector field $-\V$, and then flowing $w$ to a point $z'$ for time $-\xi$ along the vector field $+\V$.
The latter operation is equivalent to flowing $w$ for time $+\xi$ along the vector field $-\V$.
So, altogether, we find that applying the composition $\Phi_0^+ \circ (\Phi_0^-)^\inv$ amounts to flowing $z$ to a point $z'$ for time $2\xi$ along the vector field $-\V$.
Similarly, applying the composition $\Phi_0^- \circ (\Phi_0^+)^\inv$ amounts to flowing $z$ to a point $z''$ for time $2\xi$ along the vector field $+\V$.

\begin{figure}[t]
\centering
\begin{tikzpicture}
\begin{scope}
\fill [grey] (-3,-2) rectangle (3,2);
\node at (0,2) [above] {$\CC^\ast_z \times \CC_\xi$};
\draw [ultra thick] (-3,0) -- (3,0) node [right] {$\CC^\ast_z$};
\begin{scope}
\clip (-3,-2) rectangle (3,2);
\draw[thick, fill=blue, fill opacity = 0.3, blue, dashed] (0,0) to[closed, curve through =
	{ (0.5,1) (1,1) (3,1) (4,0) (3,-1) (1,-1) }] (0.5,-1);
\draw[thick, fill=blue, fill opacity = 0.3, blue, dashed] (0,0) to[closed, curve through =
	{ (-0.5,1) (-1,1) (-3,1) (-4,0) (-3,-1) (-1,-1) }] (-0.5,-1);
\end{scope}
\draw [ultra thick, white] (0,-2) -- (0,2);
\filldraw [darkgreen] (1,0.5) circle (2pt) node [right] {$(z,\xi)$};
\node [blue] at (-2.5,0.5) {$\Xi$};
\end{scope}

\begin{scope}[yshift=-5cm, xshift=-4cm]
\fill [grey] (-2,-2) rectangle (2,2);
\node at (-2,2) [below right] {$\CC_z$};
\draw [dashed] (1.5,-1.25) to [out=170,in=270] (-1,0) to [out=90,in=190] (1.5,1.25);
\filldraw [darkgreen] (-1,0) circle (2pt) node [left] {$z$};
	\begin{scope}
	\clip (-1.5,-0.5) rectangle (1,1.5);
	\draw [ultra thick, ->-=0.5, darkgreen] (-1,0) to [out=90,in=190] (1.5,1.25);
	\end{scope}
\filldraw [darkgreen] (1,1.18) circle (2pt) node [below right] {$z'$};
\draw [thin, ->, darkgrey] (-0.8,0) to [out=90,in=188] node [below right] {$\xi$} (1,1);
\end{scope}

\begin{scope}[yshift=-5cm, xshift=4cm]
\fill [grey] (-2,-2) rectangle (2,2);
\node at (2,2) [below left] {$\CC_z$};
\draw [dashed] (1.5,-1.25) to [out=170,in=270] (-1,0) to [out=90,in=190] (1.5,1.25);
\filldraw [darkgreen] (-1,0) circle (2pt) node [left] {$z$};
\filldraw [darkgreen] (-1,0) circle (2pt) node [left] {$z$};
	\begin{scope}
	\clip (-1.5,0.5) rectangle (1,-1.5);
	\draw [ultra thick, ->-=0.5, darkgreen] (-1,0) to [out=-90,in=170] (1.5,-1.25);
	\end{scope}
\filldraw [darkgreen] (1,-1.18) circle (2pt) node [above right] {$z''$};
\draw [thin, ->, darkgrey] (-0.8,0) to [out=-90,in=172] node [above right] {$\xi$} (1,-1);
\end{scope}

\draw [->] (-2.5,-1.8) -- node [left] {$\Phi^+_0$} (-3,-3.5);
\draw [->] (2.5,-1.8) -- node [right] {$\Phi^-_0$} (3,-3.5);

\end{tikzpicture}
\caption{Picturing the local holomorphic change of coordinates $\Phi_0^\pm$.}
\label{250303130219}
\end{figure}
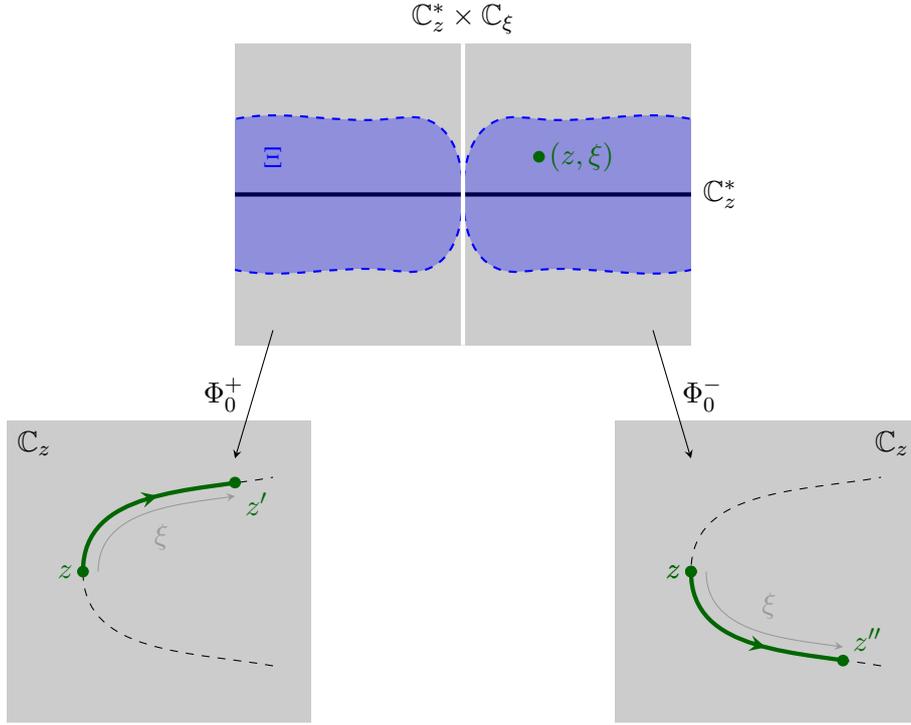

Let us now transform the first equation in \eqref{250303130505} using $\Phi_0^+$ and the second equation using $\Phi_0^-$.
Thus, if we write $\tilde{f}_\pm = f_\pm \circ \Phi_0^\pm$, $\tilde{\F}_\pm = \F_\pm \circ \Phi_0^\pm$, and $\tilde{a}_\pm = a_\pm \circ \Phi_0^\pm$, then the Initial Value Problem \eqref{250303130505} becomes simply
\begin{eqntag}
\label{250303130745}
\begin{cases}
	- \del_u \tilde{f}_+ = \tilde{\F}_+
\\	- \del_u \tilde{f}_- = \tilde{\F}_-
\end{cases}
\qtext{such that}
	\tilde{f}_\pm (w,0) = \tilde{a}_\pm (w)
\fullstop
\end{eqntag}
Consequently, it can be converted into a pair of integral equations $\tilde{f}_\pm = \tilde{a}_\pm - \int_0^u \tilde{\F}_\pm \d{s}$.
However, we must be careful because the righthand side involves not just $\tilde{f}_\pm$ but also the cross terms $\tilde{f}^\dagger_+ = f_+ \circ \Phi_0^-$ and $\tilde{f}^\dagger_- = f_- \circ \Phi_0^+$.
So, more explicitly, the pair of integral equations is as follows:
\begin{eqn}
\begin{cases}
\displaystyle
	\tilde{f}_+ (w,u)
		= \tilde{a}_+ (w) - \int_0^u \tilde{\F}_+ \Big( w,u; \tilde{f}_+ (w,u), \tilde{f}^\dagger_- (w,u) \Big) \d{s}
\fullstop{,}
\\[6pt]
\displaystyle
	\tilde{f}_- (w,u)
		= \tilde{a}_- (w) - \int_0^u \tilde{\F}_- \Big( w,u; \tilde{f}^\dagger_+ (w,u), \tilde{f}_- (w,u) \Big) \d{s}
\fullstop
\end{cases}
\end{eqn}
Undoing the coordinate transformations, we find the following pair of integral equations in the original variables.

\begin{lemma}
\label{250305163610}
There is a neighbourhood $\Xi \subset \CC^\ast_z \times \CC_\xi$ of $\xi = 0$ where the Initial Value Problem \eqref{250303130505} is equivalent to the following integral equation:
\begin{eqn}
\begin{cases}
\displaystyle
	f_+ (z,\xi)
		= a_+ (z) - \int_0^\xi \F_+ \Big( z^+_{\xi - s}, s; f_+ (z^+_{\xi - s}, s), f_- (z^+_{\xi - s}, s) \Big) \d{s}
\fullstop{,}
\\[6pt]
\displaystyle
	f_- (z,\xi)
		= a_- (z) - \int_0^\xi \F_- \Big( z^-_{\xi - s}, s; f_+ (z^-_{\xi - s}, s), f_- (z^-_{\xi - s}, s) \Big) \d{s}
\fullstop{,}
\end{cases}
\end{eqn}
where $z^\pm_{\xi - s}$ is the unique point in $\CC^\ast_z$ satisfying the identity
\begin{eqn}
	\xi - s = \pm \int_{z}^{z^\pm_{\xi-s}} \lambda
\fullstop
\end{eqn}
\end{lemma}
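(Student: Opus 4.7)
The plan is to execute the strategy already sketched in the discussion preceding the lemma: straighten each of the vector fields $\pm\V - \del_\xi$ separately using its inverse flow $\Phi_0^\pm$, integrate the resulting transport equation, and then undo the two transformations.

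First I would establish the common domain $\Xi$. Since $\V = -6z^{-4}\del_z$ is holomorphic and nonvanishing on $\CC^\ast_z$, the flows $\varphi_\pm$ of $\pm\V$ exist on a neighbourhood of $\CC^\ast_z \times \set{0} \subset \CC^\ast_z \times \CC_u$, and each $\Phi_0^\pm : (w,u) \mapsto (\varphi_\pm(w,-u), u)$ defines a local biholomorphism near $\set{u=0}$ onto a neighbourhood of $\set{\xi=0}$, exactly as in the single-equation case of \autoref{250228121628}. I would take $\Xi$ to be the intersection of the two images, shrunk further if necessary to ensure that for every $(z,\xi) \in \Xi$ the straight-line trajectories $\set{z^\pm_{\xi-s} : s \in [0,\xi]}$ remain inside $\CC^\ast_z$.

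Next I would apply the two transformations in parallel. Set $\tilde{f}_\pm \coleq f_\pm \circ \Phi_0^\pm$ and $\tilde{\F}_\pm \coleq \F_\pm \circ \Phi_0^\pm$. Using $(\Phi_0^\pm)_\ast(-\del_u) = \pm\V - \del_\xi$, the system \eqref{250303130505} becomes the decoupled pair of transport equations \eqref{250303130745}. The essential subtlety is that in $\F_+(z,\xi;f_+,f_-)$ the function $f_-$ is evaluated at the \emph{same} point $(z,\xi) = \Phi_0^+(w,u)$ as $f_+$, so pulling back via $\Phi_0^+$ produces the cross term $\tilde{f}_-^\dagger(w,u) \coleq f_-(\Phi_0^+(w,u))$, which is distinct from $\tilde{f}_-(w,u) = f_-(\Phi_0^-(w,u))$. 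Since both $\Phi_0^\pm$ restrict to the identity on the initial surface $\set{u=0}$, the transformed initial conditions read $\tilde{f}_\pm(w,0) = a_\pm(w)$, and integrating each transport equation in $u$ yields an integral system for $(\tilde{f}_+, \tilde{f}_-)$ coupled through the cross terms $\tilde{f}_\mp^\dagger$.

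To recover the stated integral equations I would undo each transformation via $(\Phi_0^\pm)^\inv : (z,\xi) \mapsto (z^\pm_\xi, \xi)$. The flow-group identity $\varphi_\pm(\varphi_\pm(z,\xi), -s) = \varphi_\pm(z, \xi-s)$ gives $(z^\pm_\xi)^\pm_{-s} = z^\pm_{\xi-s}$, so the cross terms $\tilde{f}_\mp^\dagger(w,s) = f_\mp(w^\pm_{-s}, s)$ evaluated at $w = z^\pm_\xi$ collapse to $f_\mp(z^\pm_{\xi-s}, s)$, and similarly $\tilde{f}_\pm(z^\pm_\xi, s) = f_\pm(z^\pm_{\xi-s}, s)$ and $\tilde{\F}_\pm(z^\pm_\xi, s; \cdot) = \F_\pm(z^\pm_{\xi-s}, s; \cdot)$. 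The converse direction (that any solution of the integral system solves the original IVP on $\Xi$) follows by differentiating the integral equations, using the identity $\del_\xi z^\pm_{\xi-s} = \pm v(z^\pm_{\xi-s})$ for $\V = v(z)\del_z$ together with the fundamental theorem of calculus, which reassembles the operator $\pm\V - \del_\xi$ applied to $f_\pm$ and returns $\F_\pm$.

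The main technical point is the bookkeeping of the cross terms: because the two equations are straightened by \emph{different} diffeomorphisms, the nonlinear coupling is mediated by $\tilde{f}_\mp^\dagger$, which only resolve back to values of the original $f_\mp$ after the final pullback. Verifying this collapse carefully, together with ensuring that $\Xi$ is small enough for all flow trajectories $\set{z^\pm_{\xi-s}}_{s\in[0,\xi]}$ to remain in $\CC^\ast_z$, is where the real care lies; the remainder is a routine exercise in flow-box straightening.
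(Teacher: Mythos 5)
Your route is the same as the paper's: the lemma carries no separate proof there, being stated as the outcome of the derivation immediately preceding it (straighten each field $\pm\V-\del_\xi$ by its own inverse flow $\Phi_0^\pm$, integrate the decoupled transport equations \eqref{250303130745} while tracking the cross terms $f_\mp\circ\Phi_0^\pm$, then undo the two coordinate changes), and your remarks on the choice of $\Xi$ and on the converse direction are sensible additions. Your bookkeeping of the cross terms is correct: at $w=z^\pm_\xi$ one indeed has $w^\pm_{-s}=z^\pm_{\xi-s}$, so all integrand evaluations collapse as you describe.

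There is, however, one step you assert but do not carry out, and it fails as stated: the constant term. Undoing the change of variables at $(w,u)=(\Phi_0^\pm)^\inv(z,\xi)=(z^\pm_\xi,\xi)$ turns $\tilde f_\pm(w,u)$ into $f_\pm(z,\xi)$, but it turns the initial-condition term $\tilde a_\pm(w)=a_\pm(w)$ into $a_\pm(z^\pm_\xi)$, not $a_\pm(z)$; these differ unless $a_\pm$ is constant along the flow. A sanity check: for $\V=\del_z$ and $\F_+=0$, the problem $(\del_z-\del_\xi)f_+=0$, $f_+(z,0)=a_+(z)$ has solution $f_+(z,\xi)=a_+(z+\xi)=a_+(z^+_\xi)$, whereas the displayed integral equation would force $f_+(z,\xi)=a_+(z)$. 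Your own converse step, if executed, would expose this: applying $\pm\V-\del_\xi$ to the displayed right-hand side annihilates every term of the form $g(z^\pm_{\xi-s},s)$ and the upper limit of the integral returns $\F_\pm(z,\xi;f_+,f_-)$, but the term $a_\pm(z)$ is not annihilated, leaving an extra $\pm\V a_\pm$. So a faithful execution of your (and the paper's) strategy proves the lemma with first term $a_\pm(z^\pm_\xi)$ rather than $a_\pm(z)$; as written, your claims that the computation ``recovers the stated integral equations'' and that differentiation ``returns $\F_\pm$'' are not correct verbatim. You should either derive the corrected constant term or explain the replacement, which is impossible for non-constant $a_\pm$ (the same replacement already appears in \eqref{250228123756}, so the slip originates in the text you are matching, but your proof should flag it rather than reproduce it silently).
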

In the same way as before, we may interpret the points $z^+_{\xi}$ and $z^-_{\xi}$ as the target points of a pair of parameterised curves $\gamma^+, \gamma^- : [0,\xi] \to \CC^\ast_z$ with the same source $z$ but opposite central charges $\Z (\gamma^+) = - \Z (\gamma^-)$.
However, the paths $\gamma^+$ and $\gamma^-$ are necessarily distinct so the point $(z,\xi)$ cannot be interpreted simply as an element of a fundamental groupoid such as $\Pi_1 (\CC^\ast_z)$.
In order to make global geometric sense of this coupled system of integral equations, it needs to be reformulated on a more sophisticated geometric space that encodes pairs of parameterised curves on $\CC^\ast_z$ with equal source and opposite central charge.
Such a space is given as the fibre product of fundamental groupoids and the next section is devoted to this geometric construction.

\section{The Borel Space}
\label{250304152932}

In this section, we introduce this paper's main geometric constructions.

\subsection{The Unfolded Borel Space and its Quotient}

In \autoref{250315131538}, we explained how the vector field $\V$ appearing on the lefthand side of the integro-differential system \eqref{250306084618} equips the fundamental groupoid of the $z$-plane with a rich extra structure via the central charge $\Z$ obtained by integrating the holomorphic differential form $\lambda$ dual to $\V$.
As a matter of fact, the lefthand side of \eqref{250306084618} contains two different vector fields involving $+\V$ and $-\V$, and this difference in sign is a fundamental part of the structure of \eqref{250306084618}, made clear by the explicit discussion in \autoref{250315132104}.
In light of this, we must work with both differentials $+ \lambda$ and $-\lambda$ side by side, interpreted as the two eigenvalues of the classical Jacobian $\J_0$ (or, more precisely, $\J_0(z) \d{z}$), and adapt our notation accordingly.

\paragraph{}\removespace
First, we introduce a central charge and a corresponding anchor for each of the eigenvalues of the classical Jacobian.

\begin{definition}[central charge and anchor maps]
\label{250315132917}
For $X = \CC_z$ or $X = \CC^\ast_z$, the holomorphic differential $\pm \lambda$ induces the \dfn{central charge} on $\Pi_1 (X)$ given by
\begin{eqn}
	\pm \Z : \Pi_1 (X) \to \CC
\qtext{given by}
	\pm \Z : \gamma \mapsto \pm \Z (\gamma) = \pm \int_\gamma \lambda
\fullstop
\end{eqn}
Accordingly, the \dfn{anchor maps} on $\Pi_1 (X)$ associated with the differentials $\pm \lambda$ are the holomorphic surjective maps
\begin{eqn}
	\varrho_\pm \coleq (\rm{s}, \pm \Z) : \Pi_1 (X) \to X \times \CC_\xi
\fullstop
\end{eqn}
\end{definition}

Thus, the anchor map $\varrho_+$ is the same as the one discussed in \autoref{250315133506}, whilst the anchor map $\varrho_-$ is only a slight variation involving a negative sign.
This means that all the properties of $\varrho_\pm$ are easily deducible from the discussion in \autoref{250315133506}.

\paragraph{}\removespace
At the same time, the presence of two distinct anchor maps on the same groupoid $\Pi_1 (X)$ is one of the most significant and nontrivial aspects of the geometric content of this paper.
Their presence is the consequence of the fact that the system \eqref{250306084618} consists of two equations, and the fact that these equations are coupled can be encoded in the following geometric structure.

\begin{definition}[unfolded Borel space]
\label{250217111758}
We define the \dfn{unfolded Borel space} $B$ to be the fibre product of the two anchor maps $\varrho_\pm : \Pi_1 (\CC_z) \to \CC_z \times \CC_\xi$:
\begin{eqntag}
	B \coleq \Pi_1 (\CC_z) \!\! \underset{\ \varrho_- \, \varrho_+}{\times} \!\! \Pi_1 (\CC_z)
\fullstop
\end{eqntag}
Similarly, we define the \dfn{unfolded Borel covering space} $\tilde{B}$ to be the fibre product of the two anchor maps $\varrho_\pm : \Pi_1 (\CC^\ast_z) \to \CC^\ast_z \times \CC_\xi$:
\begin{eqntag}
	\tilde{B} \coleq \Pi_1 (\CC^\ast_z) \!\! \underset{\ {\varrho}_- \, {\varrho}_+}{\times} \!\! \Pi_1 (\CC^\ast_z)
\fullstop
\end{eqntag}
\end{definition}

\paragraph{The unfolded Borel space.}
Concretely, the unfolded Borel space is the set of all pairs $\bm{\gamma} \coleq (\gamma^+, \gamma^-)$ of paths on $\CC_z$ with the same source but opposite central charge:
\begin{eqn}
	B	= \set{ \bm{\gamma} = (\gamma^+, \gamma^-) \in \Pi_1 (\CC_z) \times \Pi_1 (\CC_z)
		~\Big|~ \Z (\gamma^+) = -\Z (\gamma^-)
			~\text{and}~ \rm{s} (\gamma^+) = \rm{s} (\gamma^-)}
\fullstop
\end{eqn}
Explicitly in the trivialisation $\Pi_1 (\CC_z) \cong \CC_u \times \CC_z$, the unfolded Borel space $B$ is the subspace of points $(u_1, z_1; u_2, z_2)$ in $\CC^4$ defined by the equations
\begin{eqn}
	z_1 = z_2
\qtext{and}
	\int_{z_1}^{z_1 + u_1} \!\!\!\! z^4 \d{z}
	= - \int_{z_2}^{z_2 + u_2} \!\!\!\! z^4 \d{z}
\qtext{i.e.}
	(z_1 + u_1)^5 - z_1^5 = - (z_2 + u_2)^5 + z_2^5
\fullstop
\end{eqn}
Writing $x,y$ for $u_1, u_2$, and $z$ for $z_1 = z_2$, these equations define a quintic surface in $\CC^3$:
\begin{eqntag}
\label{250217180306}
	B \cong \set{ (x,y,z) \in \CC^3 ~\Big|~ (x + z)^5 + (y + z)^5 = 2z^5 }
\fullstop
\end{eqntag}
A real slice of this quintic surface is depicted in \autoref{250222122854}.
This quintic surface in $\CC^3$ is singular with just one singular point at the origin $(x,y,z) = (0,0,0)$.
Consequently, $B$ is an algebraic surface with only one singular point $\bm{1_0} \coleq (1_0, 1_0) \in B$ representing the pair of constant paths at the origin of the $z$-plane.
The smooth locus of $B$,
\begin{eqn}
	B^\text{sm} 
		\coleq B \smallsetminus \set{ \bm{1_0} }
		\cong \set{ (x,y,z) \in \CC^3 ~\Big|~ (x + z)^5 + (y + z)^5 = 2z^5 } \smallsetminus \set{(0,0,0)}
\fullstop{,}
\end{eqn}
is a smooth algebraic surface, hence a two-dimensional holomorphic manifold.
Note that, using the change of variables $u = x+z, v = y + z, w = -z \sqrt[5]{2}$, the quintic surface \eqref{250217180306} is isomorphic to the Fermat quintic surface in $\CC^3$:
\begin{eqntag}
\label{250315135452}
	B \cong \set{ (u,v,w) \in \CC^3 ~\Big|~ u^5 + v^5 + w^5 = 0 }
\fullstop
\end{eqntag}
This isomorphism was mentioned in the Introduction, and a picture of the Fermat quintic surface \eqref{250315135452} is presented in \autoref{250312110535}.
However, the trivialisation \eqref{250217180306} will be more convenient for our purposes because in the variables $x,y,z$ are more straightforwardly interpretable as parameterising displacements of paths that start at $z$.
We summarise these facts in the following elementary lemma.

\begin{lemma}
\label{250217112814}
The unfolded Borel space $B$ is a singular algebraic surface isomorphic to the Fermat quintic surface in $\CC^3$.
It has only one singular point $\bm{1_0} \coleq (1_0, 1_0) \in B$ representing the pair of constant paths at the origin of the $z$-plane.
\end{lemma}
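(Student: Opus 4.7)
The approach is to make the fibre product defining $B$ completely explicit by working in the global algebraic trivialisation of $\Pi_1(\CC_z)$ established in \eqref{250205161722}, and then identify $B$ with an explicit algebraic surface to which the Jacobian criterion can be applied. The plan is to first unpack the fibre product condition $\varrho_- = \varrho_+$ component by component: the source map condition $\rm{s}(\gamma^+) = \rm{s}(\gamma^-)$ forces $z_1 = z_2$, and the opposite-central-charge condition $\Z(\gamma^+) = -\Z(\gamma^-)$, using the explicit formula \eqref{250226060217}, becomes the single polynomial equation $(z+u_1)^5 - z^5 = -\big( (z+u_2)^5 - z^5\big)$ in the remaining variables. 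Renaming $(u_1, u_2, z) \mapsto (x, y, z)$ yields the quintic hypersurface $(x+z)^5 + (y+z)^5 = 2z^5$ in $\CC^3$, establishing that $B$ is an algebraic surface.

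Next, I would exhibit the explicit linear isomorphism $\CC^3 \iso \CC^3$ given by $(x,y,z) \mapsto (u,v,w) \coleq (x+z, y+z, -\sqrt[5]{2}\, z)$ (with any fixed choice of fifth root of $2$), which is clearly a biholomorphism of $\CC^3$ and pulls back the Fermat equation $u^5 + v^5 + w^5 = 0$ to the quintic equation defining $B$. This identifies $B$ as an algebraic subvariety of $\CC^3$ isomorphic to the Fermat quintic surface, and the origin of $\CC^3$ in the Fermat model corresponds to $(x,y,z) = (0,0,0)$, which in turn corresponds under the trivialisation to the point $\bm{1_0} = (1_0, 1_0) \in B$.

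Finally, I would apply the Jacobian criterion to the Fermat defining polynomial $F(u,v,w) = u^5 + v^5 + w^5$: its gradient $\nabla F = (5u^4, 5v^4, 5w^4)$ vanishes if and only if $u = v = w = 0$, so the origin is the unique singular point of the Fermat quintic, and all other points are smooth. Transporting back through the linear change of variables (which preserves smoothness since it is a global biholomorphism of the ambient $\CC^3$), it follows that $\bm{1_0}$ is the unique singular point of $B$ and that $B^{\textup{sm}} = B \smallsetminus \set{\bm{1_0}}$ is a two-dimensional smooth complex algebraic variety.

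I expect no serious technical obstacles here: the main point is simply to keep the bookkeeping of the two trivialisations straight and to verify that the fibre product really does cut out the stated equations (rather than, say, picking up a spurious extra component). The only mildly delicate step is checking that, because the central charge $\Z$ on $\Pi_1(\CC_z)$ is a genuine algebraic function in the trivialisation (by \eqref{250226060217}), both anchor maps $\varrho_\pm$ are algebraic morphisms of algebraic varieties, so the fibre product in the sense of holomorphic manifolds coincides with the scheme-theoretic fibre product and has the claimed equations as its defining ideal.
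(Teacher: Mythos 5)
Your proposal is correct and follows essentially the same route as the paper: unpacking the fibre product in the trivialisation $\Pi_1(\CC_z) \cong \CC_u \times \CC_z$ to obtain the quintic $(x+z)^5 + (y+z)^5 = 2z^5$, passing to the Fermat quintic via the linear change $u = x+z$, $v = y+z$, $w = -\sqrt[5]{2}\,z$, and locating the unique singular point at the origin, i.e.\ at $\bm{1_0}$. Your explicit appeal to the Jacobian criterion merely spells out what the paper leaves as an evident computation.
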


\paragraph{The unfolded Borel covering space.}
Similarly, the unfolded Borel covering space is concretely the set of all pairs $\bm{\gamma} = (\gamma^+, \gamma^-)$ of paths on $\CC^\ast_z$ with the same source but opposite charge:
\begin{eqn}
	\tilde{B}
		= \set{ \bm{\gamma} = (\gamma^+, \gamma^-) \in \Pi_1 (\CC^\ast_z) \times \Pi_1 (\CC^\ast_z)
		~\Big|~ \Z (\gamma^+) = - \Z (\gamma^-)
			~\text{and}~ \rm{s} (\gamma^+) = \rm{s} (\gamma^-)}
\fullstop
\end{eqn}
Explicitly in the trivialisation $\Pi_1 (\CC^\ast_z) \cong \CC_u \times \CC^\ast_z$, this fibre product is the subspace of points $(u_1, z_1; u_2, z_2)$ in $\CC^4$, with $z_1, z_2$ nonzero, defined by the analytic equations
\begin{eqn}
	z_1 = z_2
\qtext{and}
	\int_{z_1}^{e^{u_1} z_1} \!\!\!\! z^4 \d{z}
	= - \int_{z_2}^{e^{u_2} z_2} \!\!\!\! z^4 \d{z}
\qtext{i.e.}
	( e^{5 u_1} - 1) z_1^5 + ( e^{5 u_2} - 1) z_2^5 = 0
\fullstop
\end{eqn}
Since $z_1 = z_2$ is nonzero, we find that $\tilde{B}$ is isomorphic to a smooth analytic surface in $\CC^3$ by writing $x,y$ for $u_1, u_2$, and $z$ for $z_1 = z_2$, respectively:
\begin{eqntag}
\label{250223102438}
	\tilde{B} \cong \set{ (x,y,z) \in \CC^3 ~\Big|~ e^{5x} + e^{5y} = 2 \qtext{and} z \neq 0}
\fullstop
\end{eqntag}
A real slice of this analytic surface is presented in \autoref{250223102352}.
We summarise these facts in the following elementary lemma.

\begin{lemma}
\label{250217185011}
The unfolded Borel covering space $\tilde{B}$ is a two-dimensional holomorphic manifold isomorphic to an analytic surface in $\CC^3$.
\end{lemma}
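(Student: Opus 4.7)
The proof is essentially a direct verification, since the preceding computations in the excerpt have already exhibited the explicit analytic surface in $\CC^3$ that will turn out to be $\tilde{B}$; all that remains is to confirm smoothness and the dimension count. The plan is as follows.

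First, I would make the fibre product description precise using the trivialisation $\Pi_1(\CC^\ast_z) \cong \CC_u \times \CC^\ast_z$ from \eqref{250205162515}. A point of $\tilde{B}$ corresponds to a quadruple $(u_1, z_1, u_2, z_2) \in \CC_u \times \CC^\ast_z \times \CC_u \times \CC^\ast_z$ satisfying the source-matching condition $\rm{s}(\gamma_{u_1, z_1}) = \rm{s}(\gamma_{u_2, z_2})$, i.e. $z_1 = z_2$, together with the opposite central-charge condition $\Z(\gamma_{u_1, z_1}) + \Z(\gamma_{u_2, z_2}) = 0$. Using the explicit formula \eqref{250226061238}, the latter reads $(1 - e^{5u_1})z_1^5 + (1 - e^{5u_2})z_2^5 = 0$. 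After setting $z \coleq z_1 = z_2 \neq 0$ and writing $x \coleq u_1$, $y \coleq u_2$, cancellation of $z^5$ yields precisely the displayed equation $e^{5x} + e^{5y} = 2$ with $z \neq 0$, establishing the biholomorphism \eqref{250223102438} as sets.

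Second, I would verify that this analytic subvariety is a smooth complex submanifold of codimension one in $\CC^\ast_z \times \CC^2$ via the Holomorphic Implicit Function Theorem. Consider the holomorphic function $F : \CC^3 \to \CC$ defined by $F(x,y,z) \coleq e^{5x} + e^{5y} - 2$. Its gradient is
\begin{eqn}
	\nabla F = \big( 5 e^{5x},\, 5 e^{5y},\, 0 \big)
\fullstop
\end{eqn}
Since the complex exponential never vanishes, the first (and second) component is never zero, so $\nabla F$ has rank one at every point. Hence $F^{\inv}(0)$ is a smooth complex submanifold of $\CC^3$ of complex dimension two, and intersecting with the open set $\{z \neq 0\}$ preserves smoothness and dimension.

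Third, I would conclude that $\tilde{B}$ inherits the structure of a two-dimensional holomorphic manifold from this biholomorphic identification with a smooth analytic hypersurface in $\CC^\ast_z \times \CC^2 \subset \CC^3$. There is essentially no obstacle: the key point which distinguishes $\tilde{B}$ from its folded counterpart $B$ of \autoref{250217112814} is precisely that the exponentials appearing in the defining equation never vanish, whereas the corresponding polynomial equation for $B$ vanishes together with all its derivatives at the origin, producing the unique singular point $\bm{1_0}$. Removing the puncture at $z = 0$ built into $\Pi_1(\CC^\ast_z)$ is exactly what eliminates this singularity at the level of $\tilde{B}$.
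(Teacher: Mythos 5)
Your proposal is correct and follows essentially the same route as the paper: the lemma there is explicitly a summary of the immediately preceding computation, which derives the same equation $e^{5x}+e^{5y}=2$, $z\neq 0$, in the same trivialisation of $\Pi_1(\CC^\ast_z)$. Your only addition is to spell out the smoothness check via the nonvanishing gradient of $e^{5x}+e^{5y}-2$, which the paper leaves implicit, and your closing remark correctly identifies why $\tilde{B}$ escapes the singularity that $B$ has at $\bm{1_0}$.
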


The following fact is a simple consequence of \autoref{250210081550}.

\begin{lemma}
\label{250218172045}
The anchor map $\varrho : B \to \CC_z \times \CC_\xi$ is a holomorphic isomorphism near the embedding $\CC^\ast_z \inj B$.
That is, there is an open neighbourhood $\Omega \subset B$ of $\CC^\ast_z \inj B$ and an open neighbourhood $\Xi \subset \CC_z^\ast \times \CC_\xi$ of $\xi = 0$ such that the anchor map restricts to a holomorphic isomorphism $\varrho : \Omega \iso \Xi$.
Namely, let $\Omega_\pm \subset \Pi_1 (\CC_z) |_{\CC^\ast_z}$ and $\Xi \subset \CC_z^\ast \times \CC_\xi$ be open neighbourhoods of $\CC_z^\ast$ such that $\varrho_\pm : \Omega_\pm \iso \Xi$ as in \autoref{250210081550}.
Then we can take $\Omega$ to be the fibre product of $\varrho_\pm : \Omega_\pm \iso \Xi$; i.e.,
\begin{eqn}
	\varrho : \Omega \coleq \Omega_- \underset{\varrho_- \, \varrho_+}{\times} \Omega_+
	\iso \Xi
\fullstop
\end{eqn}
\end{lemma}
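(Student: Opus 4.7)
The plan is to verify directly that the prescribed fibre product realises the anchor map as a biholomorphism, by invoking \autoref{250210081550} separately for each of the two signs. First, I would apply \autoref{250210081550} twice, once for each of the anchors $\varrho_\pm$, to obtain open neighbourhoods $\Omega^\circ_\pm \subset \Pi_1(\CC_z)\big|_{\CC^\ast_z}$ of the identity bisection $1_{\CC^\ast_z}$ and open neighbourhoods $\Xi_\pm \subset \CC^\ast_z \times \CC_\xi$ of $\set{\xi=0}$ such that $\varrho_\pm : \Omega^\circ_\pm \iso \Xi_\pm$ are biholomorphisms. After replacing $\Xi_\pm$ by the connected component of $\Xi_+ \cap \Xi_-$ containing $\set{\xi = 0}$, which I call $\Xi$, and setting $\Omega_\pm \coleq \varrho_\pm^{\inv}(\Xi)$, I obtain biholomorphisms $\varrho_\pm : \Omega_\pm \iso \Xi$ sharing a common codomain.

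Next, I would set $\Omega \coleq \Omega_- \times_{\varrho_-,\varrho_+} \Omega_+$, which is an open submanifold of $B$ because the defining anchors are holomorphic submersions on the relevant loci. The anchor on $B$ is given by the common value $\varrho(\gamma^+,\gamma^-) = \varrho_+(\gamma^+) = \varrho_-(\gamma^-)$, so its restriction to $\Omega$ lands in $\Xi$. Its two-sided inverse is the holomorphic map $\psi : \Xi \to \Omega$ defined by $\psi(z,\xi) \coleq \big(\varrho_-^{\inv}(z,\xi),\, \varrho_+^{\inv}(z,\xi)\big)$; the image lies in $\Omega$ because $\varrho_-\big(\varrho_-^{\inv}(z,\xi)\big) = (z,\xi) = \varrho_+\big(\varrho_+^{\inv}(z,\xi)\big)$ holds tautologically, so the fibre-product relation is automatic. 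The identities $\varrho \circ \psi = \mathrm{id}_\Xi$ and $\psi \circ \varrho = \mathrm{id}_\Omega$ follow at once from $\varrho_\pm \circ \varrho_\pm^{\inv} = \mathrm{id}_\Xi$ on the respective factors.

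Finally, I would verify that $\Omega$ contains the embedded $\CC^\ast_z \inj B$: for any $z \in \CC^\ast_z$, the constant path $1_z$ lies in both $\Omega_\pm$ since these are neighbourhoods of $1_{\CC^\ast_z}$, and $\varrho_+(1_z) = (z,0) = \varrho_-(1_z)$, so the pair $(1_z, 1_z)$ belongs to $\Omega$ and is sent to $(z,0) \in \Xi$. There is no genuine obstacle in this argument; the only mildly technical point is the arrangement of a common codomain $\Xi$, handled by the intersection-and-shrink step. The statement is essentially the universal-property observation that the fibre product of two biholomorphisms onto a common target is itself biholomorphic to that target, so the whole assertion reduces cleanly to \autoref{250210081550}.
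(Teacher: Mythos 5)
Your proposal is correct and follows exactly the route the paper intends: the paper gives no separate proof, presenting the lemma as a direct consequence of \autoref{250210081550} with the fibre-product construction built into the statement, and your argument simply spells out the routine details (common codomain $\Xi$, the inverse $\psi = (\varrho_-^{\inv}, \varrho_+^{\inv})$, and membership of the constant-path pairs $(1_z,1_z)$). The only point worth making explicit is that \autoref{250210081550} is stated for $\varrho = (\rm{s}, \Z)$, so for $\varrho_-$ one should note it follows by composing with the biholomorphism $\xi \mapsto -\xi$, as the paper remarks after \autoref{250315132917}.
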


\paragraph{Involution.}
The negation map $\sigma : \CC_z \to \CC_z$ sending $z \mapsto -z$ is a holomorphic automorphism which naturally arises on the complex $z$-plane from its structure as the ramified double cover of the $\tau$-plane.
It lifts to a canonical automorphism on the fundamental groupoid
\begin{eqn}
	\sigma : \Pi_1 (X) \too \Pi_1 (X)
\qtext{sending}
	\gamma \mapstoo \sigma \circ \gamma
\fullstop{,}
\end{eqn}
for $X = \CC_z$ or $X = \CC^\ast_z$.
This automorphism is an involution (meaning $\sigma \circ \sigma = \id$) which intertwines the central charges of $\pm \lambda$ in the sense that 
\begin{eqntag}
\label{250315144427}
	\Z = - \sigma^\ast \Z
\qqtext{i.e.}
	\Z (\gamma) = -\Z (\sigma \circ \gamma)
\fullstop
\end{eqntag}
This is because $\sigma^\ast \lambda = - \lambda$ so $\Z (\gamma) = \int_\gamma \lambda = - \int_\gamma \sigma^\ast \lambda = - \int_{\sigma \circ \gamma} \lambda = - \Z (\sigma \circ \gamma)$.
Therefore, $\sigma$ induces an involution automorphism on the unfolded Borel space as well as the unfolded Borel covering space:
\begin{eqntag}
\label{250217133212}
	\sigma : B \to B \qtext{and} \sigma : \tilde{B} \to \tilde{B}  \qtext{sending}  \bm{\gamma} = (\gamma^+, \gamma^-) \mapsto \check{\bm{\gamma}} \coleq (\sigma \circ \gamma^-, \sigma \circ \gamma^+)
\fullstop
\end{eqntag}
To check this, we just need to justify that the $\check{\bm{\gamma}}$ is also an element of $B$ (or $\tilde{B}$).
This is a simple calculation, using the identities \eqref{250315144427} and the fact that $\Z (\gamma^+) = - \Z (\gamma^-)$, which goes like this: $\Z (\sigma \circ \gamma^-) = - \Z (\gamma^-) = \Z (\gamma^+) = - \Z (\sigma \circ \gamma^+)$.
In the trivialisation \eqref{250217180306}, the involution automorphism $\sigma : B \to B$ comes from the global involution
\begin{eqn}
	\sigma : \CC^3 \to \CC^3 \qtext{sending} (x,y,z) \mapsto -(y,x,z)
\fullstop
\end{eqn}

\paragraph{The Borel space.}
Since $B$ is an algebraic variety, the quotient $B/\sigma$ is a well-defined algebraic variety that can be described using the ring of invariant functions.
Let $\CC [B]$ denote the ring of algebraic functions on the unfolded Borel space, so that 
\begin{eqn}
	B = \Spec \big( \CC [B] \big)
\qtext{and}
	\CC [B] \cong \frac{ \CC [x,y,z] }{\inner{(x+z)^5 + (y+z)^5 - 2z^5}}
\fullstop
\end{eqn}
Consider the subalgebra of $\sigma$-invariant algebraic functions on $B$; i.e.,
\begin{eqntag}
	\CC [B]^\sigma \coleq \set{ f \in \CC [B] ~\big|~ \sigma^\ast f = f }
\fullstop
\end{eqntag}
The quotient algebraic surface $M \coleq B/\sigma$ is defined as the spectrum of the ring $\CC [B]^\sigma$.
Let us also observe that the only fixed point of the involution $\sigma : B \to B$ is the singular point $\bm{1_0} = (1_0, 1_0)$ which corresponds to the origin $(0,0,0) \in \CC^3$ in the trivialisation.
This means that the quotient $M^\textup{sm} \coleq B^\textup{sm} / \sigma$ of the smooth locus of $B$ by $\sigma$ is a smooth algebraic surface.
Likewise, $\tilde{B}$ is a holomorphic manifold and the involution $\sigma : \tilde{B} \to \tilde{B}$ has no fixed points, so the quotient $\tilde{M} \coleq \tilde{B} / \sigma$ is also a holomorphic manifold.

\begin{definition}[Borel space]
\label{250217133244}
We define the \dfn{Borel space} for the deformed Painlevé I equation as the quotient algebraic surface
\begin{eqntag}
	M \coleq B / \sigma = \Spec \big( \CC [B]^\sigma \big)
\fullstop
\end{eqntag}
Similarly, we define the \dfn{Borel covering space} for the deformed Painlevé I equation as the two-dimensional holomorphic manifold
\begin{eqntag}
	\tilde{M} \coleq \tilde{B} / \sigma
\fullstop
\end{eqntag}
\end{definition}

\paragraph{The critical elements.}
Recall that a critical path $\gamma \in \Pi_1 (\CC_z)$ in the sense of \autoref{250208170930} is one that terminates at the origin.
These special elements determine special elements in the (unfolded) Borel space defined as follows.

\begin{definition}[critical elements]
\label{250209230849}
An element $\bm{\gamma} = (\gamma^+, \gamma^-) \in B$ is called \dfn{critical} if at least one of $\gamma^+$ or $\gamma^-$ is a critical path.
We denote the closed subset of critical elements and its complement in $B$ respectively by
\begin{eqn}
	\Gamma \coleq \set{ \bm{\gamma} = (\gamma^+, \gamma^-) \in B ~\Big|~ \rm{t} (\gamma^+) = 0 \qtext{or} \rm{t} (\gamma^-) = 0 }
\qqtext{and}
	B^\ast \coleq B \smallsetminus \Gamma
\fullstop
\end{eqn}
\end{definition}
Clearly, $\Gamma$ is the union of two closed subsets corresponding to whether it is the first or the second path that is critical:
\begin{eqn}
	\Gamma = \Gamma^+ \cup \Gamma^-
\qtext{where}
	\Gamma^\pm \coleq \set{ \bm{\gamma} = (\gamma^+, \gamma^-) \in B ~\Big|~ \rm{t} (\gamma^\pm) = 0 }
\fullstop
\end{eqn}

\begin{lemma}
\label{250217122922}
The subset of critical elements $\Gamma \subset B$ is a closed algebraic subset which has two irreducible components $\Gamma^+, \Gamma^-$ whose intersection is the singular locus of $B$:
\begin{eqn}
	\Gamma^+ \cap \Gamma^- = \set{ \bm{1_0} } = B \smallsetminus B^\textup{sm}
\fullstop
\end{eqn}
Furthermore, the involution $\sigma : B \to B$ exchanges these irreducible components:
\begin{eqn}
	\sigma (\Gamma^\pm) = \Gamma^\mp
\fullstop
\end{eqn}
\end{lemma}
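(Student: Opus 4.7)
The plan is to prove all four assertions by direct computation in the explicit algebraic trivialisation \eqref{250217180306} of $B$ as the quintic surface $(x+z)^5 + (y+z)^5 = 2z^5$ in $\CC^3$. The entire proof reduces to substituting defining equations and checking them coordinate-wise, so the main pedagogical content is recognising the right coordinate description of $\Gamma^\pm$ rather than surmounting any genuine difficulty.

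First, I will translate the definition of $\Gamma^\pm$ into coordinates. Under the trivialisation $\Pi_1(\CC_z) \cong \CC_u \times \CC_z$, a path with representative $(u, z_0)$ has target $z_0 + u$; hence for $\bm{\gamma} \in B$ with coordinates $(x,y,z)$ we have $\rm{t}(\gamma^+) = z + x$ and $\rm{t}(\gamma^-) = z + y$. Therefore
\begin{eqn}
    \Gamma^+ = B \cap \set{x + z = 0}
\qtext{and}
    \Gamma^- = B \cap \set{y + z = 0}
\fullstop
\end{eqn}
Each is cut out of the algebraic surface $B$ by a single polynomial equation, so both are closed algebraic subsets, and therefore so is their union $\Gamma = \Gamma^+ \cup \Gamma^-$.

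Next, I will compute the intersection directly. If $(x,y,z) \in \Gamma^+ \cap \Gamma^-$, then $x = y = -z$, and substituting into the defining equation of $B$ yields $0 + 0 = 2z^5$, which forces $z = 0$ and hence $(x,y,z) = (0,0,0)$. This is exactly $\bm{1_0}$, which by \autoref{250217112814} is the unique singular point of $B$; thus $\Gamma^+ \cap \Gamma^- = \set{\bm{1_0}} = B \smallsetminus B^\textup{sm}$ as claimed. For the final assertion, the involution $\sigma : B \to B$ is given in coordinates by $\sigma(x,y,z) = (-y, -x, -z)$. If $(x,y,z) \in \Gamma^+$, so $x + z = 0$, then the second and third coordinates of $\sigma(x,y,z)$ satisfy $(-x) + (-z) = -(x+z) = 0$, placing $\sigma(x,y,z)$ in $\Gamma^-$. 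By the symmetric argument $\sigma(\Gamma^-) \subseteq \Gamma^+$, and since $\sigma$ is an involution both inclusions are equalities.

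The only subtle point worth acknowledging is the phrase ``irreducible components''. A direct analysis shows that substituting $x = -z$ into the defining equation of $B$ gives $(y+z)^5 = 2z^5$, so $\Gamma^+$ is really a union of five lines through the origin (one for each fifth root $\omega$ of $2$), and similarly for $\Gamma^-$; in the strict sense of algebraic geometry, each $\Gamma^\pm$ is a union of five irreducible lines meeting at $\bm{1_0}$. The decomposition $\Gamma = \Gamma^+ \cup \Gamma^-$ should therefore be read as the natural decomposition distinguished by which of the two paths is critical, rather than as a decomposition into strictly irreducible pieces---an observation which will also match the count of ten Borel singularities per fibre appearing later in \autoref{250210171316}(iii).
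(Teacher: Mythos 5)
Your proof is correct and takes essentially the same route as the paper, which simply exhibits $\Gamma^\pm$ in the trivialisation \eqref{250217180306} (there $\Gamma^+$ is written as the locus $(y+z)^5 = 2z^5$, which on $B$ is equivalent to your $x+z=0$) and reads off the three assertions. Your closing caveat about ``irreducible components'' is also well taken: each $\Gamma^\pm$ is indeed a union of five lines through the origin, consistent with \autoref{250315185310}, so the paper's wording should be understood as the decomposition according to which path is critical rather than a strict irreducible decomposition.
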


\begin{proof}
These properties become evident in the trivialisation \eqref{250217180306}.
Indeed, the two irreducible components of the critical locus are given by
\begin{eqns}
	\Gamma^+ &\cong \set{ (x,y,z) \in \CC^3 ~\big|~ (y+z)^5 = 2z^5 }
\\
	\Gamma^- &\cong \set{ (x,y,z) \in \CC^3 ~\big|~ (x+z)^5 = 2z^5 }
\fullstop
\end{eqns}
The real slices of these subsets are depicted in \autoref{250222132221}.
\end{proof}

The source-fibrewise universal cover $\nu : \Pi_1 (\CC^\ast_z) \to \Pi_1 (\CC_z)$ induces a holomorphic map $\nu : \tilde{B} \to B$ which is again neither injective nor surjective, but if we restrict its codomain away from the locus of critical elements then we get a holomorphic surjective submersion:
\begin{eqn}
	\nu : \tilde{B} \to B^\ast
\fullstop
\end{eqn}

\subsection{The Central Charge}

The spaces $B$ and $\tilde{B}$ as well as $M$ and $\tilde{M}$ inherit several canonical maps which are important ingredients in our constructions.

\paragraph{}\removespace
First, as fibre products, the unfolded Borel space $B$ and the unfolded Borel covering space $\tilde{B}$ fit into the following commutative diagrams:
\begin{eqntag}
\label{250217133208}
\begin{tikzcd}
		B
			\ar[r, "\pr_-"]
			\ar[d, "\pr_+"']
	&	\Pi_1 (\CC_z)
			\ar[d, "\varrho_-"]
\\		\Pi_1 (\CC_z)
			\ar[r, "\varrho_+"']
	&	\CC_z \times \CC_\xi
\end{tikzcd}
\qqtext{and}
\begin{tikzcd}
		\tilde{B}
			\ar[r, "\pr_-"]
			\ar[d, "\pr_+"']
	&	\Pi_1 (\CC^\ast_z)
			\ar[d, "\varrho_-"]
\\		\Pi_1 (\CC^\ast_z)
			\ar[r, "\varrho_+"']
	&	\CC^\ast_z \times \CC_\xi
\fullstop
\end{tikzcd}
\end{eqntag}
Here, the maps $\pr_\pm$ are the projections on the first and the second factor of $\Pi_1 (X) \times \Pi_1 (X)$ for $X = \CC_z$ or $X = \CC^\ast_z$.
By the commutativity of the diagrams, the two equal ways of composing the arrows shows that $B$ and $\tilde{B}$ come equipped with their own \dfn{anchor maps}:
\begin{eqn}
	\varrho \coleq \varrho_\pm \circ \pr_\pm : B \to \CC_z \times \CC_\xi
\qqtext{and}
	\varrho \coleq \varrho_\pm \circ \pr_\pm : \tilde{B} \to \CC^\ast_z \times \CC_\xi
\fullstop
\end{eqn}
By severe abuse of notation, we have denoted them by the same letter which also coincides with the notation for the anchor map encountered in \autoref{250315133506}.
The distinction will also be clear from the context or clarified in words when necessary.
Note that these anchor maps are holomorphic surjections, and $\varrho : B \to \CC_z \times \CC_\xi$ is in fact an algebraic map.

\paragraph{}\removespace
Taking the components of each anchor map $\varrho_\pm = (\rm{s}, \pm \Z)$, we find that $B$ and $\tilde{B}$ also have their own \dfn{source maps} and \dfn{central charges}:
\begin{eqn}
	\rm{s} : B \to \CC_z
\qtext{and}
	\rm{s} : \tilde{B} \to \CC_z
\qqtext{as well as}
	\Z : B \to \CC_z
\qtext{and}
	\Z : \tilde{B} \to \CC_z
\fullstop{,}
\end{eqn}
caveated with the same comment regarding the abuse of notation, defined by
\begin{eqn}
	\rm{s} \coleq \rm{s} \circ \pr_\pm
\qtext{and}
	\Z \coleq \pm \Z \circ \pr_\pm
\qqtext{so that}
	\varrho = (\rm{s}, \Z)
\fullstop
\end{eqn}
Explicitly, the source and the central charge of an element $\bm{\gamma} = (\gamma^+, \gamma^-)$ in $B$ (or $\tilde{B}$) is
\begin{eqn}
	\rm{s} (\bm{\gamma}) = \rm{s} (\gamma^+) = \rm{s} (\gamma^-)
\qtext{and}
	\Z (\bm{\gamma}) = + \Z (\gamma^+) = - \Z (\gamma^-)
\fullstop
\end{eqn}
The fact these equalities hold is the defining feature of the geometric spaces $B$ and $\tilde{B}$.

\paragraph{}\removespace
On the other hand, the fundamental groupoid's target map $\rm{t} : \Pi_1 (X) \to X$ does not fit into the above commutative diagrams, so $\rm{t} \circ \pr_+ \neq \rm{t} \circ \pr_-$.
It follows that $B$ and $\tilde{B}$ are each equipped with two distinct \dfn{target maps} to the $z$-plane, as depicted in \autoref{250223164824}:
\begin{eqn}
	\rm{t}_\pm \coleq \rm{t} \circ \pr_\pm : B \to \CC_z
\qtext{and}
	\rm{t}_\pm \coleq \rm{t} \circ \pr_\pm : \tilde{B} \to \CC^\ast_z
\fullstop
\end{eqn}
Let us also note that there is an obvious closed embedding of the (punctured) complex $z$-plane into the unfolded Borel (covering) space (see \autoref{250222132221}):
\begin{eqn}
	\CC_z \inj B \qtext{and} \CC^\ast_z \inj \tilde{B} 
		\qtext{given by} z \mapsto \bm{1_z} = (1_z, 1_z)
\fullstop
\end{eqn}

\paragraph{}\removespace
Observe next that the involution $\sigma$ on $B$ (or $\tilde{B}$) preserves the central charge $\Z$ on $B$ (or $\tilde{B}$) in the sense that $\sigma^\ast \Z = \Z$.
Indeed: $\Z \big(\sigma (\bm{\gamma}) \big) 	= \pm \Z (\sigma \circ \gamma^\mp) = \pm \Z (\gamma^\mp) = \Z (\bm{\gamma})$.
Therefore, the central charge descends to the quotient by the involution $\sigma$, defining the \dfn{central charge} maps on the Borel space $M$ and the Borel covering space $\tilde{M}$:
\begin{eqn}
	\Z : M \to \CC_\xi
\qqtext{and}
	\Z : \tilde{M} \to \CC_\xi
\fullstop
\end{eqn}
Note that $\Z : \tilde{M} \to \CC_\xi$ is surjective and holomorphic, whilst $\Z : M \to \CC_\xi$ is surjective and algebraic.

On the other hand, the source maps $\rm{s} : B \to \CC_z$ and $\rm{s} : \tilde{B} \to \CC^\ast_z$ do not descend to the quotients by the involution because $\sigma$ comes from the negation map $z \mapsto -z$.
However, if we also take the quotient of the $z$-plane by the negation map, which is naturally the $\tau$-plane (cf. \eqref{250206214139}), then these source maps descend to surjective maps to the $\tau$-plane:
\begin{eqn}
	\rm{s} : M \to \CC_\tau
\qqtext{and}
	\rm{s} : \tilde{M} \to \CC^\ast_\tau
\fullstop
\end{eqn}
We likewise call them the \dfn{source maps} on the Borel space and the Borel covering space.

\begin{figure}[p]
\begin{adjustwidth}{-2cm}{-1.5cm}
\centering
\begin{subfigure}{0.5\textwidth}
\includegraphics[width=\textwidth]{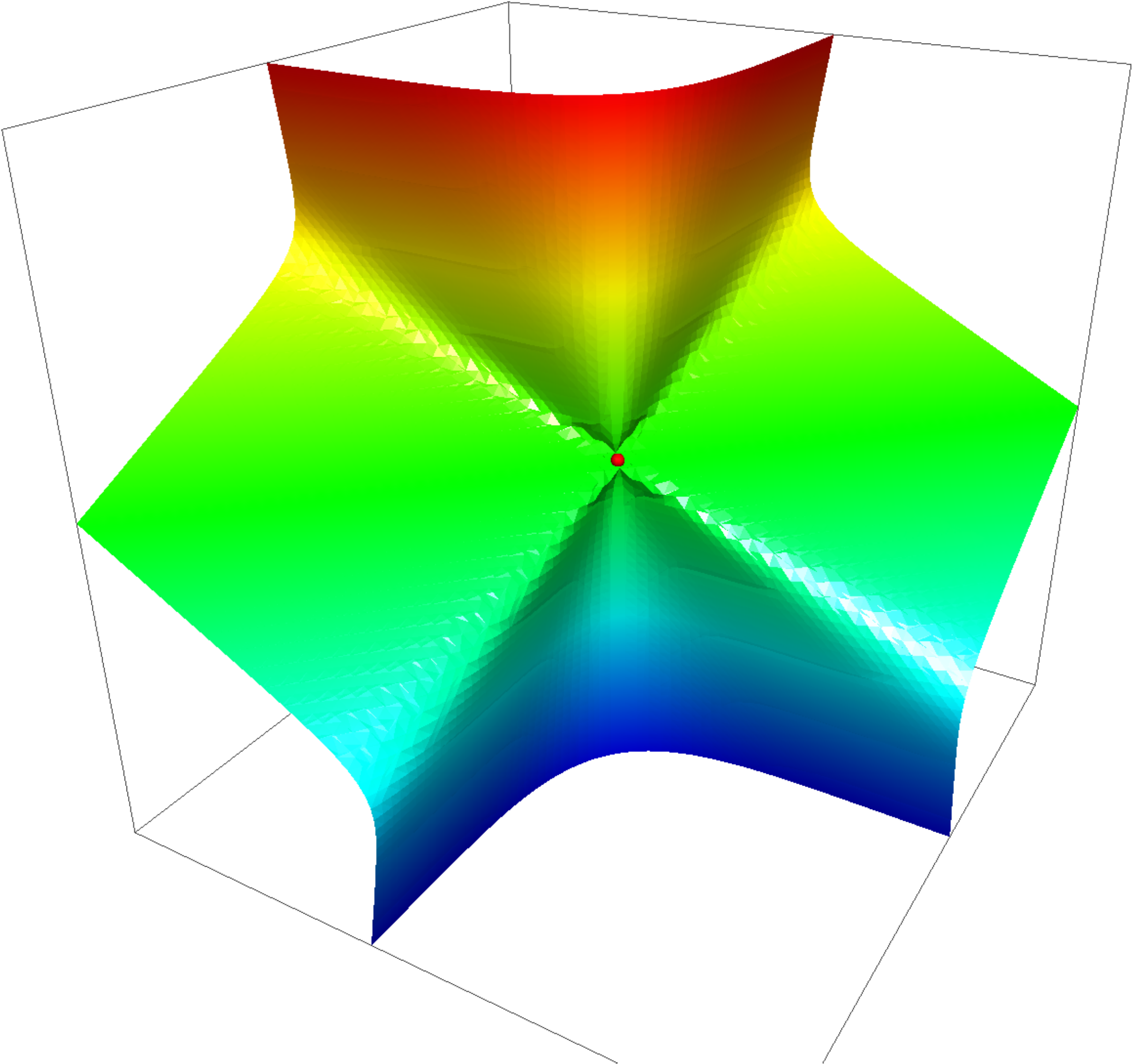}
\caption{}
\label{250222122854}
\end{subfigure}
\begin{subfigure}{0.5\textwidth}
\includegraphics[width=\textwidth]{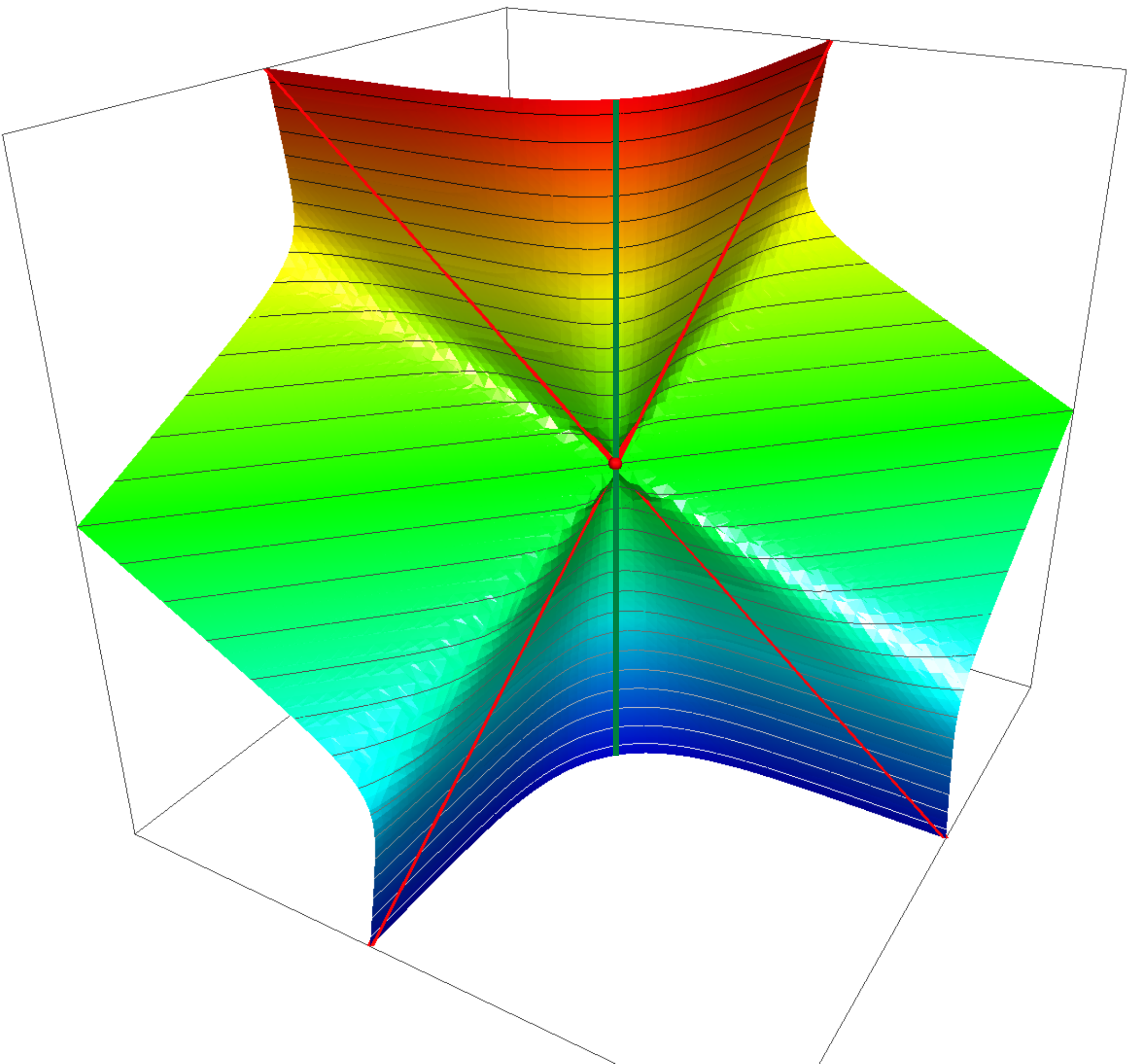}
\caption{}
\label{250222132221}
\end{subfigure}
\end{adjustwidth}
\caption{A real slice of the unfolded Borel space $B$ presented as the quintic $(x+z)^5 + (y+z)^5 = 2z^5$ in $\RR^3$ in the trivialisation \eqref{250217180306}.
For clarity, Figure (a) only indicates the singular point at the origin by a red dot, and Figure (b) includes additional information.
The two straight red lines belong to the surface: they are the two components of the locus of critical elements, $\Gamma = \Gamma^+ \cup \Gamma^-$, given by intersecting the quintic surface with the planes $x+z = 0$ and $y + z = 0$.
The vertical green line is the $z$-axis which also belongs to the quintic surface: it is the copy of (the real axis in the) the $z$-plane embedded into $B$ as the identity bisection $1_{\CC_z} \inj B$.
The plot also shows $z$-level sets: these are (the real slices of) the Borel surfaces $B_z$; a more detailed look at the $z=1$ level set is presented in \autoref{250222133101}.
}
\label{250222135810}

\bigskip\bigskip\bigskip

\begin{adjustwidth}{-2cm}{-1.5cm}
\centering
\begin{subfigure}{0.5\textwidth}
\includegraphics[width=\textwidth]{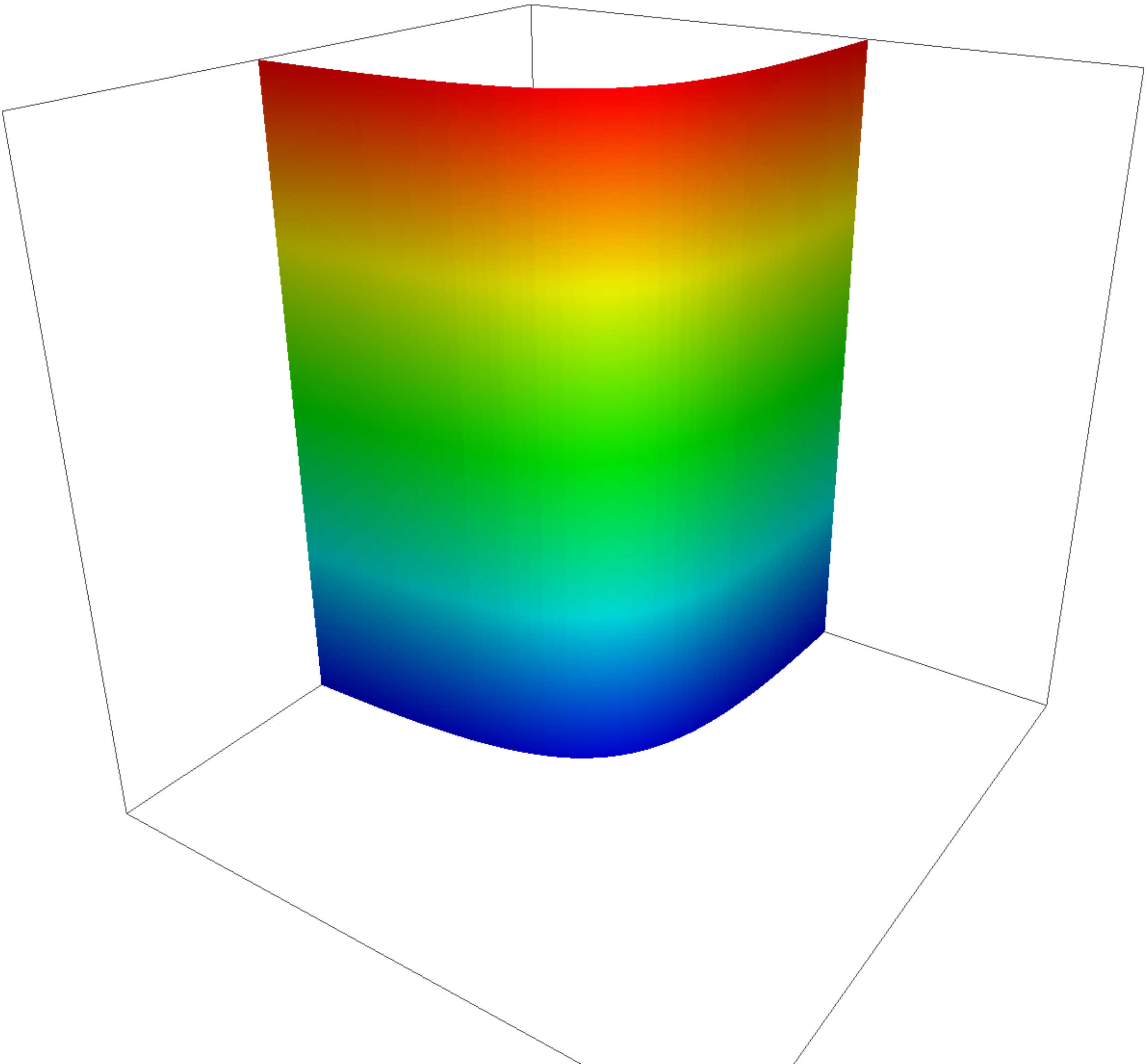}
\caption{}
\label{250223100322}
\end{subfigure}
\begin{subfigure}{0.5\textwidth}
\includegraphics[width=\textwidth]{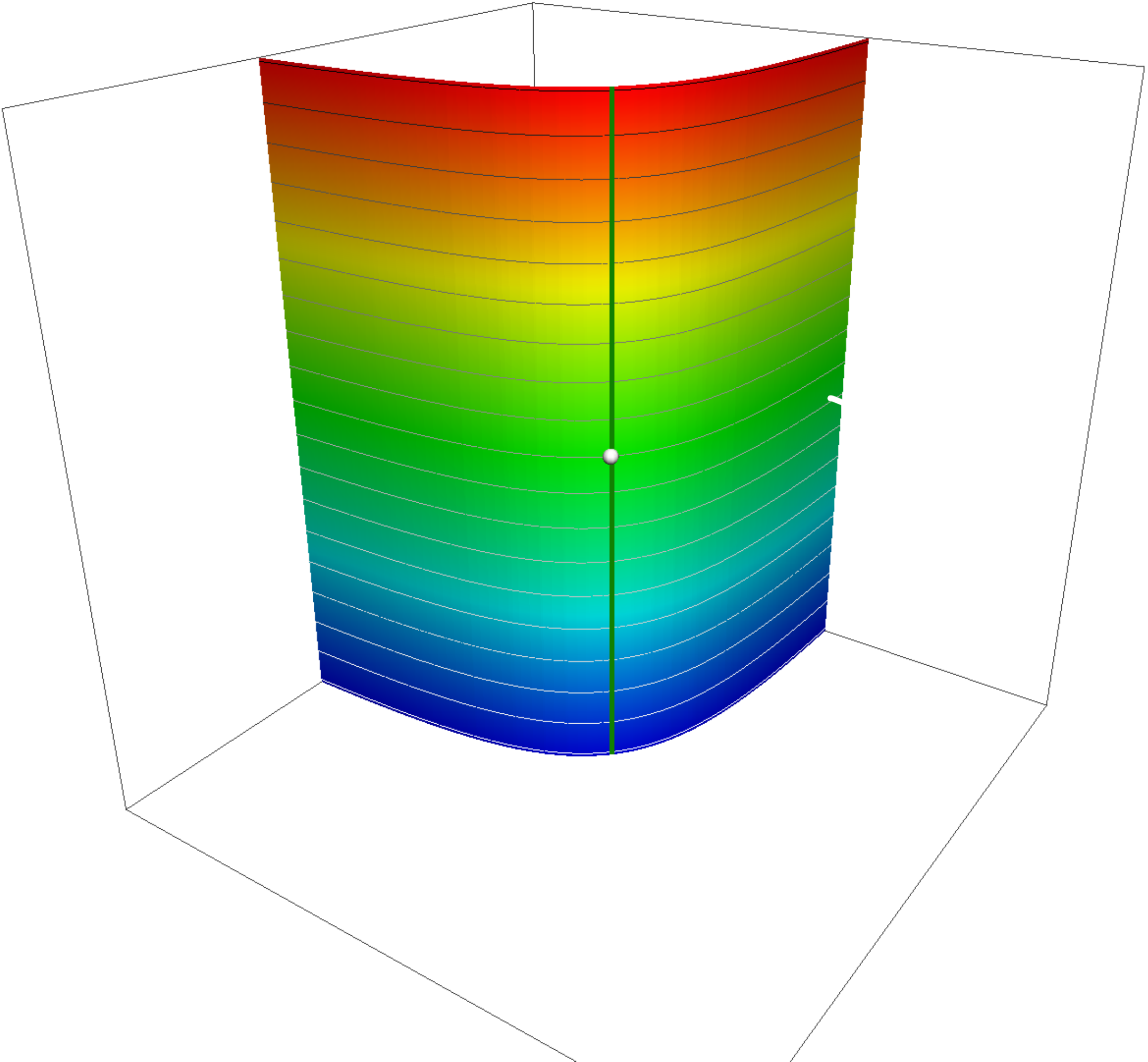}
\caption{}
\label{250223102144}
\end{subfigure}
\end{adjustwidth}
\caption{A real slice of the unfolded Borel covering space $\tilde{B}$ presented as the analytic surface $e^{5x} + e^{5y} = 2$ in $\RR^3$ in the trivialisation \eqref{250223102438}.
In Figure (b), we have plotted the $z$-level sets: these are (the real slices of) the Borel covering surfaces $\tilde{B}_z$.
The vertical green line is the punctured $z$-axis which belongs to the surface: it is the copy of (the real axis in the) the punctured $z$-plane embedded into $\tilde{B}$ as the identity bisection $1_{\CC^\ast_z} \inj B^\ast$.
}
\label{250223102352}
\end{figure}

\begin{figure}[t]
\centering
\includegraphics[width=0.4\linewidth]{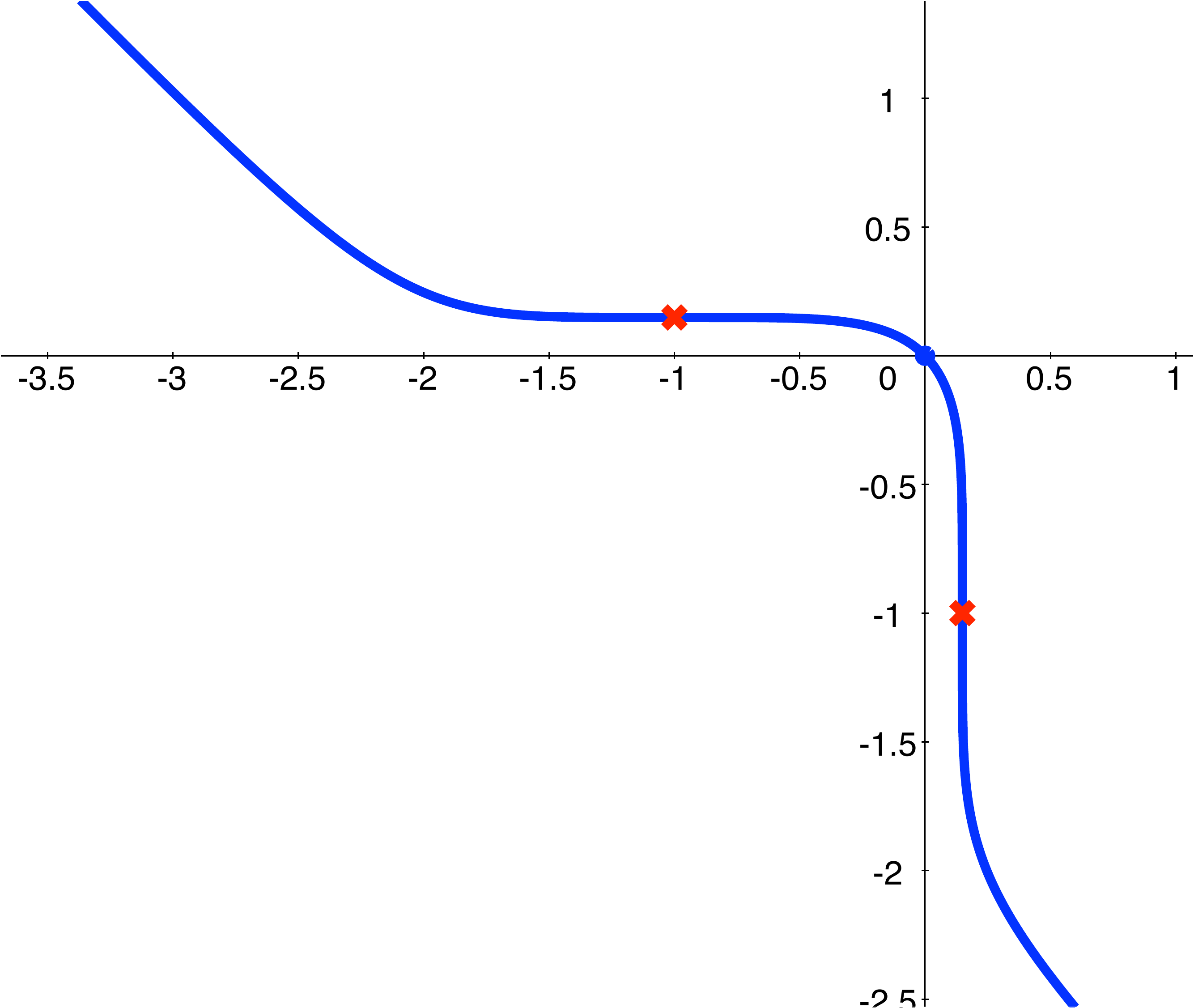}\quad
\includegraphics[width=0.4\linewidth]{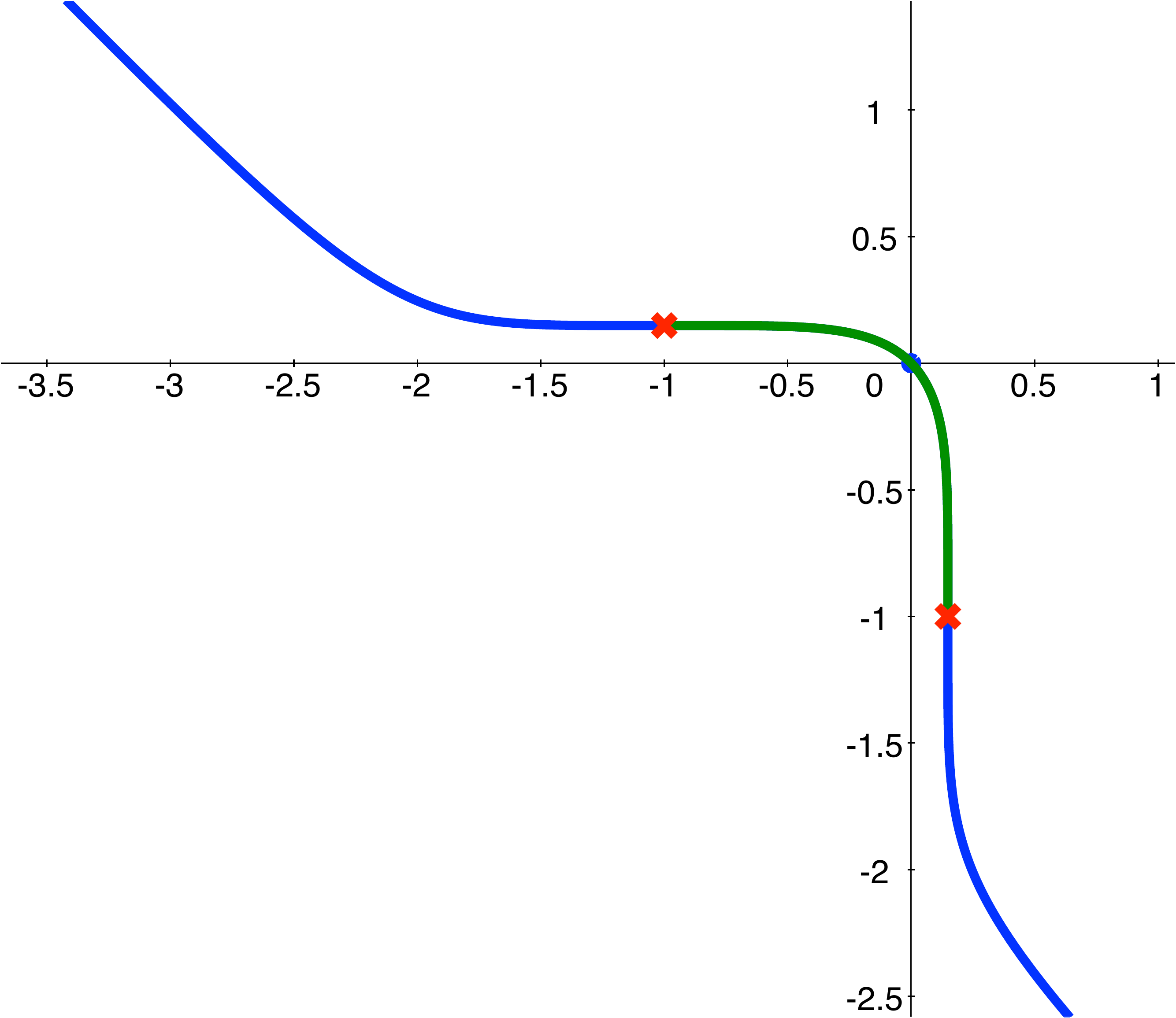}
\caption{A real slice of the Borel surface $B_z$ for $z = 1$ presented as the quintic $(x+1)^5 + (y+1)^5 = 2$ in $\RR^2$ in the trivialisation \eqref{250217180306}, pictured in blue.
The origin $(x,y) = 0$ is the distinguished `origin' point on $B_z$ which represents the constant path pair $\bm{1_z} = (1_z,1_z)$ at $z = 1$.
The two straight red lines are shadows (i.e., projections onto the $z=1$ plane) of the divisor $\Gamma$.
The divisor $\Gamma$ intersects $B_z$ in ten points, two of which are captured by this real slice, depicted with red crosses, and located at $(x,y) = (-1, \sqrt[5]{2} - 1)$ and $(x,y) = (\sqrt[5]{2} - 1, -1)$.
In the image on the right, the two critical geodesics emanating from the `origin' and captured by this real slices are shown in green.}
\label{250222133101}
\end{figure}

\subsection{Borel Surfaces}

The source maps on each of the four spaces introduced induce holomorphic fibrations which are a key part of the resurgent structure.
We give these fibres a special name and explore their properties in this subsection.

\paragraph{Unfolded Borel surfaces.}
The source map $\rm{s} : B \to \CC_z$ on the unfolded Borel space is an algebraic surjective submersion.
This means that the algebraic surface $B$ forms an algebraic fibration over the $z$-plane; i.e., fibres of $\rm{s}$ in $B$ are algebraic curves.

\begin{definition}[unfolded Borel surface]
\label{250217123915}
For any $z \in \CC_z$, we define the \dfn{unfolded Borel surface} $B_z$ to be the source fibre of $z$ under $\rm{s} : B \to \CC_z$:
\begin{eqntag}
	B_z \coleq \rm{s}^{-1} (z) \subset B
\fullstop
\end{eqntag}
\end{definition}

As an elementary consequence of \autoref{250217112814}, we obtain the following explicit description of each unfolded Borel surface.

\begin{lemma}
\label{250217175342}
For every nonzero $z$, the unfolded Borel surface $B_z$ is a smooth algebraic curve isomorphic to a smooth quintic plane curve of genus $6$:
\begin{eqntag}
\label{250217200557}
	B_z \cong \set{ (x,y) \in \CC^2 ~\Big|~ (x + z)^5 + (y + z)^5 = 2z^5 }
\fullstop
\end{eqntag}
In contrast, the unfolded Borel surface for $z = 0$ is a nodal curve isomorphic to the Fermat quintic curve in $\CC^2$; i.e., $B_{z = 0} \cong \set{ (x,y) \in \CC^2 ~\big|~ x^5 + y^5 = 0 }$.
\end{lemma}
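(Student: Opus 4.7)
The plan is to leverage the explicit algebraic description of $B$ as a quintic hypersurface in $\CC^3$ already established in \eqref{250217180306}, together with the explicit form of the source map. Since the source map $\rm{s} : B \to \CC_z$ in the trivialisation is simply the projection $(x,y,z) \mapsto z$, fixing $z$ immediately cuts out the source fibre $B_z$ as the plane curve in $\CC^2$ defined by the single equation
\begin{equation*}
	F_z (x,y) \coleq (x+z)^5 + (y+z)^5 - 2 z^5 = 0
\fullstop
\end{equation*}
This already produces the asserted isomorphism \eqref{250217200557} for all $z$, and specialising to $z = 0$ yields $x^5 + y^5 = 0$, which is the Fermat quintic plane curve (a union of five concurrent lines through the origin, with a quintuple point singularity there).

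For the smoothness claim when $z \neq 0$, I would verify the Jacobian criterion. The partial derivatives are
\begin{equation*}
	\del_x F_z = 5(x+z)^4
\qqtext{and}
	\del_y F_z = 5(y+z)^4
\fullstop
\end{equation*}
They vanish simultaneously only at the unique point $(x,y) = (-z,-z)$, but there $F_z (-z,-z) = -2z^5$, which is nonzero precisely because $z \neq 0$. Hence no point of $B_z$ is singular, so $B_z$ is a smooth plane quintic. The genus-degree formula for smooth plane curves, $g = \binom{d-1}{2}$ with $d = 5$, then gives $g = 6$.

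For $z = 0$, the singular behaviour is already evident: both partial derivatives $\del_x F_0 = 5x^4$ and $\del_y F_0 = 5y^4$ vanish at the origin, and $F_0 (0,0) = 0$, so the origin is a singular point of $B_0$. Factoring $x^5 + y^5 = \prod_{k=0}^{4} (x - \zeta_k y)$ over the five fifth roots $\zeta_k$ of $-1$ exhibits $B_0$ explicitly as the union of five distinct lines through the origin, confirming the claimed description.

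No serious obstacle is anticipated here, as the entire statement reduces to an elementary application of the Jacobian criterion to the explicit algebraic model of $B$; the only mildly subtle point is the terminology ``nodal'' for the singularity at the origin of $B_0$, which is in fact an ordinary quintuple point (five lines meeting transversely), and this is best stated directly rather than via classical curve-singularity vocabulary.
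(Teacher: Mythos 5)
Your proposal is correct and takes essentially the same route as the paper, which records this lemma as an elementary consequence of the explicit quintic model \eqref{250217180306}; your Jacobian computation and the factorisation of $x^5+y^5$ supply exactly the details the paper leaves implicit. The only point worth adding is that the genus--degree formula $g=\binom{d-1}{2}$ requires smoothness of the \emph{projective} closure, which does hold here: at the five points at infinity of the homogenised equation one has $X^5+Y^5=0$ with $(X,Y)\neq(0,0)$, so the partial derivative $5(X+zW)^4 = 5X^4$ is nonzero there.
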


A real slice of the unfolded Borel surface $B_z$ for $z = 1$ is pictured in \autoref{250222133101} with respect to the trivialisation \eqref{250217200557}.

\paragraph{Critical elements.}
For any $z \in \CC$, we also denote the subset of critical elements and their complement in $B_z$ respectively by
\begin{eqn}
	\Gamma_z \coleq \Gamma \cap B_z
\qqtext{and}
	B^\ast_z \coleq B_z \smallsetminus \Gamma_z
\fullstop
\end{eqn}

\begin{lemma}
\label{250315185310}
For every nonzero $z$, the intersection $\Gamma^+_z \cap \Gamma^-_z$ is empty and the subset $\Gamma_z = \Gamma^+_z \cup \Gamma^-_z \subset B_z$ consists of exactly ten points.
In the trivialisation \eqref{250217200557},
\begin{eqns}
	\Gamma^+_z &\cong \set{ (-z,y_k,z) ~\big|~ k = 1, \ldots, 5 }
\qtext{with}
	y_k = (\varepsilon^k - 1) z
\fullstop{;}
\\
	\Gamma^-_z &\cong \set{ (x_k,-z,z) ~\big|~ k = 1, \ldots, 5 }
\qtext{with}
	x_k = (\varepsilon^k - 1) z
\fullstop{,}
\end{eqns}
where $\varepsilon$ is a primitive root of $\varepsilon^5 = 2$.
\end{lemma}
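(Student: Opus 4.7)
The plan is to work entirely in the explicit trivialisation of the unfolded Borel surface $B_z$ provided by \autoref{250217175342}. Since $B_z$ is cut out inside $\CC^2$ by the single equation $(x+z)^5 + (y+z)^5 = 2z^5$, and since the two irreducible components $\Gamma^\pm$ of the critical locus $\Gamma$ are described in the trivialisation \eqref{250217180306} as $\Gamma^+ = \{(x,y,z) \in B : x + z = 0\}$ and $\Gamma^- = \{(x,y,z) \in B : y + z = 0\}$ (as was observed in the proof of \autoref{250217122922}), the problem reduces to a direct elimination.

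First, I would fix any nonzero $z$ and compute $\Gamma^+_z$. Setting $x = -z$ in the defining equation of $B_z$ makes the term $(x+z)^5$ vanish, leaving the single variable equation $(y+z)^5 = 2z^5$. Writing $\varepsilon$ for a primitive root of $\varepsilon^5 = 2$, this has exactly five distinct solutions $y + z = \varepsilon^k z$ for $k=1,\dots,5$, which gives the five claimed points $(-z, (\varepsilon^k - 1)z)$. The same computation with the roles of $x$ and $y$ exchanged produces the five claimed points of $\Gamma^-_z$. The key observation ensuring that the five solutions in each case are genuinely distinct is that $z \neq 0$ and that $\varepsilon^k$ runs over the five distinct fifth roots of $2$; in particular none of them equals $1$, so the five values of $\varepsilon^k - 1$ are distinct and nonzero.

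Second, I would verify the disjointness $\Gamma^+_z \cap \Gamma^-_z = \emptyset$. A point in the intersection would have to satisfy both $x+z=0$ and $y+z=0$, which substituted into the defining equation of $B_z$ gives $0 = 2z^5$, contradicting $z \neq 0$. Hence the union $\Gamma_z = \Gamma^+_z \sqcup \Gamma^-_z$ is disjoint and consists of exactly $5 + 5 = 10$ points.

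There is no real obstacle here; this is essentially a finite check reducing a quintic equation in one variable after substituting the linear constraint coming from criticality. The only point requiring mild care is to confirm that the five roots of $w^5 = 2z^5$ are distinct and avoid $w = z$ (so that $\varepsilon^k \neq 1$), both of which hold simply because $z \neq 0$ and $2 \neq 1$.
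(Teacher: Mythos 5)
Your proposal is correct and follows essentially the same route as the paper's proof: work in the trivialisation \eqref{250217200557}, substitute the linear criticality conditions $x+z=0$ (resp. $y+z=0$) to reduce to the quintic $w^5 = 2z^5$ with its five distinct roots, and note that both conditions together would force $2z^5=0$, contradicting $z\neq 0$. The extra remark that $\varepsilon^k - 1 \neq 0$ is harmless but not needed for the count.
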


\begin{proof}
Take a critical element $\bm{\gamma} = (\gamma^+, \gamma^-) \in \Gamma_z$, and work in the trivialisation of $B_z$ as the quintic curve \eqref{250217200557}.
If $\gamma^+$ is critical then $x + z = 0$, and if $\gamma^-$ is critical then $y + z = 0$.
Since $z$ is assumed to be nonzero, $(x + z)$ and $(y+z)$ cannot both be zero, so $\Gamma^+_z \cap \Gamma^-_z = \emptyset$.
Then it follows that $\Gamma^+_z$ has exactly five points because the condition $x + z = 0$ implies $(y + z)^5 = 2z^5$; i.e., $y = (\varepsilon^k - 1) z$ for $k = 1, \ldots, 5$ where $\varepsilon$ is a primitive root of $\varepsilon^5 = 2$.
Similarly, $\Gamma^-_z$ has exactly five points because $y + z = 0$ implies $(x + z)^5 = 2z^5$; i.e., $x = (\varepsilon^k - 1) z$ for $k = 1, \ldots, 5$.
\end{proof}

\paragraph{}\removespace
Next, we describe the restriction of the central charge to each unfolded Borel surface.

\begin{lemma}
\label{250217200331}
For any nonzero $z \in \CC_z$, the restriction of the central charge $\Z : B \to \CC_\xi$ to the unfolded Borel surface $B_{z}$ determines a fivefold ramified covering map
\begin{eqn}
	\Z_{z} \coleq \Z \big|_{B_{z}} : B_{z} \to \CC_\xi
\fullstop
\end{eqn}
Its ramification locus is the ten-point subset $\Gamma_z \subset B_z$.
Every ramification point has order $5$.
There are exactly two branch points located at $\xi_\pm \coleq \pm \tfrac{1}{30} z^5$.
The preimage of $\xi_\pm$ is the five-point subset $\Gamma^\pm_z \subset B_z$.
Thus, $(B_z, \Z_z)$ is an endless Riemann surface of algebraic type in the sense of \cite[Definition 1.6 and 1.7]{240622121512}.
\end{lemma}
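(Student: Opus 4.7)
The plan is to work entirely in the explicit trivialisation of $B_z$ as the smooth plane quintic $\{(x,y) \in \CC^2 : (x+z)^5 + (y+z)^5 = 2z^5\}$ from \autoref{250217175342}. Since $\Z = +\Z \circ \pr_+$ and $\gamma^+$ is parameterised by the displacement $x$, formula \eqref{250226060217} gives the central charge in coordinates as $\Z_z(x,y) = \tfrac{1}{30}(z^5 - (z+x)^5)$, which depends only on $x$. Setting $F(x,y) \coleq (x+z)^5 + (y+z)^5 - 2z^5$, smoothness of $B_z$ for $z \ne 0$ amounts to $\partial_x F$ and $\partial_y F$ never vanishing simultaneously along $B_z$.

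First I would localise the ramification. A tangent vector $(\alpha,\beta)$ to $B_z$ at $(x_0,y_0)$ satisfies $(x_0+z)^4\alpha + (y_0+z)^4\beta = 0$, while $d\Z_z(\alpha,\beta) = -\tfrac{1}{6}(x_0+z)^4\alpha$. Analysing both ways this can vanish shows $d\Z_z|_{B_z}$ vanishes exactly when $x_0+z = 0$ or $y_0+z = 0$ (they cannot vanish simultaneously on $B_z$ for $z\ne0$), i.e., at the ten points of $\Gamma_z = \Gamma_z^+ \cup \Gamma_z^-$ from \autoref{250315185310}. Direct substitution then gives $\Z_z \equiv \xi_+ \coleq z^5/30$ on $\Gamma_z^+$ and $\Z_z \equiv \xi_- \coleq -z^5/30$ on $\Gamma_z^-$, identifying the two branch values with $\xi_+ + \xi_- = 0$.

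Next I would compute the local ramification order at each such point. Near $p = (-z, y_0) \in \Gamma_z^+$ one has $\partial_y F(p) \ne 0$, so by the Implicit Function Theorem $y$ is a holomorphic function of $x$ on $B_z$ near $p$, making $t \coleq x+z$ a local uniformiser vanishing at $p$. In this uniformiser $\Z_z - \xi_+ = -t^5/30$, exhibiting ramification of order $5$. For $q = (x_0, -z) \in \Gamma_z^-$ one instead has $\partial_x F(q) \ne 0$, so $s \coleq y+z$ is a local uniformiser; using $(x+z)^5 = 2z^5 - (y+z)^5$ along $B_z$ yields $\Z_z - \xi_- = s^5/30$, again of ramification order $5$. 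Alternatively the $\Gamma_z^-$ case reduces to the $\Gamma_z^+$ case via the curve involution $\rho : (x,y) \mapsto (y,x)$, which preserves $B_z$ and satisfies $\Z_z \circ \rho = -\Z_z$, interchanging $\Gamma_z^\pm$ and $\xi_\pm$.

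Finally, $\Z_z$ is polynomial so extends to a morphism $\bar\Z_z : \bar B_z \to \PP^1_\xi$ of the smooth projective closure $\bar B_z \subset \PP^2$, automatically proper; and since by the first step the only finite critical values are $\xi_\pm$, the restriction $\Z_z : B_z \to \CC_\xi$ remains proper. Combined with the local normal forms just established and the fact that $d\Z_z$ surjects on tangent spaces elsewhere, we deduce that $\Z_z$ is a finite ramified covering with ramification locus $\Gamma_z$, branch values $\xi_\pm$, and ramification index $5$ at each ramification point; the sheet count is then read off by preimage counting at a generic $\xi$, consistent via Riemann--Hurwitz with genus $6$ of the smooth quintic. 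The statement that $(B_z, \Z_z)$ is an endless Riemann surface of algebraic type follows immediately from $B_z$ being a smooth algebraic curve and $\Z_z$ being algebraic and proper with finite branch locus. The main subtlety, needing the most care, will be the correct choice of local uniformisers at the ramification points: the ambient coordinate $x$ or $y$ naively applied yields an incorrect order count unless one first checks $\partial_x F$ versus $\partial_y F$ to decide which coordinate actually parameterises the curve smoothly; the symmetry $\rho$ streamlines but does not eliminate this verification.
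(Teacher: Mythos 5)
Your ramification analysis is correct and follows essentially the same route as the paper's proof: work in the trivialisation $(x+z)^5+(y+z)^5=2z^5$, observe that $x+z$ and $y+z$ cannot vanish simultaneously when $z\neq 0$, use the Implicit Function Theorem to take $x+z$ (resp.\ $y+z$) as a local uniformiser near $\Gamma_z^+$ (resp.\ $\Gamma_z^-$), and read off the local normal forms $-t^5/30$ and $s^5/30$ of order $5$, with branch values $\xi_\pm=\pm\tfrac{1}{30}z^5$. The paper's proof stops there; your additions (the tangent-vector computation locating the critical points, the involution $(x,y)\mapsto(y,x)$ intertwining $\Z_z$ with $-\Z_z$, and the properness argument via the projective closure) are correct and, if anything, more complete than what the paper records.

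The step you should not wave through is the sheet count, and here both your proposal and the statement itself run into trouble. Over a generic $\xi$ the fibre of $\Z_z$ is cut out by $(x+z)^5=z^5-30\xi$ together with the curve equation, which then forces $(y+z)^5=z^5+30\xi$: five choices of $x$ and, independently for each, five choices of $y$, hence $25$ points, not $5$. The same count follows from the data you yourself established: the fibre over $\xi_+$ is the five points of $\Gamma_z^+$, each of local multiplicity $5$, so the degree is $5\times 5=25$. Your appeal to Riemann--Hurwitz as a "consistency check'' would in fact detect this rather than confirm fivefoldness: for genus $6$ one needs $10=-2d+\sum_P(e_P-1)$, and the ten affine ramification points of index $5$ alone contribute $40$, which is impossible for $d=5$ but fits $d=25$ once the five points at infinity (each also of index $5$, contributing $20$) are included. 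Structurally this is no surprise: $B_z$ is the fibre product over $\CC_\xi$ of two fivefold covers (cf.\ \autoref{250214122849}), so its degree over $\CC_\xi$ is $25$. In short, "fivefold'' is irreconcilable with "ten ramification points of order $5$ distributed over two branch points''. The paper's own proof never verifies the degree, so this is a defect of the statement rather than something your method could repair; but as written your proposal asserts a preimage count and a Riemann--Hurwitz consistency that do not hold, and carrying out either computation honestly is precisely what exposes the discrepancy.
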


A cartoon of $B_z$ as a fivefold ramified cover of $\CC_\xi$ is presented in \autoref{250304114549}.

\begin{proof}
To see this, we work in the trivialisation of $B_z$ as the quintic curve \eqref{250217200557}.
In these coordinates, the map $\Z_z$ becomes explicit:
\begin{eqn}
	\Z_z : (x, y) \mapstoo \xi 
		= \tfrac{1}{30} \big( z^5 - (x + z)^5 \big)
		= - \tfrac{1}{30} \big( z^5 - (y + z)^5\big)
\fullstop
\end{eqn}
Since $z$ is assumed to be nonzero, $(x + z)$ and $(y+z)$ cannot both be zero.
Therefore, the open sets $\set{ x + z \neq 0 }$ and $\set{ y + z \neq 0}$ form an open cover of $B_z$.

First, on the open subset of $B_z$ where $y + z \neq 0$, it follows from the Implicit Function Theorem that every point has a neighbourhood in which we can take $x$ as a local coordinate.
The differential of $\Z_z$ in this coordinate is then simply $\d \Z_z = - \tfrac{1}{6} (x + z)^4 \d{x}$, so there is ramification point of order $5$ at every point $(x,y) \in B_z$ satisfying $x + z = 0$.
But this locus of points is exactly the subset $\Gamma^+_z$.

Similarly, on the open subset of $B_z$ where $x + z \neq 0$, we can take $y$ as a local coordinate in a sufficiently small neighbourhood of any point.
The differential of $\Z_z$ in this coordinate is $\d \Z_z = + \tfrac{1}{6} (y + z)^4 \d{y}$, so there is a ramification point of order $5$ at every point $(x,y) \in B_z$ satisfying $y + z = 0$.
Again, this subset is nothing but $\Gamma^-_z$.
\end{proof}

\paragraph{Unfolded Borel covering surfaces.}
Similarly, the source map $\rm{s} : \tilde{B} \to \CC^\ast_z$ on the unfolded Borel covering space is a holomorphic surjective submersion (which is not algebraic), so $\tilde{B}$ is a holomorphic fibration over the punctured $z$-plane; i.e., the source fibres of $\tilde{B}$ are Riemann surfaces.

\begin{definition}[unfolded Borel covering surface]
\label{250315190402}
For any nonzero $z \in \CC^\ast_z$, we define the \dfn{unfolded Borel covering surface} $\tilde{B}_z$ to be the source fibre of $z$ under $\rm{s} : \tilde{B} \to \CC^\ast_z$:
\begin{eqntag}
	\tilde{B}_z \coleq \rm{s}^{-1} (z) \subset \tilde{B}
\fullstop
\end{eqntag}
\end{definition}

\begin{lemma}
\label{250315184627}
For every nonzero $z$, the unfolded Borel covering surface $\tilde{B}_z$ is a simply connected Riemann surface isomorphic to the following analytic plane curve:
\begin{eqntag}
\label{250315184705}
	\tilde{B}_z \cong \set{ (x,y) \in \CC^2 ~\Big|~ e^{5x} + e^{5y} = 2 }
\fullstop
\end{eqntag}
Furthermore, the restriction of the holomorphic surjective submersion $\nu : \tilde{B} \to B^\ast$ restricts to each fibre $\tilde{B}_z$ is the universal covering map of the source fibre $B_z$ punctured along the locus of critical elements $\Gamma_z$ with basepoint $\bm{1_z} = (1_z,1_z) \in B_z$: 
\begin{eqn}
	\nu : \tilde{B}_z = \widetilde{ B_z^\ast } \to B^\ast_z = B_z \smallsetminus \Gamma_z
\fullstop
\end{eqn}
\end{lemma}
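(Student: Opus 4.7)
The isomorphism $\tilde{B}_z \cong \{(x,y) \in \CC^2 : e^{5x}+e^{5y}=2\}$ is immediate from the explicit description \eqref{250223102438} of the unfolded Borel covering space $\tilde{B}$: restricting the trivialisation to the source fibre over a fixed nonzero $z$ yields precisely this analytic plane curve.

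For the covering assertion, my plan is first to verify that $\nu|_{\tilde{B}_z} : \tilde{B}_z \to B_z^\ast$ is a holomorphic covering map, and then to show that $\tilde{B}_z$ is simply connected, so that it is the universal cover by uniqueness. By \autoref{250212200132}, the map $\nu : \Pi_1(\CC^\ast_z) \to \Pi_1(\CC_z)|_{\CC^\ast_z}$ is the source-fibrewise universal covering, and in particular a local biholomorphism. Since $\tilde{B}$ and $B^\ast$ are built as fibre products along the anchor maps $\varrho_\pm = (\rm{s}, \pm \Z)$, the induced map $\nu : \tilde{B} \to B^\ast$ inherits the local biholomorphism property and restricts fibrewise to $\nu|_{\tilde{B}_z}$. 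Surjectivity is clear by lifting: any $\bm{\gamma} \in B_z^\ast$ has non-critical components, each of which lifts via $\nu$ to an element of $\tilde{\rm{s}}^\inv(z)$, and the pair of lifts lies in $\tilde{B}_z$ since $\nu$ preserves central charges. In the coordinates of Part~1, the map reads explicitly $(x,y) \mapsto ((e^x-1)z, (e^y-1)z)$ and admits the deck action of $\ZZ \times \ZZ$ by $(x,y) \mapsto (x + 2\pi i m, y + 2\pi i n)$ that preserves both the defining equation and the map, making $\nu|_{\tilde{B}_z}$ a covering map.

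Simple connectedness of $\tilde{B}_z$ will come from its fibre product presentation $\tilde{B}_z = \tilde{\rm{s}}^\inv(z) \times_{(\Z,-\Z)} \tilde{\rm{s}}^\inv(z)$, in which each factor is the simply connected universal cover of $\CC^\ast_z$ (see \autoref{250214122849}), and the two factors are glued along their central charges—one realised as the universal cover of $\CC_\xi \smallsetminus \{\xi_+\}$, the other of $\CC_\xi \smallsetminus \{\xi_-\}$. Once simple connectedness is secured, the covering map above is automatically the universal cover $\widetilde{B_z^\ast} \to B_z^\ast$.

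The main obstacle is the last step. A naive analysis via the central charge map $\Z_z : \tilde{B}_z \to \CC_\xi$ presents $\tilde{B}_z$ as a $\ZZ \times \ZZ$-cover of the twice-punctured Borel plane $\CC_\xi \smallsetminus \{\xi_\pm\}$, so the simple connectedness of $\tilde{B}_z$ cannot be read off directly from either factor alone and must instead be extracted from the fibre product structure. The key input is the distinctness of the two branch points $\xi_\pm$ as seen by the two factors together with the explicit uniformisation afforded by the exponential description $(x,y) \mapsto (e^{5x}, e^{5y})$, dovetailing with the endless Riemann surface structure announced in item~(5) of \autoref{250210171316}.
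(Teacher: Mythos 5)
Your trivialisation step and your identification of $\nu|_{\tilde{B}_z}$ as a Galois covering with deck group $\ZZ \times \ZZ$ are correct (and this is essentially all the paper itself records, since it states the lemma without proof). The genuine gap is exactly the step you defer as the ``main obstacle'': simple connectedness of $\tilde{B}_z$ is never argued, and it cannot be argued, because it fails for the model you have just constructed. Writing $u = e^{5x}$, the curve $\set{(x,y) : e^{5x}+e^{5y}=2}$ maps to $\CC \smallsetminus \set{0,2}$ (equivalently, via $\Z_z$, to $\CC_\xi \smallsetminus \set{\xi_+,\xi_-}$) as a covering with deck group $\ZZ \times \ZZ$, and the monodromy homomorphism $\pi_1\big(\CC_\xi \smallsetminus \set{\xi_\pm}\big) \cong F_2 \to \ZZ \times \ZZ$ sends the loop around $\xi_+$ to $(1,0)$ and the loop around $\xi_-$ to $(0,1)$. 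This homomorphism is surjective, so the curve is connected, and its kernel is the commutator subgroup $[F_2,F_2]$, a nontrivial (infinite-rank) free group. Hence $\pi_1(\tilde{B}_z) \cong [F_2,F_2] \neq 1$: the lift of a commutator loop around $\xi_+$ and then $\xi_-$ closes up in $\tilde{B}_z$ but is not null-homotopic there. The fibre product of two simply connected covers glued over \emph{different} branch points is precisely what produces this nontrivial fundamental group, so the ``distinctness of $\xi_\pm$'' you invoke as the key input works against you, not for you.

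The same count rules out the universal-covering claim. You correctly observe that $\nu|_{\tilde{B}_z} : \tilde{B}_z \to B_z^\ast$ is Galois with deck group $\ZZ \times \ZZ$; but $B_z^\ast$ is an unramified finite cover of the twice-punctured $\xi$-plane, so $\pi_1(B_z^\ast)$ is a finite-index subgroup of $F_2$, hence a nonabelian free group, and the deck group of its universal cover would have to be all of $\pi_1(B_z^\ast)$. A connected cover with abelian deck group corresponds to a normal subgroup containing the commutator subgroup; it is the maximal abelian cover classified by the two winding numbers around $\xi_\pm$, never the universal cover. So Parts 1 and 2 of your proposal are incompatible with the conclusion you aim at: no refinement of the fibre-product argument can deliver simple connectedness, and the statement can only be reached by either replacing $\tilde{B}_z$ with the genuine fibrewise universal cover of $B_z^\ast$ (strictly larger than the fibre product \eqref{250223102438}) or weakening the conclusion to ``$\nu|_{\tilde{B}_z}$ is the $\ZZ \times \ZZ$-cover determined by the windings around $\xi_\pm$''. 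As written, your proposal establishes the trivialisation and the covering property, but not the lemma.
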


\begin{lemma}
\label{250315193721}
For any nonzero $z \in \CC_z$, the restriction of the central charge $\Z : \tilde{B} \to \CC_\xi$ to the unfolded Borel covering surface $\tilde{B}_{z}$ is the universal cover of the twice-punctured complex plane $\CC_\xi \smallsetminus \set{ \xi_+, \xi_- }$ based at the origin, where $\xi_\pm$ are the two branch points of the central charge $\Z_z : B_z \to \CC_\xi$:
\begin{eqn}
	\Z_z \coleq \Z \big|_{\tilde{B}_z} : 
		\tilde{B}_z = \widetilde{ \CC_\xi \smallsetminus \set{ \xi_+, \xi_- } }
		\too \CC_\xi \smallsetminus \set{ \xi_+, \xi_- }
\fullstop
\end{eqn}
Thus, the map $\Z_z : \tilde{B}_z \to \CC_\xi$ has two logarithmic branch points located at $\xi_\pm$.
Consequently, $(\tilde{B}_z, \Z_z)$ is an endless Riemann surface of log-algebraic type in the sense of \cite[Definition 1.6 and 1.7]{240622121512}.
\end{lemma}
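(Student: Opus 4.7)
The plan is to deduce the statement by composing two already-established covering maps and observing that the composition has a simply connected domain. First, I would invoke \autoref{250217200331} together with \autoref{250315185310}: the restriction $\Z_z : B_z \to \CC_\xi$ is a fivefold ramified cover whose entire ramification locus is $\Gamma_z$ and whose branch locus is $\{\xi_+,\xi_-\}$. Removing these finitely many points on both sides, the restricted map
\begin{eqn}
	\Z_z : B_z^\ast = B_z \smallsetminus \Gamma_z \too \CC_\xi \smallsetminus \set{\xi_+, \xi_-}
\end{eqn}
is therefore an honest (unramified) covering map of degree $5$. Second, by \autoref{250315184627}, the map $\nu : \tilde{B}_z \to B_z^\ast$ is the universal covering of $B_z^\ast$ based at $\bm{1_z}$.

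Next, I would compose these two coverings. The composition of covering maps is a covering map, so
\begin{eqn}
	\Z_z \circ \nu : \tilde{B}_z \too \CC_\xi \smallsetminus \set{\xi_+,\xi_-}
\end{eqn}
is a covering map. Since $\tilde{B}_z$ is simply connected by \autoref{250315184627}, this composition is, up to canonical isomorphism, the universal covering of the twice-punctured complex plane. To identify the basepoint, I would note that $\nu(\bm{1_z}) = \bm{1_z} \in B_z^\ast$ (the former viewed as the distinguished point of the universal cover, the latter as the constant-path-pair element of $B$), and by construction $\Z(\bm{1_z}) = 0$, so the origin of $\CC_\xi$ is indeed the basepoint. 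This establishes the main assertion of the lemma.

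For the statement about logarithmic branch points and log-algebraic endlessness, I would argue locally near $\xi_\pm$. Pick a small punctured disc $\DD^\ast \subset \CC_\xi \smallsetminus \set{\xi_+,\xi_-}$ around $\xi_+$. Its preimage under the finite unramified cover $\Z_z : B_z^\ast \to \CC_\xi \smallsetminus \set{\xi_+,\xi_-}$ is a disjoint union of punctured discs, one around each of the five points of $\Gamma_z^+$; by \autoref{250217200331}, each such component maps to $\DD^\ast$ as a fivefold cyclic cover (the local picture of a ramification point of order $5$). The preimage of $\DD^\ast$ under $\Z_z \circ \nu$ is then the disjoint union of universal covers of these punctured discs, each of which is a logarithmic cover of $\DD^\ast$. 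Therefore every connected component of $(\Z_z \circ \nu)^\inv(\DD^\ast)$ is biholomorphic to a half-plane mapping to $\DD^\ast$ via the exponential, which is precisely the definition of a logarithmic branch point. Combined with the algebraic-type endlessness of $(B_z, \Z_z)$ from \autoref{250217200331}, this gives endlessness of log-algebraic type for $(\tilde{B}_z, \Z_z)$.

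There is no serious obstacle, because all the difficult geometric content (smoothness and simple connectedness of $\tilde{B}_z$, the covering degree and ramification behaviour of $\Z_z$ on $B_z$) has been packaged into the earlier lemmas. The only point that requires a little care is keeping track of basepoints so that the isomorphism with the universal cover is canonical rather than defined up to deck transformations, which is handled by following $\bm{1_z}$ through the construction.
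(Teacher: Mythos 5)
Your proposal is correct, but it takes a genuinely different route from the paper. The paper's own proof is a one-line computation: in the trivialisation $\tilde{B}_z \cong \set{(x,y) \in \CC^2 \,:\, e^{5x}+e^{5y}=2}$ of \autoref{250315184627}, the central charge becomes the explicit map $(x,y) \mapsto \xi = \tfrac{1}{30}(1-e^{5x})z^5 = -\tfrac{1}{30}(1-e^{5y})z^5$, and the universal-covering property and the two logarithmic branch points at $\xi_\pm$ are read off directly from this formula, in the same spirit as the proof of \autoref{250214122849}. You instead argue structurally: strip the ramification from $\Z_z : B_z \to \CC_\xi$ (\autoref{250217200331}, \autoref{250315185310}) to get a finite unramified covering of $\CC_\xi \smallsetminus \set{\xi_+,\xi_-}$, compose with the universal covering $\nu : \tilde{B}_z \to B_z^\ast$ (\autoref{250315184627}), use that a composition of a covering with a finite covering is again a covering, and conclude from the simple connectedness of $\tilde{B}_z$ that the composite is the universal cover based at the image of $\bm{1_z}$; the local analysis over punctured discs then yields the logarithmic branch points. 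This is sound. The only steps you leave tacit are (i) the identification $\Z|_{\tilde{B}_z} = \Z_z \circ \nu$, which is immediate because the central charges on $\tilde{B}$ and $B$ are intertwined by $\nu$, and (ii) the composition-of-coverings fact, which holds here since the outer covering is finite (and since the base is a manifold). It is also worth noting that your argument never actually uses the specific covering degree or the number of ramification points over each branch point, only their finiteness, so it is robust to the exact counts quoted from \autoref{250217200331}. What your route buys is independence from the explicit trivialisation, treating the earlier lemmas as black boxes; what the paper's route buys is brevity and a completely explicit coordinate description of the covering map, which is then reused elsewhere (e.g. in the proof of \autoref{250312131625}).
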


\begin{proof}
This becomes obvious in the trivialisation \eqref{250315184705} because the map $\Z_z$ becomes explicit:
\begin{eqn}
	\Z_z : (x,y) \mapstoo \xi 
		= \tfrac{1}{30} ( 1 - e^{5x} ) z^5 
		= - \tfrac{1}{30} ( 1 - e^{5y} ) z^5
\fullstop
\tag*{\qedhere}
\end{eqn}
\end{proof}

\paragraph{The Borel surface.}
Similarly, the source maps $\rm{s} : M \to \CC_\tau$ and $\rm{s} : \tilde{M} \to \CC^\ast_\tau$ define respectively algebraic and holomorphic fibrations that play a key role in the resurgence of the deformed Painlevé I equation.

\begin{definition}[Borel surface]
\label{250315183236}
For any $\tau \in \CC_\tau$, we define the \dfn{Borel surface} $M_\tau$ to be the source fibre of $\tau$ under $\rm{s} : M \to \CC_\tau$.
Similarly, for any nonzero $\tau \in \CC^\ast_\tau$, we define the \dfn{Borel covering surface} $\tilde{M}_\tau$ to be the source fibre of $z$ under $\rm{s} : \tilde{M} \to \CC^\ast_\tau$.
Thus:
\begin{eqntag}
	M_\tau \coleq \rm{s}^{-1} (\tau) \subset M
\qtext{and}
	\tilde{M}_\tau \coleq \rm{s}^{-1} (\tau) \subset \tilde{M}
\fullstop
\end{eqntag}
\end{definition}

\begin{figure}[t]
\centering
\includegraphics[width=0.45\textwidth]{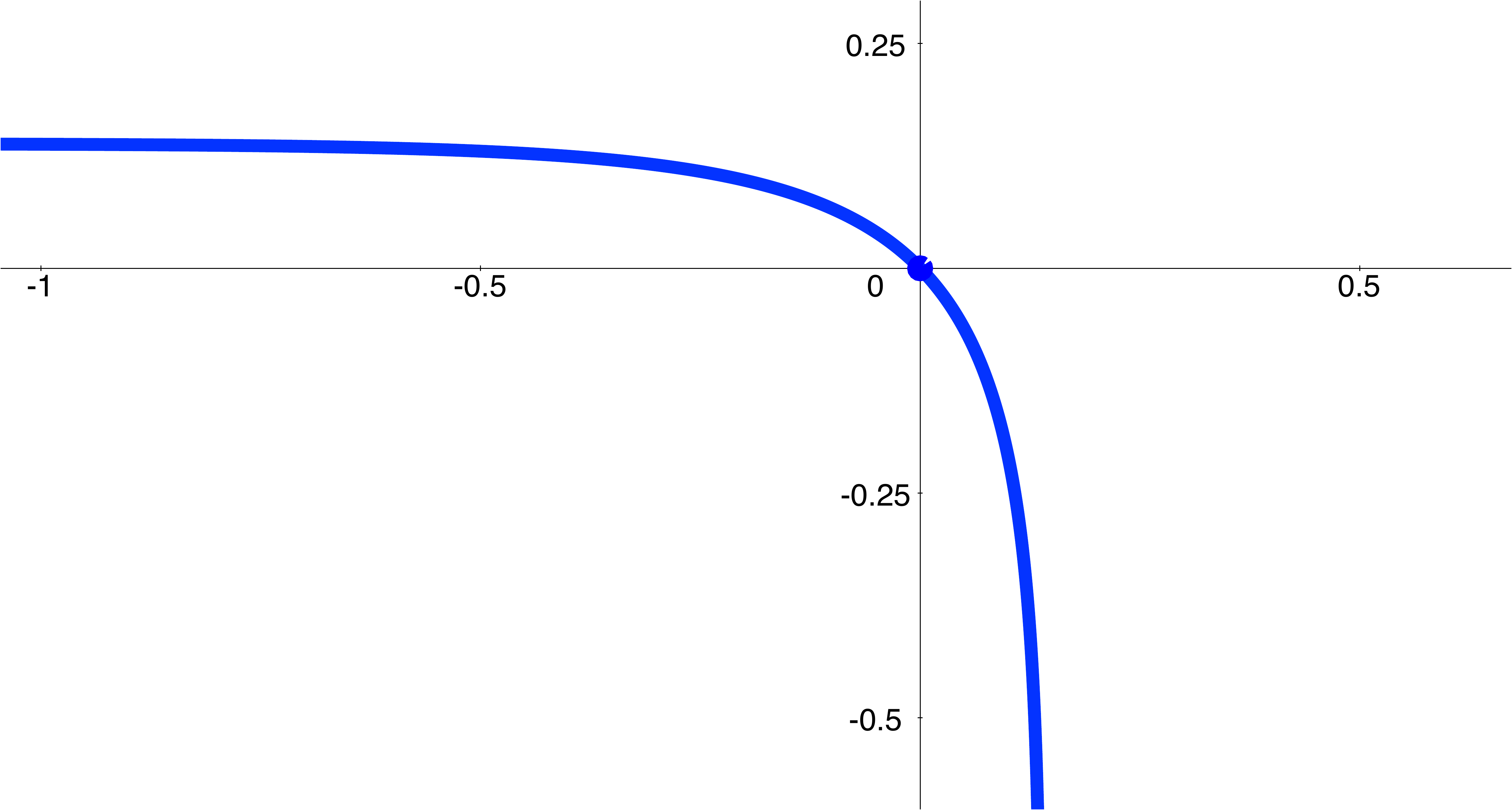}
\caption{A real slice of any unfolded Borel covering surface $\tilde{B}_z$, $z \in \CC^\ast_z$, presented as the analytic curve $e^{5x} + e^{5y} = 2$ in $\RR^2$.
The origin $(x,y) = (0,0)$ is the distinguished point corresponding to $\bm{1_z} = (1_z,1_z) \in \tilde{B}_z$.
}
\label{250223103744}
\end{figure}

\subsection{Convolution and Target-Truncations}
\label{250307192033}

In this subsection, we explain several other constructions which can be made over the unfolded Borel space that will be of use to us later.

\paragraph{The convolution product.}
First, we explain how to lift the convolution product $\ast$ to the unfolded Borel space $B$ or the unfolded Borel covering space $\tilde{B}$ using the anchor map.
For this, we need to introduce a notion of parameterisation of elements in $B$ (or $\tilde{B}$) which is `synchronised' according to the central charge in the following precise sense.

\begin{definition}
\label{250315195528}
By a \dfn{synchronised parameterisation} of an element $\bm{\gamma} = (\gamma^+, \gamma^-)$ in $B$ (or $\tilde{B}$) we mean any pair of parameterisations $\gamma^\pm : I \to \CC_z$ whose source-truncations $\gamma_s^\pm = \gamma^\pm |_{[0,s]} : [0,s] \subset I \to \CC_z$ satisfy $\Z (\gamma_s^+) = - \Z (\gamma_s^-)$ for all $s \in I$.
See \autoref{250306131237}.
\end{definition}

\begin{definition}[source-truncation]
\label{250306123324}
If $\bm{\gamma} = (\gamma^+, \gamma^-)$ in $B$ (or $\tilde{B}$) is synchronously parameterised, then each pair $\bm{\gamma}_s \coleq (\gamma_s^+, \gamma_s^-)$ defines an element of $B$ (or $\tilde{B}$) which we call the \dfn{source-truncation} of (the given synchronised parameterisation of) $\bm{\gamma}$.
See \autoref{250306142700}.
\end{definition}

\begin{definition}[arc-length]
\label{250307163500}
We define the \dfn{arc-length} of a synchronised parameterisation of any $\bm{\gamma} = (\gamma^+, \gamma^-)$ in $B$ (or $\tilde{B}$) to be the positive real number
\begin{eqn}
	|\bm{\gamma}| \coleq |\gamma^+| = |\gamma^-| = \int_{\gamma^\pm} |\lambda|
\fullstop
\end{eqn}
Any such $\bm{\gamma}$ has a natural \dfn{arc-length parameterisation} whereby we take the arc-length parameterisations $\gamma^\pm : [0, \L] \to \CC_z$ with $\L \coleq |\bm{\gamma}|$.
Then for every $s \in [0, \L]$, we can define the \dfn{arc-length source-truncation} to be
\begin{eqn}
	\bm{\gamma}_s \coleq (\gamma_s^+, \gamma_s^-)
\qtext{where}
	\gamma_s^\pm \coleq \gamma^\pm |_{[0,s]} : [0,s] \subset [0, \L] \to \CC_z
\fullstop
\end{eqn}
\end{definition}

\begin{definition}[convolution product]
\label{250307163506}
For any pair of holomorphic functions $f,g$ on $B$ (or $\tilde{B}$), their \dfn{convolution product} is the holomorphic function $f \ast g$, respectively on $B$ (or $\tilde{B}$), given by the following formula: for all $\bm{\gamma}$,
\begin{eqntag}
\label{250307184040}
	(f \ast g) (\bm{\gamma}) \coleq \int_0^{\L} f (\bm{\gamma}_s) g (\bm{\gamma}_{\L - s}) \D{s}
\qtext{where}
	\D{s} \coleq \pm \rm{t}_\pm^\ast \lambda_{\gamma^\pm_s}
\fullstop{,}
\end{eqntag}
where $\bm{\gamma}_s$ is the arc-length source-truncation of any synchronised representative of $\bm{\gamma}$ of length $|\bm{\gamma}| = \L$.
\end{definition}

\begin{figure}[t]
\centering
\begin{adjustwidth}{-1cm}{-1cm}
\begin{subfigure}[b]{0.24\linewidth}
\centering
\begin{tikzpicture}
\fill [darkgreen] (0,0) circle (2pt) node [left] {$z_0$};
\fill [darkgreen] (3,2) circle (2pt) node [right] {$z_1^+$};
\fill [darkgreen] (3,-1) circle (2pt) node [right] {$z_1^-$};
\fill [darkgreen] (2,1.85) circle (1pt) node [above] {$z_s^+$};
\fill [darkgreen] (2,-0.95) circle (1pt) node [below] {$z_s^-$};
\draw [darkgreen, thick] (0,0) to [out=120, in=210] (1,1.4);
\draw [darkgreen, thick, ->-=0.1] (1,1.4) to [out=30, in=195] (2,1.85);
\draw [darkgreen, thick] (2,1.85) to [out=15, in=180] (3,2);
\draw [darkgreen, thick] (0,0) to [out=-60, in=170] (1,-0.8);
\draw [darkgreen, thick, ->-=0.2] (1,-0.8) to [out=-10, in=175] (2,-0.95);
\draw [darkgreen, thick] (2,-0.95) to [out=-5, in=180] (3,-1);
\node [darkgreen] at (1,1.3) [above left] {$\gamma^+$};
\node [darkgreen] at (1,-0.65) [below left] {$\gamma^-$};
\node [darkgreen] at (2.25,0.25) {$\bm{\gamma}$};
\end{tikzpicture}
\caption{}
\label{250306131237}
\end{subfigure}
\begin{subfigure}[b]{0.24\linewidth}
\centering
\begin{tikzpicture}
\fill [darkgreen] (0,0) circle (2pt) node [left] {$z_0$};
\fill [darkgreen] (3,2) circle (2pt) node [right] {$z_1^+$};
\fill [darkgreen] (3,-1) circle (2pt) node [right] {$z_1^-$};
\fill [darkgreen] (1,1.4) circle (2pt) node [above] {$z_s^+$};
\fill [darkgreen] (1,-0.8) circle (2pt) node [below] {$z_s^-$};
\draw [darkgreen, thick, ->-=0.5] (0,0) to [out=120, in=210] (1,1.4);
\draw [darkgreen, thick, dashed] (1,1.4) to [out=30, in=195] (2,1.85);
\draw [darkgreen, thick, dashed] (2,1.85) to [out=15, in=180] (3,2);
\draw [darkgreen, thick, ->-=0.5] (0,0) to [out=-60, in=170] (1,-0.8);
\draw [darkgreen, thick, dashed] (1,-0.8) to [out=-10, in=175] (2,-0.95);
\draw [darkgreen, thick, dashed] (2,-0.95) to [out=-5, in=180] (3,-1);
\node [darkgreen] at (0.5,1) [above left] {$\gamma_s^+$};
\node [darkgreen] at (0.5,-0.5) [below left] {$\gamma_s^-$};
\node [darkgreen] at (0.75,0.25) {$\bm{\gamma_s}$};
\end{tikzpicture}
\caption{}
\label{250306142700}
\end{subfigure}
\begin{subfigure}[b]{0.24\linewidth}
\centering
\begin{tikzpicture}
\fill [darkgreen] (0,0) circle (2pt) node [left] {$z_0$};
\fill [darkgreen] (3,2) circle (2pt) node [right] {$z_1^+$};
\fill [darkgreen] (3,-1) circle (2pt) node [right] {$z_1^-$};
\fill [darkgreen] (2,1.85) circle (2pt) node [below] {$z_{1-s}^+$};
\fill [darkgreen] (2,-0.95) circle (2pt) node [above] {$z_{1-s}^-$};
\draw [darkgreen, thick, dashed] (0,0) to [out=120, in=210] (1,1.4);
\draw [darkgreen, thick, dashed] (1,1.4) to [out=30, in=195] (2,1.85);
\draw [darkgreen, thick, ->-=0.5] (2,1.85) to [out=15, in=180] (3,2);
\draw [darkgreen, thick, dashed] (0,0) to [out=-60, in=170] (1,-0.8);
\draw [darkgreen, thick, dashed] (1,-0.8) to [out=-10, in=175] (2,-0.95);
\draw [darkgreen, thick, ->-=0.5] (2,-0.95) to [out=-5, in=180] (3,-1);
\node [darkgreen] at (2.65,2) [above] {$\bar{\gamma}_s^+$};
\node [darkgreen] at (2.65,-1) [below] {$\bar{\gamma}_s^-$};
\end{tikzpicture}
\caption{}
\label{250306143123}
\end{subfigure}
\begin{subfigure}[b]{0.24\linewidth}
\centering
\begin{tikzpicture}
\fill [darkgreen] (0,0) circle (2pt) node [left] {$z_0$};
\fill [darkgreen] (3,2) circle (2pt) node [right] {$z_1^+$};
\fill [darkgreen] (3,-1) circle (2pt) node [right] {$z_1^-$};
\fill [darkgreen] (2,1.85) circle (2pt) node [below, scale=0.75] {$z_{1-s}^+$};
\fill [darkgreen] (2,-0.95) circle (2pt) node [above, scale=0.75] {$z_{1-s}^-$};
\fill [darkgreen] (1,1.4) circle (2pt);
\fill [darkgreen] (1,-0.8) circle (2pt);
\draw [darkgreen, thick, dashed] (0,0) to [out=120, in=210] (1,1.4);
\draw [darkgreen, thick, -<-=0.5] (1,1.4) to [out=30, in=195] (2,1.85);
\draw [darkgreen, thick, ->-=0.5] (2,1.85) to [out=15, in=180] (3,2);
\draw [darkgreen, thick, dashed] (0,0) to [out=-60, in=170] (1,-0.8);
\draw [darkgreen, thick, -<-=0.5] (1,-0.8) to [out=-10, in=175] (2,-0.95);
\draw [darkgreen, thick, ->-=0.5] (2,-0.95) to [out=-5, in=180] (3,-1);
\node [darkgreen] at (2.65,2) [above] {$\bar{\gamma}_s^+$};
\node [darkgreen] at (2.65,-1) [below] {$\bar{\gamma}_s^-$};
\node [darkgreen] at (1.25,1.7) [above, scale=0.9] {$(\bar{\gamma}_s^+)^\dagger$};
\node [darkgreen] at (1.4,-1) [below, scale=0.9] {$(\bar{\gamma}_s^-)^\dagger$};
\node [darkgreen] at (2.5,1) {$\bar{\bm{\gamma}}^+_s$};
\node [darkgreen] at (2.5,-0.25) {$\bar{\bm{\gamma}}^-_s$};
\end{tikzpicture}
\caption{}
\label{250308095823}
\end{subfigure}
\end{adjustwidth}
\caption{In (a), a synchronised parameterisation of $\bm{\gamma} = (\gamma^+, \gamma^-)$.
In (b), its source-truncation $\bm{\gamma}_s = (\gamma^+_s, \gamma^-_s)$.
In (c), the pair of target-truncations $\bar{\gamma}^\pm_s$ of $\bar{\gamma}^\pm$ do \textit{not} form an element of $B$ (nor $\tilde{B}$) because in general they do not have the same source.
In (d), the two target-truncations $\bar{\bm{\gamma}}^+_s$ and $\bar{\bm{\gamma}}^-_s$ of $\bm{\gamma}$.}
\label{250306131511}
\end{figure}
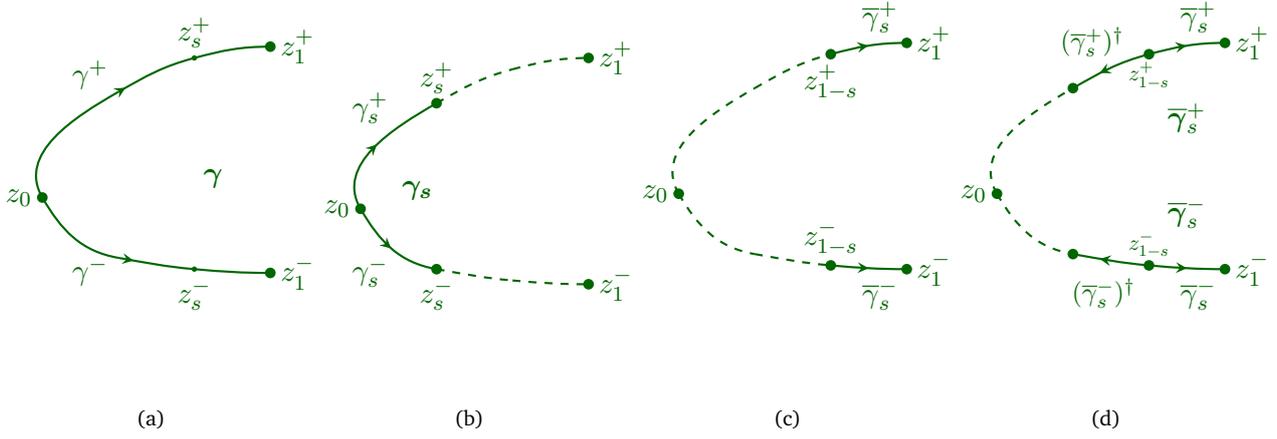

Then we deduce the following analogue of \autoref{250306181554}.

\begin{lemma}
\label{250308085903}
The pullback by the anchor $\varrho$ distributes over the convolution product $\ast$.
That is, for any two holomorphic functions $f = f(z,\xi)$ and $g = g(z,\xi)$, we have the following identity of holomorphic functions on $B$ (or $\tilde{B}$):
\begin{eqntag}
\label{250308090118}
	\varrho^\ast (f \ast g) = (\varrho^\ast f) \ast (\varrho^\ast g)
\fullstop
\end{eqntag}
\end{lemma}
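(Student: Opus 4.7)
The plan is to mirror the argument for \autoref{250306181554}, the analogous statement on the fundamental groupoid. The identity \eqref{250308090118} will be verified by a direct change of variables transforming the convolution integral on $B$ (or $\tilde{B}$) into the standard convolution integral on $\CC_\xi$. The key observation is that the differential $\D{s}$ appearing in \autoref{250307163506} is, by design, the pullback of $du$ via the central charge along any synchronised arc-length parameterisation.

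Concretely, I would fix $\bm{\gamma} \in B$ (the case of $\tilde{B}$ is identical), set $z \coleq \rm{s}(\bm{\gamma})$ and $\xi \coleq \Z(\bm{\gamma})$, and choose a synchronised arc-length parameterisation of length $\L = |\bm{\gamma}|$. The defining fibre-product structure ensures each arc-length source-truncation $\bm{\gamma}_s$ lies in $B$, with $\rm{s}(\bm{\gamma}_s) = z$ and $u_s \coleq \Z(\bm{\gamma}_s) = \Z(\gamma^+_s) = -\Z(\gamma^-_s)$; the cocycle property of $\Z$ then gives $\Z(\bm{\gamma}_{\L-s}) = \xi - u_s$. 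Expanding the right-hand side of \eqref{250308090118} via \autoref{250307163506} yields
\begin{eqn}
\big((\varrho^\ast f) \ast (\varrho^\ast g)\big)(\bm{\gamma})
= \int_0^\L f(z, u_s)\, g(z, \xi - u_s)\, \D{s}.
\end{eqn}
Differentiating $u_s = \int_{\gamma^+_s} \lambda$ with respect to $s$ gives $du_s/ds = \lambda\big((\gamma^+)'(s)\big) = (\rm{t}_+^\ast \lambda)_{\gamma^+_s}(\del_s)$, so that $\D{s}$ is precisely $du_s$; synchronisation guarantees that the alternative expression $-\rm{t}_-^\ast \lambda$ yields the same value. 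Under the substitution $u = u_s$, the integral becomes $\int_C f(z,u)\, g(z, \xi-u)\, du$, where $C$ is the path in $\CC_\xi$ from $0$ to $\xi$ traced by $s \mapsto u_s$.

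The final step is to deform $C$ to the straight segment $[0,\xi]$ by Cauchy's theorem, which identifies the result with $(f \ast g)(z,\xi) = \varrho^\ast(f \ast g)(\bm{\gamma})$. The only mild obstacle is justifying this deformation: under the implicit hypothesis that $f$ and $g$ are holomorphic on a common domain in $\CC_z \times \CC_\xi$ containing both $C$ and $[0,\xi]$ together with the region between them, it is standard. A more robust alternative, should subtleties about global domains arise, is to exploit \autoref{250218172045}: near the identity bisection the anchor $\varrho$ is a biholomorphism, so both sides of \eqref{250308090118} coincide there tautologically, and the identity then propagates to the whole of $B$ (resp.\ $\tilde{B}$) by analyticity of both sides as functions of $\bm{\gamma}$.
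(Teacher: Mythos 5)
Your overall strategy is the right one and is essentially the paper's own implicit argument (the paper gives no proof, merely ``deducing'' the lemma as the analogue of \autoref{250306181554}, whose justification is the change-of-variables derivation surrounding \eqref{250307073427}--\eqref{250307081855}): pull back along the anchor, substitute $u = \Z(\bm{\gamma}_s)$, and recognise the standard convolution. One step, however, is stated incorrectly. The cocycle property does \emph{not} give $\Z(\bm{\gamma}_{\L-s}) = \xi - u_s$ for an arbitrary synchronised arc-length parameterisation. Writing $\gamma^+ = \bar{\gamma}^+_s \circ \gamma^+_{\L-s}$, the cocycle property gives $\Z(\gamma^+_{\L-s}) = \xi - \Z(\bar{\gamma}^+_s)$, and the charge of the \emph{target} portion of length $s$ need not equal $u_s$, the charge of the \emph{source} portion of length $s$, unless $s \mapsto u_s$ is affine onto $[0,\xi]$. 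So for a general representative the integrand is $f(z,u_s)\,g(z,u_{\L-s})$ with $u_{\L-s} \neq \xi - u_s$, and your substitution does not produce the convolution kernel $g(z,\xi-u)$ along the contour $C$; the subsequent Cauchy deformation then has nothing to act on.

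The repair is exactly the fallback you already propose, and I would promote it to the main argument: for $\bm{\gamma}$ in the neighbourhood $\tilde{\Omega}$ (resp.\ $\Omega$) of the identity bisection one may choose the representative whose central-charge image traverses the segment $[0,\xi]$ affinely — this is precisely how the paper passes from \eqref{250307073427} to \eqref{250307081855} — and for that representative $u_{\L-s} = \xi - u_s$ does hold, $\D{s} = \d{u_s}$, and the identity is immediate. Since both sides of \eqref{250308090118} are holomorphic and $\tilde{B}$ (resp.\ the smooth locus of the irreducible quintic $B$) is connected, the identity theorem extends the equality everywhere. With that adjustment your proof is complete and matches the paper's intent.
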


\paragraph{Two target-truncations on $B$.}
Unlike the source-truncation $\bm{\gamma}_s = (\gamma^+_s, \gamma^-_s)$ of a given synchronously parameterised element $\bm{\gamma} = (\gamma^+, \gamma^-)$, the pair $(\bar{\gamma}^+_s, \bar{\gamma}^{\,-}_s)$ formed by the target-truncations of $\gamma^\pm$ does \textit{not} define an element of $B$ (nor $\tilde{B}$) because $\rm{s} (\bar{\gamma}^+_s) \neq \rm{s} (\bar{\gamma}^{\,-}_s)$ in general; see \autoref{250306143123}.
However, there is a canonical way to promote each target-truncation $\bar{\gamma}^+_s$ and $\bar{\gamma}^{\,-}_s$ to an element of $B$ (or $\tilde{B}$).

\begin{definition}[target-truncations]
\label{250308094543}
Given any synchronously parameterised element $\bm{\gamma} = (\gamma^+, \gamma^-)$ of $B$ (or $\tilde{B}$), let $\L = |\bm{\gamma}|$ and consider the arc-length target-truncations $\bar{\gamma}^\pm_s : [\L - s, \L] \to \CC_z$ of $\gamma^\pm : [0,\L] \to \CC_z$.
Take the negatively parameterised inverse paths $(\gamma^\pm)^\inv : [-\L,0] \to \CC_z$ and concatenate them with $\gamma^\mp$ like so:
\begin{eqn}
	\gamma^+ \circ (\gamma^-)^\inv : [-\L,\L] \to \CC_z
\qqtext{and}
	\gamma^- \circ (\gamma^+)^\inv : [-\L,\L] \to \CC_z
\fullstop
\end{eqn}
Then, for each $s \in [0, \L]$, the two \dfn{arc-length target-truncations} of $\bm{\gamma}$ at time $s$ are the elements of $B$ (or $\tilde{B}$) given by the following two pairs of paths:
\begin{eqntag}
	\bar{\bm{\gamma}}^+_s \coleq \big( \bar{\gamma}^+_s, (\bar{\gamma}^+_s)^\dagger \big)
\qqtext{and}
	\bar{\bm{\gamma}}^-_s \coleq \big( (\bar{\gamma}^{\,-}_s)^\dagger, \bar{\gamma}^{\,-}_s \big)
\end{eqntag}
where $(\bar{\gamma}^+_s)^\dagger$ and $(\bar{\gamma}^{\,-}_s)^\dagger$ are the parameterised paths (see \autoref{250308095823})
\begin{eqntag}
	(\bar{\gamma}^+_s)^\dagger \coleq \big( \gamma^- \circ (\gamma^+)^\inv \big) \big|_{[-\L + s, \L + 2s]}
\qtext{and}
	(\bar{\gamma}^{\,-}_s)^\dagger \coleq \big( \gamma^+ \circ (\gamma^{\,-})^\inv \big) \big|_{[-\L + s, \L + 2s]}
\fullstop
\end{eqntag}
\end{definition}

In words, $(\bar{\gamma}^+_s)^\dagger$ is the concrete path that starts at the source of $\bar{\gamma}^+_s$ but travels backwards for time $s$ along $(\gamma^+)^\inv$ and $\gamma^-$.
Similarly, $(\bar{\gamma}^{\,-}_s)^\dagger$ is the concrete path that starts at the source of $\bar{\gamma}^{\,-}_s$ but travels backwards for time $s$ along $(\gamma^-)^\inv$ and $\gamma^+$.

\begin{figure}[!t]
\centering
\begin{subfigure}[t]{0.45\textwidth}
\centering
\begin{tikzpicture}
\begin{scope}
\clip (-3,-2) rectangle (3,2);
\fill [grey] (-3,-2) rectangle (3,2);
\fill [black] (0,0) circle (1pt) node [below] {$0$};
\node at (135:2.45) {$\CC_\xi$};
\node [cross, red] at (0:1) {};
\node [cross, red] at (180:1) {};
\node at (0:1) [red, below] {$\xi_+$};
\node at (180:1) [red, below] {$\xi_-$};
\draw [dashed, red, ultra thick] (0:1) -- (0:4);
\draw [dashed, red, ultra thick] (180:1) -- (180:4);
\end{scope}
\end{tikzpicture}
\caption{The central charge $\Z$ restricts to the unfolded Borel surface $B_z$ for $z=1$ to give a fivefold covering map of the $\xi$-plane which is ramified in ten points, but with only two branch points located at $\xi_\pm = \pm 1/30$.
The `origin' $\bm{1_z}$ of $B_z$ is mapped to the origin $\xi = 0$.
Two of these ramification points are captured by the real slice in \autoref{250222133101}.
By drawing branch cuts (red dashed lines) emanating from the two branch points, this picture shows the sheet of the Riemann surface $B_z$ containing the `origin' $\bm{1_z}$; there are five sheets stacked like pancakes one on top of another and glued along the branch cuts appropriately.}
\label{250222150214}
\end{subfigure}
\quad
\begin{subfigure}[t]{0.45\textwidth}
\centering
\begin{tikzpicture}
\begin{scope}
\clip (-3,-2) rectangle (3,2);
\fill [grey] (-3,-2) rectangle (3,2);
\draw [ultra thick, darkgreen, ->-=0.5] (0,0) -- (0:1);
\draw [ultra thick, darkgreen, ->-=0.5] (0,0) -- (180:1);
\draw [ultra thick, darkgreen, ->-=0.5] (0,0) to [out=90, in=0] (120:1) to [out=180, in=90] (180:2);
\draw [ultra thick, darkgreen, ->-=0.5, dashed] (180:2) to [out=270, in=180] (-120:1) to [out=0, in=-120] (0:1);
\fill [black] (0,0) circle (1pt) node [below] {$0$};
\node at (135:2.45) {$\CC_\xi$};
\node [cross, red] at (0:1) {};
\node [cross, red] at (180:1) {};
\node at (0:1) [red, above] {$\xi_+$};
\node at (180:1) [red, below] {$\xi_-$};
\draw [dashed, red, ultra thick] (0:1) -- (0:4);
\draw [dashed, red, ultra thick] (180:1) -- (180:4);
\end{scope}
\end{tikzpicture}
\caption{The two ramification points which are captured by the real slice in \autoref{250222133101} can be reached from the `origin' $\bm{1_z}$ via geodesics (i.e., paths on $B_z$ whose projection to the $\xi$-plane are straight line segments).
The projections to the $\xi$-plane of these two geodesics are pictured by straight green line segments from $0$ to $\xi_\pm$.
Only these two ramification points can be reached by a geodesic, so they are the only visible singularities of $(B_z, \Z_z)$.
To reach any of the other eight ramification points, we need to go around one of the visible singularities and to another sheet of the Riemann surface $B_z$.
One such path that connects the `origin' $\bm{1_z}$ to a ramification point gets projected to the $\xi$-plane as depicted by the green curvy path from $0$ to $\xi_+$.}
\label{250222151307}
\end{subfigure}
\caption{Projection under the central charge.}
\label{250222160721}
\end{figure}

\paragraph{Geodesics.}
Let us now describe the geodesics on the unfolded Borel space.

\begin{definition}[geodesics]
\label{250223185940}
A \dfn{geodesic} on $B$ is a smooth real curve $\tilde{\bm{\gamma}} : [0,1] \to B$ which lies in a single source fibre of $B$, avoids the subset $\Gamma$ except possibly at the endpoints, and whose projection $\Z \circ \tilde{\bm{\gamma}} : [0,1] \to \CC_\xi$ to the $\xi$-plane is a straight line segment.
A \dfn{critical geodesic} is a geodesic whose target belongs to $\Gamma$.
An \dfn{infinite geodesic} on $B$ is a smooth real curve $\tilde{\bm{\gamma}} : [0,1) \to B$ which lies in a single source-fibre of $B$, avoids the subset $\Gamma$ except possibly at the source, and whose projection $\Z \circ \tilde{\bm{\gamma}}$ to the $\xi$-plane is an infinite straight ray.
\end{definition}

Since points of $B$ represent pairs $\bm{\gamma} = (\gamma^+, \gamma^-)$ of paths on $\CC_z$, a real curve $\tilde{\bm{\gamma}} : [0,1] \to B$ which lies a source fibre $B_{z_0}$ is nothing but a pair $\bm{\gamma}_s = (\gamma^+_s, \gamma^-_s)$ of families of paths on $\CC_z$ parameterised by $s \in [0,1]$ such that $\rm{s} (\bm{\gamma}_s) = \rm{s} (\gamma^\pm_s) = z_0$ for all $s$.
Likewise, the projection $\Z \circ \bm{\gamma}$ is nothing but a path $s \mapsto \xi_s$ in the $\xi$-plane, where $\xi_s = \Z (\bm{\gamma}_s) = \pm \int_{\gamma^\pm_s} \lambda$.
Then $\bm{\gamma}$ is a geodesic if and only if none of the paths $\gamma^\pm_s$ for $s \in (0,1)$ is critical and the phase of $\xi_s$ remains constant in $s$; i.e., $\arg (\xi_s) = \const$; i.e., there is some $\alpha \in \RR / 2 \pi \ZZ$ such that $\Z (\bm{\gamma}_s) = \pm \Z (\gamma^\pm) \in e^{i \alpha} \RR_+$ for all $s \in [0,1]$.
Said differently, the constraint on the phase of $\Z (\bm{\gamma}_s)$ reads
\begin{eqn}
	\Im \left( \pm e^{-i \alpha} \int_{\gamma_s^\pm} \lambda \right) = 0
\qtext{and}
	\Re \left( \pm e^{-i \alpha} \int_{\gamma_s^\pm} \lambda \right) \geq 0
\fullstop
\end{eqn}
The fixed-endpoint homotopy class of any $\gamma_s^\pm$ is determined by its endpoints $z_0 = \rm{s} (\bm{\gamma}_s) = \rm{s} (\gamma^\pm_s)$ and $z^\pm_s \coleq \rm{t}_\pm (\bm{\gamma}_s) = \rm{t} (\gamma^\pm_s)$.
So the above constraint on the phase of $\Z (\bm{\gamma}_s)$ reads more explicitly as follows:
\begin{eqntag}
\label{250224075106}
	\Im \left( \pm e^{-i \alpha} \int_{z_0}^{z^\pm_s} \lambda \right) = 0
\qtext{and}
	\Re \left( \pm e^{-i \alpha} \int_{z_0}^{z^\pm_s} \lambda \right) \geq 0
\fullstop
\end{eqntag}
Thus, a geodesic $\bm{\gamma}$ on $B$ is nothing but a pair of concrete paths $(z_s^+)$ and $(z_s^-)$ on the $z$-plane defined by the equations \eqref{250224075106}, each of which is understood as the target $\rm{t}_\pm$ projection of $\bm{\gamma}$.
This is depicted in \autoref{250223164824} and \autoref{250223191822}.

Concrete paths on $\CC_z$ defined by \eqref{250224075106} are known as \dfn{geodesic trajectories} of the differential $\pm \lambda$.
They form (singular) real foliations of the $z$-plane known as \dfn{geodesic foliations} \textit{with phase $\alpha$} associated with the differential $\pm \lambda$.
These foliations are pictured in \autoref{250223183907}.

\begin{figure}[p]
\begin{adjustwidth}{-2cm}{-2cm}
\centering
\raisebox{1cm}{
\begin{subfigure}{0.35\textwidth}
\centering
\includegraphics[width=\textwidth]{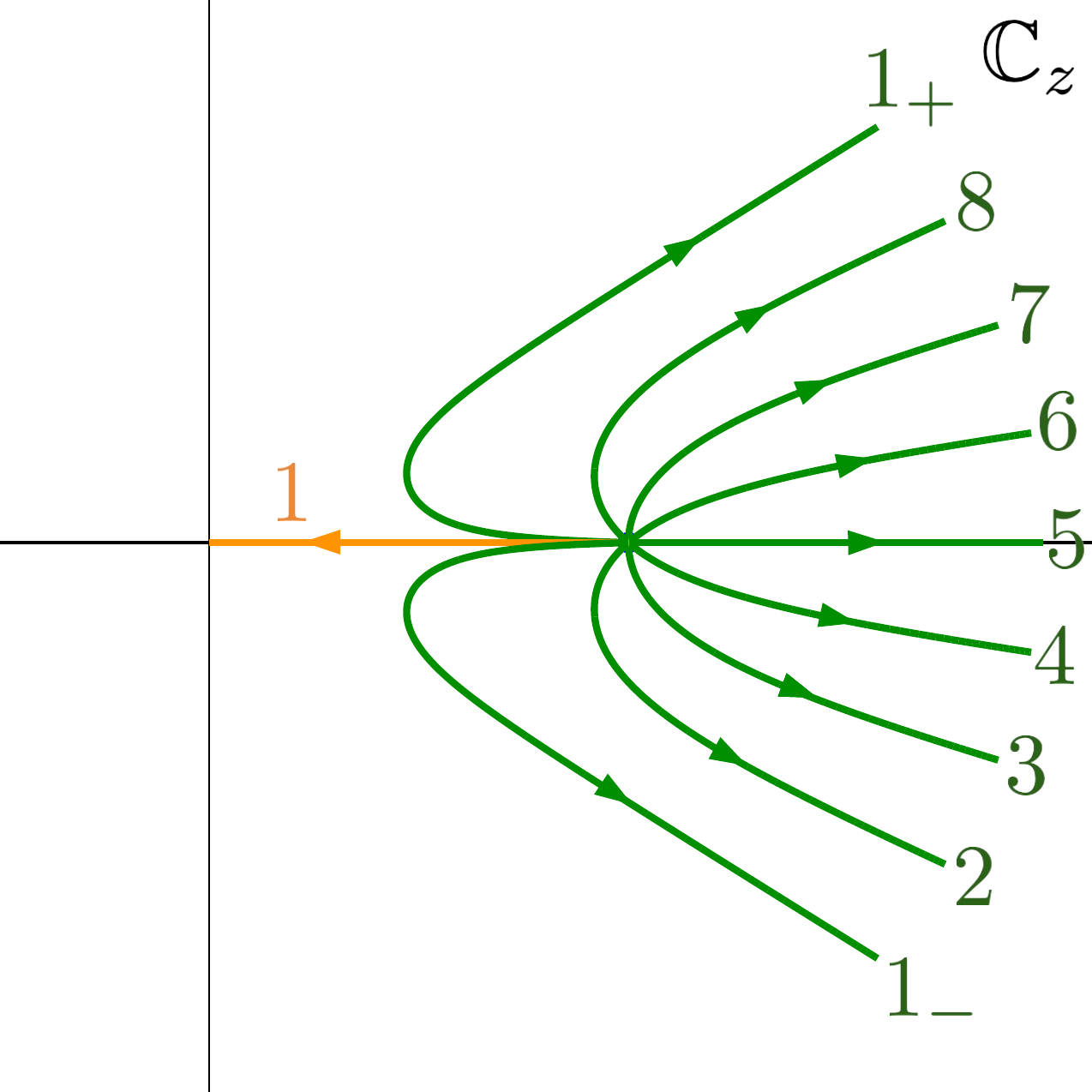}
\caption{}
\label{250223172028}
\end{subfigure}
}
\begin{subfigure}{0.35\textwidth}
\centering
\begin{tikzpicture}
\begin{scope}[yshift=4cm]
\node at (0,0) {$B_z$};
\draw [->] (-90:0.5) -- (-90:1.5) node[midway, right] {$\Z$};
\draw [->] (-20:0.5) -- (-20:2) node[midway, above right] {$\rm{t}_+$};
\draw [->] (200:0.5) -- (200:2) node[midway, above left] {$\rm{t}_-$};
\end{scope}
\begin{scope}[scale=0.7, every node/.style={scale=1}]
\clip (-3,-3) rectangle (3,3);
\fill [grey] (-3,-3) rectangle (3,3);
\draw [ultra thick, darkgreen, ->-=0.65] (0,0) -- (3:2) node [above right] {$1_+$};
\draw [ultra thick, darkgreen, ->-=0.65] (0,0) -- (-3:2) node [below right] {$1_-$};
\draw [ultra thick, darkgreen, ->-=0.65] (0,0) -- (177:2) node [above left] {$5_-$};
\draw [ultra thick, darkgreen, ->-=0.65] (0,0) -- (-177:2) node [below left] {$5_+$};

\draw [ultra thick, darkgreen, ->-=0.5] (0,0) -- (45:2) node [above right] {$2$};
\draw [ultra thick, darkgreen, ->-=0.5] (0,0) -- (90:2) node [above] {$3$};
\draw [ultra thick, darkgreen, ->-=0.5] (0,0) -- (135:2) node [above left] {$4$};
\draw [ultra thick, darkgreen, ->-=0.5] (0,0) -- (-135:2) node [below left] {$6$};
\draw [ultra thick, darkgreen, ->-=0.5] (0,0) -- (-90:2) node [below] {$7$};
\draw [ultra thick, darkgreen, ->-=0.5] (0,0) -- (-45:2) node [below right] {$8$};

\draw [ultra thick, orange, ->-=0.5] (0,0) -- (0:2) node [right] {$1$};
\draw [ultra thick, orange, ->-=0.5] (0,0) -- (180:2) node [left] {$5$};
\fill [black] (0,0) circle (1pt);
\node at (135:3.5) {$\CC_\xi$};
\node [cross, red] at (0:2) {};
\node [cross, red] at (180:2) {};
\end{scope}
\end{tikzpicture}
\caption{}
\label{250223162611}
\end{subfigure}
\raisebox{1cm}{
\begin{subfigure}{0.35\textwidth}
\centering
\includegraphics[width=\textwidth]{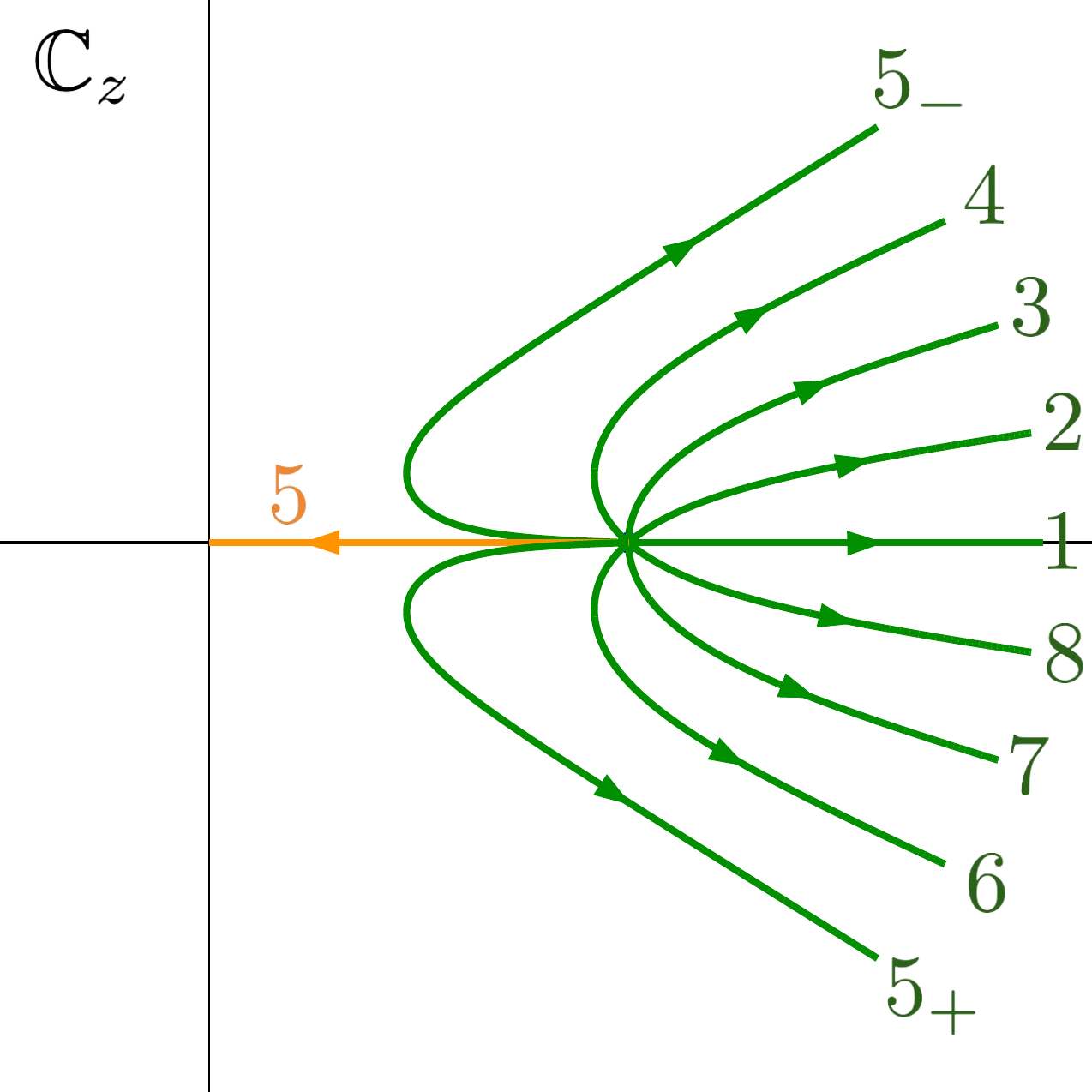}
\caption{}
\label{250223172029}
\end{subfigure}
}
\end{adjustwidth}
\caption{Projections of geodesics in $B_z$ for $z=1$ to the $\xi$-plane via $\Z$ and to the $z$-plane via $\rm{t}_\pm$.
The straight line segments in the $\xi$-plane labelled $k = 1, \ldots, 8$ are the segments $[0, \tfrac{1}{30} e^{i\alpha_k}]$ where $\alpha_k = \pi (k-1)/4$, respectively.
The four extra line segments labelled $1_\pm$ and $5_\pm$ are respectively $[0,\tfrac{1}{30} e^{\pm i \epsilon}]$ and $[0,-\tfrac{1}{30} e^{\pm i \epsilon}]$, where $\epsilon = 0.01\pi$.
The two branch points of $\Z$ in the $\xi$-plane are located at $\pm \tfrac{1}{30}$.
}
\label{250223164824}

\smallskip

\begin{adjustwidth}{-2cm}{-2cm}
\centering
\begin{subfigure}{0.4\textwidth}
\centering
\begin{tikzpicture}
\begin{scope}[yshift=4cm]
\node at (0,0) {$B_z$};
\draw [->] (-90:0.5) -- (-90:1.5) node[midway, right] {$\Z$};
\draw [->,yshift=2pt, xshift=1pt] (-20:0.5) -- (-20:3) node[midway, above right] {$\rm{t}_+$};
\draw [->,yshift=-2pt, xshift=-1pt] (-20:0.5) -- (-20:3) node[midway, below left] {$\rm{t}_-$};
\end{scope}
\begin{scope}[scale=0.7, every node/.style={scale=1}]
\clip (-3,-3) rectangle (3,3);
\fill [grey] (-3,-3) rectangle (3,3);
\draw [ultra thick, darkgreen, ->-=0.5] (0,0) -- (45:5) node [midway, below right] {$1$};
\draw [ultra thick, darkgreen, ->-=0.5] (0,0) -- (135:5) node [midway, below left] {$2$};

%
\fill [black] (0,0) circle (1pt);
\node at (90:2.5) {$\CC_\xi$};
\node [cross, red] at (0:2) {};
\node [cross, red] at (180:2) {};
\end{scope}
\end{tikzpicture}
\caption{}
\label{250223192028}
\end{subfigure}
\quad
\begin{subfigure}{0.6\textwidth}
\includegraphics[width=\textwidth]{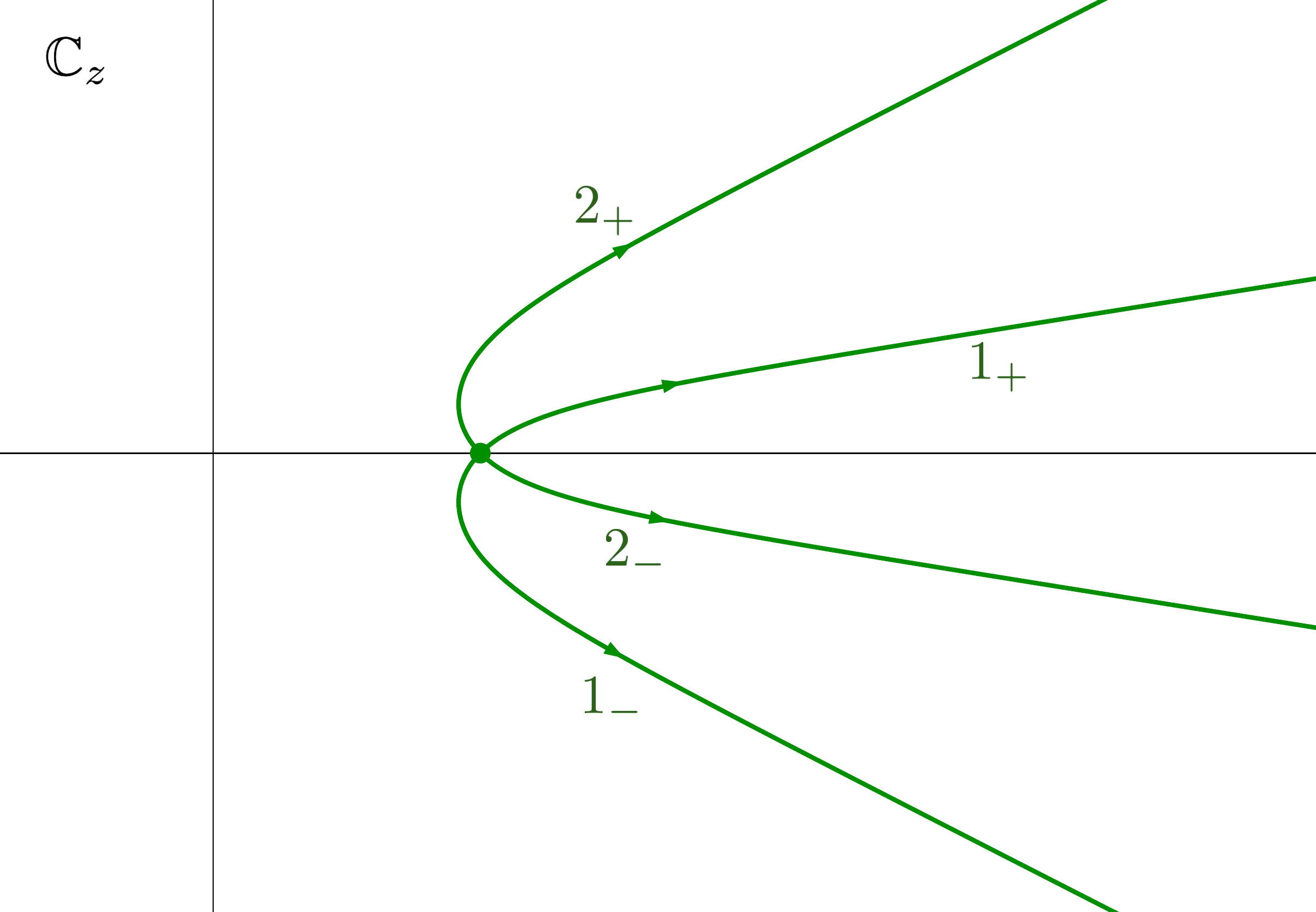}
\caption{}
\label{250223191141}
\end{subfigure}
\end{adjustwidth}
\caption{Projections of infinite geodesics in $B_z$ for $z = 1$ to the $\xi$-plane via $\Z$ and to the $z$-plane via $\rm{t}_\pm$.
The two straight infinite rays in the $\xi$-plane labelled $1$ and $2$ have phases $\pi/4$ and $3\pi/4$, respectively.
The infinite geodesics in $B_z$ whose projections to the $\xi$-plane are the straight infinite rays labelled $1$ and $2$ are projected to the $z$-plane via $\rm{t}_\pm$ with images labelled by $1_\pm$ and $2_\pm$, respectively.
}
\label{250223191822}
\end{figure}

\begin{figure}[p]
\begin{adjustwidth}{-2cm}{-2cm}
\centering
\begin{subfigure}{0.4\textwidth}
\centering
\begin{tikzpicture}
\begin{scope}[yshift=4cm]
\node at (0,0) {$B_z$};
\draw [->] (-90:0.5) -- (-90:1.5) node[midway, right] {$\Z$};
\draw [->,yshift=2pt, xshift=1pt] (-20:0.5) -- (-20:3) node[midway, above right] {$\rm{t}_+$};
\draw [->,yshift=-2pt, xshift=-1pt] (-20:0.5) -- (-20:3) node[midway, below left] {$\rm{t}_-$};
\end{scope}
\begin{scope}[scale=0.7, every node/.style={scale=1}]
\clip (-4,-3) rectangle (4,3);
\fill [grey] (-4,-3) rectangle (4,3);
\begin{scope}[yshift=1.5]
\draw [ultra thick, darkgreen, ->-=0.55] (0,0) -- (0:1.6) arc (180:0:0.4) -- (0:4);
\end{scope}
\begin{scope}[yshift=-1.5]
\draw [ultra thick, darkgreen, ->-=0.55] (0,0) -- (0:1.6) arc (180:360:0.4) -- (0:4);
\end{scope}

\node at (0:1) [above, darkgreen] {$1'$};
\node at (2,0.5) [above, darkgreen] {$1''$};
\node at (0:3) [above, darkgreen] {$1'''$};

\node at (0:1) [below, darkgreen] {$2'$};
\node at (2,-0.5) [below, darkgreen] {$2''$};
\node at (0:3) [below, darkgreen] {$2'''$};


%
\fill [black] (0,0) circle (1pt);
\node at (90:2.5) {$\CC_\xi$};
\node [cross, red] at (0:2) {};
\node [cross, red] at (180:2) {};
\end{scope}
\end{tikzpicture}
\caption{}
\label{250223200010}
\end{subfigure}
\quad
\begin{subfigure}{0.4\textwidth}
\includegraphics[width=\textwidth]{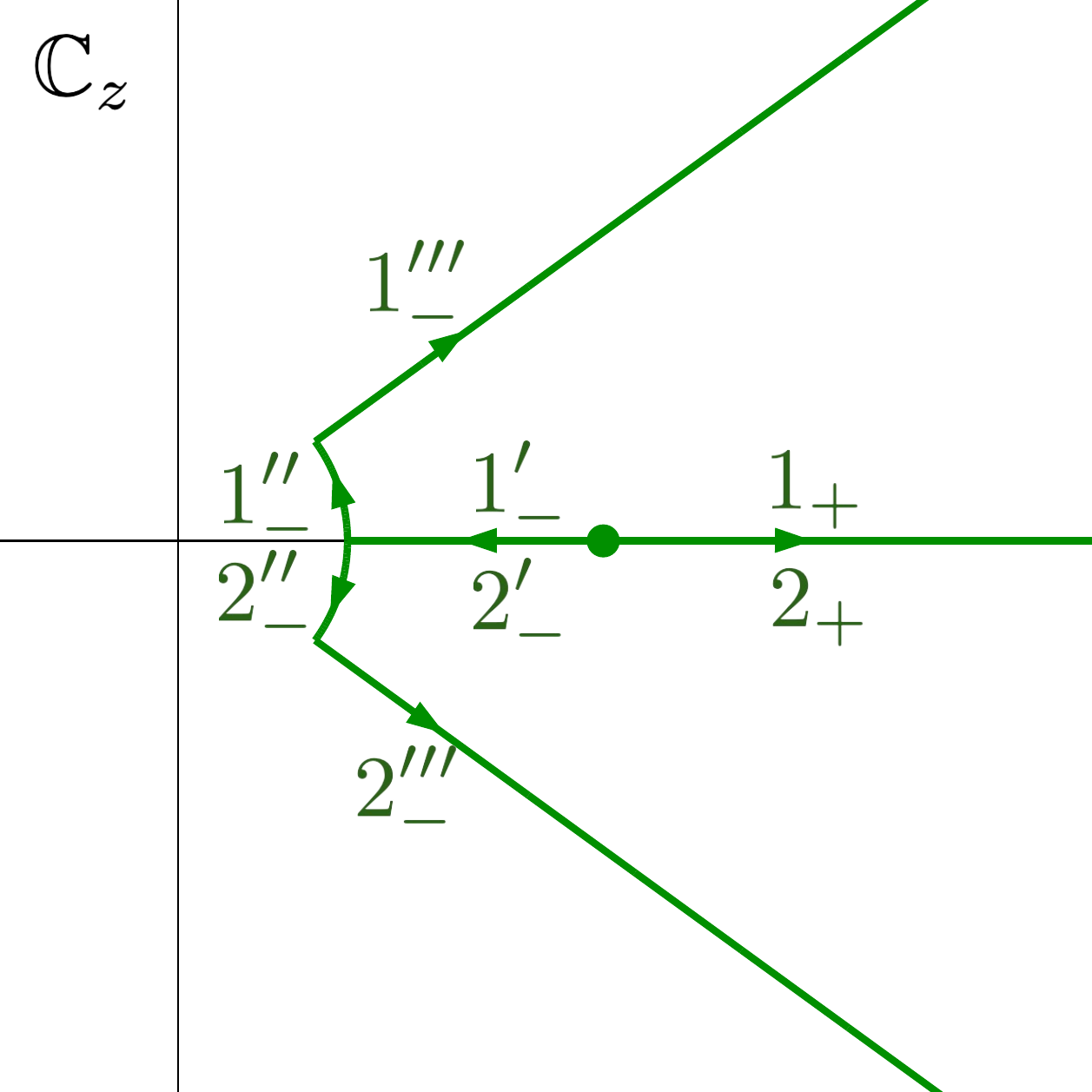}
\caption{}
\label{250223202658}
\end{subfigure}
\end{adjustwidth}
\caption{Projections of two infinite distorted geodesics in $B_z$ for $z = 1$ to the $\xi$-plane via $\Z$ and to the $z$-plane via $\rm{t}_\pm$.
The projection of the distorted geodesic labelled $1$ to the $\xi$-plane consists of the straight line segment $1'$, the semicircular arc $1''$, and the straight infinite ray $1'''$; and similarly for the distorted geodesic labelled $2$.
The projection of the distorted geodesic labelled $1$ to the $z$-plane via $\rm{t}_-$ consists of the straight line segment $1'_-$, the circular arc $1''_-$, and the straight infinite ray $1'''_-$ in the direction $\pi/5$; similarly for the distorted geodesic labelled $2$, for which the ray $2'''_-$ is in the direction $-\pi/5$.
Meanwhile, the projection to the $z$-plane via $\rm{t}_+$ of both distorted geodesics is the straight infinite ray $1_+$ or $2_+$.
(Strictly speaking, the paths $1_+$ and $2_+$ have small semicircular `kinks' which are the projections of the portions of the geodesics that correspond to the semicircular arcs $1''$ and $2''$, but they are too small to be visible in this plot.)
}
\label{250224080358}

\smallskip

\centering
\begin{subfigure}{0.4\textwidth}
\includegraphics[width=\textwidth]{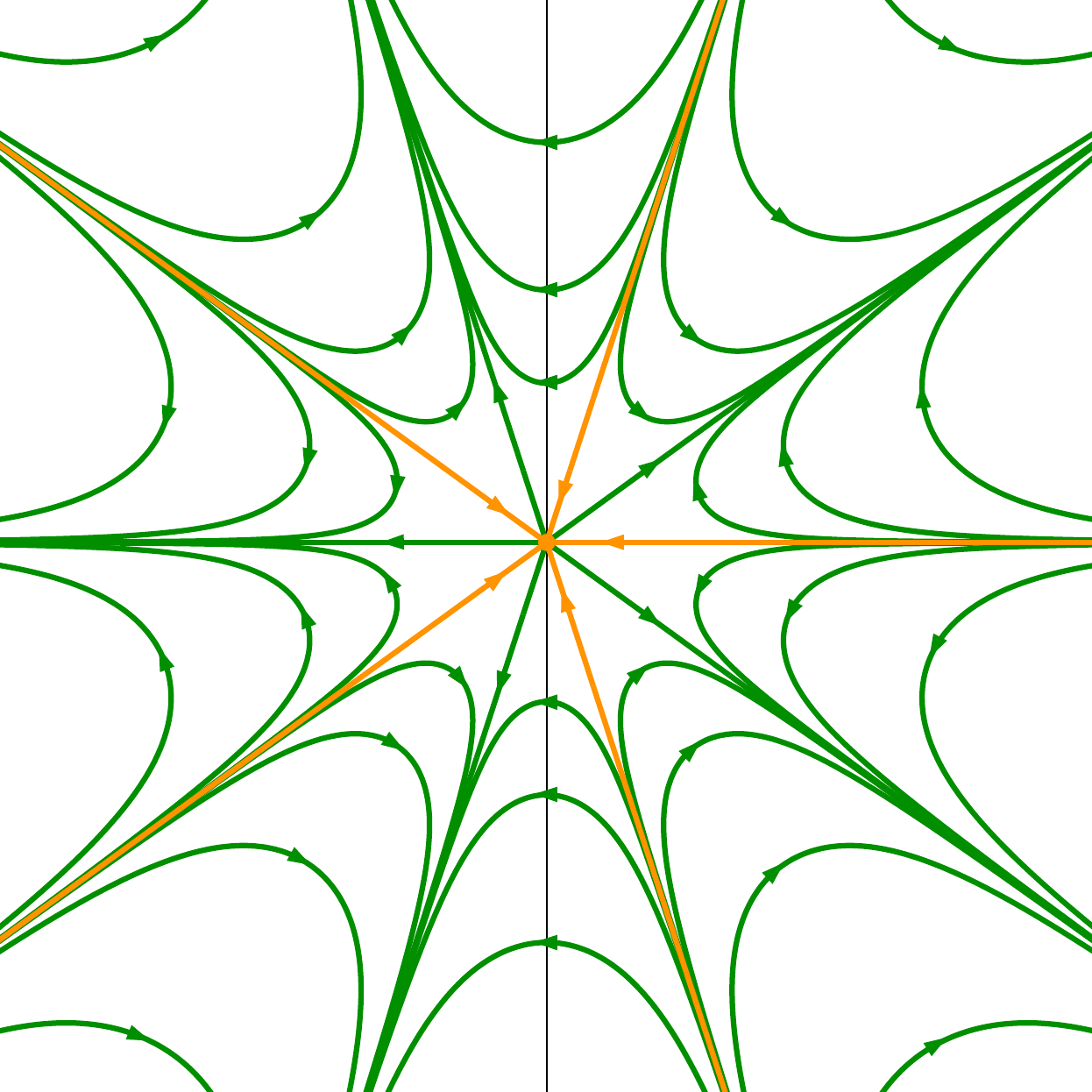}
\caption{$-\lambda$}
\label{250223182215}
\end{subfigure}
\quad
\begin{subfigure}{0.4\textwidth}
\includegraphics[width=\textwidth]{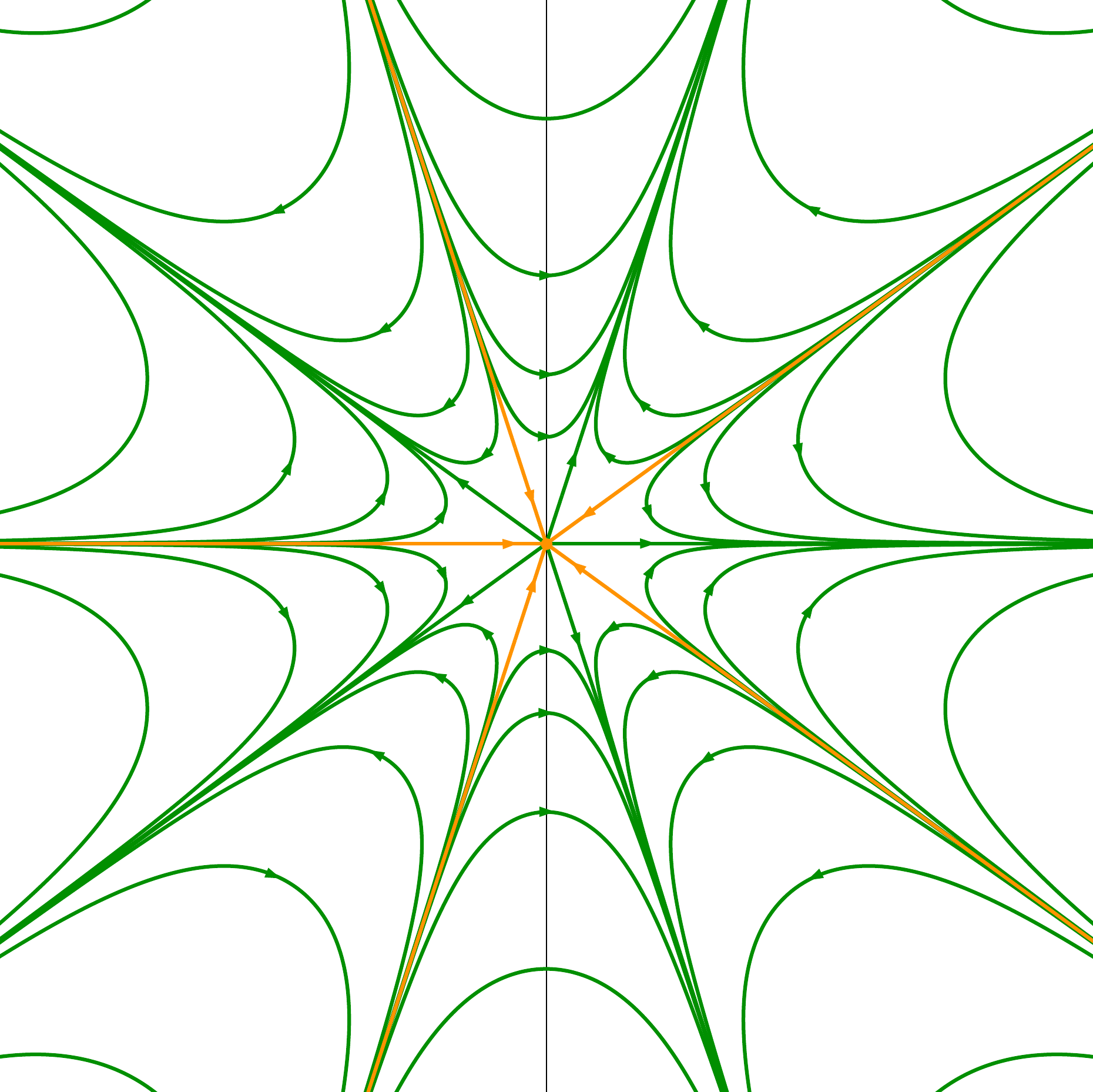}
\caption{$+\lambda$}
\label{250223182216}
\end{subfigure}
\caption{Geodesic foliations of the $z$-plane for the two eigenvalues of the classical Jacobian $\J_0$ for the phase $\alpha = 0$.
Regular geodesics are coloured green, and critical geodesics are coloured orange.}
\label{250223183907}

\end{figure}

\subsection{Initial Value Problem on the Unfolded Borel Space}
\label{250221200950}

We are now ready to reformulate in global geometric terms the Initial Value Problem \eqref{250306084618}, which we quote here for convenience:
\begin{eqntag}
\label{250218203125}
\begin{cases}
	(+\V - \del_\xi) \phi_+ = \Phi
\\	(-\V - \del_\xi) \phi_- = \Phi
\end{cases}
\qqtext{such that}
	\phi_\pm (z,0) = c (z)
\fullstop{,}
\end{eqntag}
where $\V = -6z^{-4} \del_z$, $c(z) = -6z^{-8}$, and $\Phi$ is the following expression in $\phi_+, \phi_-$ with coefficients $a(z) = 3z^{-2}$ and $b(z) = -3z^{-5}$:
\begin{eqn}
	\Phi = \Phi (z, \xi; \phi_+, \phi_-)
	\coleq a (\phi_+ - \phi_-)^{\ast 2} + b (\phi_+ - \phi_-)
\fullstop
\end{eqn}

\paragraph{Lifting the local solution to the unfolded Borel space.}
Recall that in \autoref{250221201337} we constructed a local holomorphic solution $\hat{\phi} = (\hat{\phi}_+, \hat{\phi}_-)$ of this Initial Value Problem valid for all nonzero $z$ and all sufficiently small $\xi$.
More precisely, by \autoref{250221201747}, $\hat{\phi}$ is well-defined on an open neighbourhood $\Xi \subset \CC^\ast_z \times \CC_\xi$ of $\xi = 0$; in symbols, $\hat{\phi} \in \cal{O} (\Xi, \CC^2)$.
By \autoref{250218172045}, there are open neighbourhoods $\Omega_\pm \subset \Pi_1 (\CC_z) |_{\CC^\ast_z}$ of the identity bisection $1_{\CC^\ast_z}$ such that the restriction to $\Omega_\pm$ of the anchor map $\varrho_\pm$ is an isomorphisms $\varrho_\pm : \Omega_\pm \iso \Xi$.
Moreover, the fibre product
\begin{eqn}
	\Omega \coleq \Omega_- \underset{\Xi}{\times} \Omega_+ 
		= \set{ \bm{\gamma} = (\gamma^+, \gamma^-) \in \Omega_+ \times \Omega_- ~\big|~ \varrho_+ (\gamma^+) = \varrho_- (\gamma^-) }
		\subset B
\end{eqn}
is an open neighbourhood in $B$ such that the anchor map $\varrho : B \to \CC_z \times \CC_\xi$ restricts to a biholomorphism $\varrho : \Omega \iso \Xi$.
In addition, there are open subsets $\tilde{\Omega}_\pm \subset \Pi_1 (\CC^\ast_z)$ and $\tilde{\Omega} \subset \tilde{B}$ such that the source-fibrewise universal covering map $\nu : \Pi_1 (\CC^\ast_z) \to \Pi_1 (\CC_z) |_{\CC^\ast_z}$ induces isomorphisms $\tilde{\Omega}_\pm \iso \Omega_\pm$ and $\tilde{\Omega} \iso \Omega$.
Altogether, these isomorphism can be organised into the following commutative diagram where all arrows are biholomorphisms:
\begin{eqn}
\begin{tikzcd}
		\tilde{\Omega}
			\ar[rr, "\pr_+"]
			\ar[dd, "\pr_-"']
			\ar[dr, "\nu" description]
			\ar[ddrr, bend right=20pt, "\varrho"' near start]
	&
	&	\tilde{\Omega}_+
			\ar[d, "\nu"]
			\ar[dd, bend left = 40pt, "\varrho_+"]
\\
	&	\Omega
			\ar[r]
			\ar[d, crossing over]
			\ar[dr, "\varrho" description]
	&	\Omega_+
			\ar[d, "\varrho_+"]
\\
		\tilde{\Omega}_-
			\ar[r, "\nu"']
			\ar[rr, bend right = 25pt, "\varrho_-"']
	&	\Omega_-
			\ar[r, "\varrho_-"']
	&	\Xi
\fullstop
\end{tikzcd}
\end{eqn}
With the help of these isomorphisms, we can naturally and interchangeably treat the local solution $\hat{\phi}$ as a holomorphic vector-valued function on any of the open subsets $\Omega, \Omega_+, \Omega_-$ via pullback by the corresponding anchor map, as well as on any of the open subsets $\tilde{\Omega}, \tilde{\Omega}_+, \tilde{\Omega}_-$ via a further pullback by $\nu$.
Rather than introduce specialised symbols for all these functions, we opt for keeping our expressions uncluttered at the expense of slight abuse of notation.
We denote the pullbacks to both $\Omega_\pm$ and $\tilde{\Omega}_\pm$ by $\hat{\phi}^\tinybrac{\pm} = (\hat{\phi}^\tinybrac{\pm}_+, \hat{\phi}^\tinybrac{\pm}_-)$, and we continue to denote the pullbacks to both $\Omega$ and $\tilde{\Omega}$ simply by $\hat{\phi}$.
In other words, from now on, the symbol ``\,$\hat{\phi}$\,'' denotes both the holomorphic vector-valued function of $(z,\xi) \in \Xi \subset \CC_z \times \CC_\xi$, the corresponding holomorphic vector-valued function on $\Omega \subset B$ obtained via pullback by $\varrho$, as well as the holomorphic vector-valued function on $\tilde{\Omega} \subset B$ obtained via pullback by $\varrho$, with the intended meaning always clear from the context.
Similarly, for the symbols ``\,$\hat{\phi}^\tinybrac{\pm}$\,''.
We summarise these conventions as follows:
\begin{eqns}
	\hat{\phi}^\tinybrac{\pm} 
		= (\hat{\phi}^\tinybrac{\pm}_+, \hat{\phi}^\tinybrac{\pm}_-)
		\coleq \varrho^\ast_\pm \hat{\phi}
\qquad
	&\text{as an element of $\cal{O} (\Omega_\pm, \CC^2)$ or $\cal{O} (\tilde{\Omega}_\pm, \CC^2)$;}
\\
	\hat{\phi}
		= (\hat{\phi}_+, \hat{\phi}_-)
		\coleq \varrho^\ast \hat{\phi}
\qquad
	&\text{as an element of $\cal{O} (\Omega, \CC^2)$ or $\cal{O} (\tilde{\Omega}, \CC^2)$.}
\end{eqns}
More explicitly, for any $\bm{\gamma} = (\gamma^+, \gamma^-)$ in $B$ or in $\tilde{B}$ with sufficiently small central charge $\xi = \Z (\bm{\gamma}) = \pm \Z (\gamma^\pm)$, we write:
\begin{eqntag}
\label{250305174840}
	\hat{\phi} (\bm{\gamma}) 
		= \hat{\phi}^\tinybrac{\pm} (\gamma^\pm)
		\coleq \hat{\phi} \big( \varrho (\bm{\gamma}) \big)
		= \hat{\phi} \big( \varrho_\pm (\gamma^\pm) \big)
		= \hat{\phi} (z, \xi)
\fullstop{,}
\end{eqntag}
where $z = \rm{s} (\bm{\gamma}) = \rm{s} (\gamma^\pm)$ is such that $(z,\xi) \in \Xi$.

\paragraph{Lifting the coefficients to the unfolded Borel space.}
Correspondingly, we can reinterpret the Initial Value Problem \eqref{250218203125} over the open subset $\Omega$ or $\tilde{\Omega}$ as follows.
We pullback the functions $a = a(z), b = b(z), c = c(z)$ using the corresponding anchor maps.
In fact, these functions are globally well-defined meromorphic functions on $\CC_z \times \CC_\xi$.
So, abusing notation, we let
\begin{eqn}
		a_{\pm} \coleq \varrho_\pm^\ast a,
\quad 	b_{\pm} \coleq \varrho_\pm^\ast b,
\quad	c_{\pm} \coleq \varrho_\pm^\ast c
\end{eqn}
denote both their pullbacks to globally well-defined meromorphic functions on $\Pi_1 (\CC_z)$ and their pullbacks to globally well-defined holomorphic functions on $\Pi_1 (\CC^\ast_z)$.
By a similar abuse of notation, we continue to denote their pullbacks by $\varrho$ to global meromorphic functions on $B$ and global holomorphic functions on $\tilde{B}$ simply by $a,b,c$:
\begin{eqn}
	a \coleq \varrho^\ast a
\fullstop{,}
\qqquad
	b \coleq \varrho^\ast b
\fullstop{,}
\qqquad
	c \coleq \varrho^\ast c
\fullstop
\end{eqn}
Explicitly, for any $\bm{\gamma} = (\gamma^+, \gamma^-)$ in $B$ or in $\tilde{B}$ with source $\rm{s} (\bm{\gamma}) = \rm{s} (\gamma^\pm) = z$, we have 
\begin{eqntag}
\label{250305174910}
	a (\bm{\gamma}) = a^\tinybrac{\pm} (\gamma^\pm) \coleq a (z),
\quad
	b (\bm{\gamma}) = b^\tinybrac{\pm} (\gamma^\pm) \coleq b (z),
\quad
	c (\bm{\gamma}) = c^\tinybrac{\pm} (\gamma^\pm) \coleq c (z).
\end{eqntag}

\paragraph{Lifting the vector fields to the unfolded Borel space.}
It remains to interpret the vector fields $\pm \V - \del_\xi$ on the lefthand side of \eqref{250218203125} as vector fields on the Borel space $B$.

\begin{lemma}
\label{250222200044}
The pairs
\begin{eqn}
	\V_+ \coleq \left( \V^\rm{s}, \V^\rm{s} + 2 \V^\rm{t} \right)
\qtext{and}
	\V_- \coleq \left( - \V^\rm{s} - 2 \V^\rm{t}, -\V^\rm{s} \right)
\end{eqn}
define meromorphic vector fields on the algebraic surface $B$.
Similarly, they define holomorphic vector fields on the holomorphic manifold $\tilde{B}$.
They are the unique such vector fields that satisfy $\varrho_\ast \V_\pm = \pm \V - \del_\xi$.
\end{lemma}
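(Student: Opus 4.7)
The plan is to view $B$ as the analytic subvariety of $\Pi_1(\CC_z) \times \Pi_1(\CC_z)$ cut out by the fibre-product condition, so that (at smooth points) its tangent bundle is the subbundle of $T\Pi_1(\CC_z) \oplus T\Pi_1(\CC_z)$ consisting of pairs $(W_+, W_-)$ satisfying the tangency identity $(\varrho_+)_\ast W_+ = (\varrho_-)_\ast W_-$. Showing that $\V_+ = (\V^\rm{s}, \V^\rm{s} + 2\V^\rm{t})$ and $\V_- = (-\V^\rm{s} - 2\V^\rm{t}, -\V^\rm{s})$ define vector fields on $B$ thus reduces to verifying this identity for each of them.

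The verification is a short computation from the push-forward formulas of \autoref{250222194723}, namely $\rm{s}_\ast \V^\rm{s} = \V$, $\rm{s}_\ast \V^\rm{t} = 0$, $\Z_\ast \V^\rm{s} = -\del_\xi$, $\Z_\ast \V^\rm{t} = +\del_\xi$, combined with $\varrho_\pm = (\rm{s}, \pm\Z)$. For $\V_+$ the left-hand side evaluates to $(\varrho_+)_\ast \V^\rm{s} = (\V, -\del_\xi)$, while the right-hand side expands as $(\varrho_-)_\ast(\V^\rm{s} + 2\V^\rm{t}) = (\V + 0, \, +\del_\xi - 2\del_\xi) = (\V, -\del_\xi)$; the analogous calculation for $\V_-$ produces $(-\V, -\del_\xi)$ on both sides. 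The push-forward under $\varrho = \varrho_\pm \circ \pr_\pm$ then follows immediately from the same computation: $\varrho_\ast \V_+ = (\varrho_+)_\ast \V^\rm{s} = \V - \del_\xi$ and $\varrho_\ast \V_- = (\varrho_-)_\ast(-\V^\rm{s}) = -\V - \del_\xi$.

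Global regularity is inherited directly from $\V^\rm{s}$ and $\V^\rm{t}$: these are meromorphic on $\Pi_1(\CC_z)$ with poles only along the source fibre over $z = 0$ and genuinely holomorphic on $\Pi_1(\CC^\ast_z)$, so $\V_\pm$ are meromorphic on the algebraic surface $B$ (with poles confined to the critical locus $\Gamma$) and holomorphic on $\tilde{B}$. For uniqueness I would note that $\varrho : B \to \CC_z \times \CC_\xi$ is a local biholomorphism on the dense open subset away from the fibrewise ramification locus of the central charges $\Z_\tau$ described in \autoref{250217200331}; on this open subset any lift of $\pm\V - \del_\xi$ is pointwise determined, so it must agree with $\V_\pm$, and meromorphic (resp.\ holomorphic) continuation extends the equality to all of $B$ (resp.\ $\tilde{B}$). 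I do not expect a genuine obstacle here; the only subtle point is recognising that the asymmetric ``$2\V^\rm{t}$'' correction is dictated by the sign flip between $+\Z$ and $-\Z$ in the two anchor maps, so that the two $\xi$-components conspire to cancel to $-\del_\xi$ on both sides of the tangency identity.
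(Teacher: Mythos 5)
Your argument is correct and is essentially the paper's own proof: the paper likewise identifies tangent vectors to the fibre product $B$ at $\bm{\gamma}=(\gamma^+,\gamma^-)$ with pairs $(v^+,v^-)$ satisfying $(\varrho_+)_\ast v^+ = (\varrho_-)_\ast v^-$ and verifies this for $\V_\pm$ by the same short computation using the push-forward identities of \autoref{250222194723}, leaving the regularity and uniqueness claims implicit, which your local-biholomorphism argument via the anchor map away from the ramification locus supplies correctly. The only slip is the incidental parenthetical that the poles of $\V_\pm$ on $B$ are confined to the critical locus $\Gamma$: the lifts $\V^\rm{s},\V^\rm{t}$ also acquire poles along the fibre $\rm{s}^\inv(0)$ over the turning point (the $z^{-4}$ factors), but this is immaterial to the lemma as stated.
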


\begin{proof}
By definition of the fibre product, a tangent vector to $B$ at $\bm{\gamma} = (\gamma^+, \gamma^-)$ is a pair $(v^+, v^-)$ consisting of a tangent vector to $\Pi_1 (\CC_z)$ at $\gamma^+$ and a tangent vector at $\gamma^-$ such that $(\varrho_+)_\ast v^+ = (\varrho_-)_\ast v^-$ where $(\varrho_\pm)_\ast$ is the derivative of $\varrho_\pm$ at $\gamma^\pm$.
So, for example, to check that $\V_+$ is a well-defined vector field on $B$, we just need to verify the identity $(\varrho_+)_\ast \V^\rm{s} = (\varrho_-)_\ast (\V^\rm{s} + 2 \V^\rm{t})$.
This is a simple calculation using the properties of source- and target-lifts of $\V$ that can be found in \autoref{250307191800}: the lefthand side is $\V - \del_\xi$ and the righthand side is $\V + \del_\xi - 2 \del_\xi$.
\end{proof}

\paragraph{IVP on the unfolded Borel space.}
We can now consider the following Initial Value Problem over the unfolded Borel space $B$:
\begin{eqntag}
\label{250305170843}
\begin{cases}
	\V_+ \phi_+ = \Phi
\\	\V_- \phi_- = \Phi
\end{cases}
\qqtext{such that}
	\phi_\pm \big|_{\CC^\ast_z} = c
\fullstop{,}
\end{eqntag}
where $\Phi$ is the following expression in $\phi_+,\phi_-$ that depends holomorphically on $\bm{\gamma} \in B$:
\begin{eqntag}
\label{250305190114}
	\Phi = \Phi \big( \bm{\gamma}, \phi_+, \phi_- \big)
		\coleq a (\gamma) (\phi_+ - \phi_-)^{\ast 2} (\bm{\gamma}) + b (\gamma) (\phi_+ - \phi_-) (\bm{\gamma})
\fullstop
\end{eqntag}
Recall that the convolution product on $B$ was defined in \autoref{250307192033}.
Then we have the following equivalence.

\begin{lemma}
\label{250222201015}
The Initial Value Problems \eqref{250218203125} and \eqref{250305170843} are equivalent over $\varrho : \Omega \iso \Xi$.
\end{lemma}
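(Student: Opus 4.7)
Since $\varrho : \Omega \iso \Xi$ is a biholomorphism by \autoref{250218172045}, pullback $\varrho^\ast : \cal{O}(\Xi) \iso \cal{O}(\Omega)$ is an isomorphism of algebras. The strategy is therefore to check that, term by term, each ingredient of the Initial Value Problem \eqref{250218203125} pulls back under $\varrho$ to the corresponding ingredient of \eqref{250305170843}, so that a local holomorphic function $\phi_\pm \in \cal{O}(\Xi)$ solves \eqref{250218203125} if and only if its pullback $\varrho^\ast \phi_\pm \in \cal{O}(\Omega)$ solves \eqref{250305170843}.

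First I would handle the differential operators on the lefthand sides. This is already done by \autoref{250222200044}, which asserts $\varrho_\ast \V_\pm = \pm \V - \del_\xi$; consequently, for any $\phi_\pm \in \cal{O}(\Xi)$,
\begin{eqn}
	\V_\pm \big( \varrho^\ast \phi_\pm \big) = \varrho^\ast \big( (\pm \V - \del_\xi) \phi_\pm \big)
\fullstop
\end{eqn}
Next, the coefficients $a(z), b(z)$ and the initial datum $c(z)$ on the righthand side of \eqref{250218203125} were \emph{defined} in \eqref{250305174910} so that their lifts to $\Omega$ are precisely $\varrho^\ast a, \varrho^\ast b, \varrho^\ast c$. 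Thus the only nontrivial ingredient left is the convolution product appearing in the quadratic term of $\Phi$. But \autoref{250308085903} gives exactly the required distributivity,
\begin{eqn}
	\varrho^\ast \big( (\phi_+ - \phi_-)^{\ast 2} \big)
		= \big( \varrho^\ast (\phi_+ - \phi_-) \big)^{\ast 2}
\fullstop{,}
\end{eqn}
so combining these identities yields $\varrho^\ast \Phi_\Xi = \Phi_B \circ \varrho$ where $\Phi_\Xi$ and $\Phi_B$ denote the nonlinearity in \eqref{250218203125} and \eqref{250305170843}, respectively.

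Finally, I would verify that the initial condition submanifolds match. The identity bisection $\CC^\ast_z \inj B$ consists of pairs of constant paths $\bm{1_z} = (1_z, 1_z)$, whose central charge vanishes: $\Z(\bm{1_z}) = 0$. Since the biholomorphism $\varrho = (\rm{s}, \Z) : \Omega \iso \Xi$ sends the identity bisection to the subset $\set{\xi = 0} \subset \Xi$ with $\rm{s}$ restricting to the identity on $\CC^\ast_z$, the condition $\phi_\pm(z,0) = c(z)$ on $\Xi$ is equivalent under pullback to $(\varrho^\ast \phi_\pm)|_{\CC^\ast_z} = \varrho^\ast c|_{\CC^\ast_z} = c$ on $\Omega$. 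Assembling these three compatibilities establishes the claimed equivalence; no single step is a genuine obstacle since everything was arranged in the preceding subsections so that this verification is essentially a matter of unwinding definitions.
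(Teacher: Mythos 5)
Your proposal is correct and follows exactly the route the paper intends: the paper gives no separate proof of this lemma, treating it as the assembly of \autoref{250218172045} (the biholomorphism $\varrho:\Omega\iso\Xi$), \autoref{250222200044} (the vector-field lifts), the pullback definitions of $a,b,c$, and \autoref{250308085903} (compatibility of $\varrho^\ast$ with $\ast$), which is precisely the term-by-term verification you carry out. Your additional check that the identity bisection maps to $\set{\xi=0}$ is a sensible explicit confirmation of the matching of initial-condition loci that the paper leaves implicit.
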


Now we rewrite \eqref{250305170843} as an integral equation on the unfolded Borel covering space.
Recall that by \autoref{250305163610}, if the neighbourhood $\Xi \subset \CC^\ast_z \times \CC_\xi$ of $\xi = 0$ is sufficiently small, then the Initial Value Problem \eqref{250218203125} is equivalent for all $(z,\xi) \in \Xi$ to the following integral equation:
\begin{eqntag}
\label{250305165333}
\begin{cases}
\displaystyle
	\phi_+ (z,\xi)
		= c (z) - \int_0^\xi \Phi \Big( z^+_{\xi - s}, s; \phi_+ (z^+_{\xi - s}, s), \phi_- (z^+_{\xi - s}, s) \Big) \d{s}
\fullstop{,}
\\[6pt]
\displaystyle
	\phi_- (z,\xi)
		= c (z) - \int_0^\xi \Phi \Big( z^-_{\xi - s}, s; \phi_+ (z^-_{\xi - s}, s), \phi_- (z^-_{\xi - s}, s) \Big) \d{s}
\fullstop{,}
\end{cases}
\end{eqntag}
where $z^\pm_{\xi - s}$ is the unique point in $\CC^\ast_z$ satisfying the identity
\begin{eqn}
	\xi - s = \pm \int_{z}^{z^\pm_{\xi-s}} \lambda
\fullstop
\end{eqn}
If we interpret $z$ in \eqref{250305165333} as the source of a pair of curves $\gamma^+, \gamma^- : [0,\xi] \to \CC^\ast_z$ with central charge $\Z(\gamma^+) = - \Z (\gamma^-) = \xi$, then the point $z^\pm_{\xi - s}$ in the integrand can be interpreted as the source of the target-truncation $\bar{\gamma}^\pm_s \coleq \gamma^\pm |_{[\xi - s, \xi]}$ with central charge $\pm \Z (\bar{\gamma}^\pm_s) = s$.
Consequently, the two equations in \eqref{250305165333} may be written respectively on $\tilde{\Omega}_+$ and $\tilde{\Omega}_-$ as follows:
\begin{eqntag}
\label{250305173948}
\begin{cases}
\displaystyle
	\phi_+^\tinybrac{+} (\gamma^+)
		= c^\tinybrac{+} (\gamma^+) - \int_0^{+\Z (\gamma^+)} \Phi \Big( \bar{\gamma}^{\,+}_s; \phi^\tinybrac{+}_+ (\bar{\gamma}^{\,+}_s), \phi^\tinybrac{+}_- (\bar{\gamma}^{\,+}_s) \Big) \d{s}
\fullstop{,}
\\[6pt]
\displaystyle
	\phi_-^\tinybrac{-} (\gamma^-)
		= c^\tinybrac{-} (\gamma^-) - \int_0^{-\Z (\gamma^-)} \Phi \Big( \bar{\gamma}^{\,-}_s; \phi^\tinybrac{-}_+ (\bar{\gamma}^{\,-}_s), \phi^\tinybrac{-}_- (\bar{\gamma}^{\,-}_s) \Big) \d{s}
\fullstop
\end{cases}
\end{eqntag}
These equations are valid for all pairs of paths $(\gamma^+, \gamma^-) \in \tilde{\Omega}_+ \times \tilde{\Omega}_-$ with the same source and opposite central charge (i.e., such that $\tilde{\varrho}_+ (\gamma^+) = \tilde{\varrho}_- (\gamma^-)$), and any target-truncations $\bar{\gamma}^\pm_s \coleq \gamma^\pm |_{[\xi - s, \xi]}$ with central charge $\pm \Z (\bar{\gamma}^\pm_s) = s$.
It follows that the pair $\bm{\gamma} = (\gamma^+, \gamma^-)$ defines a point in the open subset $\tilde{\Omega}$ of the unfolded Borel covering space.
Correspondingly, we lift the target-truncations $\bar{\gamma}^\pm_s$ to the two pairs $\bar{\bm{\gamma}}^+_s \coleq \big( \bar{\gamma}^+_s, (\bar{\gamma}^+_s)^\dagger \big)$ and $\bar{\bm{\gamma}}^-_s \coleq \big( (\bar{\gamma}^-_s)^\dagger, \bar{\gamma}^-_s \big)$ that define the target-truncations of $\bm{\gamma}$ as explained in \autoref{250307192033}.
Therefore, taking into account our notational conventions \eqref{250305174840} and \eqref{250305174910} as well as the fact that $\Z (\bm{\gamma}) = \pm \Z (\gamma^\pm)$ and $\Z (\bar{\bm{\gamma}}^\pm_s) = \pm \Z (\bar{\gamma}_s^\pm)$, the system of integral equations \eqref{250305173948} may be written as follows:
\begin{eqntag}
\label{250305173150}
\begin{cases}
\displaystyle
	\phi_+ (\bm{\gamma})
		= c (\bm{\gamma}) - \int_0^{\Z (\bm{\gamma})} \Phi \Big( \bar{\bm{\gamma}}^+_s; \phi_+ (\bar{\bm{\gamma}}^+_s), \phi_- (\bar{\bm{\gamma}}^+_s) \Big) \d{s}
\fullstop{,}
\\[6pt]
\displaystyle
	\phi_- (\bm{\gamma})
		= c (\bm{\gamma}) - \int_0^{\Z (\bm{\gamma})} \Phi \Big( \bar{\bm{\gamma}}^-_s; \phi_+ (\bar{\bm{\gamma}}^-_s), \phi_- (\bar{\bm{\gamma}}^-_s) \Big) \d{s}
\fullstop
\end{cases}
\end{eqntag}
Finally, reparameterising the target-truncations $\bar{\bm{\gamma}}^\pm_s$ using their arc-length, we get the following lemma.

\begin{lemma}
\label{250305181745}
The Initial Value Problem \eqref{250305170843} is equivalent to the following system of integral equations over the unfolded Borel covering space $\tilde{B}$:
\begin{eqntag}
\label{250305181917}
\begin{cases}
\displaystyle
	\phi_+ (\bm{\gamma})
		= c (\bm{\gamma}) - \int_0^{|\bm{\gamma}|} \Phi \Big( \bar{\bm{\gamma}}^+_s; \phi_+ (\bar{\bm{\gamma}}^+_s), \phi_- (\bar{\bm{\gamma}}^+_s) \Big) \D{s}
\fullstop{,}
\\[6pt]
\displaystyle
	\phi_- (\bm{\gamma})
		= c (\bm{\gamma}) - \int_0^{|\bm{\gamma}|} \Phi \Big( \bar{\bm{\gamma}}^-_s; \phi_+ (\bar{\bm{\gamma}}^-_s), \phi_- (\bar{\bm{\gamma}}^-_s) \Big) \D{s}
\fullstop{,}
\end{cases}
\end{eqntag}
where $\D{s} \coleq \pm \rm{t}^\ast \lambda_{\gamma^\pm_s}$ and $\bm{\gamma}^\pm_s$ are the two arc-length target-truncations of any synchronised parameterisation of $\bm{\gamma}$.
\end{lemma}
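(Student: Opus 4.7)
The plan is to establish the equivalence by recognising that the system \eqref{250305181917} is exactly what one obtains by integrating the IVP \eqref{250305170843} along the canonical one-parameter families of arc-length target-truncations, and then to extend the local equivalence already proven in \autoref{250305163610} to the whole of $\tilde{B}$. First, I would verify that the families $s \mapsto \bar{\bm{\gamma}}^\pm_s$ from \autoref{250308094543}, when parameterised by arc-length, are integral curves of the vector fields $\V_\pm$ constructed in \autoref{250222200044}. Geometrically, as $s$ grows, $\bar{\bm{\gamma}}^+_s$ is obtained by trimming the first path $\gamma^+$ from its source and synthesising the auxiliary path $(\bar{\gamma}^+_s)^\dagger$ that travels backwards along $(\gamma^+)^\inv$ and forwards along $\gamma^-$; this is precisely the infinitesimal action generated by $\V_+ = (\V^\rm{s}, \V^\rm{s} + 2 \V^\rm{t})$, and symmetrically for $\V_-$. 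At $s = 0$, both target-truncations collapse to $\bm{1}_{\rm{s}(\bm{\gamma})}$ on the identity bisection, so the initial condition $\phi_\pm|_{\CC^\ast_z} = c$ enters naturally.

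Next, assuming $(\phi_+, \phi_-)$ satisfies \eqref{250305170843}, the Fundamental Theorem of Calculus applied along the integral curves $\bar{\bm{\gamma}}^\pm_s$ yields \eqref{250305181917}, with the measure $\D{s} = \pm \rm{t}_\pm^\ast \lambda_{\gamma^\pm_s}$ appearing precisely because $\V_\pm$ have been normalised so that $\varrho_\ast \V_\pm = \pm \V - \del_\xi$ and $\lambda(\V) = 1$. Conversely, given continuous $(\phi_+, \phi_-)$ satisfying \eqref{250305181917}, differentiating the integral in the arc-length parameter recovers the ODEs, and evaluation at $s = 0$ recovers the initial condition. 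I would then check that all ingredients of \eqref{250305181917} make global sense on $\tilde{B}$: the source-pulled-back coefficients $a, b, c$ are holomorphic on $\tilde{B}$, the convolution product of \autoref{250307163506} was constructed precisely so that $\Phi(\bar{\bm{\gamma}}^\pm_s; \phi_+, \phi_-)$ is well-defined, and the integrand is independent of the synchronised parameterisation of $\bm{\gamma}$ by \autoref{250308085903}. Compatibility with the local integral equation \eqref{250305165333} on $\Xi \subset \CC^\ast_z \times \CC_\xi$ follows by pullback through $\varrho : \tilde{\Omega} \iso \Xi$, invoking \autoref{250305163610}.

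The main obstacle I anticipate is the justification that the two distinct one-parameter families $s \mapsto \bar{\bm{\gamma}}^\pm_s$ genuinely realise integral curves of $\V_\pm$ respectively, rather than merely of their $\varrho$-images. The subtlety, highlighted in \autoref{250306143123}, is that the naive pair of target-truncations $(\bar{\gamma}^+_s, \bar{\gamma}^-_s)$ does not lie in $\tilde{B}$, so one is forced to introduce the companion paths $(\bar{\gamma}^\pm_s)^\dagger$. I would resolve this by a direct local coordinate computation in the trivialisation \eqref{250223102438} of $\tilde{B}$: writing $\V^\rm{s}, \V^\rm{t}$ explicitly from the formulas in the paragraph preceding \autoref{250222194723} and differentiating the coordinates $(x,y,z)$ of $\bar{\bm{\gamma}}^\pm_s$ with respect to $s$, the resulting tangent vectors must match the components of $\V_\pm$ as defined in \autoref{250222200044}. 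Once this identification is verified in a neighbourhood of the identity bisection, analytic continuation along $\tilde{B}$ extends it everywhere, and the integral equation descends to a bona fide restatement of the IVP.
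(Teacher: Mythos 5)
Your strategy is essentially the paper's own: the paper obtains \eqref{250305181917} by rewriting the flow-coordinate (characteristic) integral equation of \autoref{250305163610} on the fibre product, interpreting the points $z^\pm_{\xi-s}$ as sources of target-truncations, lifting these to the elements $\bar{\bm{\gamma}}^\pm_s$ of \autoref{250308094543}, and reparameterising by arc-length; your plan to exhibit $s \mapsto \bar{\bm{\gamma}}^\pm_s$ as integral curves of $\mp\V_\pm$ and integrate along them is precisely the geometric content of that derivation made explicit on $\tilde{B}$, and the auxiliary points you list (parameterisation-independence, \autoref{250308085903}, compatibility over $\varrho : \tilde{\Omega} \iso \Xi$) are the right ones.

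There is, however, a genuine error at the step where the initial condition enters. By \autoref{250308094543} (see \autoref{250308095823}), at $s = 0$ the target-truncations do \emph{not} collapse to $\bm{1}_{\rm{s}(\bm{\gamma})}$: one has $\bar{\bm{\gamma}}^\pm_0 = \bm{1}_{\rm{t}_\pm(\bm{\gamma})}$, the constant pair at the \emph{target} of $\gamma^\pm$, and $\bar{\bm{\gamma}}^\pm_s$ equals $\bm{\gamma}$ only at $s = |\bm{\gamma}|$. The coordinate check you propose does confirm that, in the central-charge parameterisation, $\tfrac{d}{ds}\bar{\bm{\gamma}}^\pm_s = -\V_\pm$; but note that for $\V_\pm$ to be tangent to the fibre product at all you must use the target-lift with $v$ evaluated at the target point (e.g.\ $\V^\rm{t} = -6(z+u)^{-4}\,\del_u$ on $\Pi_1(\CC_z)$), not the expression $v(z)\,\del_u$ printed before \autoref{250222194723}, with which your verification would fail. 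With the endpoints correctly identified, integrating $\V_\pm\phi_\pm = \Phi$ along these curves produces the boundary term $\phi_\pm\big(\bm{1}_{\rm{t}_\pm(\bm{\gamma})}\big) = c\big(\rm{t}_\pm(\bm{\gamma})\big)$, i.e.\ $c$ evaluated at the target of $\gamma^\pm$ — in local coordinates $c(z^\pm_\xi)$, which is exactly the term $\tilde{a}_\pm(w)$ with $w = z^\pm_\xi$ arising in the flow-coordinate derivation of \autoref{250315132104} — whereas your mis-identification of the $s=0$ endpoint silently converts it into $c(\bm{\gamma})$, which under the convention \eqref{250305174910} means $c(\rm{s}(\bm{\gamma}))$. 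Since $c(z) = -6z^{-8}$ is not constant, these evaluations differ, so as written your fundamental-theorem-of-calculus step does not produce the displayed constant term; a first-order check at $\xi=0$ (comparing $\del_\xi\phi_\pm(z,0) = \pm\V c(z)$ with the $\xi$-derivative of the integral equation) shows the boundary term must indeed sit at the target. Correcting the endpoint identification and then reconciling where $c$ is evaluated with the statement you are proving is the missing step.
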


\section{Endless Analytic Continuation}
\label{250304161635}

In this final section, we construct the global solution $\phi$ of the Initial Value Problem \eqref{250305170843}, or equivalently the integral equation \eqref{250305181917}, and finish the proof of our main \autoref{250304131355} as well as all the remaining results stated in \autoref{250313191046}.
Our construction of $\phi$, presented in \autoref{250304154803}, uses the Contraction Mapping Principle over the Borel covering space which in particular allows for tight control on the behaviour of $\phi$ at infinity.
This allows us to deduce in \autoref{250311190749} the resurgence properties of the formal vector-valued solution $\hat{f}$ of the differential system \eqref{250216174108} and consequently the resurgence properties of the formal solution $\hat{q}$ of the deformed Painlevé I equation.
Finally, in \autoref{250311191044}, we flesh out the Borel summability properties of $\hat{f}$ and associated Stokes phenomenon and explain how they imply the analogous properties of $\hat{q}$ that were stated in \autoref{250304162404}.

\subsection{A Contraction Mapping}
\label{250304154803}

In this subsection, we construct the global solution of the integral equation \eqref{250305181917}.
The main result, which does the majority of this paper's (mild) analytic heavy-lifting, can be formulated as follows.

\begin{proposition}
\label{250311180618}
There is a unique vector-valued global holomorphic function $\phi \in \cal{O} (\tilde{B}, \CC^2)$ on the unfolded Borel covering space $\tilde{B}$ that satisfies the integral equation \eqref{250305181917} or equivalently the Initial Value Problem \eqref{250305170843}.
\end{proposition}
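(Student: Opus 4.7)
The plan is to apply the Contraction Mapping Principle to the nonlinear integral operator $T$ whose two components are the right-hand sides of \eqref{250305181917}, working on suitable weighted Banach spaces of holomorphic vector-valued functions on compact subsets of $\tilde{B}$, and then to patch the resulting fixed points together to produce a single global holomorphic solution.

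First, I would restrict attention to compact subsets $K \subset \tilde{B}$ which are \emph{saturated} with respect to target-truncation, meaning that whenever $\bm{\gamma} \in K$, all arc-length target-truncations $\bar{\bm{\gamma}}^\pm_s$ for $s \in [0, |\bm{\gamma}|]$ also belong to $K$, and such that $\rm{s}(K) \Subset \CC_z^\ast$ is bounded away from the origin (so that the meromorphic functions $a, b, c$ lifted from $\CC_z$ remain bounded on $K$). A convenient family is $K_{U,R} \coleq \rm{s}^{-1}(U) \cap \set{|\Z| \leq R}$ for $U \Subset \CC_z^\ast$ compact and $R > 0$; because every target-truncation $\bar{\bm{\gamma}}^\pm_s$ preserves the source and satisfies $|\Z(\bar{\bm{\gamma}}^\pm_s)| \leq |\Z(\bm{\gamma})|$, saturation is automatic. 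On each such $K$ I introduce, for a parameter $\K > 0$ to be chosen later, the weighted norm
\begin{equation*}
\|\phi\|_\K \coleq \sup_{\bm{\gamma} \in K} \max_{\pm} \bigl| \phi_\pm(\bm{\gamma}) \bigr| \, e^{-\K |\Z(\bm{\gamma})|},
\end{equation*}
and let $\cal{B}_\K(K)$ denote the Banach space of continuous vector-valued functions on $K$, holomorphic in its interior, with this norm.

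Second, I would establish the standard exponential-weight estimate for the convolution product. Because every point on the integration path defining $(\phi_+ - \phi_-)^{\ast 2}(\bm{\gamma})$ is itself a target-truncation of $\bm{\gamma}$ and thus lies in $K$, a direct calculation using $|e^{\K |\Z(\bar{\bm{\gamma}}_s)|} e^{\K |\Z(\bar{\bm{\gamma}}_{\L - s})|}| \leq e^{\K \L}$ yields an inequality of the shape
\begin{equation*}
\bigl| (\phi_+ - \phi_-)^{\ast 2}(\bm{\gamma}) \bigr| \leq \frac{C_K}{\K} \, \|\phi\|_\K^2 \, e^{\K |\Z(\bm{\gamma})|},
\end{equation*}
with $C_K$ depending only on $K$ through the supremum of $|\rm{t}_\pm^\ast \lambda|$ and the diameter of $\Z(K)$. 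Combining this with the bounds $|a|, |b| \leq C'_K$ and $|c| \leq C''_K$ on $K$, one finds that $T$ maps the closed ball of radius $R$ in $\cal{B}_\K(K)$ into itself and is Lipschitz with a constant that tends to zero as $\K \to \infty$. Choosing $R$ slightly larger than $C''_K$ and $\K$ sufficiently large therefore makes $T$ a strict contraction, producing a unique fixed point $\phi^K \in \cal{B}_\K(K)$.

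Third, I would glue. Since $T$ is local, in the sense that $(T\phi)(\bm{\gamma})$ only depends on the values of $\phi$ along the target-truncations of $\bm{\gamma}$, the uniqueness of the fixed point implies that $\phi^K$ and $\phi^{K'}$ coincide on $K \cap K'$ whenever both are saturated. Exhausting $\tilde{B}$ by an increasing sequence of saturated compacts $K_{U_n, R_n}$ with $U_n \nearrow \CC_z^\ast$ and $R_n \to \infty$, and passing to $z$-fibres through the source map $\rm{s} : \tilde{B} \to \CC_z^\ast$ using the fibrewise universal-cover description from \autoref{250315184627} and \autoref{250315193721}, produces a global holomorphic function $\phi \in \cal{O}(\tilde{B}, \CC^2)$ satisfying \eqref{250305181917} everywhere. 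Finally, restricting $\phi$ to the neighbourhood $\tilde{\Omega}$ of the identity bisection where \autoref{250305181745} gives the equivalence with the original Initial Value Problem \eqref{250305170843}, and invoking local uniqueness from \autoref{250221201747}, identifies this restriction with $\hat{\phi}$; conversely $\hat{\phi}$ extends to a global solution by its own uniqueness within $\cal{B}_\K$ on every saturated $K$.

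The main obstacle is the verification that the family of saturated compacts $K_{U,R}$ exhausts $\tilde{B}$ with the right uniformity: because $\tilde{B}$ is the fibrewise universal cover of $B^\ast$, paths in a single fibre $\tilde{B}_z$ may wind arbitrarily many times around a critical element, so the arc-length $|\bm{\gamma}|$ can be much larger than $|\Z(\bm{\gamma})|$; one must confirm that $K_{U,R}$ is genuinely compact and that the supremum $\sup_K |\rm{t}_\pm^\ast \lambda|$ remains finite despite such winding, which follows from the explicit trivialisation \eqref{250315184705} and the fact that $\rm{t}_\pm$ is controlled by $(z, \Z)$ modulo the discrete deck group.
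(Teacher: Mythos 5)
Your high-level strategy (contraction mapping on weighted Banach spaces, then exhaustion and gluing by uniqueness) is the same as the paper's, but two of your concrete choices break the argument. First, the domains $K_{U,R} = \rm{s}^{-1}(U)\cap\set{|\Z|\le R}$ are not invariant under the operations appearing in the integral operator. A target-truncation $\bar{\bm{\gamma}}^\pm_s$ does \emph{not} preserve the source: by \autoref{250308094543} its source is the intermediate point $\gamma^\pm(\L-s)$ of the path, which need not lie in $U$; and $|\Z(\bar{\bm{\gamma}}^\pm_s)|\le|\Z(\bm{\gamma})|$ also fails in general, since $\Z(\bar{\gamma}^\pm_s)=\Z(\gamma^\pm)-\Z(\gamma^\pm_{\L-s})$ can overshoot. (The sets are also not compact: over $\set{|\Z|\le R}$ the fibrewise universal cover has infinitely many sheets.) The paper avoids this by constraining the \emph{entire paths}, not just their sources, to avoid a disc around the origin — the excised space $\tilde{B}^\circ$ — which is genuinely closed under all truncations.

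Second, and more fatally, the weight $e^{-\K|\Z(\bm{\gamma})|}$ in your sup-norm cannot close the estimates. The operator integrates over the arc-length $\L=|\bm{\gamma}|$ of a representative, and your own bounds produce a factor of order $e^{\K\L}/\K$; to absorb this into $e^{\K|\Z(\bm{\gamma})|}/\K$ you need $\L\le|\Z(\bm{\gamma})|$, i.e.\ a geodesic representative. On the universal cover most homotopy classes admit no such representative — a path winding $n$ times around a Borel singularity has minimal arc-length growing with $n$ while $|\Z|$ stays bounded — so $e^{\K(\L-|\Z(\bm{\gamma})|)}$ is unbounded and the Lipschitz constant does \emph{not} tend to $0$ as $\K\to\infty$. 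This is exactly the difficulty you flag in your last paragraph, but it is not resolved by the trivialisation \eqref{250315184705}; it forces a different norm. The paper uses an $L^1$-type norm $\norm{f}_\K=\sup_{\bm{\gamma}}\inf\int_0^{|\bm{\gamma}|}e^{-\K s}|f(\bm{\gamma}_s)\D{s}|$, with the exponential weight in the arc-length parameter $s$ and an infimum over synchronised parameterisations; the Banach-algebra property for $\ast$ then follows by Fubini, and the contraction constant is $O(1/\K)$ uniformly in the winding. If you want to keep a sup-norm you would at minimum have to replace $|\Z(\bm{\gamma})|$ by the infimal arc-length of the homotopy class and re-verify the convolution estimate, which is substantially more delicate.
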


This subsection is devoted to the proof of this Proposition.
We employ the familiar strategy of realising the solution $\phi$ as the fixed point of a contraction mapping $\Upsilon$ on a suitable function space.
This function space is the infinite-dimensional vector space $\cal{O} (\tilde{B}, \CC^2)$ of vector-valued global holomorphic functions over the unfolded Borel covering space $\tilde{B}$.
However, the Contraction Mapping Principle for $\Upsilon$ fails if we allow paths on $\CC^\ast_z$ to come arbitrarily close to the origin.
To address this problem, we will introduce an exhaustion of $\CC^\ast_z$ by open subsets of the punctured $z$-plane obtained by excising closed discs around the origin of smaller and smaller radius.
This will allow us to introduce a sequence of suitable Banach spaces of functions over the unfolded Borel covering space where $\Upsilon$ can be shown to have a unique fixed point using the Contraction Mapping Principle.
We describe this excision process in \autoref{250221134517} and define the relevant Banach spaces in \autoref{250221134607}.
Then in \autoref{250221134622} we prove \autoref{250311180618} through a sequence of lemmas.

\subsubsection{The excised unfolded Borel covering space.}
\label{250221134517}

Let $\bar{\DD} \subset \CC_z$ be any compact and simply connected domain around the origin, such as a closed disc centred at the origin of finite nonzero radius, and consider the \dfn{excised $z$-plane}
\begin{eqn}
	\CC^\circ_z \coleq \CC_z \smallsetminus \bar{\DD}
\fullstop
\end{eqn}
Then the spaces $\CC^\circ_z$ and $\CC^\ast_z$ are homotopy equivalent, which means any concrete path in $\CC^\ast_z$ with source and target outside of $\bar{\DD}$ is homotopy equivalent to a concrete path entirely contained in $\CC_z \smallsetminus \bar{\DD}$.
This defines a canonical isomorphism
\begin{eqn}
	\Pi_1 (\CC^\circ_z) \iso \Pi_1 (\CC^\ast_z) \big|_{\CC^\circ_z}
\fullstop
\end{eqn}
The upshot is that the constructions of \autoref{250304152932} go through without any substantial modifications, resulting in the \dfn{excised unfolded Borel covering space}:
\begin{eqn}
	\tilde{B}^\circ
		\coleq \Pi_1 (\CC^\circ_z) \underset{\varrho_- \, \varrho_+}{\times} \Pi_1 (\CC^\circ_z)
\fullstop
\end{eqn}
Consequently, $\tilde{B}^\circ$ inherits a source map $\rm{s} : \tilde{B}^\circ \to \CC^\circ_z$, a central charge $\Z : \tilde{B}^\circ \to \CC_\xi$, and therefore an anchor map $\varrho : \tilde{B}^\circ \to \CC^\circ_z \times \CC_\xi$.
There is an obvious inclusion $\tilde{B}^\circ \subset \tilde{B}$ which translates into the inclusion $\cal{O} (\tilde{B}, \CC^2) \inj \cal{O} (\tilde{B}^\circ, \CC^2)$ of vector spaces.

\subsubsection{A Banach algebra over the unfolded Borel covering space.}
\label{250221134607}

Let $\CC^\circ_z \subset \CC^\ast_z$ be any excised $z$-plane in the above sense, and let $\tilde{B}^\circ$ be the corresponding excised unfolded Borel covering space.
Then we can define the following norm on the infinite-dimensional complex vector space of global holomorphic functions over $\tilde{B}^\circ$.

\begin{definition}
\label{250217202735}
For any $\K > 0$, we define the norm of any $f \in \cal{O} (\tilde{B}^\circ)$ by
\begin{eqntag}
\label{250131144547}
	\norm{ f }_\K \coleq 
		\sup_{\bm{\gamma} \in \tilde{B}^\circ}
		\inf
		\int_0^{|\bm{\gamma}|} e^{- \K s} \big| f (\bm{\gamma}_s) \D{s} \big|
\fullstop{,}
\end{eqntag}
where the infimum is taken over all source-truncations $\bm{\gamma}_s$ of $\bm{\gamma}$.
We also extend this norm to the vector spaces $\mathcal{O} (\tilde{B}^\circ, \CC^n)$ and $\mathcal{O} (\tilde{B}^\circ, \rm{M}_n)$ of vector-valued and matrix-valued functions by replacing the absolute value $| \cdot |$ with any extension to a vector and matrix norm; for definiteness, we take these to be the Euclidean norm and the Frobenius norm, respectively.
\end{definition}

\begin{definition}
\label{250221083403}
We introduce the vector subspace of $\cal{O} (\tilde{B}^\circ)$ of functions which are bounded with respect to this norm:
\begin{eqntag}
\label{250218131343}
	\mathscr{H}_\K (\tilde{B}^\circ)
		\coleq \set{ f \in \mathcal{O} (\tilde{B}^\circ) ~\Big|~ \norm{f}_\K < \infty }
\fullstop
\end{eqntag}
We will also consider the closed unit ball in this vector space, which we denote by
\begin{eqntag}
\label{250219221924}
	\mathscr{H}_\K^\heartsuit (\tilde{B}^\circ)
		\coleq \set{ f \in \mathcal{O} (\tilde{B}^\circ) ~\Big|~ \norm{f}_\K \leq 1 }
\fullstop
\end{eqntag}
We define the vector subspaces $\mathscr{H}_\K (\tilde{B}^\circ, \CC^n)$ and $\mathscr{H}_\K (\tilde{B}^\circ, \rm{M}_n)$ as well as the unit balls inside them accordingly.
\end{definition}

\begin{lemma}
\label{250131103416}
For any $\K > 0$ and any $n \geq 1$, the space $\mathscr{H}_\K (\tilde{B}^\circ, \CC^n)$ is a Banach space and the space $\mathscr{H}_\K (\tilde{B}^\circ, \rm{M}_n)$ is a Banach algebra with respect to the convolution product $\ast$; i.e., for all $f,g \in \mathscr{H}_\K (\tilde{B}^\circ, \rm{M}_n)$, we have the following inequality:
\begin{eqntag}
\label{250219210228}
	\norm{ f \ast g }_\K \leq \norm{f}_\K \norm{g}_\K
\fullstop
\end{eqntag}
In particular, the norm of the constant function is $\norm{ 1 }_\K = 1/\K$.
\end{lemma}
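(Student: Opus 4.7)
The plan is to verify the three claims in turn, focusing on the scalar case; the extensions to $\CC^n$- and $\rm{M}_n$-valued functions will then follow since the Frobenius matrix norm is submultiplicative. That $\|\cdot\|_\K$ satisfies the norm axioms is largely routine: positive homogeneity and the triangle inequality transfer directly from the integrand through the sup-inf, while positive definiteness uses the Identity Theorem together with the fact that arc-length source-truncation families foliate $\tilde{B}^\circ$. For the constant function, choosing the arc-length parameterisation reduces $|\D{s}|$ to $ds$ on $[0, |\bm{\gamma}|]$, so the integral equals $(1-e^{-\K|\bm{\gamma}|})/\K$ and the supremum $1/\K$ is achieved in the limit of arbitrarily long source-truncations, which exist in $\tilde{B}^\circ$.

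I would next prove the Banach-algebra inequality by a Fubini argument. Fix $\bm{\eta} \in \tilde{B}^\circ$ with $|\bm{\eta}| = \L$. The convolution definition \eqref{250307184040}, combined with the transitivity $(\bm{\eta}_r)_s = \bm{\eta}_s$ for $s \le r$, gives in arc-length parameterisation
\begin{equation*}
	|(f \ast g)(\bm{\eta}_r)| \le \int_0^r |f(\bm{\eta}_s)| \, |g(\bm{\eta}_{r-s})| \, ds.
\end{equation*}
Multiplying by $e^{-\K r} = e^{-\K s} \, e^{-\K(r-s)}$, applying Fubini on the triangle $\{0 \le s \le r \le \L\}$, and substituting $u = r-s$ produces
\begin{equation*}
	\int_0^\L e^{-\K r} |(f \ast g)(\bm{\eta}_r)| \, dr \le \int_0^\L e^{-\K s} |f(\bm{\eta}_s)| \bigl( \int_0^{\L-s} e^{-\K u} |g(\bm{\eta}_u)| \, du \bigr) ds.
\end{equation*}
The inner integral is bounded by $\|g\|_\K$ (by extending its upper limit to $\L$ and testing the sup in the definition of $\|g\|_\K$ against $\bm{\eta}$ itself), and the outer is bounded by $\|f\|_\K$. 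Taking the supremum over $\bm{\eta}$ yields the claim.

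The main obstacle is completeness, for which the plan is to upgrade $\|\cdot\|_\K$-bounds to locally uniform bounds using holomorphicity. Concretely, for each compact $K \subset \tilde{B}^\circ$ I would construct a constant $C_K > 0$ such that $\sup_K |f| \le C_K \|f\|_\K$ for every $f \in \mathscr{H}_\K (\tilde{B}^\circ)$. At a point $\bm{\gamma}_0 \in K$, the anchor map $\varrho$ is a local biholomorphism (since $\tilde{B}$ contains no critical elements), so there are local coordinates $(z,\xi)$ in a small polydisc around $\bm{\gamma}_0$, which is then naturally foliated by arc-length source-truncations of constant $z$. The mean-value inequality applied to $f$ on the polydisc, combined with Fubini against this foliation and the definition of $\|\cdot\|_\K$, yields a bound $|f(\bm{\gamma}_0)| \le C(\bm{\gamma}_0) \|f\|_\K$ with $C$ locally continuous, and compactness of $K$ delivers the uniform constant $C_K$. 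A Cauchy sequence $(f_n)$ in $\mathscr{H}_\K (\tilde{B}^\circ)$ is then locally uniformly Cauchy and converges by Weierstrass's theorem to some $f \in \cal{O} (\tilde{B}^\circ)$, after which a standard Fatou argument inside the integral defining $\|\cdot\|_\K$ yields $\|f\|_\K \le \liminf_n \|f_n\|_\K < \infty$ and $\|f_n - f\|_\K \to 0$, completing the proof.
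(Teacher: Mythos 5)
Your proposal is correct, and the Banach-algebra inequality is proved by essentially the same argument as the paper: use the transitivity of source-truncations ($\bm{\gamma}_{s,u} = \bm{\gamma}_u$), split $e^{-\K s} = e^{-\K u}e^{-\K(s-u)}$, apply Fubini on the triangle, enlarge the domain, and only then take the infimum and supremum. Where you genuinely diverge is completeness. The paper runs the standard $\L^1$-style argument: extract a subsequence with $\norm{f_{n_{k+1}} - f_{n_k}}_\K \leq 2^{-k}$, sum the telescoping series in the norm, and simply \emph{assert} that the limit is holomorphic as a limit of holomorphic partial sums --- which is the one soft spot in their proof, since convergence in an integral norm along one-real-dimensional truncation families does not by itself give locally uniform convergence. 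Your route attacks exactly that point: you first establish $\sup_K |f| \leq \C_K \norm{f}_\K$ on compacta by combining the sub-mean-value inequality with the local product structure supplied by the anchor map, so that $\norm{\cdot}_\K$-Cauchy sequences are locally uniformly Cauchy and Weierstrass applies. This is more work (you must check that the truncation-tails of nearby elements sweep out a full four-real-dimensional neighbourhood with controlled Jacobian, and you must handle the infimum over parameterisations when converting the integral bound into a pointwise one), but it buys a genuinely complete justification of the holomorphicity of the limit, plus the useful by-product that $\norm{\cdot}_\K$-convergence implies locally uniform convergence. Your explicit computation $\int_0^{|\bm{\gamma}|} e^{-\K s}\,ds = (1 - e^{-\K |\bm{\gamma}|})/\K$ for the constant function, with the supremum attained in the limit of long truncations, is also a detail the paper states without proof. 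One shared caveat: both you and the paper pass from a per-parameterisation inequality of the form $\mathrm{LHS}(p) \leq A(p)B(p)$ to $\inf_p \mathrm{LHS} \leq (\sup\inf A)(\sup\inf B)$ somewhat briskly, since the infima of $A$ and $B$ need not be approached by the same parameterisation; this is harmless here but worth a sentence if you write the argument out in full.
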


\begin{proof}
It is enough to consider the space $\mathscr{H}_\K (\tilde{B}^\circ)$.

\emph{Banach space structure.}
First, we show that the norm \eqref{250131144547} is complete, making $\big( \mathscr{H}_\K (\tilde{B}^\circ), \norm{\shortdummy} \big)$ into a Banach space.
The argument is very similar to the standard argument for the completeness of the $\L^1$-norm.

Let $(f_n)_{n=0}^\infty$ be a Cauchy sequence of functions in $\mathscr{H}_\K (\tilde{B}^\circ)$.
Recall that this means that for any $\epsilon > 0$ there is some $\N$ such that $\norm{f_n - f_m}_\K < \epsilon$ whenever $n,m > \N$.
We need to show that it converges in this norm; i.e., there is a function $f \in \mathscr{H}_\K (\tilde{B}^\circ)$ such that $\norm{f_n - f}_\K \to 0$ as $n \to \infty$.

Consider a subsequence $(f_{n_k})_{k=0}^\infty$ of $(f_n)$ such that $\norm{f_{n_{k+1}} - f_{n_k}}_\K \leq 2^{-k}$ for all $k \geq 0$.
We claim that the following telescopic series is the desired limit of $(f_n)$:
\begin{eqn}
	f \coleq f_{n_0} + \sum_{k=1}^\infty \big( f_{n_{k+1}} - f_{n_k} \big)
\fullstop
\end{eqn}
To see this, note that for any positive integer $\N$, its $\N$-th partial sum is bounded in the $\norm{\shortdummy}_\K$-norm by
\begin{eqn}
	\norm{ f_{n_0} }_\K + \sum_{k=1}^\N \norm{ f_{n_{k+1}} - f_{n_k} \big. }_\K
	\leq \norm{ f_{n_0} }_\K + \sum_{k=1}^\N \tfrac{1}{2^k}
\fullstop
\end{eqn}
This means that the series defining $f$ converges in the $\norm{\shortdummy}_\K$-norm and that the subsequence $(f_{n_k})$ converges to $f$.
Moreover, $f$ is a holomorphic function on $\tilde{B}^\circ$ because it is the limit of partial sums of a sequence of holomorphic functions on $\tilde{B}^\circ$.
We conclude that $f \in \mathscr{H}_\K (\tilde{B}^\circ)$, and so it remains to show that $f$ is the limit of $(f_n)$.
This follows from the fact that $(f_n)$ is a Cauchy sequence.
Indeed, for any $\epsilon > 0$, let $\N$ be such that $\norm{f_n - f_m}_\K < \epsilon / 2$ whenever $n,m > \N$.
Let $\M$ be such that $n_{\M} > \N$ and $\norm{ f_{n_{\M}} - f } < \epsilon / 2$.
Then
\begin{eqn}
	\norm{ f_n - f }_\K
	\leq \norm{ f_n - f_{n_\M} }_\K + \norm{ f_{n_\M} - f }_\K
	< \epsilon
\fullstop
\end{eqn}

\emph{Banach algebra structure.}
Next, we show that $\big( \mathscr{H}_\K (\tilde{B}^\circ), \norm{\shortdummy} \big)$ is a Banach algebra with respect to the convolution product defined in \autoref{250307163506}.
Suppose $f, g \in \mathscr{H}_\K (\tilde{B}^\circ)$; we need to demonstrate the inequality \eqref{250219210228}.
For any $\bm{\gamma} \in \tilde{B}^\circ$, choose any synchronously parameterised representative and let $\L \coleq |\bm{\gamma}|$.
Then we compute as follows:
\begin{eqns}
	\int_0^{\L} e^{-\K s} 
		\big| (f \ast g) (\bm{\gamma}_s) \D{s} \big|
	&\leq \int_0^{\L} e^{-\K s} 
		\int_0^{s} \big| f (\bm{\gamma}_{s,u}) \big| 
			\big| g (\bm{\gamma}_{s,s-u}) \big| 
			\big| \D{u} \D{s} \big|
\\	&= \int_0^{\L} 
		\int_0^{s} e^{-\K u} \big| f (\bm{\gamma}_{s,u}) \big| 
			e^{-\K (s-u)} \big| g (\bm{\gamma}_{s,s-u}) \big| 
			\big| \D{u} \D{s}\big|
\\	&\leq \int_0^{\L} 
		\int_0^{\L} e^{-\K u} \big| f (\bm{\gamma}_{u}) \big| 
			e^{-\K r} \big| g (\bm{\gamma}_{r}) \big| 
			\big| \D{u} \D{r}\big|
\\	&= \int_0^{\L} e^{-\K u} \big| f (\bm{\gamma}_{u}) \D{u} \big| 
		\cdot \int_0^{\L} e^{-\K r} \big| g (\bm{\gamma}_{r}) \D{r} \big|
\fullstop
\end{eqns}
Here, $\bm{\gamma}_s$ is the source-truncation of $\bm{\gamma}$ of length $s$, and $\bm{\gamma}_{s,u}$ is the source-truncation of $\bm{\gamma}_s$ of length $u$.
Going from the second line to the third, we first noticed that $f (\bm{\gamma}_{s,u}) = f (\bm{\gamma}_u)$ and $g (\bm{\gamma}_{s,s-u}) = g (\bm{\gamma}_{s-u})$, and then we changed the integration variable $s$ to $r = s - u$ and extended the integration domain in $r$.
The last line is the result of applying Fubini's Theorem.
Taking the infimum and then the supremum of both sides of this inequality yields \eqref{250219210228}.
\end{proof}

\subsubsection{A contraction mapping.}
\label{250221134622}
We now turn to the proof of \autoref{250311180618} by showing that the following integral operator on $\cal{O} (\tilde{B}, \CC^2)$ admits a unique fixed point.

\begin{definition}
\label{250305182841}
We define a mapping
\begin{eqntag}
	\Upsilon = (\Upsilon_+, \Upsilon_-) : \cal{O} (\tilde{B}, \CC^2) \to \cal{O} (\tilde{B}, \CC^2)
\fullstop{,}
\end{eqntag}
whose components $\Upsilon_\pm : \cal{O} (\tilde{B}, \CC^2) \to \cal{O} (\tilde{B})$ are given on all $\phi = (\phi_+, \phi_-) \in \cal{O} (\tilde{B}, \CC^2)$ and all $\bm{\gamma} \in \tilde{B}$ by the following formula:
\begin{eqntag}
\label{250219195751}
	\Upsilon_\pm \big[ \, \phi \, \big] (\bm{\gamma})
	\coleq c(\bm{\gamma}) 
		- \int_0^{|\bm{\gamma}|} \Phi \Big( \bar{\bm{\gamma}}^\pm_s; \phi_+ (\bar{\bm{\gamma}}^\pm_s), \phi_- (\bar{\bm{\gamma}}^\pm_s) \Big) \D{s}
\fullstop{,}
\end{eqntag}
where $\D{s} = \pm \rm{t}^\ast_{\pm} \lambda_{\bm{\gamma}_s}$ and $\bar{\bm{\gamma}}^\pm_s$ are the two arc-length target-truncations of any concrete representative of $\bm{\gamma}$.
For any excised complex plane $\CC^\circ_z \subset \CC^\ast_z$, the mapping $\Upsilon$ readily extends to a mapping $\Upsilon : \cal{O} (\tilde{B}^\circ, \CC^2) \to \cal{O} (\tilde{B}^\circ, \CC^2)$ by the very same formula.
\end{definition}

First, we show that the restriction of $\Upsilon$ to every Banach space $\mathscr{H}_\K (\tilde{B}^\circ, \CC^2)$ is well-defined.

\begin{lemma}
\label{250218210947}
Let $\CC^\circ_z \subset \CC^\ast_z$ be any excised complex $z$-plane, and let $\tilde{B}^\circ$ be the corresponding excised unfolded Borel covering space.
Then, for any $\K > 0$, the integral operator $\Upsilon$ restricts to a mapping
\begin{eqntag}
\label{250219224527}
	\Upsilon : \mathscr{H}_\K (\tilde{B}^\circ, \CC^2) \to \mathscr{H}_\K (\tilde{B}^\circ, \CC^2)
\fullstop
\end{eqntag}
\end{lemma}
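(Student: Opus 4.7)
The plan is to prove the lemma in two stages: first verify that $\Upsilon_\pm[\phi]$ defines a holomorphic function on $\tilde{B}^\circ$, and then derive the norm estimate $\|\Upsilon[\phi]\|_\K < \infty$ for $\phi \in \mathscr{H}_\K(\tilde{B}^\circ, \CC^2)$. Holomorphicity is essentially routine: the coefficients $a$, $b$, $c$ are pullbacks of rational functions of $z$ with poles only at the origin, hence holomorphic on all of $\tilde{B}^\circ$; the convolution $(\phi_+ - \phi_-)^{\ast 2}$ is holomorphic on $\tilde{B}^\circ$ by construction (\autoref{250307163506}); and since the integrand in \eqref{250219195751} depends holomorphically on $\bm{\gamma}$ for each fixed $s$, holomorphic dependence on parameters yields holomorphicity of the integral.

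For the norm estimate, decompose $\Upsilon_\pm[\phi] = c - \tilde{\Upsilon}_\pm[\phi]$ where $\tilde{\Upsilon}_\pm[\phi](\bm{\gamma}) \coleq \int_0^{|\bm{\gamma}|} \Phi(\bar{\bm{\gamma}}^\pm_s)\, \D{s}$. Since every constituent path of a point of $\tilde{B}^\circ$ avoids the excised disc $\bar{\DD}$, the source of every source- or target-truncation lies in $\CC^\circ_z$; consequently there is a constant $M > 0$ (depending only on $\bar{\DD}$) uniformly bounding $|a|$, $|b|$, $|c|$ on $\tilde{B}^\circ$. The constant term is immediate: $\|c\|_\K \leq M \|1\|_\K = M/\K$ by \autoref{250131103416}.

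The crux is bounding $\|\tilde{\Upsilon}_\pm[\phi]\|_\K$. For any $\bm{\gamma} \in \tilde{B}^\circ$ with arc-length $L$, the relevant iterated integral satisfies
\[
\int_0^L e^{-\K u} \bigl| \tilde{\Upsilon}_\pm[\phi](\bm{\gamma}_u) \bigr|\,|\D{u}| \leq \int_0^L \int_0^u e^{-\K u} \bigl|\Phi\bigl(\overline{(\bm{\gamma}_u)}{}^\pm_v\bigr)\bigr|\,|\D{v}|\,|\D{u}|.
\]
The key manoeuvre is the change of variables $(u,v) \mapsto (r, s) = (u - v, v)$, which has unit Jacobian and maps the triangle $\{0 \leq v \leq u \leq L\}$ onto $\{0 \leq r \leq L,\ 0 \leq s \leq L - r\}$. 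Under this substitution the element $\overline{(\bm{\gamma}_u)}^\pm_v \in \tilde{B}^\circ$ --- represented by the portion of $\bm{\gamma}$ from parameter $r$ to $r + s$, paired with the appropriate dagger partner (\autoref{250308094543}) --- coincides with the source-truncation $\bm{\delta}^r_s$ of the element $\bm{\delta}^r \coleq \overline{\bm{\gamma}}^\pm_{L-r} \in \tilde{B}^\circ$, and the exponential weight splits as $e^{-\K u} = e^{-\K r} e^{-\K s}$. Thus the double integral factorises as
\[
\int_0^L e^{-\K r}\left( \int_0^{L-r} e^{-\K s} \bigl|\Phi(\bm{\delta}^r_s)\bigr|\,|\D{s}| \right)|\D{r}|,
\]
each inner integral is bounded by $\|\Phi\|_\K$, yielding $\|\tilde{\Upsilon}_\pm[\phi]\|_\K \leq \|\Phi\|_\K / \K$. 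Finally, with $\psi \coleq \phi_+ - \phi_-$, the pointwise bound $|a|,|b| \leq M$ together with the Banach algebra inequality \eqref{250219210228} gives $\|\Phi\|_\K \leq M(\|\psi\|_\K^2 + \|\psi\|_\K) < \infty$.

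The main obstacle is precisely this Fubini manoeuvre: the norm $\|\cdot\|_\K$ is naturally defined via source-truncations whereas $\Upsilon_\pm$ features target-truncations, and one must carefully verify --- exploiting the explicit description of the dagger paths in \autoref{250308094543} --- that the reindexed family $(\overline{(\bm{\gamma}_u)}^\pm_v)$ with $r = u - v$ fixed genuinely realises the source-truncation family of a single well-defined element $\bm{\delta}^r \in \tilde{B}^\circ$, including the correct identification of the dagger components on both sides.
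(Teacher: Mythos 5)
Your proof is correct and takes essentially the same route as the paper's: bound $a,b,c$ uniformly on the excised space, dispose of the constant term via $\norm{1}_\K = 1/\K$, split the exponential weight and change variables $r = u-v$ in the double integral, and invoke the Banach-algebra inequality \eqref{250219210228} for the convolution terms. Your explicit identification of the reindexed truncations with the source-truncations of the target-truncation $\bm{\delta}^r = \bar{\bm{\gamma}}^\pm_{|\bm{\gamma}|-r}$ is precisely the manoeuvre the paper performs implicitly in its displayed estimate, and packaging the $a$- and $b$-terms into a single bound on $\norm{\Phi}_\K$ instead of estimating them separately is only a cosmetic difference.
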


\begin{proof}
Recall that $a(z) = 3z^{-2}$, $b(z) = -3z^{-5}$, and $c(z) = -6z^{-8}$.
They are bounded functions on any excised $z$-plane $\CC^\circ_z$, so there is a constant $\C > 0$ that gives the following bounds:
\begin{eqntag}
\label{250221093210}
	\big| a(z) \big|, \big| b(z) \big|, \big| c (z) \big| \leq \C
\qquad
	\forall\, z \in \CC^\circ_z
\fullstop
\end{eqntag}
Since the elements of $\tilde{B}^\circ$ are pairs of paths entirely contained in $\CC^\circ_z$, these bounds automatically translate into the bounds over the space $\tilde{B}^\circ$:
\begin{eqntag}
\label{250219204814}
	\big| a (\bm{\gamma}) \big|, \big| b (\bm{\gamma}) \big|, \big| c (\bm{\gamma}) \big| \leq \C
\qquad
	\forall\, \bm{\gamma} \in \tilde{B}^\circ
\fullstop
\end{eqntag}

Now, take an arbitrary element $\phi = (\phi_+, \phi_-) \in \mathscr{H}_\K (\tilde{B}^\circ, \CC^2)$.
We need to show that $\norm{ \Upsilon [ \phi ] }_\K < \infty$.
To this end, take an arbitrary synchronously parameterised $\bm{\gamma} \in \tilde{B}^\circ$, let $\L \coleq |\bm{\gamma}|$, and estimate the individual terms making up the expression
\begin{eqn}
	\int_0^{\L} e^{- \K s} \big| \Upsilon_\pm [\phi] (\bm{\gamma}_s) \D{s} \big|
\fullstop
\end{eqn}
Explicitly using \eqref{250219195751} and \eqref{250305190114}, this quantity is bounded by
\begin{eqns}
	\int_0^{\L} e^{- \K s} | c (\bm{\gamma}_s) \D{s} \big|
		&+ \int_0^{\L} e^{- \K s} \int_0^{s} \Big| \Phi_\pm \Big( \bar{\bm{\gamma}}^\pm_{s,u}; \phi_+ (\bar{\bm{\gamma}}^\pm_{s,u}), \phi_- (\bar{\bm{\gamma}}^\pm_{s,u}) \Big) \D{u}\D{s} \Big|
\\
	\leq	\int_0^{\L} e^{- \K s} | c (\bm{\gamma}_s) \D{s} \big|
		&+ \int_0^{\L} e^{- \K s} 
			\int_0^{s} \Big| a (\bar{\bm{\gamma}}^\pm_{s,u}) (\phi_+ - \phi_-)^{\ast 2} (\bar{\bm{\gamma}}^\pm_{s,u}) 
				\D{u} \D{s} \Big|
\\		&+ \int_0^{\L} e^{- \K s} 
			\int_0^{s} \Big| b (\bar{\bm{\gamma}}^\pm_{s,u}) (\phi_+ - \phi_-) (\bar{\bm{\gamma}}^\pm_{s,u}) 
				\D{u} \D{s} \Big|
\fullstop
\end{eqns}
Here, $\bm{\gamma}_s$ is the source-truncation of $\bm{\gamma}$ of length $s$, whilst $\bar{\bm{\gamma}}^\pm_{s,u}$ are the two target-truncations of $\bm{\gamma}_s$ of length $u$.
For the first term, we find:
\begin{eqn}
	\int_0^{\L} e^{-\K s} \big| c (\bm{\gamma}_s) \D{s} \big|
	\leq \C \int_0^{\L} e^{-\K s} |\D{s}|
\fullstop
\end{eqn}
The infimum of this quantity is bounded by $\C \norm{ 1 }_\K = \C / \K$, which is independent of $\bm{\gamma}$.
Therefore, we conclude that the supremum is finite and
\begin{eqntag}
\label{250221093419}
	\norm{ c }_\K \leq \C / \K
\fullstop
\end{eqntag}

Next, we estimate the terms involving $b \phi_\pm$ using \eqref{250219204814} and a similar line of reasoning as the one used in the proof of \autoref{250131103416}:
\begin{eqns}
	\int_0^{\L} e^{- \K s} 
		\int_0^{s} \Big| b (\bar{\bm{\gamma}}^\pm_{s,u}) \phi_\pm (\bar{\bm{\gamma}}^\pm_{s,u})
			\D{u} \D{s} \Big|
	&\leq \C \int_0^{\L}
		\int_0^{s} e^{- \K (s-u)} e^{- \K u} \big| \phi_\pm (\bar{\bm{\gamma}}^\pm_{s,u}) 
			\D{u} \D{s} \big|
\\
	&\leq \C \int_0^{\L}
		\int_0^{\L} e^{- \K r} e^{- \K u} \big| \phi_\pm (\bar{\bm{\gamma}}^\pm_{u}) 
			\D{u} \D{r} \big|
\fullstop
\end{eqns}
The infimum of this quantity is bounded by $\C \norm{1}_\K \norm{ \phi_\pm }_\K = (\C / \K) \norm{ \phi_\pm }_\K$ which is independent of $\bm{\gamma}$.
Therefore, we conclude that the supremum is finite and
\begin{eqntag}
\label{250221104144}
	\norm{ b \phi_\pm }_\K \leq (\C / \K) \norm{ \phi_\pm }_\K
\fullstop
\end{eqntag}
We can similarly estimate the terms involving $a \phi_+ \ast \phi_+,~ a \phi_+ \ast \phi_-,~ a \phi_- \ast \phi_-$.
For any one of these terms $a \phi_\bullet \ast \phi_\bullet$, we find:
\begin{eqn}
\nonumber
	\inf \int_0^{\L} e^{- \K s} \int_0^{s}
		\big| a (\bar{\bm{\gamma}}^\pm_{s,u}) \big( \phi_\bullet \ast \phi_\bullet \big) (\bar{\bm{\gamma}}^\pm_{s,u}) 
			\D{u} \D{s} \big|
	\leq \C \norm{1}_\K \norm{ \phi_\bullet \ast \phi_\bullet }_\K
\fullstop
\end{eqn}
where the infimum is taken over all synchronised parameterisations of $\bm{\gamma}$.
Therefore, we conclude that the supremum of the lefthand side is finite and
\begin{eqntag}
\label{250221104408}
	\norm{ a \phi_\bullet \ast \phi_\bullet }_\K \leq (\C / \K ) \norm{ \phi_\bullet }_\K \norm{ \phi_\bullet }_\K
\fullstop
\end{eqntag}
We conclude that $\norm{ \Upsilon [ \phi ] }_\K < \infty$.
\end{proof}

Next, we prove the key contractive property of $\Upsilon$ by collecting the estimates obtained in the proof just finished.

\begin{lemma}
\label{250221092010}
Let $\CC^\circ_z \subset \CC^\ast_z$ be any excised complex $z$-plane, and let $\tilde{B}^\circ$ be the corresponding excised unfolded Borel covering space.
Then there is a sufficiently large $\K > 0$ such that the integral operator $\Upsilon$ restricts to a contraction mapping
\begin{eqntag}
\label{250219211219}
	\Upsilon : \mathscr{H}^\heartsuit_\K (\tilde{B}^\circ, \CC^2) \to \mathscr{H}^\heartsuit_\K (\tilde{B}^\circ, \CC^2)
\fullstop
\end{eqntag}
That is, there is a strictly positive real number $\chi < 1$ such that, for all $\phi,\psi \in \mathscr{H}^\heartsuit_\K (\tilde{B}^\circ, \CC^2)$,
\begin{eqntag}
\label{250221113514}
	\norm{ \Upsilon [0] \big. }_\K < 1 - \chi
\qqtext{and}
	\norm{ \Upsilon [\phi] - \Upsilon [\psi] \big.}_\K 
		\leq \chi \norm{ \phi - \psi }_\K
\fullstop
\end{eqntag}
\end{lemma}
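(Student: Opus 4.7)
The plan is to reuse the estimates already collected in the proof of \autoref{250218210947} to bound the images and differences of images under $\Upsilon$, and then to absorb all the factors of $1/\K$ by choosing $\K$ large enough. Throughout, $\C > 0$ denotes the uniform bound on $|a|, |b|, |c|$ on the excised unfolded Borel covering space $\tilde{B}^\circ$ as in \eqref{250219204814}.

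First, observe that $\Upsilon [0] = c$ because the nonlinear and linear terms in $\Phi$ vanish when $\phi_+ = \phi_- = 0$. By \eqref{250221093419}, $\norm{\Upsilon [0]}_\K = \norm{c}_\K \leq \C/\K$, so this quantity tends to $0$ as $\K \to \infty$. This will immediately give the first inequality in \eqref{250221113514} once $\K$ is chosen large enough.

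Next, to establish the contraction property, I would expand $\Upsilon_\pm [\phi] - \Upsilon_\pm [\psi]$ using the definition \eqref{250219195751}. The initial condition $c$ cancels, leaving the difference
\begin{eqn}
	\int_0^{|\bm{\gamma}|} \Big( \Phi(\bar{\bm{\gamma}}^\pm_s; \psi_+, \psi_-) - \Phi(\bar{\bm{\gamma}}^\pm_s; \phi_+, \phi_-) \Big) \D{s}
\fullstop
\end{eqn}
Writing $\delta_\pm \coleq \phi_\pm - \psi_\pm$ and $\delta \coleq \delta_+ - \delta_-$, the linear part of $\Phi$ contributes $b \cdot \delta$, whereas the quadratic part is handled by the bilinearity identity
\begin{eqn}
	(\phi_+ - \phi_-)^{\ast 2} - (\psi_+ - \psi_-)^{\ast 2}
		= \delta \ast (\phi_+ - \phi_-) + (\psi_+ - \psi_-) \ast \delta
\fullstop
\end{eqn}
Then, exactly as in the derivation of \eqref{250221104144} and \eqref{250221104408}, I obtain the bounds
\begin{eqn}
	\norm{ b \cdot \delta }_\K \leq (\C/\K) \norm{\delta}_\K,
\qquad
	\norm{ a \cdot \big(\delta \ast (\phi_+ - \phi_-)\big) }_\K \leq (\C/\K) \norm{\delta}_\K \norm{\phi_+ - \phi_-}_\K
\fullstop
\end{eqn}
Since $\phi, \psi$ lie in the unit ball, $\norm{\phi_+ - \phi_-}_\K$ and $\norm{\psi_+ - \psi_-}_\K$ are each at most $2$. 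Combining everything gives
\begin{eqn}
	\norm{ \Upsilon_\pm [\phi] - \Upsilon_\pm [\psi] }_\K
		\leq (\C/\K) \big( \norm{\delta}_\K + 4 \norm{\delta}_\K \big)
		\leq (5\C/\K) \norm{ \phi - \psi }_\K
\fullstop
\end{eqn}
(Accounting properly for both components and the Euclidean/matrix norm used in \autoref{250217202735} will at worst introduce a further harmless multiplicative constant.)

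Finally, I would pick any $\chi \in (0,1)$ and choose $\K$ large enough so that both $5\C/\K \leq \chi$ and $\C/\K < 1 - \chi$ hold simultaneously; e.g., $\K \geq 6\C/\min(\chi, 1 - \chi)$. With this choice, the second inequality in \eqref{250221113514} is immediate, and the first follows as noted. In particular, applying the contraction bound to $\psi = 0$ yields $\norm{\Upsilon[\phi]}_\K \leq \norm{\Upsilon[0]}_\K + \chi \norm{\phi}_\K < 1$ whenever $\norm{\phi}_\K \leq 1$, which confirms the self-map property \eqref{250219211219} claimed in the statement. The only real obstacle is making the bilinearity identity and the resulting factor-of-$2$'s completely accurate, since everything else is a routine consequence of estimates already established in the proof of \autoref{250218210947} and of the Banach-algebra inequality \eqref{250219210228}.
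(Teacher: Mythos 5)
Your proposal is correct and follows essentially the same route as the paper: reuse the bounds $|a|,|b|,|c|\leq \C$ and the estimates from the proof of \autoref{250218210947} together with the Banach-algebra inequality \eqref{250219210228}, note $\Upsilon[0]=c$ with $\norm{c}_\K\leq \C/\K$, split the difference of convolution squares bilinearly, obtain a Lipschitz constant of order $\C/\K$ on the unit ball, and take $\K$ large. The only cosmetic differences are your choice of splitting $u^{\ast2}-v^{\ast2}=\delta\ast u+v\ast\delta$ versus the paper's grouping by $\phi_\pm-\psi_\pm$, your deduction of the self-map property from the two inequalities in \eqref{250221113514} rather than a separate direct estimate, and your leaving the numerical constants (factors of $2$, $\sqrt{2}$) untracked, which is harmless for this statement though the paper's explicit choice $\K=16\C$ is what feeds \autoref{250221114438}.
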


\begin{proof}
First, let us show that $\Upsilon$ restricts to a mapping \eqref{250219211219} for a sufficiently large $\K > 0$.
Let $\C > 0$ be the same constant as in the proof of \autoref{250218210947}, so that the bounds \eqref{250221093210} are true.
Pick an arbitrary $\bm{\gamma} \in \tilde{B}^\circ$.
Then, for any $\K > 0$ and any $\phi \in \mathscr{H}_\K (\tilde{B}^\circ, \CC^2)$, upon combining the estimates \eqref{250221093419}, \eqref{250221104144}, and \eqref{250221104408}, we find:
\begin{eqn}
	\inf
	\int_0^{|\bm{\gamma}|} \!\! e^{- \K s} \big| \Upsilon_\pm [\phi] (\bm{\gamma}_s) 
		\D{s} \big|
	\leq (\C / \K) \Big( 1 + 2 \norm{ \phi_+ }_\K \norm{ \phi_- }_\K + \norm{ \phi_+ }_\K^2 + \norm{ \phi_- }_\K^2
		+ \norm{ \phi_+ }_\K + \norm{ \phi_- }_\K \Big)
\fullstop{,}
\end{eqn}
where, as ever, the infimum is taken over all synchronised parameterisations of $\bm{\gamma}$.
Note that the righthand side is independent of $\bm{\gamma}$, hence the supremum over all $\bm{\gamma}$ of the lefthand side is also bounded by this quantity.
Therefore, if $\phi \in \mathscr{H}^\heartsuit_\K (\tilde{B}^\circ, \CC^2)$ so that $\norm{ \phi_\pm }_\K \leq 1$, then
\begin{eqn}
	\norm{ \Upsilon_\pm [ \phi ] }_\K \leq 7 \C / \K
\qqtext{and so}
	\norm{ \Upsilon [ \phi ] }_\K \leq 7 \C \sqrt{2} / \K
\fullstop
\end{eqn}
We conclude that $\norm{ \Upsilon [ \phi ] }_\K \leq 1$ provided $\K \geq 7 \C \sqrt{2}$.

Next, we show that $\Upsilon$ is a contraction mapping on the unit ball $\mathscr{H}^\heartsuit_\K (\tilde{B}^\circ, \CC^2)$ for all sufficiently large $\K > 0$.
First, let us note that $\Upsilon_\pm [0] (\bm{\gamma}) = c (\bm{\gamma})$ for all $\bm{\gamma} \in \tilde{B}^\circ$.
From \eqref{250221093419}, we know that $\norm{ c }_\K \leq \C / \K$ for all $\K > 0$, and so
\begin{eqn}
	\norm{ \Upsilon [0] }_\K \leq \C \sqrt{2} / \K
\fullstop
\end{eqn}
Next, for any $\K > 0$, any $\phi, \psi \in \mathscr{H}^\heartsuit_\K (\tilde{B}^\circ, \CC^2)$, and any synchronously parameterised $\bm{\gamma} \in \tilde{B}^\circ$, the expression
\begin{eqntag}
\label{250221112240}
	\int_0^{|\bm{\gamma}|} e^{- \K s} \Big| \Upsilon_\pm [\phi] (\bm{\gamma}_s) - \Upsilon_\pm [\psi] (\bm{\gamma}_s) \Big| \big| \D{s} \big|
\end{eqntag}
is bounded by
\begin{eqns}
		&\int_0^{|\bm{\gamma}|} e^{- \K s} 
			\int_0^{|\bm{\gamma}_s|} 
				\Big| a (\bar{\bm{\gamma}}^\pm_{s,u}) 
					\Big( (\phi_+ - \phi_-)^{\ast 2} - (\psi_+ - \psi_-)^{\ast 2} \Big) 
					(\bar{\bm{\gamma}}^\pm_{s,u}) \D{u} \D{s} \Big|
\\		&+\int_0^{|\bm{\gamma}|} e^{- \K s} 
			\int_0^{|\bm{\gamma}_s|} 
				\Big| b (\bar{\bm{\gamma}}^\pm_{s,u}) 
					\Big( (\phi_+ - \phi_-) - (\psi_+ - \psi_-) \Big) 
					(\bar{\bm{\gamma}}^\pm_{s,u}) \D{u} \D{s} \Big|
\fullstop
\end{eqns}
The goal is to estimate this expression in terms of $\norm{ \phi - \psi }_\K$.
The term involving $b$ is easy to deal with because $\big| b \big( (\phi_+ - \phi_-) - (\psi_+ - \psi_-) \big) \big| \leq |b| \big( \big| \phi_+ - \psi_+ \big| + \big| \phi_- - \psi_- \big| \big)$.
Consequently, we can argue just as we did in \eqref{250221104144} that the infimum of the term involving $b$ is bounded by $2(\C/\K) \norm{ \phi - \psi }_\K$.
For the term involving $a$, we rewrite $(\phi_+ - \phi_-)^{\ast 2} - (\psi_+ - \psi_-)^{\ast 2}$ as follows:
\begin{eqns}
		& \big( \phi_+^{\ast 2} - \psi_+^{\ast 2} \big) + \big( \phi_-^{\ast 2} - \psi_-^{\ast 2} \big)
			- 2 \phi_+ \ast \phi_- + 2 \psi_+ \ast \psi_-
\\		=& \big( \phi_+ - \psi_+ \big) \ast (\phi_+ + \psi_+)
			+ \big( \phi_- - \psi_- \big) \ast (\phi_- + \psi_-)
			- 2\phi_+ \ast (\phi_- - \psi_-) - 2 (\phi_+ - \psi_+) \ast \psi_-
\\		=& \big( \phi_+ - \psi_+ \big) \ast (\phi_+ + \psi_+ - 2 \psi_-)
			+ \big( \phi_- - \psi_- \big) \ast (\phi_- + \psi_- - 2 \phi_+)
\fullstop
\end{eqns}
Therefore, using the fact that $\norm{\phi_\pm}_\K \leq \norm{\phi}_\K \leq 1$, etc., we find that the infimum of the term involving $a$ is bounded by $8 (\C / \K) \norm{ \phi - \psi }_\K$.
Altogether, we find that \eqref{250221112240} is bounded by $10 (\C / \K) \norm{ \phi - \psi }_\K$, and so 
\begin{eqn}
	\norm{ \Upsilon [\phi] - \Upsilon [\psi] \big.}_\K \leq 10 \sqrt{2} (\C / \K) \norm{ \phi - \psi }_\K
\fullstop
\end{eqn}
If we put $\chi \coleq 10 \sqrt{2} (\C / \K)$, then in order for \eqref{250221113514} to be true, we need to choose $\K$ such that $ 10 \sqrt{2} (\C / \K) < 1$ and $\C \sqrt{2} / \K < 1 - 10 \sqrt{2} (\C / \K)$.
Both of these inequalities are satisfied if $\K > 11 \C \sqrt{2}$.
For example, we can take $\K = 16 \C$, in which case $\chi = 5\sqrt{2}/8$.
\end{proof}

Finally, we can be more specific about the choice of constants $\C$ and $\K$ if we are more specific about the excision $\CC_z^\circ = \CC_z \smallsetminus \bar{\DD}$.

\begin{lemma}
\label{250221114438}
Fix $\R > 0$ and let $\bar{\DD} = \bar{\DD (0, \R)}$ be the closed disc around the origin of radius $\R > 0$.
Consider the excised complex $z$-plane $\CC_z^\circ = \CC_z \smallsetminus \bar{\DD}$, and let $\tilde{B}^\circ$ be the corresponding excised unfolded Borel covering space.
Then we can be more specific about the choice of constant $\K$ in \autoref{250221092010}.
Namely, we can take $\K = 96 / \R^8$ if $\R \leq \sqrt[6]{2}$ and $\K = 48 / \R^2$ if $\R \geq \sqrt[6]{2}$.
In particular, we can take $\K$ arbitrarily small provided that $\R$ is sufficiently large.
\end{lemma}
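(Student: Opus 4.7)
The plan is to specialise the constant $\C$ from the proof of \autoref{250221092010} to the case of the concrete excision $\CC_z^\circ = \CC_z \smallsetminus \bar{\DD (0, \R)}$. Recall that the bound $\K = 16 \C$ was shown to work, where $\C$ is any constant simultaneously dominating the moduli of $a(z) = 3 z^{-2}$, $b(z) = -3 z^{-5}$, and $c(z) = -6 z^{-8}$ on $\CC_z^\circ$. Since $|z| \geq \R$ throughout the excised plane, the sharpest such bounds are
\begin{equation*}
	|a(z)| \leq 3/\R^2,
	\qquad
	|b(z)| \leq 3/\R^5,
	\qquad
	|c(z)| \leq 6/\R^8,
\end{equation*}
so the smallest admissible $\C$ is $\C_\R \coleq \max \big( 3/\R^2, \, 3/\R^5, \, 6/\R^8 \big)$.

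First, I would determine which of the three terms achieves the maximum as a function of $\R$. A direct comparison shows $6/\R^8 \geq 3/\R^5$ iff $\R^3 \leq 2$, and $3/\R^2 \geq 3/\R^5$ iff $\R \geq 1$, while $6/\R^8 \geq 3/\R^2$ iff $\R^6 \leq 2$. Putting these together, the transition occurs precisely at $\R = \sqrt[6]{2}$: for $\R \leq \sqrt[6]{2}$ the maximum is $\C_\R = 6 / \R^8$, whilst for $\R \geq \sqrt[6]{2}$ the maximum is $\C_\R = 3 / \R^2$ (note that $\sqrt[6]{2} > 1$, so the middle term $3/\R^5$ is never the largest in the latter regime).

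Then, applying $\K = 16 \C_\R$, which was shown to satisfy both requirements $\norm{\Upsilon [0]}_\K < 1 - \chi$ and $\chi \coleq 10\sqrt{2}(\C/\K) < 1$ in the proof of \autoref{250221092010}, we obtain the claimed values:
\begin{equation*}
	\K = 96 / \R^8 \text{ if } \R \leq \sqrt[6]{2},
	\qquad
	\K = 48 / \R^2 \text{ if } \R \geq \sqrt[6]{2}.
\end{equation*}
Finally, the asymptotic claim is immediate from the second branch: as $\R \to \infty$, $\K = 48 / \R^2 \to 0$, so $\K$ can be taken arbitrarily small by excising a sufficiently large disc.

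There is essentially no obstacle here; this is purely a bookkeeping refinement of \autoref{250221092010}. The only minor subtlety is verifying the correct crossover point between the two regimes (i.e.\ ensuring $3/\R^5$ never dominates), which is handled by the elementary comparisons above.
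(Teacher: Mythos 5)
Your proposal is correct and follows essentially the same route as the paper: specialise the bounds $|a| \leq 3/\R^2$, $|b| \leq 3/\R^5$, $|c| \leq 6/\R^8$ on the excised plane, observe that the maximum switches from $6/\R^8$ to $3/\R^2$ at $\R = \sqrt[6]{2}$, and apply $\K = 16\C$ from \autoref{250221092010}. Your explicit check that the middle term $3/\R^5$ never dominates is a small extra verification the paper leaves implicit, but the argument is the same.
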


\begin{proof}
From the proof of \autoref{250221092010}, we know that we can take $\K = 16 \C$ where $\C$ is any constant that ensures the bounds \eqref{250221093210}.
Recall that $a(z) = 3z^{-2}$, $b(z) = -3z^{-5}$, and $c(z) = -6z^{-8}$, so
\begin{eqntag}
	\big| a(z) \big| \leq 3/\R^2~,~~ \big| b(z) \big| \leq 3 / \R^5~,~~ \big| c (z) \big| \leq 6/\R^8
\qquad
	\forall\, z \in \CC^\circ_z
\fullstop
\end{eqntag}
Thus, we can take $\C = 6 / \R^8$ if $\R \leq \sqrt[6]{2}$ and $\C = 3 / \R^2$ if $\R \geq \sqrt[6]{2}$.
\end{proof}

\begin{proof}[Proof of \autoref{250311180618}.]
Choose any decreasing sequence of strictly positive real numbers $(\R_n)$ that converges to $0$; i.e., $\R_n > \R_{n+1}$ and $\R_n \to 0$ as $n \to \infty$.
For each $n$, consider the excised $z$-plane $\CC^\circ_{z,n} \coleq \CC_z \smallsetminus \bar{\DD (0, \R_n)}$.
We obtain a nested increasing sequence of excised $z$-planes which exhausts the punctured $z$-plane $\CC^\ast_z$:
\begin{eqn}
	\CC^\circ_{z,n} \subset \CC^\circ_{z, n+1} \subset \cdots \subset \CC^\ast_z
\fullstop
\end{eqn}
In turn, each $\CC^\circ_{z,n}$ induces a corresponding excised unfolded Borel covering space that we denote by $\tilde{B}^\circ_n$, and we also obtain a nested increasing sequence of excised unfolded Borel covering spaces which exhausts the Borel covering space $\tilde{B}$:
\begin{eqn}
	\tilde{B}^\circ_n \subset \tilde{B}^\circ_{n+1} \subset \cdots \subset \tilde{B}
\fullstop
\end{eqn}

By \autoref{250221092010}, for each $n$, there is a constant $\K_n > 0$ such that the integral operator $\Upsilon$ restricts to a contraction mapping on the unit ball $\mathscr{H}^\heartsuit_{\K_n} (\tilde{B}^\circ_n, \CC^2)$.
By the Contraction Theorem on a Ball (for example, see \cite[Theorem 5.1-4]{MR992618}), $\Upsilon$ has a unique fixed point $\phi_n \in \mathscr{H}^\heartsuit_{\K_n} (\tilde{B}^\circ_n, \CC^2)$.
Thus, we have produced a sequence of vector-valued holomorphic functions $\phi_n \in \cal{O} (\tilde{B}_n^\circ, \CC^2)$ each of which is a fixed point of the integral operator $\Upsilon$.
We will now argue that this sequence converges to an element $\phi \in \cal{O} (\tilde{B}, \CC^2)$ as $n \to \infty$.

By \autoref{250221114438}, we can arrange the constants $\K_n$ such that $\K_n \leq \K_{n+1}$, yielding inclusions
\begin{eqn}
	\mathscr{H}^\heartsuit_{\K_n} (\tilde{B}^\circ_n, \CC^2)
	\subset \mathscr{H}^\heartsuit_{\K_{n+1}} (\tilde{B}^\circ_n, \CC^2)
\fullstop
\end{eqn}
At the same time, each inclusion $\tilde{B}^\circ_n \inj \tilde{B}^\circ_{n+1}$ induces the reverse inclusion
\begin{eqn}
	\mathscr{H}^\heartsuit_{\K_{n+1}} (\tilde{B}^\circ_{n+1}, \CC^2)
	\subset \mathscr{H}^\heartsuit_{\K_{n+1}} (\tilde{B}^\circ_n, \CC^2)
\fullstop
\end{eqn}
This means that for every $n$ we can view both $\phi_n$ and the restriction of $\phi_{n+1}$ to the open subset $\tilde{B}^\circ_n \subset \tilde{B}^\circ_{n+1}$ as elements of the same function space $\mathscr{H}^\heartsuit_{\K_{n+1}} (\tilde{B}^\circ_n, \CC^2)$.
It follows that they must be equal because they are fixed points of $\Upsilon$ but, by \autoref{250221092010}, the integral operator $\Upsilon$ restricts to a contraction mapping on $\mathscr{H}^\heartsuit_{\K_{n+1}} (\tilde{B}^\circ_n, \CC^2)$ and so must have a unique fixed point.
We deduce that the holomorphic vector-valued function $\phi_{n+1} \in \cal{O} (\tilde{B}^\circ_{n+1}, \CC^2)$ is the analytic continuation to $\tilde{B}^\circ_{n+1}$ of the holomorphic vector-valued function $\phi_n \in \cal{O} (\tilde{B}^\circ_{n}, \CC^2)$ from the open subset $\tilde{B}^\circ_n \subset \tilde{B}^\circ_{n+1}$.
Consequently, the sequence $(\phi_n)$ converges to a holomorphic function $\phi \in \cal{O} (\tilde{B}, \CC^2)$ that satisfies $\Upsilon [\phi] = \phi$.

The uniqueness of $\phi \in \cal{O} (\tilde{B}, \CC^2)$ is the result of holomorphy, uniqueness of fixed points $\Upsilon$, and uniqueness of analytic continuation.
Indeed, any other such $\phi' \in \cal{O} (\tilde{B}, \CC^2)$ would necessarily coincide with $\phi$ on the open subset $\tilde{B}^\circ_1$ by the uniqueness of fixed points of $\Upsilon$, so its analytic continuation to all of $\tilde{B}$ is nothing but $\phi$.
\end{proof}

As an immediate corollary of the proof of \autoref{250311180618} (and in particular of \autoref{250221092010}), we obtain the following strong bound on the solution $\phi$.

\begin{corollary}
\label{250313194333}
For any excised Borel covering space $\tilde{B}^\circ$, there is a constant $\K > 0$ such that the unique solution $\phi \in \cal{O} (\tilde{B}, \CC^2)$ of the integral equation \eqref{250305181917} satisfies the bound
\begin{eqntag}
\label{250315115800}
	\norm{ \phi \big|_{\tilde{B}^\circ} }_\K \leq 1
\fullstop
\end{eqntag}
Moreover, $\K$ can be taken arbitrarily small provided that the excision $\bar{\DD} \subset \CC_z$ is sufficiently large.
In particular, for any $\bm{\gamma} \in \tilde{B}^\circ$,
\begin{eqntag}
\label{250315115805}
	\inf \int_0^{|\bm{\gamma}|} e^{-\K s} \big| \phi (\bm{\gamma}_s) \D{s} \big|
	\leq 1
\fullstop{,}
\end{eqntag}
where $\D{s} = \pm \rm{t}^\ast_\pm \lambda$ and the infimum is taken over all arc-length source-truncations $\bm{\gamma}_s$ of $\bm{\gamma}$.
\end{corollary}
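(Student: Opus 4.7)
The plan is to obtain this bound as a direct consequence of Lemma \ref{250221092010} together with the construction of the global solution $\phi$ in the proof of Proposition \ref{250311180618}. Given an arbitrary excised Borel covering space $\tilde{B}^\circ$ corresponding to an excision $\bar{\DD} \subset \CC_z$, I would first apply Lemma \ref{250221092010} to $\tilde{B}^\circ$ directly to produce a constant $\K > 0$ for which $\Upsilon$ restricts to a contraction mapping on the unit ball $\mathscr{H}^\heartsuit_\K(\tilde{B}^\circ, \CC^2)$, with a unique fixed point $\psi$ satisfying $\norm{\psi}_\K \leq 1$.

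The key step is to identify $\psi$ with the restriction $\phi|_{\tilde{B}^\circ}$. For this I would choose the exhausting sequence $(\R_n)$ in the construction of $\phi$ so that $\bar{\DD(0,\R_1)} \subset \bar{\DD}$; such a choice is permissible because the proof of Proposition \ref{250311180618} allows any decreasing sequence converging to zero, and the resulting $\phi$ is independent of this choice by its uniqueness. Under this choice, $\tilde{B}^\circ \subset \tilde{B}^\circ_n$ for every $n$, and by construction $\phi|_{\tilde{B}^\circ_n} = \phi_n \in \mathscr{H}^\heartsuit_{\K_n}(\tilde{B}^\circ_n, \CC^2)$. Since every $\bm{\gamma} \in \tilde{B}^\circ$ is represented by paths entirely contained in $\CC_z \smallsetminus \bar{\DD}$, all its source-truncations remain in $\tilde{B}^\circ$, so the inner infimum in Definition \ref{250217202735} is computed over exactly the same set of parameterisations whether one views $\bm{\gamma}$ as an element of $\tilde{B}^\circ$ or of $\tilde{B}^\circ_n$. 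The outer supremum over $\tilde{B}^\circ$ is then taken over a subset of $\tilde{B}^\circ_n$, which yields the monotonicity $\norm{\phi|_{\tilde{B}^\circ}}_{\K_n} \leq \norm{\phi_n}_{\K_n} \leq 1$. Hence $\phi|_{\tilde{B}^\circ}$ lies inside the unit ball and, being a fixed point of $\Upsilon$ on $\tilde{B}^\circ$, must coincide with $\psi$ by uniqueness; this proves the bound \eqref{250315115800}. The pointwise estimate \eqref{250315115805} is then the immediate unpacking of this supremum bound applied to a single $\bm{\gamma}$.

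For the final assertion that $\K$ may be taken arbitrarily small provided $\bar{\DD}$ is sufficiently large, I would invoke Lemma \ref{250221114438} directly: if $\bar{\DD}$ contains a closed disc of large radius $\R$, the admissible constant $\K = 48/\R^2$ furnished by Lemma \ref{250221092010} tends to zero as $\R \to \infty$, and the same identification argument then yields the unit-ball bound for this smaller value of $\K$. No serious analytic obstacle is expected, since the entire statement is a repackaging of estimates already proved along the construction of $\phi$; the only mild subtlety requiring explicit verification is the monotonicity of the norm under inclusions of excised Borel covering spaces, which reduces to the elementary observation that source-truncations of an element of $\tilde{B}^\circ$ never escape $\tilde{B}^\circ$.
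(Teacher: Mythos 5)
Your proposal is correct and follows essentially the same route as the paper, which treats this corollary as an immediate consequence of Lemma \ref{250221092010} and the exhaustion construction in the proof of Proposition \ref{250311180618}: identify $\phi|_{\tilde{B}^\circ}$ with the unique fixed point of $\Upsilon$ in the unit ball and invoke Lemma \ref{250221114438} for the smallness of $\K$. The only detail worth tightening is the matching of norm constants when transferring the bound from $\norm{\cdot}_{\K_n}$ to $\norm{\cdot}_{\K}$, which is harmless since the norms are monotone decreasing in $\K$ and the contraction property of Lemma \ref{250221092010} persists under enlarging $\K$.
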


\subsection{Resurgence of the Associated System}
\label{250311190749}

Recall that that in \autoref{250221201337} we constructed a \textit{local} holomorphic solution $\hat{\phi}$ of the Initial Value Problem \eqref{250306084618} as the Borel transform of the formal vector-valued solution $\hat{f}$ of the differential system \eqref{250216174108}.
By \autoref{250221201747}, that local solution $\hat{\phi} = \hat{\phi} (z, \xi)$ is valid for all nonzero $z$ and all sufficiently small $\xi$; i.e., in an open neighbourhood $\Xi \subset \CC^\ast_z \times \CC_\xi$ of $\xi = 0$.
Meanwhile, in \autoref{250221200950} we explained that this open neighbourhood $\Xi$ is canonically isomorphic to an open neighbourhood $\tilde{\Omega} \subset \tilde{B}$ of the embedding $\CC^\ast_z \inj \tilde{B}$ using the appropriate anchor map $\varrho$.
Furthermore, by \autoref{250222201015}, the Initial Value Problem \eqref{250306084618} is equivalent over the isomorphism $\varrho : \tilde{\Omega} \to \Xi$ to the Initial Value Problem \eqref{250305170843}.
Therefore, by constructing the global solution $\phi$ of \eqref{250305170843} in \autoref{250304154803}, we have constructed the analytic continuation of the local solution $\hat{\phi}$ to the entire unfolded Borel covering space $\tilde{B}$.
We can therefore deduce from \autoref{250311180618} the following analogue of our main \autoref{250304131355}.

\begin{proposition}
\label{250311180705}
The formal solution $\hat{f} (z,\hbar)$ of the differential system \eqref{250216174108} is resurgent.
\end{proposition}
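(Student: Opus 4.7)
The plan is to assemble the resurgence of $\hat{f}$ from the ingredients already prepared. By definition, saying $\hat{f}$ is resurgent amounts to establishing three properties of its Borel transform $\hat{\phi}$: convergence near $\xi = 0$, endless analytic continuation in $\xi$ (with a global realisation as a holomorphic function on a suitable covering space), and exponential bounds at infinity locally uniformly in $z$. The convergence is already done: Proposition \autoref{250221201747} produces $\hat{\phi}$ as a holomorphic function on an open neighbourhood $\Xi \subset \CC^\ast_z \times \CC_\xi$ of $\xi = 0$. So the real content is to upgrade the local object $\hat{\phi}$ to a global one on $\tilde{B}$ with controlled growth.

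First I would invoke Proposition \autoref{250311180618} to obtain the unique global holomorphic solution $\phi \in \cal{O}(\tilde{B}, \CC^2)$ of the Initial Value Problem \eqref{250305170843}. By Lemma \autoref{250222201015}, this IVP is equivalent to the local IVP \eqref{250306084618} over the biholomorphism $\varrho : \tilde{\Omega} \iso \Xi$ from Lemma \autoref{250218172045}, and by the uniqueness of the local solution (also in Proposition \autoref{250221201747}), the restriction $\phi|_{\tilde{\Omega}}$ must coincide with the pullback $\varrho^\ast \hat{\phi}$. Hence $\phi$ is precisely the endless analytic continuation of $\hat{\phi}$ to the Borel covering space $\tilde{B}$, with the geometry of the singularities controlled by the critical locus $\Gamma \subset B$ as described in \autoref{250304152932}.

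Next I would extract the exponential type from Corollary \autoref{250313194333}. That corollary provides, for every excised Borel covering space $\tilde{B}^\circ$, a constant $\K > 0$ (which can be made arbitrarily small if the excision is sufficiently large, i.e.\ if we stay away from $z = 0$) such that $\norm{\phi|_{\tilde{B}^\circ}}_\K \leq 1$ in the weighted norm of Definition \autoref{250217202735}. To convert this averaged estimate into the pointwise bound $|\phi(\bm{\gamma})| \leq \C e^{\K |\Z(\bm{\gamma})|}$ required by the definition of exponential type, the plan is to evaluate the integral equation \eqref{250305181917} at a single point $\bm{\gamma}$: the right-hand side is a sum of a bounded term $c(\bm{\gamma})$ and integrals whose integrands are bounded linear and bilinear expressions in $\phi$ with bounded coefficients $a, b$ on $\tilde{B}^\circ$. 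A direct estimate with the exponential weight $e^{\K s}$ reabsorbed then yields a pointwise bound controlled by $\norm{\phi}_\K$ and $\norm{\phi}_\K^2$ times $e^{\K |\bm{\gamma}|}$, which via the comparability of arc-length and $|\Z|$ along appropriately chosen truncations gives the required exponential type.

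The main obstacle I anticipate is precisely this last step: the weighted norm controls $\phi$ only in an integrated sense along families of source-truncations, so extracting a pointwise exponential bound requires a careful choice of representative path realising the infimum in the norm (namely an arc-length geodesic in the sense of Definition \autoref{250223185940}, along which $|\lambda|$ and $|\d\xi|$ are comparable). Once the geodesic representative is chosen and the bound is established in any fixed direction $\alpha$ at infinity in $\tilde{M}_\tau$, the locally uniform statement in $\tau$ follows by the continuous dependence of the construction on the excision, and the descent to the quotient $\tilde{M} = \tilde{B}/\sigma$ is automatic from the $\sigma$-symmetry of the formal data recorded in Lemma \autoref{250224110855}. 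Combined with Lemma \autoref{250224110231}, this Proposition then immediately implies the three propositions (\autoref{250312151012}, \autoref{250210171316}, \autoref{250313152819}) constituting Theorem \autoref{250304131355}.
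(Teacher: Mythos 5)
Your proposal is correct and follows essentially the same route as the paper: convergence via \autoref{250221201747}, endless continuation by identifying the fixed point $\phi$ of \autoref{250311180618} with $\varrho^\ast\hat{\phi}$ through the equivalence of \autoref{250222201015}, and exponential type from the norm bound of \autoref{250313194333} combined with $|\Z(\bm{\gamma})|\leq|\bm{\gamma}|$. Your extra step of bootstrapping the integrated $\norm{\cdot}_\K$ estimate through the integral equation \eqref{250305181917} to obtain the pointwise bound is exactly the detail the paper's one-line proof of \autoref{250314125137} leaves implicit, so there is no substantive divergence.
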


Much like our main \autoref*{250304131355}, this Proposition packs a lot of information, so we break it down into three parts corresponding to the three defining aspects of resurgence.
First, the convergence of the Borel transform of $\hat{f}$ is equivalent to the existence of the local solution $\hat{\phi}$ which was covered by \autoref{250221201747}; for completeness, we state this explicitly in \autoref{250314131034}, namely, \autoref{250314131125}.
Second, the property of endless analytic continuation of the Borel transform is described in \autoref{250314125038}; namely, \autoref{250314125007}.
Third, the property of exponential type of the Borel transform is explained in \autoref{250314125118}; namely, \autoref{250314125137}.
Throughout, we indicate how these assertions imply the corresponding results in \autoref{250313153618} for the deformed Painlevé I equation.

\subsubsection{Convergence of the Borel transform.}
\label{250314131034}
First, we make explicit the convergence of the Borel transform of $\hat{f}$.
This follows from \autoref{250221201747} and the discussion in \autoref{250221200950}.

\begin{proposition}[convergence of the Borel transform]
\label{250314131125}
The Borel transform
\begin{eqntag}
\label{250314132334}
	\hat{\phi} (z,\xi) = \sum_{n=0}^\infty \phi_n (z) \xi^n
	\coleq \Borel \big[ \, \hat{f} \, \big] (z, \xi)
	= \sum_{n=0}^\infty \tfrac{1}{n!} f_{n+1} (z) \xi^n
\end{eqntag}
of the formal solution $\hat{f} (z,\hbar)$ of the differential system \eqref{250216174108} is a convergent power series in $\xi$, locally uniformly for all nonzero $z$.
Moreover, $\hat{\phi}$ can be canonically regarded via the anchor map $\varrho : B \to \CC_z \times \CC_\xi$ as a holomorphic vector-valued germ $\hat{\phi}$ along the embedding $\CC^\ast_z \inj B$.
As such, there is an open neighbourhood $\Omega \subset B$ of the embedding $\CC^\ast_z \inj B$ such that $\hat{\phi}$ defines a holomorphic vector-valued function on $\Omega$.
Equivalently, $\hat{\phi}$ is a holomorphic vector-valued germ along the embedding $\CC^\ast_z \inj \tilde{B}$ via the anchor map $\varrho : \tilde{B} \to \CC_z^\ast \times \CC_\xi$.
As such, there is an open neighbourhood $\tilde{\Omega} \subset \tilde{B}$ of the embedding $\CC^\ast_z \inj B$ such that $\hat{\phi}$ defines a holomorphic vector-valued function on $\tilde{\Omega}$.
In symbols, $\hat{\phi} \in \cal{O} (\Omega, \CC^2) \cong \cal{O} (\tilde{\Omega}, \CC^2)$.
\end{proposition}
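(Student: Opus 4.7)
The plan is to assemble this proposition from ingredients already in place: a factorial-type bound on $\hat{f}$ that yields convergence of the formal Borel transform, together with the local biholomorphism supplied by the anchor map which promotes the resulting germ to an honest holomorphic function on open subsets of $B$ and $\tilde{B}$.

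First I would establish that $\hat{f}$ is of factorial type locally uniformly on $\CC^\ast_z$. The coefficient formula \eqref{250216160341} expresses each $f_n^\pm(z)$ as $(2z)^{-1}\bigl(z q_{n+1}(t) \pm p_{n+1}(t)\bigr)$ with $t=-z^4/24$, and $p_{n+1}=\dot{q}_n$. By \autoref{250224202049}, each nonzero $t_0$ has a neighbourhood on which $|q_n(t)|\leq \C\M^n n!$; a Cauchy estimate on a slightly smaller disc gives the same type of bound on $\dot{q}_n$. Pulling back through $t=-z^4/24$ and absorbing the factor $1/(2z)$ on any compact subset $K\Subset\CC^\ast_z$ then yields constants $\C',\M'>0$ such that $|f_n^\pm(z)|\leq \C'(\M')^n n!$ uniformly for $z\in K$.

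Next I would read off the convergence of the Borel transform. By definition $\hat\phi_n(z)=\tfrac{1}{n!}f_{n+1}(z)$, so the factorial bound reduces to a geometric bound $|\hat\phi_n(z)|\leq \C''(\M'')^n$ uniform on $K$. Hence for each $z_0\in\CC^\ast_z$ there is a neighbourhood $U_0\subset\CC^\ast_z$ and a radius $r_0>0$ such that $\hat\phi(z,\xi)=\sum_n\hat\phi_n(z)\xi^n$ converges absolutely and uniformly on $U_0\times\{|\xi|<r_0\}$, defining a holomorphic vector-valued function there. Gluing over $z_0$, we obtain an open neighbourhood $\Xi\subset\CC^\ast_z\times\CC_\xi$ of the locus $\{\xi=0\}$ and an element $\hat\phi\in\cal{O}(\Xi,\CC^2)$; this is exactly the local solution already furnished by \autoref{250221201747}.

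Finally I would transport $\hat\phi$ onto the groupoids using the anchor maps. By \autoref{250218172045}, shrinking $\Xi$ if needed, the anchor $\varrho:B\to\CC_z\times\CC_\xi$ restricts to a biholomorphism $\varrho:\Omega\iso\Xi$ onto a connected neighbourhood $\Omega\subset B$ of the embedded punctured axis $\CC^\ast_z\inj B$; pulling back gives $\hat\phi\coleq\varrho^\ast\hat\phi\in\cal{O}(\Omega,\CC^2)$. Applying the source-fibrewise universal cover $\nu:\tilde B\to B^\ast$ from \autoref{250212200132}, which is a biholomorphism in a neighbourhood of the identity bisection, produces the corresponding $\tilde\Omega\subset\tilde B$ and $\hat\phi\in\cal{O}(\tilde\Omega,\CC^2)$, and commutativity of the diagrams in \autoref{250221200950} ensures the three incarnations agree. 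There is no substantial obstacle here: everything is bookkeeping once the factorial bound on $\hat f$ is recorded, and the only mild subtlety is checking that the bounds on $\dot q_n$ survive the Cauchy estimate uniformly on the image under $t=-z^4/24$ of a chosen compact neighbourhood — a routine compactness argument.
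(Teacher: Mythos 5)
Your proposal is correct and takes essentially the same route as the paper: the paper proves this proposition by combining \autoref{250221201747} (convergence of $\hat{\phi}$, which rests on the factorial type of $\hat{f}$ inherited from \autoref{250224202049} through the coefficient formula \eqref{250216160341}) with the anchor-map identifications $\Xi \cong \Omega \cong \tilde{\Omega}$ from \autoref{250218172045} and the discussion in \autoref{250221200950}, exactly the three steps you carry out. The only cosmetic difference is that you bound $\dot{q}_n$ via a Cauchy estimate, whereas the paper can read the bound off the explicit momentum coefficients $b^\pm_{2n+1} = -\tfrac{1}{2}(5n-6)\, a^\pm_{2n}$; both work.
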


\subsubsection{Endless Analytic Continuation.}
\label{250314125038}
Second, we describe the property of endless analytic continuation of the Borel transform $\hat{\phi}$
In essence, it says that for every fixed nonzero $z$, the convergent $\xi$-power series $\hat{\phi} (z,\xi)$ admits maximal analytic continuation to the universal cover of a Riemann surface punctured in ten points which is a fivefold covering of the Borel $\xi$-plane.
This Riemann surface is nothing but the Borel surface $B_z = \rm{s}^\inv (z)$, the ten points are nothing but the critical elements $\Gamma_z \subset B_z$, and the fivefold cover is nothing but the central charge $\Z_z : B_z \to \CC_\xi$ with branch points at $\xi_\pm = \pm \tfrac{1}{30} z^5$.
Furthermore, as $z$ varies but stays nonzero, these different analytic continuations of $\hat{\phi}$ patch together into a global holomorphic vector-valued function $\phi$ on a two-dimensional manifold, which turns out to be the unfolded Borel covering space $\tilde{B}$.
Therefore, the following assertion is an immediate consequence of \autoref{250311180618}.

\begin{proposition}[endless analytic continuation]
\label{250314125007}
The Borel transform $\hat{\phi}$ admits endless analytic continuation $\phi$ to the unfolded Borel space $B$ away from the critical locus $\Gamma \subset B$.
Namely, $\phi$ defines a global vector-valued holomorphic function on the unfolded Borel covering space $\tilde{B}$; in symbols, $\phi \in \cal{O} (\tilde{B}, \CC^2)$.
\end{proposition}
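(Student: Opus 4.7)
The plan is to derive this proposition essentially as a direct corollary of \autoref{250311180618}, combined with the convergence statement in \autoref{250314131125} and the uniqueness of analytic continuation. The substance has already been done: the hard analytic work lies in the contraction mapping argument of \autoref{250304154803} that produced the global $\phi \in \cal{O}(\tilde{B}, \CC^2)$. What remains is to identify this $\phi$ as the analytic continuation of $\hat{\phi}$.

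First, I would invoke \autoref{250314131125} to obtain the holomorphic germ $\hat{\phi} \in \cal{O}(\tilde{\Omega}, \CC^2)$ along the embedding $\CC^\ast_z \inj \tilde{B}$, where $\tilde{\Omega} \subset \tilde{B}$ is the open neighbourhood of the identity bisection provided by \autoref{250218172045} via the anchor isomorphism $\varrho : \tilde{\Omega} \iso \Xi$. Next, I would invoke \autoref{250311180618} to obtain the global holomorphic vector-valued function $\phi \in \cal{O}(\tilde{B}, \CC^2)$ satisfying the integral equation \eqref{250305181917}, and hence (by \autoref{250305181745} and \autoref{250222201015}) satisfying the Initial Value Problem \eqref{250305170843} on the whole of $\tilde{B}$.

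The key step is then to verify that $\phi|_{\tilde{\Omega}} = \hat{\phi}$. By \autoref{250305181745}, the integral equation \eqref{250305181917} on $\tilde{\Omega}$ is equivalent via the anchor $\varrho$ to the integral equation obtained by integrating the Initial Value Problem \eqref{250306084618} on $\Xi \subset \CC^\ast_z \times \CC_\xi$. But \autoref{250221201747} guarantees that this latter Initial Value Problem has a \emph{unique} local holomorphic solution on $\Xi$, namely $\hat{\phi}$ defined by the formula \eqref{250314132334}. Therefore $\varrho^\ast \hat{\phi}$ and $\phi|_{\tilde{\Omega}}$ are both local holomorphic solutions of the integral equation over $\tilde{\Omega}$, and hence must coincide. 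This identifies $\phi$ on $\tilde{B}$ as the unique analytic continuation of $\hat{\phi}$ along every path in $\tilde{B}$ emanating from the identity bisection.

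Finally, I would note that endless analytic continuation to $B \smallsetminus \Gamma = B^\ast$ is equivalent, by definition, to the existence of $\phi$ as a global holomorphic function on the cover $\nu : \tilde{B} \to B^\ast$, because (by \autoref{250315184627}) the restriction of $\nu$ to each source fibre $\tilde{B}_z \to B_z^\ast$ is the universal covering map of $B_z \smallsetminus \Gamma_z$ based at $\bm{1_z}$. I do not anticipate any real obstacle here; the proof is essentially bookkeeping that matches the geometric formulation of resurgence against the output of the contraction mapping principle. The one point that deserves care is ensuring that the base point for analytic continuation (the germ $\hat{\phi}$ at $\xi = 0$) is correctly matched across the three viewpoints — as a power series in $\xi$, as a germ on $\Omega \subset B$, and as a germ on $\tilde{\Omega} \subset \tilde{B}$ — which is handled by the commutative diagram preceding \autoref{250222201015}.
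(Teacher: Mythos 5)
Your proposal is correct and follows essentially the same route as the paper: the paper deduces this proposition as an immediate consequence of \autoref{250311180618}, after identifying the local solution $\hat{\phi}$ on $\tilde{\Omega} \cong \Xi$ (via the anchor isomorphism and the equivalences of \autoref{250222201015}, \autoref{250305181745}, and the uniqueness in \autoref{250221201747}) with the restriction of the global fixed point $\phi$ — exactly the bookkeeping you spell out.
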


We may sometimes refer to the holomorphic vector-valued function $\phi \in \cal{O} (\tilde{B}, \CC^2)$ as the \dfn{global Borel transform} of $\hat{f}$ in order to distinguish it from the holomorphic germ $\hat{\phi}$.
The property of endless analytic continuation of $\hat{\phi}$ may be stated in the following rather more elementary terms by sacrificing the more refined geometric information.

\begin{corollary}
\label{250311170342}
Fix any nonzero $z_0 \in \CC_z$ and let $\xi_\pm \coleq \pm \tfrac{1}{30} z_0^5$.
Then the Borel transform $\hat{\phi} (z_0, \xi) \in \CC^2 \otimes \CC \cbrac{\xi}$ extends to a holomorphic vector-valued function $\phi (z_0, \xi)$ on the universal cover of the twice-punctured $\xi$-plane $\CC_\xi \smallsetminus \set{ \xi_+, \xi_- }$.
In symbols, $\hat{\phi} (z_0, \xi) \in \cal{O} (\widetilde{\CC_\xi \!\smallsetminus\! \set{ \xi_\pm} }, \CC^2)$.
Moreover, since the universal cover of a twice-punctured complex plane is (noncanonically) isomorphic to the upper halfplane $\HH$, we can (noncanonically) view $\phi$ as a holomorphic vector-valued function on $\HH$; i.e., $\phi \in \cal{O} (\HH, \CC^2)$.
\end{corollary}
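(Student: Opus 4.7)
The plan is to deduce this Corollary directly from the global existence result (Proposition 250314125007) by restricting the global Borel transform $\phi \in \cal{O}(\tilde{B}, \CC^2)$ to the source fibre over $z_0$, and then identifying that fibre with the universal cover of the twice-punctured plane via the central charge.

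First, I would invoke Proposition 250314125007 to obtain the global holomorphic vector-valued function $\phi \in \cal{O}(\tilde{B}, \CC^2)$ which extends the local germ $\hat{\phi}$. Since $z_0$ is nonzero, consider the unfolded Borel covering surface $\tilde{B}_{z_0} = \rm{s}^{-1}(z_0) \subset \tilde{B}$ from Definition 250315190402, and set
\begin{eqn}
\phi_{z_0} \coleq \phi \big|_{\tilde{B}_{z_0}} \in \cal{O}(\tilde{B}_{z_0}, \CC^2).
\end{eqn}
Because $\hat{\phi}$ was defined as a germ along $\CC^\ast_z \inj \tilde{B}$ via pullback by the anchor $\varrho = (\rm{s}, \Z)$, its restriction to the fibre of $\rm{s}$ at $z_0$ coincides near the basepoint $\bm{1_{z_0}}$ with the convergent power series $\hat{\phi}(z_0, \xi)$ expressed in the central-charge coordinate.

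Next, I would apply Lemma 250315193721, which identifies the restricted central charge
\begin{eqn}
\Z_{z_0} \coleq \Z \big|_{\tilde{B}_{z_0}} : \tilde{B}_{z_0} \too \CC_\xi \smallsetminus \set{\xi_+, \xi_-}
\end{eqn}
as the universal covering map based at the origin, where the two logarithmic branch points are precisely $\xi_\pm = \pm \tfrac{1}{30} z_0^5$. In particular $\tilde{B}_{z_0}$ is simply connected and carries a distinguished basepoint $\bm{1_{z_0}}$ mapping to $\xi = 0$. Thus $\Z_{z_0}$ realises a canonical biholomorphism
\begin{eqn}
\tilde{B}_{z_0} \iso \widetilde{\CC_\xi \smallsetminus \set{\xi_+, \xi_-}},
\end{eqn}
and pushing $\phi_{z_0}$ through this identification gives the desired vector-valued holomorphic function $\phi(z_0, \xi) \in \cal{O}\big( \widetilde{\CC_\xi \smallsetminus \set{\xi_\pm}}, \CC^2 \big)$. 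That this function agrees with the analytic continuation of the convergent germ $\hat{\phi}(z_0, \xi) \in \CC^2 \otimes \CC\cbrac{\xi}$ follows from the uniqueness of analytic continuation, since both agree near $\xi = 0$.

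The final statement about viewing $\phi$ as a function on the upper halfplane $\HH$ is a standard consequence of the Uniformisation Theorem: any twice-punctured complex plane has universal cover biholomorphic to $\HH$, and choosing any such uniformisation yields $\phi \in \cal{O}(\HH, \CC^2)$. There is no real obstacle in this proof — all the genuine work has already been done in constructing $\phi$ globally on $\tilde{B}$ (via the contraction mapping argument of Section 250304154803) and in establishing the geometry of the fibres in Section 250304152932. The Corollary is essentially a translation of those results into elementary language by fibering the Borel covering space over $\CC_z^\ast$.
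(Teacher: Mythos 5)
Your proposal is correct and matches the paper's intended argument: the corollary is precisely the restriction of the global solution $\phi \in \cal{O}(\tilde{B}, \CC^2)$ from \autoref{250314125007} to the fibre $\tilde{B}_{z_0}$, identified with $\widetilde{\CC_\xi \smallsetminus \set{\xi_\pm}}$ via the central charge as in \autoref{250315193721}, with the final remark following from uniformisation. No gaps.
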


We can now infer the analogous endless analytic continuation property for the deformed Painlevé I equation and therefore prove \autoref{250210171316}.

\begin{proof}[Proof of \autoref{250210171316}.]
By \autoref{250224110855}, the Borel transform $\hat{\omega} (t,\xi)$ of the formal power series solution $\hat{q} (t,\hbar)$ is related to the Borel transform $\hat{\phi}$ by the identity \eqref{250223140730}, which we quote here for convenience:
\begin{eqntag}
\label{250311193209}
	\hat{\omega} (t,\xi) = f_0^+ (z) + f_0^- (z) + \int_0^\xi \big( \hat{\phi}_+ (z,s) + \hat{\phi}_- (z,s) \big) \d{s}
\qtext{where}
	z^4 = -24 t
\fullstop
\end{eqntag}
So, for any $\bm{\gamma} \in \tilde{B}$ and any synchronised parameterisation, we define
\begin{eqntag}
\label{250310180010}
	\omega (\bm{\gamma}) 
		\coleq \rm{s}^\ast f^+_0 (\bm{\gamma}) + \rm{s}^\ast f^-_0 (\bm{\gamma})
			+ \int_0^{|\bm{\gamma}|} \big( \phi_+ (\bm{\gamma}_s) + \phi_- (\bm{\gamma}_s) \big) \D{s}
\end{eqntag}
where $\D{s} = \pm \rm{t}_\pm^\ast \lambda_{\bm{\gamma}_s}$.
This expression does not depend on the choice of representative of $\bm{\gamma}$ because $\phi_+, \phi_-$, and $\rm{t}^\ast_\pm \lambda$ are holomorphic on $\tilde{B}$.
Therefore, it defines a global holomorphic function $\omega$ on the unfolded Borel covering space $\tilde{B}$.
By \autoref{250224110855}, we also have under the negation map $z \mapsto -z$ the following symmetries: $\hat{\phi}_\pm (-z,\xi) = \hat{\phi}_\mp (z, \xi)$,~ $f_0^\pm (-z) = f_0^\mp (z)$,~ and $\hat{\omega} (-z,\xi) = \hat{\omega} (z,\xi)$.
But the lift to $\tilde{B}$ of this negation map is the involution $\sigma$, so $\omega$ is a $\sigma$-invariant function on $\tilde{B}$; i.e., $\sigma^\ast \omega = \omega$.
Thus, $\omega$ defines a holomorphic function on the quotient $\tilde{B} / \sigma$, which is the Borel covering space $\tilde{M}$ by definition.
In symbols, $\omega \in \cal{O}(\tilde{B})^\sigma = \cal{O} (\tilde{B}/\sigma) = \cal{O} (\tilde{M})$.
Now, all the other assertions of \autoref{250210171316} follow from the detailed geometric descriptions in \autoref{250304152932}.
\end{proof}

\subsubsection{Exponential Type.}
\label{250314125118}
Finally, we describe the exponential bounds at infinity in $\xi$ on the global Borel transform $\phi$.

\begin{proposition}[exponential type]
\label{250314125137}
The Borel transform $\phi \in \cal{O} (\tilde{B}, \CC^2)$ has exponential type at infinity on $\tilde{B}$, locally uniformly for all nonzero $z$.
That is to say, for every nonzero $z$ and any direction $\alpha$ at infinity in $\tilde{B}_z$, there are real constants $\C, \K > 0$ and a sectorial neighbourhood $\sfSigma_z \subset \tilde{B}_z$ whose opening contains $\alpha$ such that
\begin{eqntag}
\label{250312170037}
	\big| \phi (\bm{\gamma}) \big| \leq \C e^{\K |\Z (\bm{\gamma})|}
\qqquad
	\forall \bm{\gamma} \in \sfSigma_{\tau} \subset \tilde{B}_z
\fullstop
\end{eqntag}
Furthermore, every nonzero $z_0 \in \CC_z$ has a neighbourhood $U_0 \subset \CC_z$ with the property that $\C, \K$ can be chosen uniformly for all $z \in U_0$, provided that $\alpha$ and $\sfSigma_z$ are chosen continuously for all $z \in U_0$.
Moreover, $\K$ can be taken arbitrarily small provided that $z_0$ is sufficiently large.
\end{proposition}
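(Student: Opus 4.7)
The plan is to deduce the pointwise exponential bound by combining the integral equation $\Upsilon[\phi] = \phi$ with the Banach-norm estimate $\norm{\phi|_{\tilde{B}^\circ}}_\K \leq 1$ from \autoref{250313194333}.

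First, I would fix $z_0 \in \CC_z^\ast$ and an asymptotic direction $\alpha$ in $\tilde{B}_{z_0}$, and pick an excision $\bar{\DD} \subset \CC_z$ so that $z_0 \in \CC_z^\circ = \CC_z \smallsetminus \bar{\DD}$. By \autoref{250221114438} and \autoref{250313194333} this furnishes explicit constants $\K, \C > 0$ with $\norm{\phi|_{\tilde{B}^\circ}}_\K \leq 1$ and $|a|, |b|, |c| \leq \C$ on $\tilde{B}^\circ$; the constant $\K$ can be made arbitrarily small by taking $\R$ large, which is compatible with $|z_0|$ large. Second, I would construct $\sfSigma_{z_0} \subset \tilde{B}_{z_0}$ with opening containing $\alpha$ as the set of points reachable from the origin $\bm{1_{z_0}}$ by geodesic paths (\autoref{250223185940}) whose projection via $\Z_{z_0}$ lies in a small sector about $e^{i\alpha}\RR_+$ and avoids the two Borel singular values $\xi_\pm = \pm z_0^5/30$. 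Along such geodesics one has $|\bm{\gamma}| = |\Z(\bm{\gamma})|$, so bounds of the form $\C' e^{\K|\bm{\gamma}|}$ will automatically upgrade to the required bounds of the form $\C' e^{\K|\Z(\bm{\gamma})|}$.

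Third, for $\bm{\gamma} \in \sfSigma_{z_0}$ of arc-length $L = |\bm{\gamma}|$, I would apply the integral equation
\begin{eqn}
\phi_\pm (\bm{\gamma}) = c(\bm{\gamma}) - \int_0^{L} \Phi \bigl( \bar{\bm{\gamma}}^\pm_s; \phi (\bar{\bm{\gamma}}^\pm_s) \bigr) \D{s}
\end{eqn}
and estimate term by term using $|a|, |b|, |c| \leq \C$. The strategy for the linear contribution $|b||\phi|$ and the quadratic convolution $|a||\phi|^{\ast 2}$ is to insert the trivial factor $1 = e^{\K L} e^{-\K L}$ under the integrand and then apply the same Fubini reindexing already used in the proof of \autoref{250218210947}: this rewrites each resulting double integral as a product of source-truncation integrals of $|\phi|$ weighted against $e^{-\K s}|\D{s}|$, each of which is then controlled by $\norm{\phi}_\K \leq 1$. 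Extracting the $e^{\K L}$ factor yields $|\phi(\bm{\gamma})| \leq \C' e^{\K L}$, hence the desired exponential bound $|\phi(\bm{\gamma})| \leq \C' e^{\K|\Z(\bm{\gamma})|}$.

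The locally uniform dependence on $z$ follows because the constants $\K, \C$ depend only on the excised plane $\CC_z^\circ$ and not on the specific source point, so $\sfSigma_z$ can be chosen continuously for $z$ in a small neighbourhood $U_0$ of $z_0$; the arbitrariness of $\K$ for $|z_0|$ large is built into \autoref{250221114438}. The main obstacle will be the bookkeeping in the third step: $\norm{\cdot}_\K$ controls integrals along \emph{source}-truncations of a fixed element, whereas the integral equation sums values of $\phi$ along \emph{target}-truncations of $\bm{\gamma}$, so a careful reapplication of the Fubini manipulation of \autoref{250218210947} (which re-identifies source-truncations of source-truncations with the pertinent target-truncations) is essential to reduce everything to $\norm{\phi}_\K$-type bounds in a way that is manifestly uniform over the sectorial neighbourhood.
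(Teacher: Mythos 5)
Your proposal is correct and follows essentially the same route as the paper: the paper's own proof is a one-line deduction from \autoref{250313194333} together with the relation between $|\Z(\bm{\gamma})|$ and the arc-length of a (geodesic) representative. Your third step --- bootstrapping the pointwise bound out of the integral bound $\norm{\phi|_{\tilde{B}^\circ}}_\K \leq 1$ by one more pass through the fixed-point equation $\phi = \Upsilon[\phi]$, with the source- versus target-truncation bookkeeping handled by the Fubini manipulation of \autoref{250218210947} --- is precisely the detail the paper leaves implicit, so your write-up is a more complete version of the same argument rather than a different one.
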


\begin{proof}
This follows from \autoref{250313194333}, and more specifically from \eqref{250315115805}, by recognising that the central charge of $\bm{\gamma}$ is bounded by the arc-length of any synchronised parameterisation of $\bm{\gamma}$; i.e., $|\Z(\bm{\gamma})| \leq |\bm{\gamma}|$.
\end{proof}

The property of exponential type of $\phi$ may likewise be stated in more elementary terms using the point of view on $\phi$ as a multivalued function over the twice-punctured $\xi$-plane as follows.

\begin{corollary}
\label{250311182904}
Fix any nonzero $z_0 \in \CC_z$ and put $\xi_\pm \coleq \pm \tfrac{1}{30} z_0^5$.
Then the Borel transform $\phi (z_0, \xi)$ has exponential type at infinity in all directions of the $\xi$-plane.
That is to say, for any choice of branch of the multivalued function $\phi (z_0, \xi)$ on $\CC_\xi \smallsetminus \set{\pm \xi}$, there are constants $\C, \K > 0$ such that, for all $\xi$ sufficiently large,
\begin{eqntag}
\label{250315121642}
	\big| \phi (z_0, \xi) \big| \leq \C e^{\K |\xi|}
\fullstop
\end{eqntag}
Furthermore, every nonzero $z_0 \in \CC_z$ has a neighbourhood $U_0 \subset \CC_z$ with the property that $\C, \K$ can be chosen uniformly for all $z \in U_0$, provided that the branch of $\phi (z, \xi)$ is chosen continuously for all $z \in U_0$.
Moreover, $\K$ can be taken arbitrarily small provided that $z_0$ is sufficiently large.
\end{corollary}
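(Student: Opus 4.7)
The plan is to deduce this elementary reformulation directly from \autoref{250314125137} by exploiting the identification of the Borel covering surface $\tilde{B}_{z_0}$ with the universal cover of the twice-punctured $\xi$-plane. Since the work of \autoref{250304154803} has been carried out on the groupoid-theoretic space $\tilde{B}$, the only remaining task is to translate the groupoid-type estimate into an elementary one on $\CC_\xi$.

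First, I would recall from \autoref{250315193721} that the restricted central charge provides the universal covering map
\begin{eqn}
    \Z_{z_0} : \tilde{B}_{z_0} \xrightarrow{\ \sim\ } \widetilde{\CC_\xi \smallsetminus \set{\xi_+,\xi_-}}
    \too \CC_\xi \smallsetminus \set{\xi_+, \xi_-}
\fullstop
\end{eqn}
Consequently, a choice of branch of the multivalued function $\phi(z_0, \xi)$ on $\CC_\xi \smallsetminus \set{\xi_\pm}$ corresponds precisely to selecting a continuous local section of $\Z_{z_0}$, and any path of analytic continuation on the base lifts uniquely to a path on $\tilde{B}_{z_0}$ starting from the corresponding point. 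Under this identification, the value $\phi(z_0, \xi)$ on a given branch equals $\phi(\bm{\gamma})$ for the appropriate lift $\bm{\gamma} \in \tilde{B}_{z_0}$, and in particular $|\Z(\bm{\gamma})| = |\xi|$.

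Next, to produce the bound uniformly in all directions, I would argue as follows. Fix the chosen branch and, for each direction $\alpha \in \SS^1$ in the $\xi$-plane, consider the corresponding direction at infinity in the sheet of $\tilde{B}_{z_0}$ determined by the branch. By \autoref{250314125137}, there exist constants $\C_\alpha, \K_\alpha > 0$ and a sectorial neighbourhood $\sfSigma_\alpha \subset \tilde{B}_{z_0}$ whose opening contains $\alpha$ such that $|\phi(\bm{\gamma})| \leq \C_\alpha e^{\K_\alpha |\Z(\bm{\gamma})|}$ on $\sfSigma_\alpha$. Since $\SS^1$ is compact and the openings of the $\sfSigma_\alpha$ form an open cover of the circle of directions at infinity in the chosen sheet, finitely many such sectors suffice to cover all directions. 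Taking $\C \coleq \max_k \C_{\alpha_k}$ and $\K \coleq \max_k \K_{\alpha_k}$ gives the uniform bound \eqref{250315121642} for all sufficiently large $\xi$ on the chosen branch.

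Finally, the local uniformity in $z$ and the statement that $\K$ may be taken arbitrarily small for large $z_0$ both follow from the corresponding assertions in \autoref{250314125137}, provided one chooses the branch of $\phi(z, \xi)$ continuously in $z$ (which translates, on the level of $\tilde{B}$, to choosing the sectorial neighbourhoods $\sfSigma_z$ continuously in $z$). The main bookkeeping obstacle is just the verification that, for a continuous choice of branch over a neighbourhood $U_0 \ni z_0$, the branch points $\xi_\pm = \pm \tfrac{1}{30} z^5$ vary continuously and the sectorial neighbourhoods in $\tilde{B}_z$ corresponding to a fixed direction $\alpha \in \SS^1$ can indeed be made to depend continuously on $z$; this is straightforward given the algebraic nature of the fibration $\rm{s} : B \to \CC_z$ and the explicit trivialisation \eqref{250315184705}.
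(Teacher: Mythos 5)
Your proposal is correct and follows essentially the paper's route: the paper presents this corollary as the immediate elementary restatement of \autoref{250314125137}, obtained exactly by identifying $\tilde{B}_{z_0}$ with the universal cover of $\CC_\xi \smallsetminus \set{\xi_\pm}$ via the central charge (\autoref{250315193721}) and translating the estimate $|\phi(\bm{\gamma})| \leq \C e^{\K|\Z(\bm{\gamma})|}$ sheet by sheet. Your only addition is to spell out the finite-cover/compactness bookkeeping over the circle of directions and the max over the resulting constants, which the paper leaves implicit and which is fine.
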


We may now complete the proof of the final ingredient in the proof of our main \autoref{250304131355} concerning the resurgence of the deformed Painlevé I equation, which is \autoref{250313152819} about the exponential type of the Borel transform $\hat{\omega}$ of $\hat{q}$.
This relies on the following analogue of \autoref{250313194333} which follows immediately from \autoref*{250313152819} and from the defining formula \eqref{250310180010} for $\omega$.

\begin{corollary}
\label{250314205102}
For any excised Borel covering space $\tilde{B}^\circ$, there is a constant $\K > 0$ such that the global Borel transform $\omega \in \cal{O} (\tilde{B})^\sigma$ satisfies the bound
\begin{eqntag}
\label{250314205140}
	\norm{ \omega \big|_{\tilde{B}^\circ} }_\K \leq 1
\fullstop
\end{eqntag}
Moreover, $\K$ can be taken arbitrarily small provided that the excision $\bar{\DD} \subset \CC_z$ is sufficiently large.
In particular, for any $\bm{\gamma} \in \tilde{B}^\circ$,
\begin{eqntag}
\label{250315120535}
	\inf \int_0^{|\bm{\gamma}|} e^{-\K s} \big| \omega (\bm{\gamma}_s) \D{s} \big|
	\leq 1
\fullstop{,}
\end{eqntag}
where $\D{s} = \pm \rm{t}^\ast_\pm \lambda$ and the infimum is taken over all arc-length source-truncations $\bm{\gamma}_s$ of $\bm{\gamma}$.
\end{corollary}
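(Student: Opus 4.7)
The plan is to deduce the bound directly from the defining formula \eqref{250310180010}, combined with the already-established norm bound on $\phi$ (\autoref{250313194333}) and the Banach algebra structure of \autoref{250131103416}. There is no further fixed-point argument needed: once the three ingredients are in place, the estimate is a routine application of the triangle inequality for the norm $\norm{\shortdummy}_\K$.

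Concretely, recall that \eqref{250310180010} reads
\begin{eqn}
\omega(\bm{\gamma}) = \rm{s}^\ast f_0^+(\bm{\gamma}) + \rm{s}^\ast f_0^-(\bm{\gamma}) + \int_0^{|\bm{\gamma}|}\big(\phi_+(\bm{\gamma}_s) + \phi_-(\bm{\gamma}_s)\big)\D{s},
\end{eqn}
with $f_0^\pm(z) = \pm 3z^{-3}$. Fix an excision $\bar{\DD}(0,\R) \subset \CC_z$ of radius $\R$, so that every $\bm{\gamma} \in \tilde{B}^\circ$ has source outside $\bar{\DD}$. First, I would observe that the source-pullback functions are uniformly bounded on $\tilde{B}^\circ$: $|\rm{s}^\ast f_0^\pm(\bm{\gamma})| \leq 3/\R^3$. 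Following the elementary estimate for constants in the proof of \autoref{250131103416}, this yields $\norm{\rm{s}^\ast f_0^\pm \big|_{\tilde{B}^\circ}}_\K \leq 3/(\R^3\K)$. Second, I would recognise the integral term as (a variant of) the convolution product $1 \ast \phi_\pm$ of $\phi_\pm$ with the constant function $1$, for which the key Fubini computation used in \autoref{250131103416} gives
\begin{eqn}
\inf \int_0^{|\bm{\gamma}|} e^{-\K s}\left|\int_0^s \phi_\pm(\bm{\gamma}_u)\D{u}\;\D{s}\right| \leq \tfrac{1}{\K}\norm{\phi_\pm\big|_{\tilde{B}^\circ}}_\K.
\end{eqn}

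Assembling these estimates via the triangle inequality gives
\begin{eqn}
\norm{\omega\big|_{\tilde{B}^\circ}}_\K \leq \tfrac{6}{\R^3\K} + \tfrac{2}{\K}\big(\norm{\phi_+\big|_{\tilde{B}^\circ}}_\K + \norm{\phi_-\big|_{\tilde{B}^\circ}}_\K\big).
\end{eqn}
Applying \autoref{250313194333} to control the $\phi$-norms by $1$, and using the quantitative scaling $\K \sim 48/\R^2$ from \autoref{250221114438} which allows $\K$ to be taken arbitrarily small by enlarging $\R$, I would choose the pair $(\R,\K)$ so that the right-hand side is at most $1$; the pointwise inequality \eqref{250315120535} is then immediate from the definition of the norm. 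The arbitrary smallness of $\K$ as $\R \to \infty$ is inherited from the corresponding statement in \autoref{250313194333}, since the contributions from $\rm{s}^\ast f_0^\pm$ decay at the fast rate $\R^{-3}$ while the excision-admissible $\K$ from \autoref{250221114438} decays only at the slower rate $\R^{-2}$, so the $f_0^\pm$ terms are asymptotically negligible.

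The main potential obstacle is checking that the two competing scales — the decay of $|f_0^\pm| \leq 3/\R^3$ on the excision and the controlled growth of $1/\K \sim \R^2/48$ from \autoref{250313194333} — compose to give a uniformly bounded right-hand side. Because the former is order $\R^{-3}$ and the latter only $\R^2$, their product $6/(\R^3\K) \sim 1/(8\R)$ tends to zero, and the dominant term in the bound is the Banach-algebra contribution from $\phi$, which is already $\leq 2/\K$. Thus, up to an innocuous adjustment of constants in the choice of $\K$ (possibly shrinking the admissible $\K$ by a harmless multiplicative factor such as $1/3$ to absorb the $2/\K$), the bound $\leq 1$ is achieved, and the corollary follows.
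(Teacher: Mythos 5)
Your overall route is the one the paper intends: the paper offers no detailed argument for this corollary (it is declared to follow ``immediately'' from the bound on $\phi$ and the defining formula \eqref{250310180010}), and your decomposition --- bound $\rm{s}^\ast f_0^\pm$ like the constant term $c$ in \autoref{250131103416}, recognise the integral as $\phi_\pm \ast 1$, and apply the Banach-algebra inequality --- is exactly that derivation made explicit, and each individual estimate ($\norm{\rm{s}^\ast f_0^\pm}_\K \leq 3/(\R^3\K)$ and $\norm{\phi_\pm \ast 1}_\K \leq \K^{-1}\norm{\phi_\pm}_\K$) is correct. The gap is in the final step. If you import only $\norm{\phi_\pm}_\K \leq 1$ from \autoref{250313194333}, your bound reads $\norm{\omega}_\K \leq 6/(\R^3\K) + \K^{-1}\big(\norm{\phi_+}_\K + \norm{\phi_-}_\K\big) \leq 6/(\R^3\K) + 2/\K$, and the dominant term $2/\K$ \emph{grows} as $\K$ shrinks: with the excision-adapted choice $\K \sim 48/\R^2$ from \autoref{250221114438} it is roughly $\R^2/24 \gg 1$, so your estimate does not even yield \eqref{250314205140} for that $\K$, let alone the clause that $\K$ can be taken arbitrarily small. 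Your proposed repair --- ``shrinking the admissible $\K$ by a harmless multiplicative factor such as $1/3$ to absorb the $2/\K$'' --- goes in the wrong direction: absorbing $2/\K \leq 1$ forces $\K \geq 2$, which is precisely the opposite of the small-$\K$ clause you claim to inherit. As written, your argument only gives the corollary for $\K$ sufficiently \emph{large}, where it is easy.

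To close the gap you need quantitative smallness of $\norm{\phi_\pm}_\K$ on large excisions, not merely membership in the unit ball. This comes from feeding the unit-ball bound back through the fixed-point equation $\phi = \Upsilon[\phi]$ once more, keeping the separate constants $|a| \leq 3/\R^2$, $|b| \leq 3/\R^5$, $|c| \leq 6/\R^8$ on $\CC^\circ_z$ rather than their common maximum: with $m \coleq \max_\pm \norm{\phi_\pm}_\K$ and $\K = 48/\R^2$, the estimates \eqref{250221093419}--\eqref{250221104408} give $m \leq \tfrac{1}{8\R^6} + \tfrac14 m^2 + \tfrac{1}{8\R^3} m$, and since $m \leq 1$ this forces $m = O(\R^{-6}) = o(\K)$. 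Then $\norm{\omega}_\K \leq 2m/\K + 6/(\R^3\K) = O(\R^{-4}) + O(\R^{-1})$, which tends to $0$ as $\R \to \infty$ and so yields both \eqref{250314205140} and the arbitrarily-small-$\K$ clause; the pointwise inequality \eqref{250315120535} is then immediate from the definition of the norm, as you say. (Incidentally, the first two terms in \eqref{250310180010} cancel identically, since $f_0^+(z) + f_0^-(z) = 3z^{-3} - 3z^{-3} = 0$, though your cruder bound on them was harmless --- the real issue is the $\phi$-term.)
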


\subsection{Borel Summability and the Stokes Phenomenon}
\label{250311191044}

In this final subsection, we describe the Borel summability properties of the formal solution $\hat{f}$ of the differential system \eqref{250216174108}.
In addition, the Borel summability properties of the formal solution $\hat{q}$ of the deformed Painlevé I system, described in \autoref{250304162404}, are direct consequences of their analogues for $\hat{f}$, which we make explicit in this subsection.
Just like in \autoref{250304162404}, we prioritise clarity and precision over conciseness.

First, we introduce two special directions at every nonzero point in the $z$-plane in which the Borel resummation of $\hat{f}$ is not well-defined.

\begin{definition}[Stokes directions]
\label{250305124829}
For any $z_0 \in \CC^\ast_z$, let $\vartheta_0 \coleq \arg (z_0)$.
We define the \dfn{Stokes directions} at $z_0$ to be the directions
\begin{eqn}
	\alpha_+ \coleq 5\vartheta_0
\qtext{and}
	\alpha_- \coleq 5\vartheta_0 + \pi
\fullstop
\end{eqn}
See \autoref{250221154526}.
All other directions are called \dfn{regular directions}, and we put
\begin{eqn}
	A_1 \coleq (\alpha_+, \alpha_-)
\qtext{and}
	A_2 \coleq (\alpha_-, \alpha_+)
\fullstop
\end{eqn}
\end{definition}

\subsubsection{Pointwise Borel summability.}
First, we spell out the case of Borel summability of $\hat{f}$ for a fixed value of $z$.
We start by considering a single regular direction.

\begin{proposition}[Pointwise Borel Summability in a Single Direction]
\label{250224130127}
For any nonzero $z_0 \in \CC_z$, the formal vector-valued solution $\hat{f} (z_0, \hbar) \in \CC^2 \otimes \CC \bbrac{\hbar}$ is stably Borel summable in every regular direction $\alpha$ at $z_0$.
Thus, the Borel resummation 
\begin{eqntag}
\label{250310154303}
	f_\alpha (z_0, \hbar) 
		\coleq s_\alpha [\, \hat{f} \,] (z_0, \hbar)
		= f_0 + \Laplace_\alpha [\, \hat{\phi} \,] (z_0, \hbar)
\end{eqntag}
in the direction $\alpha$ defines a holomorphic vector-valued function on a sectorial neighbourhood $S \subset \CC_\hbar$ of the origin with opening $\sfop{Arc}_\pi (\alpha)$ which is asymptotic to $\hat{f} (z_0, \hbar)$ of uniform factorial type:
\begin{eqntag}
\label{250224153942}
	f_\alpha (z_0, \hbar) \simeq \hat{f} (z_0, \hbar)
\qquad
	\text{as $\hbar \to 0$ unif. along $\sfop{Arc}_\pi (\alpha)$}
\fullstop
\end{eqntag}
In fact, $f_\alpha (z_0, \hbar)$ is unique with respect to this property.
Furthermore, the sectorial neighbourhood $S \subset \CC_\hbar$ can be chosen to be the straight sector $S = \sfop{Sect}_\pi (\alpha; r)$ of some radius $r > 0$ that can be taken arbitrarily large provided that $z_0$ is sufficiently large.
In particular, \autoref{250223133010} is true.
\end{proposition}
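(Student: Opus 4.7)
The plan is to apply the standard Borel resummation machinery, leveraging the three key ingredients already established for the formal vector-valued solution $\hat{f}$: the convergence of the Borel transform $\hat{\phi}$ (\autoref{250314131125}), its endless analytic continuation to a global holomorphic $\phi \in \cal{O}(\tilde{B},\CC^2)$ (\autoref{250314125007}), and its exponential bounds at infinity (\autoref{250314125137}). First I would fix $z_0 \in \CC^\ast_z$ and a regular direction $\alpha$ at $z_0$. By \autoref{250305124829}, regularity means the infinite ray $e^{i\alpha}\RR_+ \subset \CC_\xi$ avoids the two branch points $\xi_\pm = \pm\tfrac{1}{30}z_0^5$ of the central charge $\Z_{z_0}: \tilde{B}_{z_0} \to \CC_\xi$. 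By \autoref{250315193721}, this ray admits a unique lift to an infinite path $\ell$ inside the Borel covering surface $\tilde{B}_{z_0}$ emanating from the basepoint $\bm{1}_{z_0}$, along which $\phi$ is globally single-valued, and by \autoref{250314125137} there exist constants $\C,\K > 0$ and a sectorial neighbourhood in $\tilde{B}_{z_0}$ whose opening contains $\alpha$ on which $|\phi(\bm{\gamma})| \leq \C e^{\K|\Z(\bm{\gamma})|}$.

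Next I would use these exponential bounds to deduce convergence of the lifted Laplace integral. Writing the resummation as in \eqref{250310162642}, the exponential estimate guarantees absolute convergence of $\Laplace_\alpha[\phi](z_0,\hbar)$ for every $\hbar$ in the open half-plane $\Re(e^{-i\alpha}/\hbar) > \K$, which is a straight sector $\sfop{Sect}_\pi(\alpha;r)$ of some radius $r$ of order $1/\K$ bisected by $\alpha$. Since regularity is an open condition on $\alpha$, the same bound persists for all nearby phases (after a small adjustment of the constants), delivering stable Borel summability in the direction $\alpha$. The uniform factorial-type asymptotic expansion \eqref{250224153942} then follows from a standard Watson-type argument: truncating the Taylor expansion of $\hat{\phi}$ at order $N$, estimating the remainder by the exponential bound on $\phi$, and integrating term by term produces the partial sums of $\hat{f}(z_0,\hbar)$ with remainders satisfying uniform Gevrey-1 estimates on $\sfop{Arc}_\pi(\alpha)$. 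Uniqueness of $f_\alpha(z_0,\hbar)$ is classical Watson's lemma on a sector of opening $\pi$.

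The main subtlety is the final assertion that the radius $r$ of the sector can be taken arbitrarily large provided $z_0$ is sufficiently large. The idea is to exploit the second half of \autoref{250314125137}: since the exponential growth constant $\K$ may be chosen arbitrarily small when $z_0$ lies outside a sufficiently large disc around the origin (an assertion whose analytic core is the choice of excision radius in \autoref{250221114438}), and since the sector's radius satisfies $r$ of order $1/\K$, this forces $r$ to be as large as desired. Finally, the concluding sentence of the proposition follows by transporting from $\hat{f}(z,\hbar)$ back to $\hat{q}(t,\hbar)$ via the relation of \autoref{250224110231}, noting that under the fourfold cover $z^4 = -24t$ the Stokes directions of \autoref{250305124829} for $\hat{f}$ match those of \autoref{250224133114} for $\hat{q}$ through the identity $5\vartheta_0 \equiv \tfrac{5}{4}(\theta_0 + \pi) \pmod{\pi}$, where $\theta_0 = \arg(t_0)$ and $\vartheta_0 = \arg(z_0)$; consequently \autoref{250223133010} is obtained as an immediate corollary.
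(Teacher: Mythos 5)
Your proposal is correct and follows essentially the same route as the paper's proof: the regular ray $e^{i\alpha}\RR_+$ misses the Borel singular values $\xi_\pm$, the exponential-type bound of \autoref{250314125137} (in its elementary form \autoref{250311182904}) makes the Laplace integral converge, openness of regularity in $\alpha$ yields a sectorial domain of opening $\pi + 2\epsilon$ containing a straight sector of opening $\pi$, and the large-radius claim comes from taking $\K$ arbitrarily small for large $z_0$ as in \autoref{250221114438}. The only slip is cosmetic: the convergence region $\set{\Re(e^{i\alpha}/\hbar) > \K}$ is a disc tangent to the origin (with opening $\sfop{Arc}_\pi(\alpha)$), not a half-plane or a straight sector $\sfop{Sect}_\pi(\alpha;r)$, but your subsequent appeal to the open arc of nearby regular directions recovers the straight sector exactly as the paper does.
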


\begin{proof}
This is a direct consequence of the exponential type property of $\phi$, \autoref{250314125137}.
In fact, in this case we may appeal to the more elementary formulation in \autoref{250311182904}.
Indeed, if $\alpha$ is a regular ray at $z_0$, then the infinite straight ray $e^{i\alpha} \RR_+ \subset \CC_\xi$ does not pass through any of the two Borel singular values $\xi_\pm$.
Consequently, the Laplace transform of $\phi$ in the direction $\alpha$,
\begin{eqn}
	\int_{e^{i\alpha} \RR_+} e^{-\xi/\hbar} \phi (z_0, \xi) \d{\xi}
\end{eqn}
is well-defined because there is some $\K > 0$ such that $\phi$ satisfies the exponential bound \eqref{250315121642} for sufficiently large $\xi$.
Furthermore, $\alpha$ necessarily belongs to an open arc $(\alpha - \epsilon, \alpha + \epsilon)$ of regular directions, which means the Laplace transform of $\phi$, and hence the Borel resummation of $\hat{f}$, is well-defined in a sectorial neighbourhood of the origin in the $\hbar$-plane with opening angle $\pi + 2\epsilon$.
Such a sectorial domain necessarily contains a straight sector of opening angle $\pi$.
\end{proof}

As we vary the ray $\alpha$ through an arc $A$ consisting of regular directions only, the different Borel resummations $f_\alpha (z_0, \hbar)$ can be patched together into a single holomorphic function $f_A (z_0, \hbar)$ on a larger sector in the $\hbar$-plane.
This leads to the following description of Borel summability of $\hat{f}$ along an arc of directions.

\begin{corollary}[Pointwise Borel Summability in an Arc of Directions]
\label{250224154010}
For any nonzero $z_0 \in \CC_z$ and any arc $A \subset \SS^1$ of regular directions at $z_0$, the Borel resummations $f_\alpha (z_0, \hbar)$ of $\hat{f} (z_0, \hbar)$ for $\alpha \in A$ assemble into a single holomorphic vector-valued function $f_A (z_0, \hbar)$ defined on a sectorial neighbourhood $S \subset \CC_\hbar$ of the origin with opening $\sfop{Arc}_\pi (A)$ which is asymptotic to $\hat{f} (z_0, \hbar)$ of factorial type:
\begin{eqntag}
\label{250224154126}
	f_A (z_0, \hbar) \simeq \hat{f} (z_0, \hbar)
\qquad
	\text{as $\hbar \to 0$ along $\sfop{Arc}_\pi (A)$}
\fullstop
\end{eqntag}
In fact, $f_A (z_0, \hbar)$ is unique with respect to this property.
Furthermore, if $A$ is not bounded by a Stokes ray at $z_0$, then the sectorial neighbourhood $S \subset \CC_\hbar$ can be chosen to be the straight sector $S = \sfop{Sect}_\pi (A; r)$ of some radius $r > 0$ that can be taken arbitrarily large provided that $z_0$ is sufficiently large.
In particular, \autoref{250224153234} is true.
\end{corollary}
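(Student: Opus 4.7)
The plan is to deduce this Corollary from Proposition 250224130127 by a standard contour-deformation argument made possible by the exponential type bound \eqref{250315121642} established in Corollary 250311182904. For each regular direction $\alpha \in A$, Proposition 250224130127 already produces a holomorphic Borel resummation $f_\alpha(z_0, \hbar) = f_0 + \Laplace_\alpha[\phi](z_0, \hbar)$ on some sectorial neighbourhood $S_\alpha$ of the origin with opening $\sfop{Arc}_\pi(\alpha)$, asymptotic to $\hat{f}(z_0,\hbar)$ of uniform factorial type. The task is to show these glue into a single holomorphic function.

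The key gluing step goes as follows. For any pair of directions $\alpha, \alpha' \in A$ that are close enough, the closed wedge between the rays $e^{i\alpha}\RR_+$ and $e^{i\alpha'}\RR_+$ consists of regular directions at $z_0$, so $\phi(z_0, \xi)$ is holomorphic and satisfies $|\phi(z_0,\xi)| \leq \C e^{\K|\xi|}$ throughout this wedge. For $\hbar$ in the intersection $S_\alpha \cap S_{\alpha'}$ of the two halfplanes bisected by $\alpha$ and $\alpha'$, one has $\Re(e^{i\beta}/\hbar) \geq \K + \epsilon$ uniformly for $\beta \in [\alpha,\alpha']$, so Cauchy's theorem applied to a large circular-arc contour at infinity gives $\Laplace_\alpha[\phi] = \Laplace_{\alpha'}[\phi]$, hence $f_\alpha = f_{\alpha'}$ on the overlap. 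Iterating this along a covering of $A$ by close-by directions produces a single holomorphic vector-valued function $f_A$ on $S \coleq \bigcup_{\alpha \in A} S_\alpha$, which is a sectorial neighbourhood of the origin with opening $\sfop{Arc}_\pi(A)$. The factorial-type asymptotic \eqref{250224154126} follows immediately from the corresponding property of each $f_\alpha$, and uniqueness is obtained from a Watson--Nevanlinna type lemma in the opening $\sfop{Arc}_\pi(A)$ of total angular width $\pi + |A|$; the opening is already $\pi$ when $|A|=0$, reducing to Proposition 250224130127, and is strictly greater than $\pi$ when $|A|>0$, forcing uniqueness of any holomorphic function with the required asymptotic expansion of factorial type.

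For the final claim, suppose $A$ is not bounded by a Stokes ray. Then we can enlarge $A$ slightly to a closed arc $\bar A'$ of regular directions. By Corollary 250311182904 (applied on the compact set $\bar A'$), the constants $\C$ and $\K$ in the exponential bound can be chosen uniformly for all directions in $\bar A'$, and the associated $r$ can be taken to be $r = 1/\K$ (modulo any small positive constant), giving the straight sector $\sfop{Sect}_\pi(A; r)$. The final assertion that $r$ can be made arbitrarily large as $z_0$ grows follows from the fact (also in Corollary 250311182904) that $\K$ can be taken arbitrarily small provided $z_0$ is sufficiently large, which in turn traces back to \autoref{250221114438} where the contraction-mapping constant was shown to decay in the large-radius excision limit.

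The main technical obstacle is the uniform contour deformation in the gluing step when $|A|$ is not small. Deforming a Laplace contour between two distant rays requires the intermediate contour at large $|\xi|$ to lie in a cone where $\Re(\xi/\hbar) \geq (\K+\epsilon)|\xi|$; this forces $\hbar$ to lie in the intersection of the halfplanes bisected by every intermediate direction rather than merely the two endpoint directions. This is precisely why the opening of $S$ is $\sfop{Arc}_\pi(A) = A + (-\pi/2, \pi/2)$ and not something larger, and why Stokes directions obstruct the construction by preventing the wedge from being enlarged past them.
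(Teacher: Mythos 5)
Your proposal is correct and follows essentially the same route the paper intends: the corollary is obtained from \autoref{250224130127} by rotating the Laplace contour through the wedge of regular directions (justified by the exponential bound on $\phi$), patching the resummations $f_\alpha$ on overlaps, and invoking the standard Gevrey-1 uniqueness on a sector of opening greater than $\pi$, with the straight-sector radius controlled by the constant $\K$ from \autoref{250311182904}/\autoref{250221114438}. The only minor looseness is the claim that $\Re(e^{i\beta}/\hbar)\geq \K+\epsilon$ holds uniformly on the whole overlap $S_\alpha\cap S_{\alpha'}$ (it fails near the boundary rays), but equality of $f_\alpha$ and $f_{\alpha'}$ on a nonempty open subset plus the identity theorem closes this immediately.
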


\subsubsection{Locally uniform Borel summability.}
Next, we state the Borel summability property of $\hat{f}$ when $z$ is allowed to vary in a small local neighbourhood of a fixed nonzero point in the $z$-plane.
Again, we start by considering summability in a single regular direction.

\begin{proposition}[Locally Uniform Borel Summability in a Single Direction]
\label{250224161315}
For any nonzero $z_0 \in \CC_z$ and any regular direction $\alpha$ at $z_0$, there is a neighbourhood $U \subset \CC_z^\ast$ around $z_0$ such that $\hat{f} (z,\hbar)$ is stably Borel summable in the direction $\alpha$ uniformly for all $z \in U$.
Thus, there is a sectorial neighbourhood $S \subset \CC_\hbar$ of the origin with opening $\sfop{Arc}_\pi (\alpha)$ such that the Borel resummation $f_\alpha (z,\hbar)$ of $\hat{f} (z,\hbar)$ in the direction $\alpha$ defines a holomorphic vector-valued function on the domain $U \times S$ which is uniformly asymptotic to $\hat{f} (z, \hbar)$ of uniform factorial type:
\begin{eqntag}
\label{250224161547}
	f_\alpha (z, \hbar) \simeq \hat{f} (z, \hbar)
\qquad
	\text{as $\hbar \to 0$ unif. along $\sfop{Arc}_\pi (\alpha)$,}
\end{eqntag}
uniformly for all $z \in U$.
In fact, $f_\alpha$ is unique with respect to this property.
Furthermore, the sectorial neighbourhood $S \subset \CC_\hbar$ can be chosen to be the straight sector $S = \sfop{Sect}_\pi (\alpha; r)$ of some radius $r > 0$ that can be taken arbitrarily large provided that $z_0$ is sufficiently large and $U$ is sufficiently small.
In particular, \autoref{250211094021} is true.
\end{proposition}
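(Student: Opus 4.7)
The plan is to upgrade the pointwise result of \autoref{250224130127} to a locally uniform statement by directly exploiting the locally uniform exponential type of the global Borel transform $\phi$ established in \autoref{250314125137}. The core observation is that regularity of a direction $\alpha$ at $z_0$ is an open condition: since the two Borel singular values $\xi_\pm(z) = \pm \tfrac{1}{30} z^5$ depend continuously on $z$, there is a neighbourhood $U \subset \CC^\ast_z$ of $z_0$ such that for all $z \in U$, the infinite ray $e^{i\alpha} \RR_+ \subset \CC_\xi$ stays at a uniform positive distance from both $\xi_+(z)$ and $\xi_-(z)$. In particular, $\alpha$ lies in the regular arc at every $z \in U$, and the ray has a unique continuous lift to a family of infinite paths $\ell_z \subset \tilde{B}_z$ starting at $\bm{1_z}$, whose union sweeps out a sectorial neighbourhood $\sfSigma_z$ in each fibre to which \autoref{250314125137} applies.

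First I would shrink $U$ so that the exponential type constants $\C, \K > 0$ in \autoref{250314125137} can be chosen independently of $z \in U$, with the bound $|\phi(\bm{\gamma})| \leq \C e^{\K|\Z(\bm{\gamma})|}$ holding uniformly for $\bm{\gamma} \in \sfSigma_z$. The Laplace integral
\begin{equation*}
	f_\alpha(z, \hbar) = f_0(z) + \int_{e^{i\alpha} \RR_+} e^{-\xi/\hbar} \phi(z, \xi) \d{\xi}
\end{equation*}
then converges absolutely and locally uniformly for $(z, \hbar) \in U \times S$, where $S$ is any straight sector of opening $\pi$ bisected by $\alpha$ whose radius $r$ satisfies $r^{-1} > \K$ (equivalently $\Re(e^{-i\alpha}/\hbar) > \K$ in $S$). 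Since $\phi$ is holomorphic in both arguments, $f_\alpha$ is holomorphic on $U \times S$.

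The uniform asymptotic estimate \eqref{250224161547} follows from the classical Nevanlinna--Sokal truncation argument applied pointwise in $z$: substituting the Taylor expansion $\phi(z, \xi) = \sum_{n \geq 0} \phi_n(z) \xi^n/n!$ into the integrand and using the uniform exponential bound, one obtains a factorial-type remainder estimate
\begin{equation*}
	\Big| f_\alpha(z, \hbar) - \sum_{n=0}^{N-1} f_n(z) \hbar^n \Big| \leq \C' \M^N N! |\hbar|^N
\end{equation*}
with constants $\C', \M$ depending only on $\C, \K$ and the opening, hence uniform for $z \in U$ and $\hbar \in S$. Uniqueness in the class of functions with this uniform factorial-type asymptotic behaviour along an arc of length $\pi$ is the standard Watson--Nevanlinna lemma.

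The main technical obstacle is that the exponential-type constant $\K$ furnished by the Contraction Mapping construction depends on the size of the excised disc $\bar{\DD} \subset \CC_z$; to obtain a Laplace integral defined on a sector of \emph{arbitrarily large} radius $r$, one must drive $\K$ to zero. This is precisely the content of the final clause of \autoref{250314125137}, which asserts that $\K$ can be made arbitrarily small when $z_0$ is sufficiently large (and $U$ correspondingly small so that the uniform choice of $\sfSigma_z$ remains valid). This yields the last assertion of the Proposition, and the corresponding statement \autoref{250211094021} for $\hat{q}$ then follows by transporting the result through the identity \eqref{250223140730} of \autoref{250224110855} and the $\sigma$-invariance used in the proof of \autoref{250210171316}.
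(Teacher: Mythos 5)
Your overall route is the paper's: the Proposition is meant to follow from the locally uniform exponential type of $\phi$ (\autoref{250314125137}), exactly as the pointwise case \autoref{250224130127} does, and your use of the continuity of $\xi_\pm(z)=\pm\tfrac{1}{30}z^5$, the uniform choice of $\C,\K$ on a shrunken $U$, Watson-type uniqueness, and the small-$\K$/large-$z_0$ clause for the radius statement all match the intended argument, as does the transfer to $\hat q$ via \eqref{250223140730}.

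There is, however, one step that fails as written: the claim that the single-direction Laplace integral converges on a straight sector $S=\sfop{Sect}_\pi(\alpha;r)$ whenever $r^{-1}>\K$, with the parenthetical ``equivalently $\Re(e^{-i\alpha}/\hbar)>\K$ in $S$''. This equivalence is false: writing $\hbar=|\hbar|e^{i(\alpha+\theta)}$ with $\theta\in(-\pi/2,\pi/2)$, one has $\Re(e^{-i\alpha}/\hbar)=\cos\theta/|\hbar|$, which tends to $0$ as $\theta\to\pm\pi/2$ no matter how small $|\hbar|$ is; so the integral along the single ray $e^{i\alpha}\RR_+$ diverges near the lateral boundary of any opening-$\pi$ sector, and the same problem contaminates the uniform Gevrey remainder estimate there. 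The single-ray Laplace transform only lives on the ``Borel disc'' $\set{\Re(e^{-i\alpha}/\hbar)>\K}$. The fix is the one the paper uses in the proof of \autoref{250224130127}: since regularity is open, $\alpha$ lies in an arc $(\alpha-\epsilon,\alpha+\epsilon)$ of directions that remain regular for all $z$ in a possibly smaller $U$ (uniform positive distance of the rays from $\xi_\pm(z)$); the Laplace transforms in these nearby directions agree on overlaps by Cauchy's theorem and patch to a holomorphic function on a sectorial domain of opening $\pi+2\epsilon$, uniformly in $z\in U$, which then contains a straight sector of opening $\pi$ on which the uniform factorial-type asymptotics \eqref{250224161547} hold. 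With that contour-rotation step inserted (it is also what ``stably'' Borel summable refers to), the rest of your argument goes through.
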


As we vary both the ray $\alpha$ in an arc $A$ and $z$ in a neighbourhood $U_0$ of $z_0$, we can patch the Borel resummations provided that $A$ and $U_0$ are sufficiently small in a coherent way as follows.

\begin{corollary}[Locally Uniform Borel Summability in an Arc of Directions]
\label{250224160903}
For any nonzero $z_0 \in \CC_z$ and any arc $A \subset \SS^1$ of regular directions at $z_0$ whose boundary does not contain any Stokes directions at $z_0$, there is a neighbourhood $U \subset \CC_z^\ast$ around $z_0$ such that $\hat{f} (z,\hbar)$ is Borel summable in every direction $\alpha \in A$ uniformly for all $z \in U$.
Thus, there is a sectorial neighbourhood $S \subset \CC_\hbar$ of the origin with opening $\sfop{Arc}_\pi (A)$ such that the Borel resummations $f_\alpha (z,\hbar)$ of $\hat{f} (z,\hbar)$ for $\alpha \in A$ assemble into a single holomorphic vector-valued function $f_A (z,\hbar)$ defined on the domain $U \times S$ which is uniformly asymptotic to $\hat{f} (z, \hbar)$ of factorial type:
\begin{eqntag}
\label{250224161227}
	f_A (z, \hbar) \simeq \hat{f} (z, \hbar)
\qquad
	\text{as $\hbar \to 0$ along $\sfop{Arc}_\pi (A)$,}
\end{eqntag}
uniformly for all $z \in U$.
In fact, $f_A$ is unique with respect to this property.
Furthermore, the sectorial neighbourhood $S \subset \CC_\hbar$ can be chosen to be the straight sector $S = \sfop{Sect}_\pi (A; r)$ of some radius $r > 0$ that can be taken arbitrarily large provided that $z_0$ is sufficiently large and $U$ is sufficiently small.
In particular, \autoref{250224155618} is true.
\end{corollary}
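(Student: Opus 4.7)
The plan is to derive this corollary from Proposition \ref{250224161315} (the single-direction locally uniform Borel summability) by an open-cover patching argument along the arc $A$. Write $\bar A$ for the closure of $A$ in $\SS^1$; by hypothesis $\bar A$ consists only of regular directions at $z_0$, and since the Stokes directions depend continuously on the base point (they are given by $\alpha_\pm = 5\vartheta_0, 5\vartheta_0 + \pi$), after possibly shrinking a preliminary neighbourhood of $z_0$, every direction in $\bar A$ remains regular at every nearby $z$.

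First I would apply Proposition \ref{250224161315} to each $\alpha \in \bar A$, obtaining a neighbourhood $U_\alpha \subset \CC_z^\ast$ of $z_0$ and a straight sector $S_\alpha = \sfop{Sect}_\pi(\alpha; r_\alpha)$ on which $f_\alpha(z,\hbar) \coleq f_0(z) + \Laplace_\alpha[\phi](z,\hbar)$ is a well-defined holomorphic vector-valued function uniformly asymptotic to $\hat f(z,\hbar)$ of uniform factorial type for $z \in U_\alpha$. The arcs $\sfop{Arc}_\pi(\alpha)$ form an open cover of the compact set $\sfop{Arc}_\pi(\bar A)$, so by compactness I extract a finite ordered subcover indexed by $\alpha_1 < \cdots < \alpha_n$ in $\bar A$ such that consecutive $\sfop{Arc}_\pi(\alpha_i)$ and $\sfop{Arc}_\pi(\alpha_{i+1})$ overlap. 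Set $U \coleq \bigcap_i U_{\alpha_i}$ and $r \coleq \min_i r_{\alpha_i}$; these remain a neighbourhood of $z_0$ and a positive real number.

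Second I would establish that the neighbouring resummations agree on overlaps. For each consecutive pair $\alpha_i < \alpha_{i+1}$, every direction in the closed interval $[\alpha_i, \alpha_{i+1}]$ is regular at every $z \in U$. Proposition \ref{250314125137} provides, uniformly for $z \in U$, a sectorial domain $\sfSigma_z \subset \tilde B_z$ containing the entire pencil of infinite rays in these directions lifted from the Borel plane via $\Z_z$, together with exponential bounds $|\phi(\bm\gamma)| \leq \C e^{\K |\Z(\bm\gamma)|}$ uniform in $z$. Cauchy's theorem then justifies rotating the Laplace contour from $e^{i\alpha_i}\RR_+$ to $e^{i\alpha_{i+1}}\RR_+$, so $\Laplace_{\alpha_i}[\phi]$ and $\Laplace_{\alpha_{i+1}}[\phi]$ coincide on the overlap $S_{\alpha_i} \cap S_{\alpha_{i+1}}$. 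Iterating in $i$, the local functions $f_{\alpha_i}$ glue to a single holomorphic vector-valued function $f_A$ on $U \times S$, where
\begin{equation*}
S \coleq \bigcup_{i=1}^n \sfop{Sect}_\pi(\alpha_i; r) \supset \sfop{Sect}_\pi(A; r).
\end{equation*}
Since $f_A$ restricts on each $\sfop{Arc}_\pi(\alpha_i)$ to $f_{\alpha_i}$, it is uniformly asymptotic to $\hat f$ of uniform factorial type on the entire union $\sfop{Arc}_\pi(A)$, yielding \eqref{250224161227}. Uniqueness is a standard Watson-type argument on sectorial neighbourhoods of opening strictly greater than $\pi$. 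The final assertion on arbitrarily large straight-sector radii is inherited from the corresponding clause in Proposition \ref{250224161315}: each $r_{\alpha_i}$ can be made arbitrarily large for $z_0$ sufficiently large and $U$ sufficiently small, and $n$ is finite, so $r = \min_i r_{\alpha_i}$ can be as well. Corollary \ref{250224155618} for the deformed Painlevé I equation then follows from Lemma \ref{250224110231}, which expresses $\hat q$ in terms of $\hat f_\pm$ via the fourfold cover $z^4 = -24t$.

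The main obstacle will be the contour-rotation step, specifically arranging that the sectorial domain $\sfSigma_z \subset \tilde B_z$ of Proposition \ref{250314125137} can be chosen continuously in $z \in U$ to contain simultaneously all the lifts of the rays $e^{i\alpha}\RR_+$ for $\alpha \in [\alpha_i, \alpha_{i+1}]$. This requires the source map $\rm{s}: \tilde M \to \CC_\tau^\ast$ to be submersive with sufficiently ``tame'' monodromy of the Borel singularities $\xi_\pm(\tau) = \pm\tfrac{1}{30}z^5$ as $z$ varies; this tameness is precisely what the explicit algebraic description of $\tilde B$ in \autoref{250304152932} delivers, because the ten ramification points of $\Z_\tau$ depend algebraically on $\tau$, so small perturbations of $z_0$ produce only small continuous perturbations of the obstruction locus in the Borel plane and of its lifts to $\tilde B_z$.
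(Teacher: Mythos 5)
Your argument is correct and is exactly the route the paper intends: the corollary is stated without a written proof, preceded only by the remark that the single-direction resummations of Proposition \ref{250224161315} ``patch together in a coherent way'' as $\alpha$ sweeps the arc, with the contour-rotation mechanism already illustrated in the discussion following Corollary \ref{250224153234} and justified by the locally uniform exponential bounds of Proposition \ref{250314125137}. Your finite-subcover bookkeeping and the gluing on overlapping half-plane sectors fill in precisely those implicit steps, so there is nothing genuinely different to compare.
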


\subsubsection{Borel summability in Stokes sectors.}

Now we describe the Borel summability properties of $\hat{f}$ in the large.
For this purpose, we fix a direction for the resummation and describe the distinguished regions in the $z$-plane where the Borel resummation in this direction is well-defined.

\begin{definition}[Stokes lines and sectors]
\label{250310192447}
Fix any phase $\alpha \in \SS^1$.
A \dfn{Stokes line} in the $z$-plane is any infinite straight ray $e^{i \theta} \RR_+ \subset \CC_z$ with phase $\theta$ satisfying $10 \theta + 2 \alpha \equiv 0$.
The union of all Stokes lines is called the \dfn{Stokes graph}.
Any connected component $V \subset \CC_z$ of the complement of the Stokes graph is called a \dfn{Stokes sector}.
\end{definition}

Notice that the equation $10 \theta + 2 \alpha \equiv 0$ has exactly ten distinct solutions in $\SS^1$ distributed evenly around the circle.
So for the differential system \eqref{250216174108}, there are a total of ten Stokes lines and therefore ten Stokes sectors each of which is an infinite sector in the $z$-plane with opening angle $\pi/5$.

\begin{proposition}[Borel Summability in Stokes Sectors]
\label{250305151549}
Fix any phase $\alpha \in \SS^1$ and select a Stokes sector $V \subset \CC_z$.
Then $\hat{f} (z,\hbar)$ is stably Borel summable in the direction $\alpha$, locally uniformly for all $z \in V$.
Thus, the Borel resummation $\hat{f}_\alpha (z,\hbar) = s_\alpha [\, \hat{f} \,] (z,\hbar)$ defines a holomorphic vector-valued function on a domain $\VV \subset V \times \CC_\hbar$ with the following property: every $z_0 \in V$ has a neighbourhood $V_0 \subset V$ such that there is a sectorial neighbourhood $S_0 \subset \CC_\hbar$ of the origin with opening $\sfop{Arc}_\pi (\alpha)$ satisfying $V_0 \times S_0 \subset \VV$.
Furthermore, $f_\alpha$ is locally uniformly asymptotic to $\hat{f}$ of uniform factorial type:
\begin{eqntag}
	f_\alpha (z, \hbar) \simeq \hat{f} (z,\hbar)
\qquad
	\text{as $\hbar \to 0$ unif. along $\sfop{Arc}_\pi (\alpha)$,}
\end{eqntag}
locally uniformly for all $z \in V$.
In fact, $f_\alpha$ is the unique holomorphic vector-valued function on $\VV$ with this property.
Moreover, $V$ is a maximal domain of Borel summability of $\hat{f}$ in the direction of $\alpha$; i.e., there does not exist another domain $V'$ properly containing $V$ such that $\hat{f} (z,\hbar)$ is stably Borel summable in the direction $\alpha$, locally uniformly for all $z \in V$.
In particular, \autoref{250210133432} is true.
\end{proposition}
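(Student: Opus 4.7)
The plan is to assemble $f_\alpha$ on the Stokes sector $V$ by patching together the pointwise Borel resummations furnished by Proposition~\ref{250224161315}. The crucial observation is that Definition~\ref{250310192447} of a Stokes sector is the exact geometric complement of the set of points $z$ where $\alpha$ is a Stokes direction in the sense of Definition~\ref{250305124829}: a point $z \in \CC_z^\ast$ lies on a Stokes line for $\alpha$ precisely when $\alpha$ is a Stokes direction at $z$, so every $z \in V$ is a point at which $\alpha$ is regular. Applying Proposition~\ref{250224161315} at each $z_0 \in V$ therefore produces a neighbourhood $U_{z_0} \subset V$, a straight sector $S_{z_0} = \sfop{Sect}_\pi(\alpha;r_{z_0})$ of some radius $r_{z_0} > 0$, and a holomorphic $f_\alpha^{(z_0)} \in \cal{O}(U_{z_0} \times S_{z_0},\CC^2)$ that is uniformly Gevrey-$1$ asymptotic to $\hat{f}$ along $\sfop{Arc}_\pi(\alpha)$.

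Next I would patch these local pieces. On any overlap $(U_{z_0} \cap U_{z_0'}) \times (S_{z_0} \cap S_{z_0'})$, both $f_\alpha^{(z_0)}$ and $f_\alpha^{(z_0')}$ share the same uniform Gevrey-$1$ asymptotic expansion in a common straight sector of opening $\pi$, so the Watson-type uniqueness built into Proposition~\ref{250224161315} forces them to coincide. Taking $\VV \coleq \bigcup_{z_0 \in V} U_{z_0} \times S_{z_0}$ thus yields a well-defined global $f_\alpha \in \cal{O}(\VV,\CC^2)$ with the stated sectorial-neighbourhood property, the asymptotic expansion, and global uniqueness all inherited from the pointwise result. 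The companion Proposition~\ref{250210133432} for the deformed Painlevé~I equation is then obtained by transporting $f_\alpha$ through the chain of transformations of Lemma~\ref{250215120832} and applying the reconstruction formula $\hat{q} = q_0 + \hbar(\hat{f}_+ + \hat{f}_-)$ of Lemma~\ref{250224110231}; the relation $z^4 = -24t$ together with the double cover $\tau^2 = -t/6$ identifies the five Stokes sectors of Definition~\ref{250224171850} in the $\tau$-plane with appropriate pairs of Stokes sectors in the $z$-plane for the same phase $\alpha$, and the choice of Stokes sector in $V \subset \CC_\tau$ pins down the distinguished branch of $\hat{q}$.

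For the maximality assertion I would argue by contradiction. If $V' \supsetneq V$ admitted locally uniform stable Borel summability of $\hat{f}(z,\hbar)$ in direction $\alpha$, then since $V$ is a connected component of the complement of the Stokes graph, $V'$ would contain a boundary point $z^\ast \in \partial V$ lying on a Stokes line; at such $z^\ast$ the direction $\alpha$ is Stokes, so the Borel singular value $\xi_0 = \pm\tfrac{1}{30}(z^\ast)^5$ lies on the Laplace contour $e^{i\alpha}\RR_+$. The analogue for $\hat{f}$ of Proposition~\ref{250225101025} shows that $\hat{f}(z^\ast,\hbar)$ is then only \emph{laterally} Borel summable, with the two lateral resummations differing by a Stokes jump of the form $e^{-\xi_0/\hbar}\Laplace_\alpha[\Delta_{\xi_0}\phi](z^\ast,\hbar)$ (analogue of Proposition~\ref{250314214443}). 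The variation $\Delta_{\xi_0}\phi$ is nontrivial because, by Lemma~\ref{250217200331}, the covering $\Z_{z^\ast}: B_{z^\ast} \to \CC_\xi$ has genuine ramification of order $5$ above $\xi_0$, so the sheets of $\tilde{B}_{z^\ast}$ above a punctured neighbourhood of $\xi_0$ are not identified by $\phi$. Consequently no single-valued holomorphic function on a sectorial neighbourhood of $\hbar = 0$ of opening $\sfop{Arc}_\pi(\alpha)$ can be Gevrey-$1$ asymptotic to $\hat{f}(z^\ast,\hbar)$, contradicting the existence of the presumed extension of $f_\alpha$ across $z^\ast$.

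The main obstacle is this last step: the nonvanishing of the variation $\Delta_{\xi_0}\phi$ at an arbitrary boundary point $z^\ast$ is what genuinely engages the explicit geometry of the Borel surface — the fact that each Borel singularity is a true ramification point of order exactly $5$ rather than a removable or apparent singularity of $\phi$ — and is the only place where one must reach beyond formal patching and uniqueness to use the detailed resurgent structure proved in Propositions~\ref{250210171316} and~\ref{250314125007}.
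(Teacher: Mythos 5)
Your construction of $f_\alpha$ on $V$ is sound and is essentially the route the paper takes: the paper gives no separate written proof of this proposition, deriving all the summability statements from the locally uniform exponential-type bound of \autoref{250314125137}, and your patching of the local resummations supplied by \autoref{250224161315} over the Stokes sector is the intended globalisation. In fact the patching needs no Watson-type uniqueness at all: on any overlap both local functions are the same Laplace integral $f_0 + \Laplace_\alpha[\phi](z,\hbar)$ of the single global Borel transform $\phi$, so they coincide identically, and uniqueness is only needed for the final uniqueness clause. Your reduction of \autoref{250210133432} via \autoref{250224110231} and the covers $z^2 = 12\tau$, $z^4 = -24t$ also matches the paper's intention, the key point being that $\hat f_+ + \hat f_-$ is invariant under $z \mapsto -z$, so each $\tau$-Stokes sector corresponds to two antipodal $z$-Stokes sectors giving the same resummation. (A minor remark: the equivalence ``$z$ lies on a Stokes line for $\alpha$ iff $\alpha$ is a Stokes direction at $z$'' is clearly the intended content of \autoref{250310192447}, although as literally written $10\theta + 2\alpha \equiv 0$ differs by a sign from the locus $10\theta - 2\alpha \equiv 0$ forced by \autoref{250305124829}; this is a defect of the paper's conventions, not of your argument.)

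The genuine gap is exactly where you flag it: the maximality step. The inference ``$\Z_{z^\ast}$ is ramified of order $5$ over $\xi_0$, hence the variation $\Delta_{\xi_0}\phi$ is nontrivial'' is a non sequitur: ramification is a property of the covering map $\Z$, not of the function $\phi$. \autoref{250314125007} only asserts that $\phi$ is holomorphic on the universal cover of $B_{z^\ast} \smallsetminus \Gamma_{z^\ast}$; this is perfectly compatible with $\phi$ having trivial monodromy around a point of $\Gamma_{z^\ast}$ (even extending holomorphically across it), in which case $\Delta_{\xi_0}\phi = 0$, the two lateral sums agree, and your contradiction evaporates. Nothing in the paper establishes that the Borel singularities are genuine or that the Stokes jump is nonzero --- note that the discussion after \eqref{250314214016} is carefully phrased to avoid this claim, and \autoref{250312131625} only expresses the jump, never its nonvanishing. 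The maximality clause is instead meant to be read through the notion of \emph{stable} Borel summability imported from the framework the paper relies on: at a boundary point $z^\ast \in \partial V$ the ray $e^{i\alpha}\RR_+$ points at the Borel singular value $\pm\tfrac{1}{30}(z^\ast)^5$, so the Laplace contour runs into the excised critical locus, over which the lifted critical geodesic on $\tilde B_{z^\ast}$ terminates at finite length; it is this geometric obstruction that defeats stable summability in the direction $\alpha$, with no appeal to a nonzero jump. If you wish to keep your classical contradiction via the Stokes jump, you need the additional input $\Delta_{\xi_0}\phi \neq 0$, which is a nontrivial fact proved nowhere in this paper and not deducible from the lemmas you cite.
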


\subsubsection{The Stokes phenomenon.}
Finally, we describe the failure of Borel summability in a Stokes direction and the associated Stokes phenomenon.

\begin{corollary}[Lateral Borel Summability]
\label{250225105212}
Fix any nonzero $z_0 \in \CC_z$ and suppose $\alpha$ is a Stokes ray at $z_0$.
Then $\hat{f} (z_0, \hbar)$ is laterally Borel summable in the direction $\alpha$.
Thus, the left and right lateral Borel resummations 
\begin{eqntag}
\label{250315122645}
	f_\alpha^\textup{L/R} (z_0, \hbar) 
		\coleq s_\alpha^\textup{L/R} \big[ \, \hat{f} \, \big] (z_0, \hbar)
		= f_0 + \Laplace_\alpha^\textup{L/R} \big[ \, \phi \, \big] (z_0, \hbar)
\end{eqntag}
in the direction $\alpha$ define two holomorphic vector-valued functions on a sectorial neighbourhood $S \subset \CC_\hbar$ of the origin with opening $\sfop{Arc}_\pi (\alpha)$, each of which is asymptotic to $\hat{f} (z_0, \hbar)$ of factorial type:
\begin{eqntag}
\label{250225105520}
	f_\alpha^\textup{L/R} (z_0, \hbar) \simeq \hat{f} (z_0, \hbar)
\qquad
	\text{as $\hbar \to 0$ along $\sfop{Arc}_\pi (\alpha)$}
\fullstop
\end{eqntag}
\end{corollary}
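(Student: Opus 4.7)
The plan is to deduce this directly from the endless analytic continuation (\autoref{250314125007}) and the exponential type at infinity (\autoref{250314125137}) of $\phi$. Let $\xi_0 \in \set{\xi_+, \xi_-}$ be the Borel singular value visible in the direction $\alpha$ from $\bm{1_{z_0}} \in \tilde{B}_{z_0}$; by \autoref{250315193721}, $\xi_0$ is a logarithmic branch point of the central charge $\Z_{z_0} : \tilde{B}_{z_0} \to \CC_\xi$. The lateral rays $e^{i\alpha} \RR_+^{\textup{L/R}}$ are obtained by perturbing the straight ray around $\xi_0$ on the corresponding side and, since both avoid $\set{\xi_+, \xi_-}$, each lifts uniquely to an infinite path $\ell^{\textup{L/R}}$ in $\tilde{B}_{z_0}$ starting at $\bm{1_{z_0}}$. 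The two lifts lie on distinct sheets of the universal cover and differ by a single loop around $\xi_0$, which is precisely what produces the Stokes jump analysed in \autoref{250314214443}.

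First, I would verify absolute convergence of each lateral Laplace integral $\Laplace_\alpha^{\textup{L/R}}[\,\phi\,](z_0,\hbar)$ over $\ell^{\textup{L/R}}$ for $\hbar$ in the open halfplane $\Re(e^{-i\alpha}/\hbar) > \K$. This is the standard Laplace-convergence criterion applied with the exponential bound $|\phi(\bm{\gamma})| \leq \C e^{\K|\Z(\bm{\gamma})|}$ of \autoref{250314125137}, which holds in a sectorial neighbourhood of infinity in $\tilde{B}_{z_0}$ containing the tail of $\ell^{\textup{L/R}}$ since the asymptotic direction of $\ell^{\textup{L/R}}$ in $\tilde{B}_{z_0}$ projects to $\alpha$ under $\Z_{z_0}$.

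To upgrade each lateral Laplace transform to a holomorphic function on a sectorial neighbourhood of opening $\sfop{Arc}_\pi(\alpha)$, I would patch it with the pointwise Borel resummations $f_{\alpha\pm\epsilon}(z_0,\hbar)$ of \autoref{250224130127}, each defined on a straight sector of opening $\pi$ bisected by the regular direction $\alpha \pm \epsilon$. On the matching side of $\alpha$, the lift of $e^{i(\alpha\mp\epsilon)} \RR_+$ to $\tilde{B}_{z_0}$ is homotopic to $\ell^{\textup{L/R}}$, and Cauchy's theorem combined with the sectorial exponential bound shows that the corresponding Laplace integrals coincide on the overlap of their $\hbar$-halfplanes. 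Gluing over all small $\epsilon > 0$ produces $f_\alpha^{\textup{L/R}}(z_0,\hbar)$ on the full sectorial neighbourhood with opening $\sfop{Arc}_\pi(\alpha)$. The asymptotic statement \eqref{250225105520} is then a standard Watson-type argument: split $\ell^{\textup{L/R}}$ into a bounded initial segment --- on which $\phi$ coincides with the convergent Taylor series $\sum \tfrac{f_{n+1}(z_0)}{n!}\xi^n$ of \autoref{250314131125}, so that termwise integration recovers the truncations of $\hat{f}(z_0,\hbar)$ --- and an infinite tail, on which the exponential bound forces the remainder to be $\cal{O}(\M^\N \N!\,|\hbar|^\N)$ for every $\N$.

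The main technical obstacle is producing a sectorial neighbourhood of each $\ell^{\textup{L/R}}$ in $\tilde{B}_{z_0}$ on which the exponential bound of \autoref{250314125137} holds with constants that remain uniform as the bisecting direction sweeps a halfplane arc around $\alpha$ on the appropriate side. On the side closer to the Stokes direction, any further rotation would force the ray in the Borel $\xi$-plane to cross $\xi_0$ and change sheets, so the sector rotation must be carried out genuinely on the covering surface $\tilde{B}_{z_0}$ rather than in the projected $\xi$-plane. The strong uniformity clause of \autoref{250314125137} is precisely what supplies such a sectorial neighbourhood; the remaining bookkeeping amounts to tracking which sheet of $\tilde{B}_{z_0} = \widetilde{\CC_\xi \smallsetminus \set{\xi_+, \xi_-}}$ contains the rotating contour, and is otherwise straightforward.
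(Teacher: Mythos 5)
Your proposal is correct and takes essentially the same route the paper intends: the corollary is a direct consequence of endless analytic continuation (\autoref{250314125007}, via \autoref{250311170342}) and exponential type (\autoref{250314125137}, via \autoref{250311182904}), exactly parallel to the paper's proof of \autoref{250224130127}, with the lateral contours avoiding the visible singular value $\xi_0$ and the standard Watson-type estimate giving the factorial-type asymptotics. Only cosmetic slips: the convergence condition should read $\Re(e^{i\alpha}/\hbar) > \K$ rather than $\Re(e^{-i\alpha}/\hbar) > \K$ (and this tangent-disc domain already has opening $\sfop{Arc}_\pi(\alpha)$, so the patching with $f_{\alpha\pm\epsilon}$ is a harmless extra), and the pairing of L/R with $\alpha\mp\epsilon$ depends on an orientation convention the paper leaves implicit.
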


\begin{definition}[Stokes jump]
\label{250312132123}
For any nonzero $z_0 \in \CC_z$, suppose $\alpha$ is a Stokes ray at $z_0$, and let $f_\alpha^\textup{L/R} (z_0, \hbar) \in \cal{O} (S, \CC^2)$ be the two lateral Borel resummations of $\hat{f} (z_0, \hbar)$ in the direction $\alpha$ from \autoref{250225105212}.
The \dfn{Stokes jump} across $\alpha$ from right to left is the difference between the left and right lateral Borel resummations of $\hat{f}$ in the direction $\alpha$:
\begin{eqntag}
\label{250314212108}
	\Delta_{\alpha} \hat{f} (z_0, \hbar) \coleq f_\alpha^\textup{L} (z_0, \hbar) - f_\alpha^\textup{R} (z_0, \hbar)
\fullstop
\end{eqntag}
\end{definition}

The Stokes jump is clearly a holomorphic vector-valued function on $S$; i.e., $\Delta_{\alpha} \hat{f} (z_0, \hbar) \in \cal{O} (S, \CC^2)$.
Furthermore, since both $f_\alpha^\textup{L}$ and $f_\alpha^\textup{R}$ admit the same asymptotic expansion as $\hbar \to 0$ in $S$, the Stokes jump is asymptotic to $0$ of factorial type:
\begin{eqntag}
\label{250314212112}
	\Delta_{\alpha} \hat{f} (z_0, \hbar) \simeq 0
\qquad
	\text{as $\hbar \to 0$ along $\sfop{Arc}_\pi (\alpha)$}
\fullstop
\end{eqntag}
Again, let us point out the absence of the qualifier ``uniformly'' in \eqref{250314212112}, for otherwise it would force the Stokes jump to be identically zero.

\begin{definition}[variation]
\label{250312143920}
For any nonzero $z_0 \in \CC_z$, suppose $\alpha$ is a Stokes ray at $z_0$, and let $\xi_0 \in \CC_\xi$ be the Borel singular value corresponding to the visible singularity in the direction $\alpha$ at $z_0$.
The \dfn{variation} of the multivalued vector function $\phi (z_0, \xi)$ at $\xi_0$ is the difference between its values on two consecutive sheets:
\begin{eqntag}
\label{250314213357}
	\Delta_{\xi_0} \phi (z_0, \xi) 
		\coleq \phi (z_0, \xi_0 + \xi^\textup{L}) - \phi (z_0, \xi_0 + \xi^\textup{R})
\fullstop
\end{eqntag}
where $\xi^\textup{L}$ and $\xi^\textup{R}$ are two preimages of $\xi$ on the universal cover of the punctured neighbourhood of $\xi_0$ related to each other by the anti-clockwise primitive generator of the deck transformations; i.e., $\xi^\textup{R} = e^{2 \pi i } \xi^\textup{L}$.
\end{definition}

The variation $\Delta_{\xi_0} \phi (z_0, \xi)$ may be regarded as a sectorial germ at the origin in the Borel $\xi$-plane.
But since both $\phi (z_0, \xi_0 + \xi^\textup{L})$ and $\phi (z_0, \xi_0 + \xi^\textup{R})$ admit analytic continuation in the direction $\alpha$ of exponential type at infinity, we obtain the following characterisation of the Stokes jump in terms of the Laplace transform of the Borel transform's variation at the visible singularity.

\begin{proposition}[Stokes Phenomenon]
\label{250312131625}
Fix any nonzero $z_0 \in \CC_z$, suppose $\alpha$ is a Stokes direction at $z_0$, and let $f_\alpha^\textup{L/R} (z_0, \hbar) \in \cal{O} (S)$ be the two lateral Borel resummations of $\hat{f} (z_0, \hbar)$ in the direction $\alpha$.
Let $\xi_0 \in \CC_\xi$ be the Borel singular value corresponding to the visible singularity in the direction $\alpha$ at $z_0$.
Then for all $\hbar \in S$, the Stokes jump across $\alpha$ is
\begin{eqntag}
\label{250315124501}
	\Delta_{\alpha} \hat{f} (z_0, \hbar)
	= e^{-\xi_0/\hbar} \Laplace_\alpha \big[ \Delta_{\xi_0} \phi \big] (z_0, \hbar)
\fullstop{,}
\end{eqntag}
where $\Delta_{\xi_0} \phi (z_0, \xi)$ is the variation of $\phi$ at $\xi_0$ given by \eqref{250314213357}.
\end{proposition}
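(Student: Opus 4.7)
The plan is to mirror the proof of \autoref{250314214443}, which handles the scalar case for $\hat{q}$; the vector case here is strictly parallel, using $\phi$ and $\hat{f}$ in place of $\omega$ and $\hat{q}$, and invoking \autoref{250315193721} for the universal-cover structure of $\tilde{B}_{z_0}$ in place of the analogous fact for the Borel surface of the deformed Painlev\'e I equation. Concretely, I expect the argument to consist of three essentially mechanical steps: a collapse of the difference of lateral Laplace integrals to a keyhole contour integral in the $\xi$-plane; a lift of that contour to the endless analytic continuation $\phi$ on two consecutive sheets of the Borel covering surface; and a translation of the integration variable to extract the exponential factor $e^{-\xi_0/\hbar}$ and identify the remaining integral as the Laplace transform of the variation $\Delta_{\xi_0} \phi$.

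More precisely, expanding \eqref{250314212108} using the definitions \eqref{250315122645} of the lateral Borel resummations gives
\begin{eqn}
\Delta_{\alpha} \hat{f}(z_0, \hbar)
= \Laplace^{\textup{L}}_\alpha[\,\phi\,](z_0, \hbar) - \Laplace^{\textup{R}}_\alpha[\,\phi\,](z_0, \hbar)
= \int_\mathscr{C} e^{-\xi/\hbar} \phi(z_0, \xi)\, d\xi,
\end{eqn}
where $\mathscr{C}$ is homotopic to the keyhole contour around $\xi_0$ obtained by concatenating the reverse of $e^{i\alpha}\RR^{\textup{R}}_+$ with $e^{i\alpha}\RR^{\textup{L}}_+$. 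By \autoref{250315193721}, the restriction $\Z_{z_0} : \tilde{B}_{z_0} \to \CC_\xi \smallsetminus \set{\xi_+, \xi_-}$ is the universal covering map based at the origin. The two outgoing arms of $\mathscr{C}$ therefore lift uniquely to infinite paths in $\tilde{B}_{z_0}$ emanating from the visible preimage of $\xi_0$ and lying on two consecutive sheets of this cover; the Laplace integrals along these lifts converge by the exponential-type bound of \autoref{250314125137}. Shifting $\xi \mapsto \xi_0 + \xi$ then extracts the factor $e^{-\xi_0/\hbar}$, converts the contour into an ordinary ray $e^{i\alpha}\RR_+$ based at the origin, and turns the integrand into $\phi(z_0, \xi_0 + \xi^{\textup{L}}) - \phi(z_0, \xi_0 + \xi^{\textup{R}}) = \Delta_{\xi_0} \phi(z_0, \xi)$, which is exactly \eqref{250315124501}.

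The main (mild) obstacle is the bookkeeping in the second step, namely ensuring that the two sheets selected by the lifted arms are genuinely consecutive in the sense of \autoref{250312143920}. This is where \autoref{250315193721} pays off: because $\Z_{z_0}$ is the universal cover, a small anticlockwise loop in the base around $\xi_0$ acts on the fibre by the primitive generator of the deck group localised at $\xi_0$; consequently the right-arm lift and the left-arm lift of $\mathscr{C}$ differ precisely by this primitive anticlockwise generator, matching the convention $\xi^{\textup{R}} = e^{2\pi i} \xi^{\textup{L}}$ used in \eqref{250314213357}. Once this identification is in place, the proof reduces to the same line-by-line computation already carried out for \autoref{250314214443}, the only change being that every occurrence of the scalar quantities $\omega$ and $\hat{q}$ is replaced by the vector-valued $\phi$ and $\hat{f}$, with the identity holding component-wise.
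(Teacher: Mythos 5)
Your proposal is correct and follows essentially the same route as the paper's own proof: collapse the difference of lateral Laplace integrals to a keyhole contour around $\xi_0$, lift its arms via the central charge to two consecutive sheets over the visible singularity, and shift $\xi \mapsto \xi_0 + \xi$ to extract $e^{-\xi_0/\hbar}$ and recognise the Laplace transform of the variation. Your extra care in checking that the two lifted arms land on genuinely consecutive sheets (via the deck-transformation convention $\xi^{\textup{R}} = e^{2\pi i}\xi^{\textup{L}}$) is a point the paper passes over silently, but it does not change the argument.
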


\begin{proof}
This is a matter of a simple calculation.
Expanding \eqref{250314212108} according to its definition \eqref{250315122645}, we have:
\begin{eqn}
	\Delta_{\alpha} \hat{f} (z_0, \hbar) 
		= f_\alpha^\textup{L} (z_0, \hbar) - f_\alpha^\textup{R} (z_0, \hbar)
		= \Laplace_\alpha^\textup{L} \big[ \, \phi \, \big] (z_0, \hbar) 
			- \Laplace_\alpha^\textup{R} \big[ \, \phi \, \big] (z_0, \hbar)
		= \int_{\mathscr{C}} e^{-\xi/\hbar} \phi (z_0, \xi) \d{\xi}
\end{eqn}
where the integration contour $\mathscr{C}$ is homotopic to the keyhole contour around $\xi_0$ obtained by concatenating the inverse of the contour $e^{i \alpha} \RR_+^\textup{R}$ with $e^{i \alpha} \RR_+^\textup{L}$.
The two arms of this keyhole contour lift via the central charge $\Z_{z_0} : B_{z_0} \to \CC_\xi$ to infinite geodesics on two consecutive sheets of the Borel surface emanating from the visible Borel singularity above $\xi_0$.
Therefore, the above contour integral may be written like so:
\begin{eqn}
	\int_{\xi_0}^{e^{i \alpha} \cdot \infty} e^{-\xi/\hbar} \Big( \phi (z_0, \xi^\textup{L}) - \phi (z_0, \xi^\textup{R}) \Big) \d{\xi}
\fullstop{,}
\end{eqn}
where $\phi (z_0, \xi^\textup{L})$ and $\phi (z_0, \xi^\textup{R})$ denote the values at $(z_0, \xi)$ of the two relevant branches of the multivalued function $\phi (z_0, \xi)$.
Shifting the integration variable $\xi \to \xi_0 + \xi$, the factor $e^{-\xi_0/\hbar}$ appears and the integral becomes the Laplace transform of the variation \eqref{250314213357}.
So we get \eqref{250315124501}.
\end{proof}

\begin{adjustwidth}{-2cm}{-1.5cm}
{\footnotesize
\bibliographystyle{nikolaev}
\bibliography{References}
}
\end{adjustwidth}


\bigskip

\makeatletter
\newcommand\footnoteref[1]{\protected@xdef\@thefnmark{\ref{#1}}\@footnotemark}
\makeatother

\textsc{Mohamad Alameddine}\\
\small Universit\'{e} Jean Monnet, Institut Camille Jordan%
\footnote{\label{ICJ}Institut Camille Jordan, CNRS UMR 5208,
Les Forges 2, 20 Rue du Dr Annino, 42000 Saint-\'{E}tienne, France.}

\smallskip

\textsc{Olivier Marchal}\\
\small Universit\'{e} Jean Monnet, Institut Camille Jordan%
\footnoteref{ICJ}\\
\small Institut Universitaire de France

\smallskip

\textsc{Nikita Nikolaev}\\
\small School of Mathematics, University of Birmingham%
\footnote{Watson Building, Edgbaston, Birmingham, B15 2TT, United Kingdom.}

\smallskip

\textsc{Nicolas Orantin}\\
\small Section de Math\'{e}matiques, Universit\'{e} de Gen\`{e}ve%
\footnote{24 rue du G\'{e}n\'{e}ral Dufour, 1211 Gen\`{e}ve 4, Suisse.}

\end{document}